\newcommand{\arXiv}[1]{arXiv:\href{http://arXiv.org/abs/#1}{#1}}
\newtheorem{examp}[theorem]{Example}  
\renewenvironment{proof}[1][Proof]{\par\noindent{\bf #1\ }}{\hfill\BlackBox\\[2mm]}
\newcommand{\citel}[1]{\citefullauthor{#1}, \citeyear{#1}}
\def\arXiv#1{arXiv:\href{http://arXiv.org/abs/#1}{#1}}
\newcommand{\SymmDiff}{\mathop{\triangle}}
\newcommand{\E}{\mathbb{E}}
\newcommand{\N}{\mathbb{N}}
\newcommand{\Q}{\mathbb{Q}}
\newcommand{\R}{\mathbb{R}}
\renewcommand{\Pr}{\mathbb{P}}
\newcommand{\eps}{\varepsilon}
\newcommand{\1}{\mathbf{1}}
\def\cW{\mathcal{W}}
\def\cV{\mathcal{V}}
\def\cU{\mathcal{U}}
\def\cS{\mathcal{S}}
\def\cF{\mathcal{F}}
\def\cA{\mathcal{A}}
\newcommand{\BB}{\mathbb}
\newcommand{\ol}{\overline}
\newcommand{\op}{\operatorname}
\newcommand{\scr}{\mathscr}
\newcommand{\eqD}{\overset{d}{=}}
\newcommand{\ep}{\epsilon}
\newcommand{\rta}{\rightarrow}
\newcommand{\wt}{\widetilde}
\newcommand{\wh}{\widehat}
\newcommand{\mcl}{\mathcal}
\newcommand{\W}{\mcl{W}}
\renewcommand{\P}{\mcl{P}}
\renewcommand{\epsilon}{\varepsilon}
\DeclareMathOperator{\supp}{supp}
\DeclareMathAlphabet{\mathpzc}{OT1}{pzc}{m}{it}
\begin{document}

\title{Sparse Exchangeable Graphs and Their Limits\\ via Graphon Processes}

\author{\name Christian Borgs
	\email borgs@microsoft.com \\
    \addr Microsoft Research\\
       One Memorial Drive\\
	   Cambridge, MA 02142, USA
    \AND
	\name Jennifer T.\ Chayes
	\email jchayes@microsoft.com \\
	\addr Microsoft Research\\
	One Memorial Drive\\
	Cambridge, MA 02142, USA
	\AND
	\name Henry Cohn
	\email cohn@microsoft.com \\
	\addr Microsoft Research\\
	One Memorial Drive\\
	Cambridge, MA 02142, USA
	\AND
	\name Nina Holden
	\email ninah@math.mit.edu \\
	\addr Department of Mathematics\\
	Massachusetts Institute of Technology\\
	Cambridge, MA 02139, USA
    }

\editor{Edoardo M.\ Airoldi}

\maketitle

\begin{abstract} %
In a recent paper, Caron and Fox suggest a probabilistic model for sparse
graphs which are exchangeable when associating each vertex with a time
parameter in $\R_+$. Here we show that by generalizing the classical
definition of graphons as functions over probability spaces to functions
over $\sigma$-finite measure spaces, we can model a large family of
exchangeable graphs, including the Caron-Fox graphs and the traditional
exchangeable dense graphs as special cases. Explicitly, modelling the
underlying space of features by a $\sigma$-finite measure space
$(S,\cS,\mu)$ and the connection probabilities by an integrable function
$W\colon S\times S\to [0,1]$, we construct a random family  $(G_t)_{t\geq
0}$ of growing graphs such that the vertices of $G_t$ are given by a
Poisson point process on $S$ with intensity $t\mu$, with two points $x,y$
of the point process connected with probability $W(x,y)$. We call such a
random family a \emph{graphon process}. We prove that a graphon process has
convergent subgraph frequencies (with possibly infinite limits) and that,
in the natural extension of the cut metric to our setting, the sequence
converges to the generating graphon. We also show that the underlying
graphon is identifiable only as an equivalence class over graphons with cut
distance zero. More generally, we study metric convergence for arbitrary
(not necessarily random) sequences of graphs, and show that a sequence of
graphs has a convergent subsequence if and only if it has a subsequence
satisfying a property we call \emph{uniform regularity of tails}. Finally,
we prove that every graphon is equivalent to a graphon on $\R_+$ equipped
with Lebesgue measure.
\end{abstract}

\begin{keywords} graphons, graph convergence, sparse graph convergence,
modelling of sparse networks, exchangeable graph models
\end{keywords}

\section{Introduction}
\label{sec1}

The theory of graphons has provided a powerful tool for sampling and studying
convergence properties of sequences of dense graphs. Graphons characterize
limiting properties of dense graph sequences, such as properties arising in
combinatorial optimization and statistical physics. Furthermore, sequences of
dense graphs sampled from a (possibly random) graphon are characterized by a
natural notion of exchangeability via the Aldous-Hoover theorem. This paper
presents an analogous theory for sparse graphs.

In the past few years, graphons have been used as non-parametric extensions
of stochastic block models, to model and learn large networks. There have
been several rigorous papers on the subject of consistent estimation using
graphons (see, for example, papers by \citel{bickelchen}, \citel{BCL11},
\citel{roheetal}, \citel{CWA}, \citel{WO}, \citel{rateopt}, \citel{C15},
\citel{oracle}, and \citel{w-estimation}, as well as references therein), and
graphons have also been used to estimate real-world networks, such as
Facebook and LinkedIn (E.~M.~Airoldi, private communication, 2015).  This
makes it especially useful to have graphon models for sparse networks with
unbounded degrees, which are the appropriate description of many large
real-world networks.

In the classical theory of graphons as studied by, for example,
\citet*{graph-hom}, \citet*{ls-graphlimits}, \citet*{denseconv1},
\citet*{br-09}, \citet*{BCL10}, and \citet*{janson-survey}, a graphon is a
symmetric $[0,1]$-valued function defined on a probability space. In our
generalized theory we let the underlying measure space of the graphon be a
$\sigma$-finite measure space; i.e., we allow the space to have infinite
total measure. More precisely, given a $\sigma$-finite measure space $\scr
S=(S,\cS,\mu)$ we define a graphon to be a pair $\W=(W,\scr S)$, where
$W\colon S\times S\to\R$ is a symmetric integrable function, with the special
case when $W$ is $[0,1]$-valued being most relevant for the random graphs
studied in the current paper. We present a random graph model associated with
these generalized graphons which has a number of properties making it
appropriate for modelling sparse networks, and we present a new theory for
convergence of graphs in which our generalized graphons arise naturally as
limits of sparse graphs.

Given a $[0,1]$-valued graphon $\W=(W,\scr S)$ with $\scr S=(S,\cS,\mu)$ a
$\sigma$-finite measure space, we will now define a random process which
generalizes the classical notion of $\W$-random graphs, introduced in the
statistics literature \citep*{HRH02} under the name latent position graphs,
in the context of graph limits \citep*{ls-graphlimits} as $\W$-random graphs,
and in the context of extensions of the classical random graph theory
\citep*{bjr07} as inhomogeneous random graphs. Recall that in the classical
setting where $\W$ is defined on a probability space, $\W$-random graphs are
generated by first choosing $n$ points $x_1,\dots,x_n$ i.i.d.\ from the
probability distribution $\mu$ over the feature space $S$, and then
connecting the vertices $i$ and $j$ with probability $W(x_i,x_j)$.  Here,
inspired by \citet*{caron-fox}, we generalize this to arbitrary
$\sigma$-finite measure spaces by first considering a Poisson point
process\footnote{We will make this construction more precise in Section
\ref{sec:W-random-results}; in particular, we will explain that we may
associate $\Gamma_t$ with a collection of random variables $x_i\in S$. The
same result holds for the Poisson point process $\Gamma$ considered in the
next paragraph.} $\Gamma_t$ with intensity $t\mu$ on $S$ for any fixed $t>0$,
and then connecting two points $x_i,x_j$ in $\Gamma_t$ with probability
$W(x_i,x_j)$. As explained in the next paragraph, this leads to a family of
graphs $(\wt G_t)_{t\geq 0}$ such that the graphs $\wt G_t$ have almost
surely at most countably infinitely many vertices and (assuming appropriate
integrability conditions on $W$, e.g., $W\in L^1$) a finite number of edges.
Removing all isolated vertices from $\wt G_t$, we obtain a family of graphs
$( G_t)_{t\geq 0}$ that are almost surely finite. We refer to the families
$(\wt G_t)_{t\geq 0}$ and $(G_t)_{t\geq 0}$ as \emph{graphon processes}; when
it is necessary to distinguish the two, we call them graphon processes with
or without isolated vertices, respectively.

\begin{figure}
\begin{center}
\includegraphics{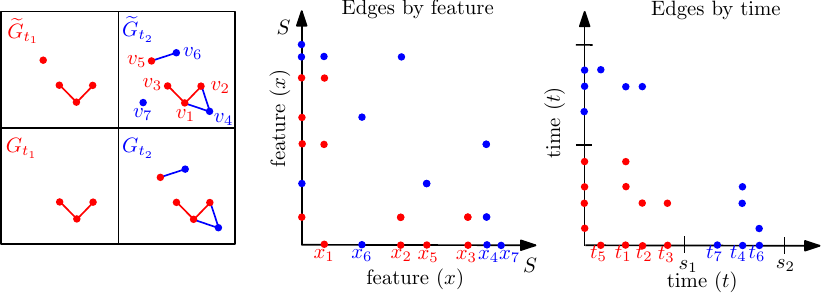}
\end{center}
\caption{This figure illustrates how we can generate
a graphon process $(G_t)_{t\geq 0}$ from a graphon $\W=(W,\scr S)$, where $\scr S=(S,\cS,\mu)$
is a $\sigma$-finite measure space. The two coordinate axes on the middle figure
represent our feature space $S$, where the red (resp.\ blue) dots on the axes represent
vertices born during $[0,s_1]$ (resp.\ $(s_1,s_2]$) for $0<s_1<s_2$, and the red
(resp.\ blue) dots in the interior of the first quadrant represent edges in $G_{t}$
for $t\geq s_1$ (resp.\ $t\geq s_2$). The graph $G_t$ is an induced subgraph of a graph
$\wt G_t$ with infinitely many vertices in the case $\mu(S)=\infty$, such that $G_t$ is
obtained from $\wt G_t$ by removing isolated vertices. At time $t\geq 0$ the marginal
law of the features of $V(\wt G_t)$ is a Poisson point process on $S$ with
intensity $t\mu$. Two distinct vertices with features $x$ and $x'$, respectively, are
connected to each other by an undirected edge with probability $W(x,x')$. The coordinate
axes on the right figure represent time $\BB R_+$. We get the graph $G_t$ by considering
the edges restricted to $[0,t]^2$. Note that the coordinate axes in the right figure and
the graphs $\wt G_t$ in the left figure are slightly inaccurate if we assume $\mu(S)=\infty$,
since in this case there are infinitely many isolated vertices in $\wt G_t$ for each $t>0$.
We have chosen to label the vertices by the order in which they appear in $G_t$,
where ties are resolved by considering the time the vertices were born, i.e., by
considering the time they appeared in $\wt G_t$.}
\label{fig1}
\end{figure}

To interpret the graphon process $(G_t)_{t\geq 0}$ as a family of growing
graphs we will need to couple the graphs $G_t$ for different times $t\geq 0$.
To this end, we consider a Poisson point process $\Gamma$ on $\R_+\times S$
(with $\R_+:=[0,\infty)$ being equipped with the Borel $\sigma$-algebra and
Lebesgue measure). Each point $v=(t,x)$ of $\Gamma$ corresponds to a vertex
of an infinite graph $\wt G$, where the coordinate $t$ is interpreted as the
time the vertex is born and the coordinate $x$ describes a feature of the
vertex. Two distinct vertices $v=(t,x)$ and $v'=(t',x')$ are connected by an
undirected edge with probability $W(x,x')$, independently for each possible
pair of distinct vertices. For each fixed time $t\geq 0$ define a graph $G_t$
by considering the induced subgraph of $\wt G$ corresponding to vertices
which are born at time $t$ or earlier, where we do not include vertices which
would be isolated in $G_t$. See Figure~\ref{fig1} for an illustration. The
family of growing graphs $(G_t)_{t\geq 0}$ just described  includes classical
dense $\W$-random graphs (up to isolated vertices) and the sparse graphs
studied by \citet*{caron-fox} and \citet*{kallenberg-block} as special cases,
and is (except for minor technical differences) identical to the family of
random graphs studied by \citet*{veitchroy}, a paper which was written in
parallel with our paper; see our remark at the end of this introduction.

The graphon process $(\wt G_t)_{t\geq 0}$ satisfies a natural notion of
exchangeability. Roughly speaking, in our setting this means that the
features of newly born vertices are homogeneous in time. More precisely, it
can be defined as joint exchangeability of a random measure in $\R_+^2$,
where the two coordinates correspond to time, and each edge of the graph
corresponds to a point mass. We will prove that graphon processes as defined
above, with $W$ integrable and possibly random, are characterized by
exchangeability of the random measure in $\R_+^2$ along with a certain
regularity condition we call \emph{uniform regularity of tails}. See
Proposition~\ref{prop5} in Section~\ref{sec:W-random-results}. This result is
an analogue in the setting of possibly sparse graphs satisfying the
aforementioned regularity condition of the Aldous-Hoover theorem
\citep*{aldous,hoover}, which characterizes $\W$-random graphs over
probability spaces as graphs that are invariant in law under permutation of
their vertices.

The graphon processes defined above also have a number of other properties
making them particularly natural to model sparse graphs or networks. They are
suitable for modelling networks which grow over time since no additional
rescaling parameters (like the explicitly given density dependence on the
number of vertices specified by \citel{br-09}, and \citel{lp1}) are
necessary; all information about the random graph model is encoded by the
graphon alone. The graphs are \emph{projective} in the sense that if $s<t$
the graph $G_s$ is an induced subgraph of $G_t$. Finally, a closely related
family of weighted graphs is proven by \citet*{caron-fox} to have power law
degree distribution for certain $\W$, and our graphon processes are expected
to behave similarly. The graphon processes studied in this paper have a
different qualitative behavior than the sparse $\W$-random graphs studied by
\citet*{br-09} and \citet*{lp1,lp2} (see Figure~\ref{fig3}), with the only
overlap of the two theories occurring when the graphs are dense.  If the
sparsity of the graphs is caused by the degrees of the vertices being scaled
down approximately uniformly over time, then the model studied by
\citet*{br-09} and \citet*{lp1,lp2} is most natural. If the sparsity is
caused by later vertices typically having lower connectivity probabilities
than earlier vertices, then the model presented in this paper is most
natural. The sampling method we will use in our forthcoming paper
\citep*{multiscale2} generalizes both of these methods.

\begin{figure}
\begin{center}
\begin{tikzpicture}[line width = 0.2pt]
\draw (0,0)--(5.00000,0);
\draw (5.00000,0)--(5.00000,5.00000);
\draw (5.00000,5.00000)--(0,5.00000);
\draw (0,5.00000)--(0,0);
\fill[black] (0,0.100000)--(0,0.200000)--(0.100000,0.200000)--(0.100000,0.100000)--cycle;
\fill[black] (0,0.400000)--(0,0.500000)--(0.100000,0.500000)--(0.100000,0.400000)--cycle;
\fill[black] (0,0.500000)--(0,0.600000)--(0.100000,0.600000)--(0.100000,0.500000)--cycle;
\fill[black] (0,0.900000)--(0,1.00000)--(0.100000,1.00000)--(0.100000,0.900000)--cycle;
\fill[black] (0,1.00000)--(0,1.10000)--(0.100000,1.10000)--(0.100000,1.00000)--cycle;
\fill[black] (0,1.50000)--(0,1.60000)--(0.100000,1.60000)--(0.100000,1.50000)--cycle;
\fill[black] (0,2.00000)--(0,2.10000)--(0.100000,2.10000)--(0.100000,2.00000)--cycle;
\fill[black] (0,2.10000)--(0,2.20000)--(0.100000,2.20000)--(0.100000,2.10000)--cycle;
\fill[black] (0,2.20000)--(0,2.30000)--(0.100000,2.30000)--(0.100000,2.20000)--cycle;
\fill[black] (0,2.50000)--(0,2.60000)--(0.100000,2.60000)--(0.100000,2.50000)--cycle;
\fill[black] (0,2.60000)--(0,2.70000)--(0.100000,2.70000)--(0.100000,2.60000)--cycle;
\fill[black] (0,3.20000)--(0,3.30000)--(0.100000,3.30000)--(0.100000,3.20000)--cycle;
\fill[black] (0,3.50000)--(0,3.60000)--(0.100000,3.60000)--(0.100000,3.50000)--cycle;
\fill[black] (0,3.70000)--(0,3.80000)--(0.100000,3.80000)--(0.100000,3.70000)--cycle;
\fill[black] (0,3.80000)--(0,3.90000)--(0.100000,3.90000)--(0.100000,3.80000)--cycle;
\fill[black] (0,4.00000)--(0,4.10000)--(0.100000,4.10000)--(0.100000,4.00000)--cycle;
\fill[black] (0,4.10000)--(0,4.20000)--(0.100000,4.20000)--(0.100000,4.10000)--cycle;
\fill[black] (0,4.30000)--(0,4.40000)--(0.100000,4.40000)--(0.100000,4.30000)--cycle;
\fill[black] (0,4.70000)--(0,4.80000)--(0.100000,4.80000)--(0.100000,4.70000)--cycle;
\fill[black] (0,4.80000)--(0,4.90000)--(0.100000,4.90000)--(0.100000,4.80000)--cycle;
\fill[black] (0.100000,0)--(0.100000,0.100000)--(0.200000,0.100000)--(0.200000,0)--cycle;
\fill[black] (0.100000,0.200000)--(0.100000,0.300000)--(0.200000,0.300000)--(0.200000,0.200000)--cycle;
\fill[black] (0.100000,0.500000)--(0.100000,0.600000)--(0.200000,0.600000)--(0.200000,0.500000)--cycle;
\fill[black] (0.100000,0.700000)--(0.100000,0.800000)--(0.200000,0.800000)--(0.200000,0.700000)--cycle;
\fill[black] (0.100000,0.800000)--(0.100000,0.900000)--(0.200000,0.900000)--(0.200000,0.800000)--cycle;
\fill[black] (0.100000,0.900000)--(0.100000,1.00000)--(0.200000,1.00000)--(0.200000,0.900000)--cycle;
\fill[black] (0.100000,1.00000)--(0.100000,1.10000)--(0.200000,1.10000)--(0.200000,1.00000)--cycle;
\fill[black] (0.100000,1.20000)--(0.100000,1.30000)--(0.200000,1.30000)--(0.200000,1.20000)--cycle;
\fill[black] (0.100000,1.30000)--(0.100000,1.40000)--(0.200000,1.40000)--(0.200000,1.30000)--cycle;
\fill[black] (0.100000,1.80000)--(0.100000,1.90000)--(0.200000,1.90000)--(0.200000,1.80000)--cycle;
\fill[black] (0.100000,2.30000)--(0.100000,2.40000)--(0.200000,2.40000)--(0.200000,2.30000)--cycle;
\fill[black] (0.100000,2.50000)--(0.100000,2.60000)--(0.200000,2.60000)--(0.200000,2.50000)--cycle;
\fill[black] (0.100000,3.50000)--(0.100000,3.60000)--(0.200000,3.60000)--(0.200000,3.50000)--cycle;
\fill[black] (0.200000,0.100000)--(0.200000,0.200000)--(0.300000,0.200000)--(0.300000,0.100000)--cycle;
\fill[black] (0.200000,0.300000)--(0.200000,0.400000)--(0.300000,0.400000)--(0.300000,0.300000)--cycle;
\fill[black] (0.200000,0.500000)--(0.200000,0.600000)--(0.300000,0.600000)--(0.300000,0.500000)--cycle;
\fill[black] (0.200000,2.60000)--(0.200000,2.70000)--(0.300000,2.70000)--(0.300000,2.60000)--cycle;
\fill[black] (0.200000,3.40000)--(0.200000,3.50000)--(0.300000,3.50000)--(0.300000,3.40000)--cycle;
\fill[black] (0.200000,4.20000)--(0.200000,4.30000)--(0.300000,4.30000)--(0.300000,4.20000)--cycle;
\fill[black] (0.200000,4.70000)--(0.200000,4.80000)--(0.300000,4.80000)--(0.300000,4.70000)--cycle;
\fill[black] (0.200000,4.80000)--(0.200000,4.90000)--(0.300000,4.90000)--(0.300000,4.80000)--cycle;
\fill[black] (0.300000,0.200000)--(0.300000,0.300000)--(0.400000,0.300000)--(0.400000,0.200000)--cycle;
\fill[black] (0.300000,0.400000)--(0.300000,0.500000)--(0.400000,0.500000)--(0.400000,0.400000)--cycle;
\fill[black] (0.300000,0.800000)--(0.300000,0.900000)--(0.400000,0.900000)--(0.400000,0.800000)--cycle;
\fill[black] (0.300000,1.40000)--(0.300000,1.50000)--(0.400000,1.50000)--(0.400000,1.40000)--cycle;
\fill[black] (0.300000,2.00000)--(0.300000,2.10000)--(0.400000,2.10000)--(0.400000,2.00000)--cycle;
\fill[black] (0.300000,2.10000)--(0.300000,2.20000)--(0.400000,2.20000)--(0.400000,2.10000)--cycle;
\fill[black] (0.300000,2.30000)--(0.300000,2.40000)--(0.400000,2.40000)--(0.400000,2.30000)--cycle;
\fill[black] (0.300000,3.10000)--(0.300000,3.20000)--(0.400000,3.20000)--(0.400000,3.10000)--cycle;
\fill[black] (0.300000,3.60000)--(0.300000,3.70000)--(0.400000,3.70000)--(0.400000,3.60000)--cycle;
\fill[black] (0.400000,0)--(0.400000,0.100000)--(0.500000,0.100000)--(0.500000,0)--cycle;
\fill[black] (0.400000,0.300000)--(0.400000,0.400000)--(0.500000,0.400000)--(0.500000,0.300000)--cycle;
\fill[black] (0.400000,1.10000)--(0.400000,1.20000)--(0.500000,1.20000)--(0.500000,1.10000)--cycle;
\fill[black] (0.400000,1.40000)--(0.400000,1.50000)--(0.500000,1.50000)--(0.500000,1.40000)--cycle;
\fill[black] (0.400000,2.70000)--(0.400000,2.80000)--(0.500000,2.80000)--(0.500000,2.70000)--cycle;
\fill[black] (0.400000,3.50000)--(0.400000,3.60000)--(0.500000,3.60000)--(0.500000,3.50000)--cycle;
\fill[black] (0.500000,0)--(0.500000,0.100000)--(0.600000,0.100000)--(0.600000,0)--cycle;
\fill[black] (0.500000,0.100000)--(0.500000,0.200000)--(0.600000,0.200000)--(0.600000,0.100000)--cycle;
\fill[black] (0.500000,0.200000)--(0.500000,0.300000)--(0.600000,0.300000)--(0.600000,0.200000)--cycle;
\fill[black] (0.500000,0.700000)--(0.500000,0.800000)--(0.600000,0.800000)--(0.600000,0.700000)--cycle;
\fill[black] (0.500000,0.900000)--(0.500000,1.00000)--(0.600000,1.00000)--(0.600000,0.900000)--cycle;
\fill[black] (0.500000,1.20000)--(0.500000,1.30000)--(0.600000,1.30000)--(0.600000,1.20000)--cycle;
\fill[black] (0.500000,2.00000)--(0.500000,2.10000)--(0.600000,2.10000)--(0.600000,2.00000)--cycle;
\fill[black] (0.500000,2.10000)--(0.500000,2.20000)--(0.600000,2.20000)--(0.600000,2.10000)--cycle;
\fill[black] (0.500000,3.60000)--(0.500000,3.70000)--(0.600000,3.70000)--(0.600000,3.60000)--cycle;
\fill[black] (0.600000,0.700000)--(0.600000,0.800000)--(0.700000,0.800000)--(0.700000,0.700000)--cycle;
\fill[black] (0.600000,0.900000)--(0.600000,1.00000)--(0.700000,1.00000)--(0.700000,0.900000)--cycle;
\fill[black] (0.600000,1.10000)--(0.600000,1.20000)--(0.700000,1.20000)--(0.700000,1.10000)--cycle;
\fill[black] (0.600000,3.00000)--(0.600000,3.10000)--(0.700000,3.10000)--(0.700000,3.00000)--cycle;
\fill[black] (0.600000,3.40000)--(0.600000,3.50000)--(0.700000,3.50000)--(0.700000,3.40000)--cycle;
\fill[black] (0.600000,4.80000)--(0.600000,4.90000)--(0.700000,4.90000)--(0.700000,4.80000)--cycle;
\fill[black] (0.700000,0.100000)--(0.700000,0.200000)--(0.800000,0.200000)--(0.800000,0.100000)--cycle;
\fill[black] (0.700000,0.500000)--(0.700000,0.600000)--(0.800000,0.600000)--(0.800000,0.500000)--cycle;
\fill[black] (0.700000,0.600000)--(0.700000,0.700000)--(0.800000,0.700000)--(0.800000,0.600000)--cycle;
\fill[black] (0.700000,1.30000)--(0.700000,1.40000)--(0.800000,1.40000)--(0.800000,1.30000)--cycle;
\fill[black] (0.700000,1.60000)--(0.700000,1.70000)--(0.800000,1.70000)--(0.800000,1.60000)--cycle;
\fill[black] (0.700000,2.00000)--(0.700000,2.10000)--(0.800000,2.10000)--(0.800000,2.00000)--cycle;
\fill[black] (0.700000,3.10000)--(0.700000,3.20000)--(0.800000,3.20000)--(0.800000,3.10000)--cycle;
\fill[black] (0.700000,4.00000)--(0.700000,4.10000)--(0.800000,4.10000)--(0.800000,4.00000)--cycle;
\fill[black] (0.700000,4.80000)--(0.700000,4.90000)--(0.800000,4.90000)--(0.800000,4.80000)--cycle;
\fill[black] (0.800000,0.100000)--(0.800000,0.200000)--(0.900000,0.200000)--(0.900000,0.100000)--cycle;
\fill[black] (0.800000,0.300000)--(0.800000,0.400000)--(0.900000,0.400000)--(0.900000,0.300000)--cycle;
\fill[black] (0.800000,2.30000)--(0.800000,2.40000)--(0.900000,2.40000)--(0.900000,2.30000)--cycle;
\fill[black] (0.800000,2.70000)--(0.800000,2.80000)--(0.900000,2.80000)--(0.900000,2.70000)--cycle;
\fill[black] (0.800000,4.10000)--(0.800000,4.20000)--(0.900000,4.20000)--(0.900000,4.10000)--cycle;
\fill[black] (0.800000,4.80000)--(0.800000,4.90000)--(0.900000,4.90000)--(0.900000,4.80000)--cycle;
\fill[black] (0.900000,0)--(0.900000,0.100000)--(1.00000,0.100000)--(1.00000,0)--cycle;
\fill[black] (0.900000,0.100000)--(0.900000,0.200000)--(1.00000,0.200000)--(1.00000,0.100000)--cycle;
\fill[black] (0.900000,0.500000)--(0.900000,0.600000)--(1.00000,0.600000)--(1.00000,0.500000)--cycle;
\fill[black] (0.900000,0.600000)--(0.900000,0.700000)--(1.00000,0.700000)--(1.00000,0.600000)--cycle;
\fill[black] (0.900000,1.70000)--(0.900000,1.80000)--(1.00000,1.80000)--(1.00000,1.70000)--cycle;
\fill[black] (0.900000,3.60000)--(0.900000,3.70000)--(1.00000,3.70000)--(1.00000,3.60000)--cycle;
\fill[black] (0.900000,4.10000)--(0.900000,4.20000)--(1.00000,4.20000)--(1.00000,4.10000)--cycle;
\fill[black] (1.00000,0)--(1.00000,0.100000)--(1.10000,0.100000)--(1.10000,0)--cycle;
\fill[black] (1.00000,0.100000)--(1.00000,0.200000)--(1.10000,0.200000)--(1.10000,0.100000)--cycle;
\fill[black] (1.00000,2.10000)--(1.00000,2.20000)--(1.10000,2.20000)--(1.10000,2.10000)--cycle;
\fill[black] (1.00000,3.40000)--(1.00000,3.50000)--(1.10000,3.50000)--(1.10000,3.40000)--cycle;
\fill[black] (1.10000,0.400000)--(1.10000,0.500000)--(1.20000,0.500000)--(1.20000,0.400000)--cycle;
\fill[black] (1.10000,0.600000)--(1.10000,0.700000)--(1.20000,0.700000)--(1.20000,0.600000)--cycle;
\fill[black] (1.20000,0.100000)--(1.20000,0.200000)--(1.30000,0.200000)--(1.30000,0.100000)--cycle;
\fill[black] (1.20000,0.500000)--(1.20000,0.600000)--(1.30000,0.600000)--(1.30000,0.500000)--cycle;
\fill[black] (1.20000,1.30000)--(1.20000,1.40000)--(1.30000,1.40000)--(1.30000,1.30000)--cycle;
\fill[black] (1.20000,4.90000)--(1.20000,5.00000)--(1.30000,5.00000)--(1.30000,4.90000)--cycle;
\fill[black] (1.30000,0.100000)--(1.30000,0.200000)--(1.40000,0.200000)--(1.40000,0.100000)--cycle;
\fill[black] (1.30000,0.700000)--(1.30000,0.800000)--(1.40000,0.800000)--(1.40000,0.700000)--cycle;
\fill[black] (1.30000,1.20000)--(1.30000,1.30000)--(1.40000,1.30000)--(1.40000,1.20000)--cycle;
\fill[black] (1.30000,3.30000)--(1.30000,3.40000)--(1.40000,3.40000)--(1.40000,3.30000)--cycle;
\fill[black] (1.30000,4.90000)--(1.30000,5.00000)--(1.40000,5.00000)--(1.40000,4.90000)--cycle;
\fill[black] (1.40000,0.300000)--(1.40000,0.400000)--(1.50000,0.400000)--(1.50000,0.300000)--cycle;
\fill[black] (1.40000,0.400000)--(1.40000,0.500000)--(1.50000,0.500000)--(1.50000,0.400000)--cycle;
\fill[black] (1.40000,3.80000)--(1.40000,3.90000)--(1.50000,3.90000)--(1.50000,3.80000)--cycle;
\fill[black] (1.40000,4.40000)--(1.40000,4.50000)--(1.50000,4.50000)--(1.50000,4.40000)--cycle;
\fill[black] (1.50000,0)--(1.50000,0.100000)--(1.60000,0.100000)--(1.60000,0)--cycle;
\fill[black] (1.50000,1.80000)--(1.50000,1.90000)--(1.60000,1.90000)--(1.60000,1.80000)--cycle;
\fill[black] (1.60000,0.700000)--(1.60000,0.800000)--(1.70000,0.800000)--(1.70000,0.700000)--cycle;
\fill[black] (1.60000,2.60000)--(1.60000,2.70000)--(1.70000,2.70000)--(1.70000,2.60000)--cycle;
\fill[black] (1.60000,2.70000)--(1.60000,2.80000)--(1.70000,2.80000)--(1.70000,2.70000)--cycle;
\fill[black] (1.60000,3.10000)--(1.60000,3.20000)--(1.70000,3.20000)--(1.70000,3.10000)--cycle;
\fill[black] (1.70000,0.900000)--(1.70000,1.00000)--(1.80000,1.00000)--(1.80000,0.900000)--cycle;
\fill[black] (1.70000,2.00000)--(1.70000,2.10000)--(1.80000,2.10000)--(1.80000,2.00000)--cycle;
\fill[black] (1.70000,2.70000)--(1.70000,2.80000)--(1.80000,2.80000)--(1.80000,2.70000)--cycle;
\fill[black] (1.70000,3.30000)--(1.70000,3.40000)--(1.80000,3.40000)--(1.80000,3.30000)--cycle;
\fill[black] (1.80000,0.100000)--(1.80000,0.200000)--(1.90000,0.200000)--(1.90000,0.100000)--cycle;
\fill[black] (1.80000,1.50000)--(1.80000,1.60000)--(1.90000,1.60000)--(1.90000,1.50000)--cycle;
\fill[black] (1.80000,3.90000)--(1.80000,4.00000)--(1.90000,4.00000)--(1.90000,3.90000)--cycle;
\fill[black] (1.90000,2.00000)--(1.90000,2.10000)--(2.00000,2.10000)--(2.00000,2.00000)--cycle;
\fill[black] (1.90000,2.80000)--(1.90000,2.90000)--(2.00000,2.90000)--(2.00000,2.80000)--cycle;
\fill[black] (1.90000,2.90000)--(1.90000,3.00000)--(2.00000,3.00000)--(2.00000,2.90000)--cycle;
\fill[black] (2.00000,0)--(2.00000,0.100000)--(2.10000,0.100000)--(2.10000,0)--cycle;
\fill[black] (2.00000,0.300000)--(2.00000,0.400000)--(2.10000,0.400000)--(2.10000,0.300000)--cycle;
\fill[black] (2.00000,0.500000)--(2.00000,0.600000)--(2.10000,0.600000)--(2.10000,0.500000)--cycle;
\fill[black] (2.00000,0.700000)--(2.00000,0.800000)--(2.10000,0.800000)--(2.10000,0.700000)--cycle;
\fill[black] (2.00000,1.70000)--(2.00000,1.80000)--(2.10000,1.80000)--(2.10000,1.70000)--cycle;
\fill[black] (2.00000,1.90000)--(2.00000,2.00000)--(2.10000,2.00000)--(2.10000,1.90000)--cycle;
\fill[black] (2.00000,3.70000)--(2.00000,3.80000)--(2.10000,3.80000)--(2.10000,3.70000)--cycle;
\fill[black] (2.00000,4.10000)--(2.00000,4.20000)--(2.10000,4.20000)--(2.10000,4.10000)--cycle;
\fill[black] (2.10000,0)--(2.10000,0.100000)--(2.20000,0.100000)--(2.20000,0)--cycle;
\fill[black] (2.10000,0.300000)--(2.10000,0.400000)--(2.20000,0.400000)--(2.20000,0.300000)--cycle;
\fill[black] (2.10000,0.500000)--(2.10000,0.600000)--(2.20000,0.600000)--(2.20000,0.500000)--cycle;
\fill[black] (2.10000,1.00000)--(2.10000,1.10000)--(2.20000,1.10000)--(2.20000,1.00000)--cycle;
\fill[black] (2.10000,2.30000)--(2.10000,2.40000)--(2.20000,2.40000)--(2.20000,2.30000)--cycle;
\fill[black] (2.10000,2.50000)--(2.10000,2.60000)--(2.20000,2.60000)--(2.20000,2.50000)--cycle;
\fill[black] (2.10000,3.60000)--(2.10000,3.70000)--(2.20000,3.70000)--(2.20000,3.60000)--cycle;
\fill[black] (2.10000,4.30000)--(2.10000,4.40000)--(2.20000,4.40000)--(2.20000,4.30000)--cycle;
\fill[black] (2.10000,4.80000)--(2.10000,4.90000)--(2.20000,4.90000)--(2.20000,4.80000)--cycle;
\fill[black] (2.20000,0)--(2.20000,0.100000)--(2.30000,0.100000)--(2.30000,0)--cycle;
\fill[black] (2.30000,0.100000)--(2.30000,0.200000)--(2.40000,0.200000)--(2.40000,0.100000)--cycle;
\fill[black] (2.30000,0.300000)--(2.30000,0.400000)--(2.40000,0.400000)--(2.40000,0.300000)--cycle;
\fill[black] (2.30000,0.800000)--(2.30000,0.900000)--(2.40000,0.900000)--(2.40000,0.800000)--cycle;
\fill[black] (2.30000,2.10000)--(2.30000,2.20000)--(2.40000,2.20000)--(2.40000,2.10000)--cycle;
\fill[black] (2.30000,2.90000)--(2.30000,3.00000)--(2.40000,3.00000)--(2.40000,2.90000)--cycle;
\fill[black] (2.30000,4.60000)--(2.30000,4.70000)--(2.40000,4.70000)--(2.40000,4.60000)--cycle;
\fill[black] (2.40000,3.40000)--(2.40000,3.50000)--(2.50000,3.50000)--(2.50000,3.40000)--cycle;
\fill[black] (2.40000,4.10000)--(2.40000,4.20000)--(2.50000,4.20000)--(2.50000,4.10000)--cycle;
\fill[black] (2.50000,0)--(2.50000,0.100000)--(2.60000,0.100000)--(2.60000,0)--cycle;
\fill[black] (2.50000,0.100000)--(2.50000,0.200000)--(2.60000,0.200000)--(2.60000,0.100000)--cycle;
\fill[black] (2.50000,2.10000)--(2.50000,2.20000)--(2.60000,2.20000)--(2.60000,2.10000)--cycle;
\fill[black] (2.50000,3.70000)--(2.50000,3.80000)--(2.60000,3.80000)--(2.60000,3.70000)--cycle;
\fill[black] (2.60000,0)--(2.60000,0.100000)--(2.70000,0.100000)--(2.70000,0)--cycle;
\fill[black] (2.60000,0.200000)--(2.60000,0.300000)--(2.70000,0.300000)--(2.70000,0.200000)--cycle;
\fill[black] (2.60000,1.60000)--(2.60000,1.70000)--(2.70000,1.70000)--(2.70000,1.60000)--cycle;
\fill[black] (2.70000,0.400000)--(2.70000,0.500000)--(2.80000,0.500000)--(2.80000,0.400000)--cycle;
\fill[black] (2.70000,0.800000)--(2.70000,0.900000)--(2.80000,0.900000)--(2.80000,0.800000)--cycle;
\fill[black] (2.70000,1.60000)--(2.70000,1.70000)--(2.80000,1.70000)--(2.80000,1.60000)--cycle;
\fill[black] (2.70000,1.70000)--(2.70000,1.80000)--(2.80000,1.80000)--(2.80000,1.70000)--cycle;
\fill[black] (2.80000,1.90000)--(2.80000,2.00000)--(2.90000,2.00000)--(2.90000,1.90000)--cycle;
\fill[black] (2.80000,3.70000)--(2.80000,3.80000)--(2.90000,3.80000)--(2.90000,3.70000)--cycle;
\fill[black] (2.90000,1.90000)--(2.90000,2.00000)--(3.00000,2.00000)--(3.00000,1.90000)--cycle;
\fill[black] (2.90000,2.30000)--(2.90000,2.40000)--(3.00000,2.40000)--(3.00000,2.30000)--cycle;
\fill[black] (2.90000,4.30000)--(2.90000,4.40000)--(3.00000,4.40000)--(3.00000,4.30000)--cycle;
\fill[black] (3.00000,0.600000)--(3.00000,0.700000)--(3.10000,0.700000)--(3.10000,0.600000)--cycle;
\fill[black] (3.10000,0.300000)--(3.10000,0.400000)--(3.20000,0.400000)--(3.20000,0.300000)--cycle;
\fill[black] (3.10000,0.700000)--(3.10000,0.800000)--(3.20000,0.800000)--(3.20000,0.700000)--cycle;
\fill[black] (3.10000,1.60000)--(3.10000,1.70000)--(3.20000,1.70000)--(3.20000,1.60000)--cycle;
\fill[black] (3.20000,0)--(3.20000,0.100000)--(3.30000,0.100000)--(3.30000,0)--cycle;
\fill[black] (3.30000,1.30000)--(3.30000,1.40000)--(3.40000,1.40000)--(3.40000,1.30000)--cycle;
\fill[black] (3.30000,1.70000)--(3.30000,1.80000)--(3.40000,1.80000)--(3.40000,1.70000)--cycle;
\fill[black] (3.30000,3.80000)--(3.30000,3.90000)--(3.40000,3.90000)--(3.40000,3.80000)--cycle;
\fill[black] (3.30000,4.50000)--(3.30000,4.60000)--(3.40000,4.60000)--(3.40000,4.50000)--cycle;
\fill[black] (3.40000,0.200000)--(3.40000,0.300000)--(3.50000,0.300000)--(3.50000,0.200000)--cycle;
\fill[black] (3.40000,0.600000)--(3.40000,0.700000)--(3.50000,0.700000)--(3.50000,0.600000)--cycle;
\fill[black] (3.40000,1.00000)--(3.40000,1.10000)--(3.50000,1.10000)--(3.50000,1.00000)--cycle;
\fill[black] (3.40000,2.40000)--(3.40000,2.50000)--(3.50000,2.50000)--(3.50000,2.40000)--cycle;
\fill[black] (3.50000,0)--(3.50000,0.100000)--(3.60000,0.100000)--(3.60000,0)--cycle;
\fill[black] (3.50000,0.100000)--(3.50000,0.200000)--(3.60000,0.200000)--(3.60000,0.100000)--cycle;
\fill[black] (3.50000,0.400000)--(3.50000,0.500000)--(3.60000,0.500000)--(3.60000,0.400000)--cycle;
\fill[black] (3.50000,3.70000)--(3.50000,3.80000)--(3.60000,3.80000)--(3.60000,3.70000)--cycle;
\fill[black] (3.50000,4.10000)--(3.50000,4.20000)--(3.60000,4.20000)--(3.60000,4.10000)--cycle;
\fill[black] (3.60000,0.300000)--(3.60000,0.400000)--(3.70000,0.400000)--(3.70000,0.300000)--cycle;
\fill[black] (3.60000,0.500000)--(3.60000,0.600000)--(3.70000,0.600000)--(3.70000,0.500000)--cycle;
\fill[black] (3.60000,0.900000)--(3.60000,1.00000)--(3.70000,1.00000)--(3.70000,0.900000)--cycle;
\fill[black] (3.60000,2.10000)--(3.60000,2.20000)--(3.70000,2.20000)--(3.70000,2.10000)--cycle;
\fill[black] (3.60000,3.90000)--(3.60000,4.00000)--(3.70000,4.00000)--(3.70000,3.90000)--cycle;
\fill[black] (3.70000,0)--(3.70000,0.100000)--(3.80000,0.100000)--(3.80000,0)--cycle;
\fill[black] (3.70000,2.00000)--(3.70000,2.10000)--(3.80000,2.10000)--(3.80000,2.00000)--cycle;
\fill[black] (3.70000,2.50000)--(3.70000,2.60000)--(3.80000,2.60000)--(3.80000,2.50000)--cycle;
\fill[black] (3.70000,2.80000)--(3.70000,2.90000)--(3.80000,2.90000)--(3.80000,2.80000)--cycle;
\fill[black] (3.70000,3.50000)--(3.70000,3.60000)--(3.80000,3.60000)--(3.80000,3.50000)--cycle;
\fill[black] (3.80000,0)--(3.80000,0.100000)--(3.90000,0.100000)--(3.90000,0)--cycle;
\fill[black] (3.80000,1.40000)--(3.80000,1.50000)--(3.90000,1.50000)--(3.90000,1.40000)--cycle;
\fill[black] (3.80000,3.30000)--(3.80000,3.40000)--(3.90000,3.40000)--(3.90000,3.30000)--cycle;
\fill[black] (3.90000,1.80000)--(3.90000,1.90000)--(4.00000,1.90000)--(4.00000,1.80000)--cycle;
\fill[black] (3.90000,3.60000)--(3.90000,3.70000)--(4.00000,3.70000)--(4.00000,3.60000)--cycle;
\fill[black] (3.90000,4.50000)--(3.90000,4.60000)--(4.00000,4.60000)--(4.00000,4.50000)--cycle;
\fill[black] (4.00000,0)--(4.00000,0.100000)--(4.10000,0.100000)--(4.10000,0)--cycle;
\fill[black] (4.00000,0.700000)--(4.00000,0.800000)--(4.10000,0.800000)--(4.10000,0.700000)--cycle;
\fill[black] (4.10000,0)--(4.10000,0.100000)--(4.20000,0.100000)--(4.20000,0)--cycle;
\fill[black] (4.10000,0.800000)--(4.10000,0.900000)--(4.20000,0.900000)--(4.20000,0.800000)--cycle;
\fill[black] (4.10000,0.900000)--(4.10000,1.00000)--(4.20000,1.00000)--(4.20000,0.900000)--cycle;
\fill[black] (4.10000,2.00000)--(4.10000,2.10000)--(4.20000,2.10000)--(4.20000,2.00000)--cycle;
\fill[black] (4.10000,2.40000)--(4.10000,2.50000)--(4.20000,2.50000)--(4.20000,2.40000)--cycle;
\fill[black] (4.10000,3.50000)--(4.10000,3.60000)--(4.20000,3.60000)--(4.20000,3.50000)--cycle;
\fill[black] (4.20000,0.200000)--(4.20000,0.300000)--(4.30000,0.300000)--(4.30000,0.200000)--cycle;
\fill[black] (4.30000,0)--(4.30000,0.100000)--(4.40000,0.100000)--(4.40000,0)--cycle;
\fill[black] (4.30000,2.10000)--(4.30000,2.20000)--(4.40000,2.20000)--(4.40000,2.10000)--cycle;
\fill[black] (4.30000,2.90000)--(4.30000,3.00000)--(4.40000,3.00000)--(4.40000,2.90000)--cycle;
\fill[black] (4.40000,1.40000)--(4.40000,1.50000)--(4.50000,1.50000)--(4.50000,1.40000)--cycle;
\fill[black] (4.50000,3.30000)--(4.50000,3.40000)--(4.60000,3.40000)--(4.60000,3.30000)--cycle;
\fill[black] (4.50000,3.90000)--(4.50000,4.00000)--(4.60000,4.00000)--(4.60000,3.90000)--cycle;
\fill[black] (4.60000,2.30000)--(4.60000,2.40000)--(4.70000,2.40000)--(4.70000,2.30000)--cycle;
\fill[black] (4.70000,0)--(4.70000,0.100000)--(4.80000,0.100000)--(4.80000,0)--cycle;
\fill[black] (4.70000,0.200000)--(4.70000,0.300000)--(4.80000,0.300000)--(4.80000,0.200000)--cycle;
\fill[black] (4.80000,0)--(4.80000,0.100000)--(4.90000,0.100000)--(4.90000,0)--cycle;
\fill[black] (4.80000,0.200000)--(4.80000,0.300000)--(4.90000,0.300000)--(4.90000,0.200000)--cycle;
\fill[black] (4.80000,0.600000)--(4.80000,0.700000)--(4.90000,0.700000)--(4.90000,0.600000)--cycle;
\fill[black] (4.80000,0.700000)--(4.80000,0.800000)--(4.90000,0.800000)--(4.90000,0.700000)--cycle;
\fill[black] (4.80000,0.800000)--(4.80000,0.900000)--(4.90000,0.900000)--(4.90000,0.800000)--cycle;
\fill[black] (4.80000,2.10000)--(4.80000,2.20000)--(4.90000,2.20000)--(4.90000,2.10000)--cycle;
\fill[black] (4.90000,1.20000)--(4.90000,1.30000)--(5.00000,1.30000)--(5.00000,1.20000)--cycle;
\fill[black] (4.90000,1.30000)--(4.90000,1.40000)--(5.00000,1.40000)--(5.00000,1.30000)--cycle;
\draw (2.50000,5.50000) node {rescaled graphon on probability space};
\draw (7,0)--(12.0000,0);
\draw (12.0000,0)--(12.0000,5.00000);
\draw (12.0000,5.00000)--(7,5.00000);
\draw (7,5.00000)--(7,0);
\fill[black] (7,0.188679)--(7,0.283019)--(7.09434,0.283019)--(7.09434,0.188679)--cycle;
\fill[black] (7,0.283019)--(7,0.377358)--(7.09434,0.377358)--(7.09434,0.283019)--cycle;
\fill[black] (7,0.377358)--(7,0.471698)--(7.09434,0.471698)--(7.09434,0.377358)--cycle;
\fill[black] (7,0.471698)--(7,0.566038)--(7.09434,0.566038)--(7.09434,0.471698)--cycle;
\fill[black] (7,0.566038)--(7,0.660377)--(7.09434,0.660377)--(7.09434,0.566038)--cycle;
\fill[black] (7,0.660377)--(7,0.754717)--(7.09434,0.754717)--(7.09434,0.660377)--cycle;
\fill[black] (7,0.754717)--(7,0.849057)--(7.09434,0.849057)--(7.09434,0.754717)--cycle;
\fill[black] (7,0.849057)--(7,0.943396)--(7.09434,0.943396)--(7.09434,0.849057)--cycle;
\fill[black] (7,0.943396)--(7,1.03774)--(7.09434,1.03774)--(7.09434,0.943396)--cycle;
\fill[black] (7,1.03774)--(7,1.13208)--(7.09434,1.13208)--(7.09434,1.03774)--cycle;
\fill[black] (7,1.13208)--(7,1.22642)--(7.09434,1.22642)--(7.09434,1.13208)--cycle;
\fill[black] (7,1.22642)--(7,1.32075)--(7.09434,1.32075)--(7.09434,1.22642)--cycle;
\fill[black] (7,1.32075)--(7,1.41509)--(7.09434,1.41509)--(7.09434,1.32075)--cycle;
\fill[black] (7,2.16981)--(7,2.26415)--(7.09434,2.26415)--(7.09434,2.16981)--cycle;
\fill[black] (7,2.64151)--(7,2.73585)--(7.09434,2.73585)--(7.09434,2.64151)--cycle;
\fill[black] (7,3.01887)--(7,3.11321)--(7.09434,3.11321)--(7.09434,3.01887)--cycle;
\fill[black] (7,4.43396)--(7,4.52830)--(7.09434,4.52830)--(7.09434,4.43396)--cycle;
\fill[black] (7,4.52830)--(7,4.62264)--(7.09434,4.62264)--(7.09434,4.52830)--cycle;
\fill[black] (7.09434,0.188679)--(7.09434,0.283019)--(7.18868,0.283019)--(7.18868,0.188679)--cycle;
\fill[black] (7.09434,0.283019)--(7.09434,0.377358)--(7.18868,0.377358)--(7.18868,0.283019)--cycle;
\fill[black] (7.09434,0.377358)--(7.09434,0.471698)--(7.18868,0.471698)--(7.18868,0.377358)--cycle;
\fill[black] (7.09434,0.660377)--(7.09434,0.754717)--(7.18868,0.754717)--(7.18868,0.660377)--cycle;
\fill[black] (7.09434,0.754717)--(7.09434,0.849057)--(7.18868,0.849057)--(7.18868,0.754717)--cycle;
\fill[black] (7.09434,0.849057)--(7.09434,0.943396)--(7.18868,0.943396)--(7.18868,0.849057)--cycle;
\fill[black] (7.09434,0.943396)--(7.09434,1.03774)--(7.18868,1.03774)--(7.18868,0.943396)--cycle;
\fill[black] (7.09434,1.13208)--(7.09434,1.22642)--(7.18868,1.22642)--(7.18868,1.13208)--cycle;
\fill[black] (7.09434,1.32075)--(7.09434,1.41509)--(7.18868,1.41509)--(7.18868,1.32075)--cycle;
\fill[black] (7.09434,1.41509)--(7.09434,1.50943)--(7.18868,1.50943)--(7.18868,1.41509)--cycle;
\fill[black] (7.09434,1.69811)--(7.09434,1.79245)--(7.18868,1.79245)--(7.18868,1.69811)--cycle;
\fill[black] (7.09434,1.88679)--(7.09434,1.98113)--(7.18868,1.98113)--(7.18868,1.88679)--cycle;
\fill[black] (7.09434,2.26415)--(7.09434,2.35849)--(7.18868,2.35849)--(7.18868,2.26415)--cycle;
\fill[black] (7.09434,2.92453)--(7.09434,3.01887)--(7.18868,3.01887)--(7.18868,2.92453)--cycle;
\fill[black] (7.09434,4.15094)--(7.09434,4.24528)--(7.18868,4.24528)--(7.18868,4.15094)--cycle;
\fill[black] (7.09434,4.33962)--(7.09434,4.43396)--(7.18868,4.43396)--(7.18868,4.33962)--cycle;
\fill[black] (7.09434,4.62264)--(7.09434,4.71698)--(7.18868,4.71698)--(7.18868,4.62264)--cycle;
\fill[black] (7.09434,4.71698)--(7.09434,4.81132)--(7.18868,4.81132)--(7.18868,4.71698)--cycle;
\fill[black] (7.18868,0)--(7.18868,0.0943396)--(7.28302,0.0943396)--(7.28302,0)--cycle;
\fill[black] (7.18868,0.0943396)--(7.18868,0.188679)--(7.28302,0.188679)--(7.28302,0.0943396)--cycle;
\fill[black] (7.18868,0.283019)--(7.18868,0.377358)--(7.28302,0.377358)--(7.28302,0.283019)--cycle;
\fill[black] (7.18868,0.471698)--(7.18868,0.566038)--(7.28302,0.566038)--(7.28302,0.471698)--cycle;
\fill[black] (7.18868,0.566038)--(7.18868,0.660377)--(7.28302,0.660377)--(7.28302,0.566038)--cycle;
\fill[black] (7.18868,0.660377)--(7.18868,0.754717)--(7.28302,0.754717)--(7.28302,0.660377)--cycle;
\fill[black] (7.18868,0.943396)--(7.18868,1.03774)--(7.28302,1.03774)--(7.28302,0.943396)--cycle;
\fill[black] (7.18868,1.03774)--(7.18868,1.13208)--(7.28302,1.13208)--(7.28302,1.03774)--cycle;
\fill[black] (7.18868,1.32075)--(7.18868,1.41509)--(7.28302,1.41509)--(7.28302,1.32075)--cycle;
\fill[black] (7.18868,1.41509)--(7.18868,1.50943)--(7.28302,1.50943)--(7.28302,1.41509)--cycle;
\fill[black] (7.18868,1.60377)--(7.18868,1.69811)--(7.28302,1.69811)--(7.28302,1.60377)--cycle;
\fill[black] (7.18868,1.69811)--(7.18868,1.79245)--(7.28302,1.79245)--(7.28302,1.69811)--cycle;
\fill[black] (7.18868,2.07547)--(7.18868,2.16981)--(7.28302,2.16981)--(7.28302,2.07547)--cycle;
\fill[black] (7.18868,2.54717)--(7.18868,2.64151)--(7.28302,2.64151)--(7.28302,2.54717)--cycle;
\fill[black] (7.18868,2.73585)--(7.18868,2.83019)--(7.28302,2.83019)--(7.28302,2.73585)--cycle;
\fill[black] (7.18868,3.77358)--(7.18868,3.86792)--(7.28302,3.86792)--(7.28302,3.77358)--cycle;
\fill[black] (7.18868,3.96226)--(7.18868,4.05660)--(7.28302,4.05660)--(7.28302,3.96226)--cycle;
\fill[black] (7.18868,4.05660)--(7.18868,4.15094)--(7.28302,4.15094)--(7.28302,4.05660)--cycle;
\fill[black] (7.28302,0)--(7.28302,0.0943396)--(7.37736,0.0943396)--(7.37736,0)--cycle;
\fill[black] (7.28302,0.0943396)--(7.28302,0.188679)--(7.37736,0.188679)--(7.37736,0.0943396)--cycle;
\fill[black] (7.28302,0.188679)--(7.28302,0.283019)--(7.37736,0.283019)--(7.37736,0.188679)--cycle;
\fill[black] (7.28302,0.377358)--(7.28302,0.471698)--(7.37736,0.471698)--(7.37736,0.377358)--cycle;
\fill[black] (7.28302,0.471698)--(7.28302,0.566038)--(7.37736,0.566038)--(7.37736,0.471698)--cycle;
\fill[black] (7.28302,0.566038)--(7.28302,0.660377)--(7.37736,0.660377)--(7.37736,0.566038)--cycle;
\fill[black] (7.28302,0.660377)--(7.28302,0.754717)--(7.37736,0.754717)--(7.37736,0.660377)--cycle;
\fill[black] (7.28302,0.754717)--(7.28302,0.849057)--(7.37736,0.849057)--(7.37736,0.754717)--cycle;
\fill[black] (7.28302,0.849057)--(7.28302,0.943396)--(7.37736,0.943396)--(7.37736,0.849057)--cycle;
\fill[black] (7.28302,0.943396)--(7.28302,1.03774)--(7.37736,1.03774)--(7.37736,0.943396)--cycle;
\fill[black] (7.28302,1.03774)--(7.28302,1.13208)--(7.37736,1.13208)--(7.37736,1.03774)--cycle;
\fill[black] (7.28302,1.22642)--(7.28302,1.32075)--(7.37736,1.32075)--(7.37736,1.22642)--cycle;
\fill[black] (7.28302,1.32075)--(7.28302,1.41509)--(7.37736,1.41509)--(7.37736,1.32075)--cycle;
\fill[black] (7.28302,2.07547)--(7.28302,2.16981)--(7.37736,2.16981)--(7.37736,2.07547)--cycle;
\fill[black] (7.28302,2.16981)--(7.28302,2.26415)--(7.37736,2.26415)--(7.37736,2.16981)--cycle;
\fill[black] (7.28302,2.54717)--(7.28302,2.64151)--(7.37736,2.64151)--(7.37736,2.54717)--cycle;
\fill[black] (7.28302,3.11321)--(7.28302,3.20755)--(7.37736,3.20755)--(7.37736,3.11321)--cycle;
\fill[black] (7.28302,3.39623)--(7.28302,3.49057)--(7.37736,3.49057)--(7.37736,3.39623)--cycle;
\fill[black] (7.28302,4.24528)--(7.28302,4.33962)--(7.37736,4.33962)--(7.37736,4.24528)--cycle;
\fill[black] (7.28302,4.90566)--(7.28302,5.00000)--(7.37736,5.00000)--(7.37736,4.90566)--cycle;
\fill[black] (7.37736,0)--(7.37736,0.0943396)--(7.47170,0.0943396)--(7.47170,0)--cycle;
\fill[black] (7.37736,0.0943396)--(7.37736,0.188679)--(7.47170,0.188679)--(7.47170,0.0943396)--cycle;
\fill[black] (7.37736,0.283019)--(7.37736,0.377358)--(7.47170,0.377358)--(7.47170,0.283019)--cycle;
\fill[black] (7.37736,0.471698)--(7.37736,0.566038)--(7.47170,0.566038)--(7.47170,0.471698)--cycle;
\fill[black] (7.37736,0.566038)--(7.37736,0.660377)--(7.47170,0.660377)--(7.47170,0.566038)--cycle;
\fill[black] (7.37736,0.660377)--(7.37736,0.754717)--(7.47170,0.754717)--(7.47170,0.660377)--cycle;
\fill[black] (7.37736,0.754717)--(7.37736,0.849057)--(7.47170,0.849057)--(7.47170,0.754717)--cycle;
\fill[black] (7.37736,0.849057)--(7.37736,0.943396)--(7.47170,0.943396)--(7.47170,0.849057)--cycle;
\fill[black] (7.37736,0.943396)--(7.37736,1.03774)--(7.47170,1.03774)--(7.47170,0.943396)--cycle;
\fill[black] (7.37736,1.32075)--(7.37736,1.41509)--(7.47170,1.41509)--(7.47170,1.32075)--cycle;
\fill[black] (7.37736,1.41509)--(7.37736,1.50943)--(7.47170,1.50943)--(7.47170,1.41509)--cycle;
\fill[black] (7.37736,1.50943)--(7.37736,1.60377)--(7.47170,1.60377)--(7.47170,1.50943)--cycle;
\fill[black] (7.37736,1.60377)--(7.37736,1.69811)--(7.47170,1.69811)--(7.47170,1.60377)--cycle;
\fill[black] (7.37736,1.88679)--(7.37736,1.98113)--(7.47170,1.98113)--(7.47170,1.88679)--cycle;
\fill[black] (7.37736,1.98113)--(7.37736,2.07547)--(7.47170,2.07547)--(7.47170,1.98113)--cycle;
\fill[black] (7.37736,2.07547)--(7.37736,2.16981)--(7.47170,2.16981)--(7.47170,2.07547)--cycle;
\fill[black] (7.37736,2.45283)--(7.37736,2.54717)--(7.47170,2.54717)--(7.47170,2.45283)--cycle;
\fill[black] (7.37736,2.83019)--(7.37736,2.92453)--(7.47170,2.92453)--(7.47170,2.83019)--cycle;
\fill[black] (7.37736,3.30189)--(7.37736,3.39623)--(7.47170,3.39623)--(7.47170,3.30189)--cycle;
\fill[black] (7.37736,4.81132)--(7.37736,4.90566)--(7.47170,4.90566)--(7.47170,4.81132)--cycle;
\fill[black] (7.47170,0)--(7.47170,0.0943396)--(7.56604,0.0943396)--(7.56604,0)--cycle;
\fill[black] (7.47170,0.188679)--(7.47170,0.283019)--(7.56604,0.283019)--(7.56604,0.188679)--cycle;
\fill[black] (7.47170,0.283019)--(7.47170,0.377358)--(7.56604,0.377358)--(7.56604,0.283019)--cycle;
\fill[black] (7.47170,0.377358)--(7.47170,0.471698)--(7.56604,0.471698)--(7.56604,0.377358)--cycle;
\fill[black] (7.47170,0.660377)--(7.47170,0.754717)--(7.56604,0.754717)--(7.56604,0.660377)--cycle;
\fill[black] (7.47170,3.01887)--(7.47170,3.11321)--(7.56604,3.11321)--(7.56604,3.01887)--cycle;
\fill[black] (7.56604,0)--(7.56604,0.0943396)--(7.66038,0.0943396)--(7.66038,0)--cycle;
\fill[black] (7.56604,0.188679)--(7.56604,0.283019)--(7.66038,0.283019)--(7.66038,0.188679)--cycle;
\fill[black] (7.56604,0.283019)--(7.56604,0.377358)--(7.66038,0.377358)--(7.66038,0.283019)--cycle;
\fill[black] (7.56604,0.377358)--(7.56604,0.471698)--(7.66038,0.471698)--(7.66038,0.377358)--cycle;
\fill[black] (7.56604,0.849057)--(7.56604,0.943396)--(7.66038,0.943396)--(7.66038,0.849057)--cycle;
\fill[black] (7.56604,1.79245)--(7.56604,1.88679)--(7.66038,1.88679)--(7.66038,1.79245)--cycle;
\fill[black] (7.56604,3.11321)--(7.56604,3.20755)--(7.66038,3.20755)--(7.66038,3.11321)--cycle;
\fill[black] (7.56604,3.20755)--(7.56604,3.30189)--(7.66038,3.30189)--(7.66038,3.20755)--cycle;
\fill[black] (7.56604,3.86792)--(7.56604,3.96226)--(7.66038,3.96226)--(7.66038,3.86792)--cycle;
\fill[black] (7.66038,0)--(7.66038,0.0943396)--(7.75472,0.0943396)--(7.75472,0)--cycle;
\fill[black] (7.66038,0.0943396)--(7.66038,0.188679)--(7.75472,0.188679)--(7.75472,0.0943396)--cycle;
\fill[black] (7.66038,0.188679)--(7.66038,0.283019)--(7.75472,0.283019)--(7.75472,0.188679)--cycle;
\fill[black] (7.66038,0.283019)--(7.66038,0.377358)--(7.75472,0.377358)--(7.75472,0.283019)--cycle;
\fill[black] (7.66038,0.377358)--(7.66038,0.471698)--(7.75472,0.471698)--(7.75472,0.377358)--cycle;
\fill[black] (7.66038,0.471698)--(7.66038,0.566038)--(7.75472,0.566038)--(7.75472,0.471698)--cycle;
\fill[black] (7.66038,1.69811)--(7.66038,1.79245)--(7.75472,1.79245)--(7.75472,1.69811)--cycle;
\fill[black] (7.75472,0)--(7.75472,0.0943396)--(7.84906,0.0943396)--(7.84906,0)--cycle;
\fill[black] (7.75472,0.0943396)--(7.75472,0.188679)--(7.84906,0.188679)--(7.84906,0.0943396)--cycle;
\fill[black] (7.75472,0.283019)--(7.75472,0.377358)--(7.84906,0.377358)--(7.84906,0.283019)--cycle;
\fill[black] (7.75472,0.377358)--(7.75472,0.471698)--(7.84906,0.471698)--(7.84906,0.377358)--cycle;
\fill[black] (7.75472,1.88679)--(7.75472,1.98113)--(7.84906,1.98113)--(7.84906,1.88679)--cycle;
\fill[black] (7.75472,2.45283)--(7.75472,2.54717)--(7.84906,2.54717)--(7.84906,2.45283)--cycle;
\fill[black] (7.75472,2.54717)--(7.75472,2.64151)--(7.84906,2.64151)--(7.84906,2.54717)--cycle;
\fill[black] (7.84906,0)--(7.84906,0.0943396)--(7.94340,0.0943396)--(7.94340,0)--cycle;
\fill[black] (7.84906,0.0943396)--(7.84906,0.188679)--(7.94340,0.188679)--(7.94340,0.0943396)--cycle;
\fill[black] (7.84906,0.283019)--(7.84906,0.377358)--(7.94340,0.377358)--(7.94340,0.283019)--cycle;
\fill[black] (7.84906,0.377358)--(7.84906,0.471698)--(7.94340,0.471698)--(7.94340,0.377358)--cycle;
\fill[black] (7.84906,0.566038)--(7.84906,0.660377)--(7.94340,0.660377)--(7.94340,0.566038)--cycle;
\fill[black] (7.84906,1.03774)--(7.84906,1.13208)--(7.94340,1.13208)--(7.94340,1.03774)--cycle;
\fill[black] (7.94340,0)--(7.94340,0.0943396)--(8.03774,0.0943396)--(8.03774,0)--cycle;
\fill[black] (7.94340,0.0943396)--(7.94340,0.188679)--(8.03774,0.188679)--(8.03774,0.0943396)--cycle;
\fill[black] (7.94340,0.188679)--(7.94340,0.283019)--(8.03774,0.283019)--(8.03774,0.188679)--cycle;
\fill[black] (7.94340,0.283019)--(7.94340,0.377358)--(8.03774,0.377358)--(8.03774,0.283019)--cycle;
\fill[black] (7.94340,0.377358)--(7.94340,0.471698)--(8.03774,0.471698)--(8.03774,0.377358)--cycle;
\fill[black] (7.94340,3.67925)--(7.94340,3.77358)--(8.03774,3.77358)--(8.03774,3.67925)--cycle;
\fill[black] (8.03774,0)--(8.03774,0.0943396)--(8.13208,0.0943396)--(8.13208,0)--cycle;
\fill[black] (8.03774,0.188679)--(8.03774,0.283019)--(8.13208,0.283019)--(8.13208,0.188679)--cycle;
\fill[black] (8.03774,0.283019)--(8.03774,0.377358)--(8.13208,0.377358)--(8.13208,0.283019)--cycle;
\fill[black] (8.03774,0.849057)--(8.03774,0.943396)--(8.13208,0.943396)--(8.13208,0.849057)--cycle;
\fill[black] (8.13208,0)--(8.13208,0.0943396)--(8.22642,0.0943396)--(8.22642,0)--cycle;
\fill[black] (8.13208,0.0943396)--(8.13208,0.188679)--(8.22642,0.188679)--(8.22642,0.0943396)--cycle;
\fill[black] (8.13208,2.64151)--(8.13208,2.73585)--(8.22642,2.73585)--(8.22642,2.64151)--cycle;
\fill[black] (8.22642,0)--(8.22642,0.0943396)--(8.32075,0.0943396)--(8.32075,0)--cycle;
\fill[black] (8.22642,0.283019)--(8.22642,0.377358)--(8.32075,0.377358)--(8.32075,0.283019)--cycle;
\fill[black] (8.32075,0)--(8.32075,0.0943396)--(8.41509,0.0943396)--(8.41509,0)--cycle;
\fill[black] (8.32075,0.0943396)--(8.32075,0.188679)--(8.41509,0.188679)--(8.41509,0.0943396)--cycle;
\fill[black] (8.32075,0.188679)--(8.32075,0.283019)--(8.41509,0.283019)--(8.41509,0.188679)--cycle;
\fill[black] (8.32075,0.283019)--(8.32075,0.377358)--(8.41509,0.377358)--(8.41509,0.283019)--cycle;
\fill[black] (8.32075,0.377358)--(8.32075,0.471698)--(8.41509,0.471698)--(8.41509,0.377358)--cycle;
\fill[black] (8.32075,2.35849)--(8.32075,2.45283)--(8.41509,2.45283)--(8.41509,2.35849)--cycle;
\fill[black] (8.32075,2.45283)--(8.32075,2.54717)--(8.41509,2.54717)--(8.41509,2.45283)--cycle;
\fill[black] (8.41509,0.0943396)--(8.41509,0.188679)--(8.50943,0.188679)--(8.50943,0.0943396)--cycle;
\fill[black] (8.41509,0.188679)--(8.41509,0.283019)--(8.50943,0.283019)--(8.50943,0.188679)--cycle;
\fill[black] (8.41509,0.377358)--(8.41509,0.471698)--(8.50943,0.471698)--(8.50943,0.377358)--cycle;
\fill[black] (8.41509,3.58491)--(8.41509,3.67925)--(8.50943,3.67925)--(8.50943,3.58491)--cycle;
\fill[black] (8.50943,0.377358)--(8.50943,0.471698)--(8.60377,0.471698)--(8.60377,0.377358)--cycle;
\fill[black] (8.60377,0.188679)--(8.60377,0.283019)--(8.69811,0.283019)--(8.69811,0.188679)--cycle;
\fill[black] (8.60377,0.377358)--(8.60377,0.471698)--(8.69811,0.471698)--(8.69811,0.377358)--cycle;
\fill[black] (8.69811,0.0943396)--(8.69811,0.188679)--(8.79245,0.188679)--(8.79245,0.0943396)--cycle;
\fill[black] (8.69811,0.188679)--(8.69811,0.283019)--(8.79245,0.283019)--(8.79245,0.188679)--cycle;
\fill[black] (8.69811,0.660377)--(8.69811,0.754717)--(8.79245,0.754717)--(8.79245,0.660377)--cycle;
\fill[black] (8.69811,3.49057)--(8.69811,3.58491)--(8.79245,3.58491)--(8.79245,3.49057)--cycle;
\fill[black] (8.79245,0.566038)--(8.79245,0.660377)--(8.88679,0.660377)--(8.88679,0.566038)--cycle;
\fill[black] (8.88679,0.0943396)--(8.88679,0.188679)--(8.98113,0.188679)--(8.98113,0.0943396)--cycle;
\fill[black] (8.88679,0.377358)--(8.88679,0.471698)--(8.98113,0.471698)--(8.98113,0.377358)--cycle;
\fill[black] (8.88679,0.754717)--(8.88679,0.849057)--(8.98113,0.849057)--(8.98113,0.754717)--cycle;
\fill[black] (8.98113,0.377358)--(8.98113,0.471698)--(9.07547,0.471698)--(9.07547,0.377358)--cycle;
\fill[black] (8.98113,3.20755)--(8.98113,3.30189)--(9.07547,3.30189)--(9.07547,3.20755)--cycle;
\fill[black] (9.07547,0.188679)--(9.07547,0.283019)--(9.16981,0.283019)--(9.16981,0.188679)--cycle;
\fill[black] (9.07547,0.283019)--(9.07547,0.377358)--(9.16981,0.377358)--(9.16981,0.283019)--cycle;
\fill[black] (9.07547,0.377358)--(9.07547,0.471698)--(9.16981,0.471698)--(9.16981,0.377358)--cycle;
\fill[black] (9.16981,0)--(9.16981,0.0943396)--(9.26415,0.0943396)--(9.26415,0)--cycle;
\fill[black] (9.16981,0.283019)--(9.16981,0.377358)--(9.26415,0.377358)--(9.26415,0.283019)--cycle;
\fill[black] (9.26415,0.0943396)--(9.26415,0.188679)--(9.35849,0.188679)--(9.35849,0.0943396)--cycle;
\fill[black] (9.35849,1.32075)--(9.35849,1.41509)--(9.45283,1.41509)--(9.45283,1.32075)--cycle;
\fill[black] (9.45283,0.377358)--(9.45283,0.471698)--(9.54717,0.471698)--(9.54717,0.377358)--cycle;
\fill[black] (9.45283,0.754717)--(9.45283,0.849057)--(9.54717,0.849057)--(9.54717,0.754717)--cycle;
\fill[black] (9.45283,1.32075)--(9.45283,1.41509)--(9.54717,1.41509)--(9.54717,1.32075)--cycle;
\fill[black] (9.54717,0.188679)--(9.54717,0.283019)--(9.64151,0.283019)--(9.64151,0.188679)--cycle;
\fill[black] (9.54717,0.283019)--(9.54717,0.377358)--(9.64151,0.377358)--(9.64151,0.283019)--cycle;
\fill[black] (9.54717,0.754717)--(9.54717,0.849057)--(9.64151,0.849057)--(9.64151,0.754717)--cycle;
\fill[black] (9.64151,0)--(9.64151,0.0943396)--(9.73585,0.0943396)--(9.73585,0)--cycle;
\fill[black] (9.64151,1.13208)--(9.64151,1.22642)--(9.73585,1.22642)--(9.73585,1.13208)--cycle;
\fill[black] (9.73585,0.188679)--(9.73585,0.283019)--(9.83019,0.283019)--(9.83019,0.188679)--cycle;
\fill[black] (9.83019,0.377358)--(9.83019,0.471698)--(9.92453,0.471698)--(9.92453,0.377358)--cycle;
\fill[black] (9.92453,0.0943396)--(9.92453,0.188679)--(10.0189,0.188679)--(10.0189,0.0943396)--cycle;
\fill[black] (10.0189,0)--(10.0189,0.0943396)--(10.1132,0.0943396)--(10.1132,0)--cycle;
\fill[black] (10.0189,0.471698)--(10.0189,0.566038)--(10.1132,0.566038)--(10.1132,0.471698)--cycle;
\fill[black] (10.1132,0.283019)--(10.1132,0.377358)--(10.2075,0.377358)--(10.2075,0.283019)--cycle;
\fill[black] (10.1132,0.566038)--(10.1132,0.660377)--(10.2075,0.660377)--(10.2075,0.566038)--cycle;
\fill[black] (10.2075,0.566038)--(10.2075,0.660377)--(10.3019,0.660377)--(10.3019,0.566038)--cycle;
\fill[black] (10.2075,1.98113)--(10.2075,2.07547)--(10.3019,2.07547)--(10.3019,1.98113)--cycle;
\fill[black] (10.3019,0.377358)--(10.3019,0.471698)--(10.3962,0.471698)--(10.3962,0.377358)--cycle;
\fill[black] (10.3962,0.283019)--(10.3962,0.377358)--(10.4906,0.377358)--(10.4906,0.283019)--cycle;
\fill[black] (10.4906,1.69811)--(10.4906,1.79245)--(10.5849,1.79245)--(10.5849,1.69811)--cycle;
\fill[black] (10.5849,1.41509)--(10.5849,1.50943)--(10.6792,1.50943)--(10.6792,1.41509)--cycle;
\fill[black] (10.6792,0.943396)--(10.6792,1.03774)--(10.7736,1.03774)--(10.7736,0.943396)--cycle;
\fill[black] (10.7736,0.188679)--(10.7736,0.283019)--(10.8679,0.283019)--(10.8679,0.188679)--cycle;
\fill[black] (10.8679,0.566038)--(10.8679,0.660377)--(10.9623,0.660377)--(10.9623,0.566038)--cycle;
\fill[black] (10.9623,0.188679)--(10.9623,0.283019)--(11.0566,0.283019)--(11.0566,0.188679)--cycle;
\fill[black] (11.0566,0.188679)--(11.0566,0.283019)--(11.1509,0.283019)--(11.1509,0.188679)--cycle;
\fill[black] (11.1509,0.0943396)--(11.1509,0.188679)--(11.2453,0.188679)--(11.2453,0.0943396)--cycle;
\fill[black] (11.2453,0.283019)--(11.2453,0.377358)--(11.3396,0.377358)--(11.3396,0.283019)--cycle;
\fill[black] (11.3396,0.0943396)--(11.3396,0.188679)--(11.4340,0.188679)--(11.4340,0.0943396)--cycle;
\fill[black] (11.4340,0)--(11.4340,0.0943396)--(11.5283,0.0943396)--(11.5283,0)--cycle;
\fill[black] (11.5283,0)--(11.5283,0.0943396)--(11.6226,0.0943396)--(11.6226,0)--cycle;
\fill[black] (11.6226,0.0943396)--(11.6226,0.188679)--(11.7170,0.188679)--(11.7170,0.0943396)--cycle;
\fill[black] (11.7170,0.0943396)--(11.7170,0.188679)--(11.8113,0.188679)--(11.8113,0.0943396)--cycle;
\fill[black] (11.8113,0.377358)--(11.8113,0.471698)--(11.9057,0.471698)--(11.9057,0.377358)--cycle;
\fill[black] (11.9057,0.283019)--(11.9057,0.377358)--(12.0000,0.377358)--(12.0000,0.283019)--cycle;
\draw (9.50000,5.50000) node {graphon with non-compact support};
\end{tikzpicture}
\end{center}
\caption{The adjacency matrices of graphs sampled as described by
\citet*{lp1} (left) and in this paper (right), where we used the
graphon $\mcl W_1=(W_1,[0,1])$ (left) and the graphon $\mcl W_2=(W_2,\R_+)$ (right),
with $W_1(x_1,x_2)=x_1^{-1/2}x_2^{-1/2}$ for $x_1,x_2\in[0,1]$ and
$W_2(x_1,x_2)=\min(0.8,7\min(1,x_1^{-2})\min(1,x_2^{-2}))$ for $x_1,x_2\in\R_+$.  Black
(resp.\ white) indicates that there is (resp.\ is not) an edge.
We rescaled the height of the graphon by $\rho:=1/40$ on the left figure. As
described by \citet*{lp1,lp2} the type of each vertex is sampled independently
and uniformly from $[0,1]$, and each pair of vertices is connected with
probability $\min(\rho W_1,1)$. In the right figure the vertices were
sampled by a Poisson point process on $\R_+$ of intensity $t=4$, and two
vertices were connected independently with a probability given by $W_2$; see
Section~\ref{sec:W-random-results} and the main text of this introduction.
The two graphs have very different qualitative properties. In the left graph
most vertices have a degree close to the average degree, where the average
degree depends on our scaling factor $\rho$. In the right graph the edges
are distributed more inhomogeneously: most of the edges are contained in
induced subgraphs of constant density, and the sparsity is caused by a large
number of vertices with very low degree. \label{fig3} }
\end{figure}

To compare different models, and to discuss notions of convergence, we
introduce the following natural generalization of the cut metric for graphons
on probability spaces to our setting. For two graphons $\W_1=(W_1,\scr S_1)$
and $\W_2=(W_2,\scr S_2)$, this metric is easiest to define when the two
graphons are defined over the same space. However, for applications we want
to compare graphons over different spaces, say two Borel spaces $\scr S_1$
and $\scr S_2$. Assuming that both Borel spaces have infinite total measure,
the cut distance between $\W_1$ and $\W_2$ can then be defined as
\begin{equation}
\label{def:delta-Borel}
\delta_\square(\mcl W_1,\mcl W_2)=
\inf_{\psi_1,\psi_2}
\sup_{U,V\subseteq \R_+^2}
\left|\int_{U\times V} \big(W_1^{\psi_1}-W_2^{\psi_2}\big)\,d\mu \,d\mu\right|,
\end{equation}
where we take the infimum over measure-preserving maps $\psi_j\colon \R_+\to
S_j$ for $j=1,2$, $W_j^{\psi_j}(x,y):=W_j(\psi_j(x),\psi_j(y))$ for $x,y \in
\R_+$, and the supremum is over measurable sets $U,V\subseteq \R_+^2$. (See
Definition~\ref{defn1} below for the definition of the cut distance for
graphons over general spaces, including the case where one or both spaces
have finite total mass.)  We call two graphons \emph{equivalent} if they have
cut distance zero. As we will see, two graphons are equivalent if and only if
the random families $(G_t)_{t\geq 0}$ generated from these graphons have the
same distribution; see Theorem~\ref{prop11} below.

To compare graphs and graphons, we embed a graph on $n$ vertices into the set
of step functions over $[0,1]^2$ in the usual way by decomposing $[0,1]$ into
adjacent intervals $I_1,\dots,I_n$ of lengths $1/n$, and define a step
function $W^G$ as the function which is equal to $1$ on $I_i\times I_j$ if
$i$ and $j$ are connected in $G$, and equal to $0$ otherwise. Extending $W^G$
to a function on $\R_+^2$ by setting it to zero outside of $[0,1]^2$, we can
then compare graphs to graphons on measure spaces of infinite mass, and in
particular we get a notion of convergence in metric of a sequence of graphs
$(G_n)_{n\in\N}$ to a graphon $\W$.

In the classical theory of graph convergence, such a sequence will  converge
to the zero graphon whenever the sequence
is sparse.%
\footnote{Here, as usual, a sequence of simple graphs is considered sparse if
the number of edges divided by the square of the number of vertices goes to
zero.} We resolve this difficulty by rescaling the input arguments of the
step function $W^G$ so as to get a ``stretched graphon''
$\W^{G,s}=(W^{G,s},\R_+)$ satisfying $\|W^{G,s}\|_1=1$. Equivalently, we may
interpret $\W^{G,s}$ as a graphon where the measure of the underlying measure
space is rescaled. See Figure~\ref{fig2} for an illustration, which also
compares the rescaling in the current paper with the rescaling considered by
\citet*{lp1}. We say that $(G_n)_{n\in\N}$ converges to a graphon $\W$ (with
$L^1$ norm equal to 1) for the stretched cut metric if
$\lim_{n\rta\infty}\delta_\square(\W^{G_n,s},\W)=0$. Graphons on
$\sigma$-finite measure spaces of infinite total measure may therefore be
considered as limiting objects for sequences of sparse graphs, similarly as
graphons on probability spaces are considered limits of dense graphs. We
prove that graphon processes converge to the generating graphon in the
stretched cut metric; see Proposition~\ref{prop4} in
Section~\ref{sec:W-random-results}. We will also consider another family of
random sparse graphs associated with a graphon $\W$ over a $\sigma$-finite
measure space, and prove that these graphs are also converging for the
stretched cut metric.

\begin{figure}
\begin{center}
\begin{tikzpicture}[line width = 0.2pt]
\draw[gray] (-1.2500,-0.50000)--(-0.75000,-1.3750);
\draw[gray] (0.50000,-0.87500)--(-0.75000,-1.3750);
\draw[->] (0,0)--(0,0.93750);
\draw[->] (0,0)--(-2.6875,-1.0750);
\draw[->] (0,0)--(1.0750,-1.8813);
\draw (0,0.25000) node[right] {\tiny 1};
\draw (0.50000,-0.87500) node[right] {\tiny 1};
\draw (-1.2500,-0.50000) node[above left] {\tiny 1};
\fill (0,0) circle [radius=0.02];
\fill (0,0.25000) circle [radius=0.02];
\fill (0,0.50000) circle [radius=0.02];
\fill (0,0.75000) circle [radius=0.02];
\fill (-1.2500,-0.50000) circle [radius=0.02];
\fill (-2.5000,-1.0000) circle [radius=0.02];
\fill (0.50000,-0.87500) circle [radius=0.02];
\fill (1.0000,-1.7500) circle [radius=0.02];
\fill[yellow] (-0.25000,0.15000)--(-0.75000,-0.050000)--(-0.65000,-0.22500)--(-0.15000,-0.025000)--cycle;
\draw (-0.25000,0.15000)--(-0.75000,-0.050000)--(-0.65000,-0.22500)--(-0.15000,-0.025000)--cycle;
\fill[yellow] (-0.75000,-0.30000)--(-0.75000,-0.050000)--(-0.65000,-0.22500)--(-0.65000,-0.47500)--cycle;
\draw (-0.75000,-0.30000)--(-0.75000,-0.050000)--(-0.65000,-0.22500)--(-0.65000,-0.47500)--cycle;
\fill[yellow] (-0.15000,-0.27500)--(-0.15000,-0.025000)--(-0.65000,-0.22500)--(-0.65000,-0.47500)--cycle;
\draw (-0.15000,-0.27500)--(-0.15000,-0.025000)--(-0.65000,-0.22500)--(-0.65000,-0.47500)--cycle;
\fill[yellow] (0.10000,0.075000)--(-0.15000,-0.025000)--(0.050000,-0.37500)--(0.30000,-0.27500)--cycle;
\draw (0.10000,0.075000)--(-0.15000,-0.025000)--(0.050000,-0.37500)--(0.30000,-0.27500)--cycle;
\fill[yellow] (-0.15000,-0.27500)--(-0.15000,-0.025000)--(0.050000,-0.37500)--(0.050000,-0.62500)--cycle;
\draw (-0.15000,-0.27500)--(-0.15000,-0.025000)--(0.050000,-0.37500)--(0.050000,-0.62500)--cycle;
\fill[yellow] (0.30000,-0.52500)--(0.30000,-0.27500)--(0.050000,-0.37500)--(0.050000,-0.62500)--cycle;
\draw (0.30000,-0.52500)--(0.30000,-0.27500)--(0.050000,-0.37500)--(0.050000,-0.62500)--cycle;
\fill[yellow] (-0.65000,-0.22500)--(-1.1500,-0.42500)--(-1.0500,-0.60000)--(-0.55000,-0.40000)--cycle;
\draw (-0.65000,-0.22500)--(-1.1500,-0.42500)--(-1.0500,-0.60000)--(-0.55000,-0.40000)--cycle;
\fill[yellow] (-1.1500,-0.67500)--(-1.1500,-0.42500)--(-1.0500,-0.60000)--(-1.0500,-0.85000)--cycle;
\draw (-1.1500,-0.67500)--(-1.1500,-0.42500)--(-1.0500,-0.60000)--(-1.0500,-0.85000)--cycle;
\fill[yellow] (-0.55000,-0.65000)--(-0.55000,-0.40000)--(-1.0500,-0.60000)--(-1.0500,-0.85000)--cycle;
\draw (-0.55000,-0.65000)--(-0.55000,-0.40000)--(-1.0500,-0.60000)--(-1.0500,-0.85000)--cycle;
\fill[yellow] (0.050000,-0.37500)--(-0.20000,-0.47500)--(0,-0.82500)--(0.25000,-0.72500)--cycle;
\draw (0.050000,-0.37500)--(-0.20000,-0.47500)--(0,-0.82500)--(0.25000,-0.72500)--cycle;
\fill[yellow] (-0.20000,-0.72500)--(-0.20000,-0.47500)--(0,-0.82500)--(0,-1.0750)--cycle;
\draw (-0.20000,-0.72500)--(-0.20000,-0.47500)--(0,-0.82500)--(0,-1.0750)--cycle;
\fill[yellow] (0.25000,-0.97500)--(0.25000,-0.72500)--(0,-0.82500)--(0,-1.0750)--cycle;
\draw (0.25000,-0.97500)--(0.25000,-0.72500)--(0,-0.82500)--(0,-1.0750)--cycle;
\draw (-0.80625,1.4906) node {canonical graphon};
\draw[gray] (3.0125,-0.50000)--(3.5125,-1.3750);
\draw[gray] (4.7625,-0.87500)--(3.5125,-1.3750);
\draw[->] (4.2625,0)--(4.2625,0.93750);
\draw[->] (4.2625,0)--(1.5750,-1.0750);
\draw[->] (4.2625,0)--(5.3375,-1.8813);
\draw (4.7625,-0.87500) node[right] {\tiny 1};
\draw (3.0125,-0.50000) node[above left] {\tiny 1};
\fill (4.2625,0) circle [radius=0.02];
\fill (4.2625,0.25000) circle [radius=0.02];
\fill (4.2625,0.50000) circle [radius=0.02];
\fill (4.2625,0.75000) circle [radius=0.02];
\fill (3.0125,-0.50000) circle [radius=0.02];
\fill (1.7625,-1.0000) circle [radius=0.02];
\fill (4.7625,-0.87500) circle [radius=0.02];
\fill (5.2625,-1.7500) circle [radius=0.02];
\fill[yellow] (4.0125,0.68125)--(3.5125,0.48125)--(3.6125,0.30625)--(4.1125,0.50625)--cycle;
\draw (4.0125,0.68125)--(3.5125,0.48125)--(3.6125,0.30625)--(4.1125,0.50625)--cycle;
\fill[yellow] (3.5125,-0.30000)--(3.5125,0.48125)--(3.6125,0.30625)--(3.6125,-0.47500)--cycle;
\draw (3.5125,-0.30000)--(3.5125,0.48125)--(3.6125,0.30625)--(3.6125,-0.47500)--cycle;
\fill[yellow] (4.1125,-0.27500)--(4.1125,0.50625)--(3.6125,0.30625)--(3.6125,-0.47500)--cycle;
\draw (4.1125,-0.27500)--(4.1125,0.50625)--(3.6125,0.30625)--(3.6125,-0.47500)--cycle;
\fill[yellow] (4.3625,0.60625)--(4.1125,0.50625)--(4.3125,0.15625)--(4.5625,0.25625)--cycle;
\draw (4.3625,0.60625)--(4.1125,0.50625)--(4.3125,0.15625)--(4.5625,0.25625)--cycle;
\fill[yellow] (4.1125,-0.27500)--(4.1125,0.50625)--(4.3125,0.15625)--(4.3125,-0.62500)--cycle;
\draw (4.1125,-0.27500)--(4.1125,0.50625)--(4.3125,0.15625)--(4.3125,-0.62500)--cycle;
\fill[yellow] (4.5625,-0.52500)--(4.5625,0.25625)--(4.3125,0.15625)--(4.3125,-0.62500)--cycle;
\draw (4.5625,-0.52500)--(4.5625,0.25625)--(4.3125,0.15625)--(4.3125,-0.62500)--cycle;
\fill[yellow] (3.6125,0.30625)--(3.1125,0.10625)--(3.2125,-0.068750)--(3.7125,0.13125)--cycle;
\draw (3.6125,0.30625)--(3.1125,0.10625)--(3.2125,-0.068750)--(3.7125,0.13125)--cycle;
\fill[yellow] (3.1125,-0.67500)--(3.1125,0.10625)--(3.2125,-0.068750)--(3.2125,-0.85000)--cycle;
\draw (3.1125,-0.67500)--(3.1125,0.10625)--(3.2125,-0.068750)--(3.2125,-0.85000)--cycle;
\fill[yellow] (3.7125,-0.65000)--(3.7125,0.13125)--(3.2125,-0.068750)--(3.2125,-0.85000)--cycle;
\draw (3.7125,-0.65000)--(3.7125,0.13125)--(3.2125,-0.068750)--(3.2125,-0.85000)--cycle;
\fill[yellow] (4.3125,0.15625)--(4.0625,0.056250)--(4.2625,-0.29375)--(4.5125,-0.19375)--cycle;
\draw (4.3125,0.15625)--(4.0625,0.056250)--(4.2625,-0.29375)--(4.5125,-0.19375)--cycle;
\fill[yellow] (4.0625,-0.72500)--(4.0625,0.056250)--(4.2625,-0.29375)--(4.2625,-1.0750)--cycle;
\draw (4.0625,-0.72500)--(4.0625,0.056250)--(4.2625,-0.29375)--(4.2625,-1.0750)--cycle;
\fill[yellow] (4.5125,-0.97500)--(4.5125,-0.19375)--(4.2625,-0.29375)--(4.2625,-1.0750)--cycle;
\draw (4.5125,-0.97500)--(4.5125,-0.19375)--(4.2625,-0.29375)--(4.2625,-1.0750)--cycle;
\draw (3.4563,1.4906) node {rescaled graphon};
\draw[->] (8.5250,0)--(8.5250,0.93750);
\draw[->] (8.5250,0)--(5.8375,-1.0750);
\draw[->] (8.5250,0)--(9.6000,-1.8813);
\draw (8.5250,0.25000) node[right] {\tiny 1};
\fill (8.5250,0) circle [radius=0.02];
\fill (8.5250,0.25000) circle [radius=0.02];
\fill (8.5250,0.50000) circle [radius=0.02];
\fill (8.5250,0.75000) circle [radius=0.02];
\fill (7.2750,-0.50000) circle [radius=0.02];
\fill (6.0250,-1.0000) circle [radius=0.02];
\fill (9.0250,-0.87500) circle [radius=0.02];
\fill (9.5250,-1.7500) circle [radius=0.02];
\fill[yellow] (8.0831,0.073223)--(7.1992,-0.28033)--(7.3760,-0.58969)--(8.2598,-0.23614)--cycle;
\draw (8.0831,0.073223)--(7.1992,-0.28033)--(7.3760,-0.58969)--(8.2598,-0.23614)--cycle;
\fill[yellow] (7.1992,-0.53033)--(7.1992,-0.28033)--(7.3760,-0.58969)--(7.3760,-0.83969)--cycle;
\draw (7.1992,-0.53033)--(7.1992,-0.28033)--(7.3760,-0.58969)--(7.3760,-0.83969)--cycle;
\fill[yellow] (8.2598,-0.48614)--(8.2598,-0.23614)--(7.3760,-0.58969)--(7.3760,-0.83969)--cycle;
\draw (8.2598,-0.48614)--(8.2598,-0.23614)--(7.3760,-0.58969)--(7.3760,-0.83969)--cycle;
\fill[yellow] (8.7018,-0.059359)--(8.2598,-0.23614)--(8.6134,-0.85485)--(9.0553,-0.67808)--cycle;
\draw (8.7018,-0.059359)--(8.2598,-0.23614)--(8.6134,-0.85485)--(9.0553,-0.67808)--cycle;
\fill[yellow] (8.2598,-0.48614)--(8.2598,-0.23614)--(8.6134,-0.85485)--(8.6134,-1.1049)--cycle;
\draw (8.2598,-0.48614)--(8.2598,-0.23614)--(8.6134,-0.85485)--(8.6134,-1.1049)--cycle;
\fill[yellow] (9.0553,-0.92808)--(9.0553,-0.67808)--(8.6134,-0.85485)--(8.6134,-1.1049)--cycle;
\draw (9.0553,-0.92808)--(9.0553,-0.67808)--(8.6134,-0.85485)--(8.6134,-1.1049)--cycle;
\fill[yellow] (7.3760,-0.58969)--(6.4921,-0.94324)--(6.6688,-1.2526)--(7.5527,-0.89905)--cycle;
\draw (7.3760,-0.58969)--(6.4921,-0.94324)--(6.6688,-1.2526)--(7.5527,-0.89905)--cycle;
\fill[yellow] (6.4921,-1.1932)--(6.4921,-0.94324)--(6.6688,-1.2526)--(6.6688,-1.5026)--cycle;
\draw (6.4921,-1.1932)--(6.4921,-0.94324)--(6.6688,-1.2526)--(6.6688,-1.5026)--cycle;
\fill[yellow] (7.5527,-1.1490)--(7.5527,-0.89905)--(6.6688,-1.2526)--(6.6688,-1.5026)--cycle;
\draw (7.5527,-1.1490)--(7.5527,-0.89905)--(6.6688,-1.2526)--(6.6688,-1.5026)--cycle;
\fill[yellow] (8.6134,-0.85485)--(8.1714,-1.0316)--(8.5250,-1.6503)--(8.9669,-1.4736)--cycle;
\draw (8.6134,-0.85485)--(8.1714,-1.0316)--(8.5250,-1.6503)--(8.9669,-1.4736)--cycle;
\fill[yellow] (8.1714,-1.2816)--(8.1714,-1.0316)--(8.5250,-1.6503)--(8.5250,-1.9003)--cycle;
\draw (8.1714,-1.2816)--(8.1714,-1.0316)--(8.5250,-1.6503)--(8.5250,-1.9003)--cycle;
\fill[yellow] (8.9669,-1.7236)--(8.9669,-1.4736)--(8.5250,-1.6503)--(8.5250,-1.9003)--cycle;
\draw (8.9669,-1.7236)--(8.9669,-1.4736)--(8.5250,-1.6503)--(8.5250,-1.9003)--cycle;
\draw (7.7188,1.4906) node {stretched graphon};
\end{tikzpicture}
\end{center}
\caption{The figure shows three graphons associated with the same simple
graph $G$ on five vertices. In the classical theory of graphons all simple
sparse graphs  converge to the zero graphon. We may prevent this by
renormalizing the graphons, either by rescaling the height of the graphon
(middle) or by stretching the domain on which it is defined (right). The
first approach was chosen by \citet*{br-09} and \citet*{lp1,lp2}, and the second approach
is chosen in this paper. In our forthcoming paper \citep*{multiscale2} we
choose a combined approach, where the renormalization depends on the
observed graph.} \label{fig2}
\end{figure}

Particular random graph models of special interest arise by considering
certain classes of graphons $\W$. \citet*{caron-fox} consider graphons on the
form $W(x_1,x_2)=1-\exp(-f(x_1)f(x_2))$ (with a slightly different definition
on the diagonal, since they also allow for self-edges) for certain decreasing
functions $f\colon \R_+\to\R_+$. In this model $x$ represents a sociability
parameter of each vertex. A multi-edge version of this model allows for an
alternative sampling procedure to the one we present above \cite[Section
3]{caron-fox}. \citet*{kallenberg-block} introduced a generalization of the
model of \citet*{caron-fox} to graphs with block structure. In this model
each node is associated to a type from a finite index set
$[K]:=\{1,\dots,K\}$ for some $K\in\N$, in addition to its sociability
parameter, such that the probability of two nodes connecting depends both on
their type and their sociability. More generally we can obtain sparse graphs
with block structure by considering  integrable functions $W_{k_1,k_2}\colon
\R_{+}^2\to [0,1]$ for $k_1,k_2\in\{1,\dots,K\}$, and defining $S:=[K]\times
\R_+$ and $W((k_1,x_1),(k_2,x_2)):=W_{k_1,k_2}(x_1,x_2)$. As compared to the
block model of \citet*{kallenberg-block}, this allows for a more complex
interaction within and between the blocks. An alternative generalization of
the stochastic block model to our setting is to consider infinitely many
disjoint intervals $I_k\subset\R_+$ for $k\in\N$, and define
$W:=\sum_{k_1,k_2\in\N}p_{k_1,k_2}\1_{I_{k_1}\times I_{k_2}}$ for constants
$p_{k_1,k_2}\in[0,1]$. For the block model of \citet*{kallenberg-block} and
our first generalization above (with $S:=[K]\times \R_+$), the degree
distribution of the vertices within each block will typically be strongly
inhomogeneous; by contrast, in our second generalization above (with
infinitely many blocks), all vertices within the same block have the same
connectivity probabilities, and hence the degree distribution will be more
homogeneous.

We can also model sparse graphs with mixed membership structure within our
framework. In this case we let $\wt S\subset[0,1]^{K}$ be the standard
$(K-1)$-simplex, and define $S:=\wt S\times\R_+$. For a vertex with feature
$(\wt x,x)\in \wt S\times \R_+$ the first coordinate $\wt x=(\wt
x_1,\dots,\wt x_K)$ is a vector such that $\wt x_j$ for $j\in[K]$ describes
the proportion of time the vertex is part of community $j\in[K]$, and the
second coordinate $x$ describes the role of the vertex within the community;
for example, $x$ could be a sociability parameter. For each $k_1,k_2\in[K]$
let $\W_{k_1,k_2}=(W_{k_1,k_2},\R_+)$ be a graphon describing the
interactions between the communities $k_1$ and $k_2$. We define our mixed
membership graphon $\W=(W,\scr S)$ by
\[
W\big((\wt x^1,x^1),(\wt
x^2,x^2)\big) :=\sum_{k_1,k_2\in[K]} \wt x^1_{k_1}\wt x^2_{k_2}
W_{k_1,k_2}(x^1,x^2).
\]
Alternatively, we could define $S:=\wt S\times\R_+^K$, which would provide a
model where, for example, the sociability of a node varies depending on which
community it is part of.

In the classical setting of dense graphs, many papers only consider graphons
defined on the unit square, instead of graphons on more general probability
spaces. This is justified by the fact that every graphon with a probability
space as base space is equivalent to a graphon with base space $[0,1]$. The
analogue in our setting would be graphons over $\R_+$ equipped with the
Lebesgue measure. As the examples in the preceding paragraphs illustrate, for
certain random graph models it is more natural to consider another underlying
measure space. For example, each coordinate in some higher-dimensional space
may correspond to a particular feature of the vertices, and changing the base
space can disrupt certain properties of the graphon, such as smoothness
conditions. For this reason we consider graphons defined on general
$\sigma$-finite measure spaces in this paper. However, we will prove that
every graphon is equivalent to a graphon on $\R_+$ equipped with the Borel
$\sigma$-algebra and Lebesgue measure, in the sense that their cut distance
is zero; see Proposition~\ref{prop7} in Section~\ref{sec:graphon-results}. As
stated before, our results then imply that they correspond to the same random
graph model.

The set of $[0,1]$-valued graphons on probability spaces is compact for the
cut metric. For the possibly unbounded graphons studied by \citet*{lp1},
which are real-valued and defined on probability spaces, compactness holds if
we consider closed subsets of the space of graphons which are \emph{uniformly
upper regular} (see Section~\ref{sec:Convergence-results} for the
definition). In our setting, where we look at graphons over spaces of
possibly infinite measure, the analogous regularity condition is
\emph{uniform regularity of tails} if we restrict ourselves to, say,
$[0,1]$-valued graphons. In particular our results imply that a sequence of
simple graphs with uniformly regular tails is subsequentially convergent, and
conversely, that every convergent sequence of simple graphs has uniformly
regular tails. See Theorem~\ref{prop1} in
Section~\ref{sec:Convergence-results} and the two corollaries following this
theorem.

In the setting of dense graphs, convergence for the cut metric is equivalent
to left convergence, meaning that subgraph densities converge. This
equivalence does not hold in our setting, or for the unbounded graphons
studied by \citet*{lp1,lp2}; its failure is characteristic of sparse graphs,
because deleting even a tiny fraction of the edges in a sparse graph can
radically change the densities of larger subgraphs (see the discussion by
\citel{lp1}, Section~2.9). However, randomly sampled graphs do satisfy a
notion of left convergence; see Proposition~\ref{prop3} in
Section~\ref{sec7}.

As previously mentioned, in our forthcoming paper \citep*{multiscale2} we
will generalize and unify the theories and models presented by
\citet*{br-09}, \citet*{lp1,lp2}, \citet*{caron-fox},
\citet*{kallenberg-block}, and \citet*{veitchroy}. Along with the
introduction of a generalized model for sampling graphs and an alternative
(and weaker) cut metric, we will prove a number of convergence properties of
these graphs. Since the graphs in this paper are obtained as a special case
of the graphs in our forthcoming paper, the mentioned convergence results
also hold in our setting.

In Section~\ref{sec2} we will state the main results of this paper, which
will be proved in the subsequent appendices. In Appendix~\ref{sec4} we prove
that the cut metric $\delta_\square$ is well defined. In Appendix
\ref{sec:equiv} we prove that any graphon is equivalent to a graphon with
underlying measure space $\R_+$. We also prove that under certain conditions
on the underlying measure space we may define the cut metric $\delta_\square$
in a number of equivalent ways.  In Appendix~\ref{sec:measurability}, we deal
with some technicalities regarding graph-valued processes.  In
Appendix~\ref{sec3} we prove that certain random graph models derived from a
graphon $\W$, including the graphon processes defined above, give graphs
converging to $\W$ for the cut metric. We also prove that two graphons are
equivalent (i.e., they have cut distance zero) iff the corresponding graphon
processes are equal in law. In Appendix \ref{sec:compact} we prove that
uniform regularity of tails is sufficient to guarantee subsequential metric
convergence for a sequence of graphs; conversely, we prove that every
convergent sequence of graphs with non-negative edge weights has uniformly
regular tails. In Appendix~\ref{sec8} we prove some basic properties of
sequences of graphs which are metric convergent, for example that metric
convergence implies unbounded average degree if the number of edges diverge
and the graph does not have too many isolated vertices; see
Proposition~\ref{prop:unbdd_avg_deg} below.  We also compare the notion of
metric graph convergence in this paper to the one studied by \citet*{lp1}. In
Appendix~\ref{sec6} we prove with reference to the Kallenberg theorem for
jointly exchangeable measures that graphon processes for integrable $W$ are
uniquely characterized as exchangeable graph processes satisfying uniform
tail regularity. We also describe more general families of graphs that may be
obtained from the Kallenberg representation theorem if this regularity
condition is not imposed. Finally, in Appendix~\ref{sec5} we prove our
results on left convergence of graphon processes.

\begin{remark}
After writing a first draft of this work, but a little over a month before
completing the paper, we became aware of parallel, independent work by
\citet*{veitchroy}, who introduce a closely related model for exchangeable
sparse graphs and interpret it with reference to the Kallenberg theorem for
exchangeable measures. The random graph model studied by \citet*{veitchroy}
is (up to minor differences) the same as the graphon processes introduced in
the current paper. Aside from both introducing this model, the results of the
two papers are essentially disjoint. While \citet*{veitchroy} focus on
particular properties of the graphs in a graphon process (in particular, the
expected number of edges and vertices, the degree distribution, and the
existence of a giant component under certain assumptions on $\W$), our focus
is graph convergence, the cut metric, and the question of when two different
graphons lead to the same graphon process.

See also the subsequent paper by \citet*{janson16} expanding on the results
of our paper, characterizing in particular when two graphons are equivalent,
and proving additional compactness results for graphons over $\sigma$-finite
spaces.
\end{remark}

\section{Definitions and Main Results}
\label{sec2}

We will work mainly with simple graphs, but we will allow the graphs to have
weighted vertices and edges for some of our definitions and results. We
denote the vertex set of a graph $G$ by $V(G)$ and the edge set of $G$ by
$E(G)$. The sets $V(G)$ and $E(G)$ may be infinite, but we require them to be
countable. If $G$ is weighted, with edge weights $\beta_{ij}(G)$ and vertex
weights $\alpha_i(G)$, we require the vertex weights to be non-negative, and
we often (but not always) require that $\|\beta(G)\|_1:=\sum_{i,j\in
V(G)}\alpha_i(G)\alpha_j(G)|\beta_{ij}(G)|<\infty$ (note that
$\|\beta(G)\|_1$ is defined in such a way that for an unweighted graph, it is
equal to $2|E(G)|$, as opposed to the density, which is ill-defined if
$|V(G)|=\infty$). We define the edge density of a finite simple graph $G$ to
be $\rho(G):=2|E(G)|/|V(G)|^2$. Letting $\N=\{1,2,\dots\}$ denote the
positive integers, a sequence $(G_n)_{n\in\N}$ of simple, finite graphs will
be called sparse if $\rho(G_n)\to 0$ as $n\to\infty$, and dense if
$\liminf_{n\to\infty} \rho(G_n)>0$. When we consider graph-valued stochastic
processes $(G_n)_{n\in\N}$ or $(G_t)_{t\geq 0}$ of simple graphs, we will
assume each vertex is labeled by a distinct number in $\N$, so we can view
$V(G)$ as a subset of $\N$ and $E(G)$ as a subset of $\N\times\N$. The labels
allow us to keep track of individual vertices in the graph over time. In
Section \ref{sec:W-random-results} we define a topology and $\sigma$-algebra
on the set of such graphs.

\subsection{Measure-theoretic Preliminaries}
\label{sec:measuretheory}

We start by recalling several notions from measure theory.

For two  measure spaces $\scr{S}=(S,\cS,\mu)$ and $\scr{S}'=(S',\cS',\mu')$,
a measurable map $\phi\colon S\to S'$ is called \emph{measure-preserving} if
for every $A\in\cS'$ we have $\mu(\phi^{-1}(A))=\mu'(A)$. Two measure spaces
$(S,\cS,\mu)$ and $(S',\cS',\mu')$ are called \emph{isomorphic} if there
exists a bimeasurable, bijective, and measure-preserving map $\phi\colon S\to
S'$.  A \emph{Borel measure space} is defined as a measure space that is
isomorphic to a Borel subset of a complete separable metric space equipped
with a Borel measure.

Throughout most of this paper, we consider \emph{$\sigma$-finite measure
spaces}, i.e., spaces $\scr S=(S,\cS,\mu)$ such that $S$ can be written as a
countable union of sets $A_i\in \cS$ with $\mu(A_i)<\infty$. Recall that a
set $A\in\cS$ is an \emph{atom} if $\mu(A)>0$ and if every measurable
$B\subseteq A$ satisfies either $\mu(B)=0$ or $\mu(B)=\mu(A)$. The measure
space $\scr S$ is atomless if it has no atoms. Every atomless $\sigma$-finite
Borel space of infinite measure is isomorphic to $(\R_+,\mcl B,\lambda)$,
where $\mcl B$ is the Borel $\sigma$-algebra and $\lambda$ is Lebesgue
measure; for the convenience of the reader, we prove this as
Lemma~\ref{prop9} below.

We also need the notion of a coupling, a concept well known for probability
spaces: if $(S_i,\cS_i,\mu_i)$ is a measure space for $i=1,2$ and
$\mu_1(S_1)=\mu_2(S_2)\in(0,\infty]$, we say that $\mu$ is a \emph{coupling}
of $\mu_1$ and $\mu_2$ if $\mu$ is a measure on $(S_1\times S_2,\cS_1\times
\cS_2)$ with marginals $\mu_1$ and $\mu_2$, i.e., if $\mu(U\times
S_2)=\mu_1(U)$ for all $U\in \cS_1$ and $\mu(S_1\times U)=\mu_2(U)$ for all
$U\in \cS_2$. Note that this definition of coupling is closely related to the
definition of coupling of probability measures, which applies when
$\mu_1(S_1)=\mu_2(S_2)=1$.  For probability spaces, it is easy to see that
every pair of measures has a coupling (for example, the product space of the
two probability spaces). We prove the existence of a coupling for
$\sigma$-finite measure spaces in Appendix~\ref{sec4}, where this fact is
stated as part of a more general lemma, Lemma~\ref{prop20}.

Finally, we say that a measure space $\wt{\scr S}=(\wt S,\wt {\cS},\wt\mu)$
\emph{extends} a measure space $\scr S=(S,\cS,\mu)$ if $S\in\wt{\cS}$,
$\cS=\{A\cap S\,:\,A\in\wt{\cS}\}$, and $\mu(A)=\wt\mu(A)$ for all $A\in\cS$.
We say that $\scr S$ is a \emph{restriction} of $\wt{\scr S}$, or, if $S$ is
specified, \emph{the restriction} of $\wt{\scr S}$ \emph{to} $S$.

\subsection{Graphons and Cut Metric}
\label{sec:graphon-results}

We will work with the following definition of a graphon.

\begin{definition}
A \emph{graphon} is a pair $\mcl W=(W,\scr S)$, where  $\scr S=(S,\cS,\mu)$
is a $\sigma$-finite measure space satisfying $\mu(S)>0$ and $W$ is a
symmetric real-valued function $W\in L^1(S\times S)$ that is measurable with
respect to the product $\sigma$-algebra $\cS\times\cS$ and integrable with
respect to $\mu \times \mu$. We say that $\mcl W$ is a \emph{graphon over $\scr S$}.
\end{definition}

\begin{remark}
Most literature on graphons defines a graphon to be the function $W$ instead
of the pair $(W,\scr S)$. We have chosen the above definition since the
underlying measure space will play an important role. Much literature on
graphons requires $W$ to take values in $[0,1]$, and some of our results will
also be restricted to this case. The major difference between the above
definition and the definition of a graphon in the existing literature,
however, is that we allow the graphon to be defined on a measure space of
possibly infinite measure, instead of a probability space.\footnote{The term
``graphon'' was coined by \citet*{denseconv1}, but the use of this concept in
combinatorics goes back to at least \citet*{FK99}, who considered a version
of the regularity lemma for functions over $[0,1]^2$. As a limit object for
convergent graph sequences it was introduced by \citet*{ls-graphlimits},
where it was called a $W$-function, and graphons over general probability
spaces were first studied by \citet*{BCL10} and \citet*{janson-survey}.}
\end{remark}

\begin{remark}
One may relax the integrability condition for $W$ in the above definition
such that the corresponding random graph model (as defined in Definition
\ref{defn3} below) still gives graphs with finitely many vertices and edges
for each bounded time. This more general definition is used by
\citet*{veitchroy}. We work with the above definition since the majority of
the analysis in this paper is related to convergence properties and graph
limits, and our definition of the cut metric is most natural for integrable
graphons. An exception is the notion of subgraph density convergence in the
corresponding random graph model, which we discuss in the more general
setting of not necessarily integrable graphons; see
Remark~\ref{rem:W-notin-L1} below. \label{rmk:notL1}
\end{remark}

We will mainly study simple graphs in the current paper, in particular,
graphs which do not have self-edges. However, the theory can be generalized
in a straightforward way to graphs with self-edges, in which case we would
also impose an integrability condition for $W$ along its diagonal.

If $\W=(W,(S,\mcl B,\lambda))$, where $S$ is a Borel subset of $\R$, $\mcl B$
is the Borel $\sigma$-algebra, and $\lambda$ is Lebesgue measure, we write
$\W=(W,S)$ to simplify notation. For example, we write $\W=(W,\R_+)$ instead
of $\W=(W,(\R_+,\mcl B,\lambda))$.

For any measure space $\scr S=(S,\cS,\mu)$ and integrable function $W\colon
S\times S\to\R$, define the \emph{cut norm} of $W$ over $\scr S$ by
\[
\|W\|_{\square,S,\mu} := \sup_{U,V\in\cS} \left|\int_{U\times V}W(x,y)\,d\mu(x)\, d\mu(y)\right|.
\]
If $S$ and/or $\mu$ is clear from the context we may write
$\|\cdot\|_\square$ or $\|\cdot\|_{\square,\mu}$ to simplify notation.

Given a graphon $\wt{\mcl W}=(\wt W,\wt{\scr{S}})$ with $\wt{\scr S}=(\wt S,
\wt{\cS},\wt\mu)$ and a set $S\in \wt{\cS}$, we say that  $\mcl
W=(W,\scr{S})$ is \emph{the restriction of $\wt{\mcl W}$ to}  $S$ if
$\scr{S}$ is the restriction of $\wt{\scr{S}}$ to $S$ and $\wt W|_{S\times
S}=W$. We say that $\wt{\mcl W}=(\wt W,\wt{\scr{S}})$ is \emph{the trivial
extension of $\mcl W$ to $\wt{\scr S}$} if $\mcl W=(W,\scr{S})$ is the
restriction of $\wt{\mcl W}$ to $S$ and $\op{supp}(\wt W)\subseteq S\times
S$. For measure spaces $\scr{S}=(S,\cS,\mu)$ and $\scr{S}'=(S',\cS',\mu')$, a
graphon $\mcl W=(W,\scr{S})$, and a measurable map $\phi\colon S'\to S$, we
define the graphon $\mcl W^\phi=(W^\phi,\scr{S}')$ by
$W^\phi(x_1,x_2):=W(\phi(x_1),\phi(x_2))$ for $x_1,x_2\in S'$. We say that
$\mcl W^\phi$ (resp.\ $W^\phi$) is a \emph{pullback} of $\mcl W$ (resp.\ $W$)
onto $\scr{S'}$. Finally, let $\|\cdot\|_1$ denote the $L^1$ norm.

\begin{definition}
For $i=1,2$, let $\mcl W_i=(W_i,\scr{S}_i)$ with $\scr{S}_i=(S_i,\mcl
S_i,\mu_i)$ be a graphon.
\begin{itemize}
\item[(i)] If $\mu_1(S_1)=\mu_2(S_2)\in(0,\infty]$, the \emph{cut metric}
    $\delta_\square$ and \emph{invariant $L^{1}$ metric} $\delta_{1}$ are
    defined by
\begin{equation}
\begin{split}
\delta_{\square}(\mcl W_1,\mcl W_2) &:= \inf_\mu \|W_1^{\pi_1}-W_2^{\pi_2}\|_{\square,S_1 \times S_2,\mu}
\quad\text{and}
\\
\delta_{1}(\mcl W_1,\mcl W_2) & := \inf_\mu \|W_1^{\pi_1}-W_2^{\pi_2}\|_{1,S_1 \times S_2,\mu},
\end{split}
\label{eq1}
\end{equation}
where $\pi_i\colon S_1\times S_2\to S_i$ denotes projection for $i=1,2$,
and we take the infimum over all couplings $\mu$ of $\mu_1$ and $\mu_2$.
\item[(ii)] If $\mu_1(S_1)\neq \mu_2(S_2)$, let $\wt{\scr S}_i=(\wt S_i,\wt
    {\cS}_i,\wt\mu_i)$ be a $\sigma$-finite measure space extending $\scr
    S_i$ for $i=1,2$ such that $\wt{\mu}_1(\wt S_1)=\wt{\mu}_2(\wt
    S_2)\in(0,\infty]$. Let $\wt{\mcl W}_i=(\wt W_i,\wt{\scr{S}}_i)$ be the
    trivial extension of $\mcl W_i$ to $\wt{\scr S}_i$, and define
\[
\delta_\square(\mcl W_1,\mcl W_2):=\delta_\square(\wt{\mcl W}_1,\wt{\mcl W}_2)
\quad\text{and}\quad
\delta_1(\mcl W_1,\mcl W_2):=\delta_1(\wt{\mcl W}_1,\wt{\mcl W}_2).
\]
\item[(iii)] We call two graphons $\W_1$ and $\W_2$
    \emph{equivalent} if $\delta_\square(\W_1,\W_2)=0$.
\end{itemize}
\label{defn1}
\end{definition}

The following proposition will be proved in Appendix~\ref{sec4}. Recall that
a pseudometric on a set $S$ is a function from $S\times S$ to $\R_+$ which
satisfies all the requirements of a metric, except that the distance between
two different points might be zero.

\begin{proposition}
The  metrics $\delta_\square$ and $\delta_{1}$ given in Definition
\ref{defn1} are well defined; in other words, under the assumptions of (i)
there exists at least one coupling $\mu$, and under the assumptions of (ii)
the definitions of $\delta_\square(\mcl W_1,\mcl W_2)$ and $\delta_{\1}(\mcl
W_1,\mcl W_2)$ do not depend on the choice of extensions
$\wt{\scr{S}}_1,\wt{\scr{S}}_2$. Furthermore, $\delta_\square$ and
$\delta_{1}$ are pseudometrics on the space of graphons. \label{prop8}
\end{proposition}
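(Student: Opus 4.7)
I plan to verify the three claims in turn: existence of at least one coupling in case (i), independence of the choice of extensions in case (ii), and the pseudometric axioms. Throughout, the essential point is that, for $\sigma$-finite measure spaces of equal total mass, constructions familiar from probability theory (product measures, disintegrations, diagonal couplings, gluings) still go through, provided we pass through a countable decomposition into pieces of finite mass.

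For the existence of a coupling in case (i), first suppose $\mu_1(S_1)=\mu_2(S_2)=c<\infty$. Then $\mu_1/c$ and $\mu_2/c$ are probability measures, so the product measure furnishes a coupling, and multiplying by $c$ gives a coupling of $\mu_1$ and $\mu_2$. For the infinite case $\mu_1(S_1)=\mu_2(S_2)=\infty$, use $\sigma$-finiteness to write $S_i=\bigsqcup_{k\in\N} A_i^k$ with $\mu_i(A_i^k)<\infty$; by further subdividing and relabelling, I can arrange $\mu_1(A_1^k)=\mu_2(A_2^k)=c_k\in(0,\infty)$ with $\sum_k c_k=\infty$. Coupling each pair $(A_1^k,A_2^k)$ by the product construction above and taking the direct sum of these couplings produces a coupling of $\mu_1$ and $\mu_2$ on $S_1\times S_2$. (The paper later packages this as part of Lemma~\ref{prop20}.)

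For the independence from extensions in case (ii), let $\wt{\scr S}_1,\wt{\scr S}_2$ and $\wt{\scr S}_1',\wt{\scr S}_2'$ be two admissible pairs of extensions. The key observation is that since $\wt W_i$ is the trivial extension of $W_i$ (so $\op{supp}(\wt W_i)\subseteq S_i\times S_i$), every integrand $\wt W_1^{\pi_1}-\wt W_2^{\pi_2}$ appearing in a cut or $L^1$ norm vanishes off $\pi_1^{-1}(S_1)\cup\pi_2^{-1}(S_2)$. Consequently only the joint mass of a coupling on this set contributes, and this mass is determined by $\mu_1$, $\mu_2$ and the ``overlap'' coupling in $S_1\times S_2$; the extra measure added to enforce equal total mass plays no role. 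To make this precise I will show that the set of values $\|\wt W_1^{\pi_1}-\wt W_2^{\pi_2}\|_{\square,\mu}$ achievable over couplings on any one pair of extensions coincides with that achievable on any other, by transporting a coupling from one pair of extensions to a common third extension (which exists, e.g., by enlarging both with disjoint copies of $\R_+$) without changing the integrals. The same argument applies to $\delta_1$.

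For the pseudometric axioms, symmetry of $\delta_\square$ and $\delta_1$ follows from the symmetry of the flip $(x,y)\mapsto(y,x)$, which sends a coupling of $(\mu_1,\mu_2)$ to a coupling of $(\mu_2,\mu_1)$. The reflexivity $\delta_\square(\W,\W)=\delta_1(\W,\W)=0$ follows by taking the diagonal coupling, i.e., the pushforward of $\mu$ along $x\mapsto(x,x)$ onto $S\times S$, under which $W^{\pi_1}=W^{\pi_2}$ pointwise. The triangle inequality is the main obstacle. Given graphons $\W_1,\W_2,\W_3$ over spaces of equal total mass and couplings $\mu_{12},\mu_{23}$ achieving the two distances up to $\varepsilon$, I will glue them into a measure $\mu_{123}$ on $S_1\times S_2\times S_3$ with marginals $\mu_{12}$ and $\mu_{23}$ on the respective pairs, and then project onto $S_1\times S_3$. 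In the $\sigma$-finite setting the gluing requires a disintegration of each $\mu_{ij}$ over the common marginal $\mu_2$; this exists because, by Lemma~\ref{prop9}, we may reduce to Borel spaces, where regular conditional measures are available (and the $\sigma$-finite case reduces to the finite one by decomposing $\mu_2$). Once $\mu_{123}$ is in hand, the triangle inequality for $\|\cdot\|_\square$ and $\|\cdot\|_1$ as seminorms, together with the identity $(W_1^{\pi_1}-W_3^{\pi_3})=(W_1^{\pi_1}-W_2^{\pi_2})+(W_2^{\pi_2}-W_3^{\pi_3})$ computed under $\mu_{123}$, yields $\delta(\W_1,\W_3)\le\delta(\W_1,\W_2)+\delta(\W_2,\W_3)+2\varepsilon$, and letting $\varepsilon\to 0$ finishes the argument. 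Implementing this gluing rigorously in the $\sigma$-finite, possibly non-standard, measure-theoretic setting is the one step I expect to be genuinely delicate.
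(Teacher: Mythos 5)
Your plan correctly identifies the coupling existence argument (which mirrors the paper's Lemma~\ref{prop20}) and proposes a disintegration-based gluing for the triangle inequality that is a reasonable alternative to the paper's step-function construction in Lemma~\ref{prop24}. However, your treatment of the independence-from-extensions claim in case (ii) has a genuine gap, and it happens to be where the real difficulty lies.

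The issue is that your argument only gives one inclusion. You correctly observe that for a \emph{fixed} coupling $\mu$, the cut norm $\|\wt W_1^{\pi_1}-\wt W_2^{\pi_2}\|_{\square,\mu}$ depends only on the restriction $\mu|_{S_1\times S_2}$ (this is essentially the paper's Corollary~\ref{cor:cut-norm-dep-on-mu}). But you then conclude that ``the extra measure added to enforce equal total mass plays no role,'' and this does not follow, because the infimum in $\delta_\square$ is taken over couplings, and the \emph{set of achievable restrictions} $\mu|_{S_1\times S_2}$ genuinely grows with the size of the extension: a larger extension allows more mass to be diverted from $S_1$ into $\wt S_2\setminus S_2$ (and vice versa). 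Your device of transporting both extensions into a common third, larger one only proves that $\inf(\text{ext}_1)\ge\inf(\text{big ext})$ and $\inf(\text{ext}_2)\ge\inf(\text{big ext})$; it does not show that the bigger extension cannot achieve a strictly smaller infimum. That reverse inequality — that diverting coupling mass off $S_1\times S_2$ cannot help — is precisely the content of the paper's Lemma~\ref{prop21} (invariance of $\delta_\square$ under trivial extension), and it is proved there by an explicit combinatorial argument: reduce to step functions, then show that any coupling (permutation) mixing $S_1$ with the extended part of $S_2$ can be modified to a permutation of $S_1$ without increasing the cut norm, by swapping pairs. You have not supplied any analogue of this step; without it the extension-independence claim is unproven. (By contrast, the gluing step you flag as ``genuinely delicate'' is comparatively routine — the paper handles it by approximating with step functions, avoiding disintegration altogether.)
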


An important input to the proof of the proposition (Lemma~\ref{prop21} in
Appendix~\ref{sec4}) is that the $\delta_\square$ (resp.\ $\delta_1$)
distance between two graphons over spaces of equal measure, as defined in
Definition~\ref{defn1}(i), is invariant under trivial extensions. The lemma
is proved by first showing that it holds for step functions (where the proof
more or less boils down to an explicit calculation) and then using the fact
that every graphon can be approximated by a step function.

We will see in Proposition~\ref{prop10} in Appendix~\ref{sec:equiv} that
under additional assumptions on the underlying measure spaces $\scr S_1$ and
$\scr S_2$ the cut metric can be defined equivalently in a number of other
ways, giving, in particular, the equivalence of the definitions
\eqref{def:delta-Borel} and \eqref{eq1} in the case of two Borel spaces of
infinite mass. Similar results hold for the metric $\delta_1$; see
Remark~\ref{rem:alt-def-delta-p}.

While the two metrics $\delta_\square$ and $\delta_1$ are not equivalent, a
fact which is  already well known from the theory of graph convergence for
dense graphs, it turns out that the statement that two graphons have distance
zero in the cut metric is equivalent to the same statement in the invariant
$L^1$ metric.  This is the content of our next proposition.

\begin{proposition}\label{prop:dcut-d1}
Let $\W_1$ and $\W_2$ be graphons. Then $\delta_\square(\W_1,\W_2)=0$ if and
only if $\delta_1(\W_1,\W_2)=0$.
\end{proposition}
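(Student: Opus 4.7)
The plan is to handle the two implications separately. The easy direction $\delta_1 = 0 \Rightarrow \delta_\square = 0$ follows from the pointwise inequality $\|f\|_{\square,\mu} \le \|f\|_{1,\mu}$: applied to $f = W_1^{\pi_1} - W_2^{\pi_2}$ for an arbitrary coupling $\mu$ and followed by an infimum over couplings (and, if necessary, a trivial extension to equalize total masses), it yields $\delta_\square(\W_1,\W_2) \le \delta_1(\W_1,\W_2)$.

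For the converse, assume $\delta_\square(\W_1,\W_2) = 0$. After trivially extending each $\W_i$, which preserves both $\delta_\square$ and $\delta_1$ by the content of Lemma~\ref{prop21}, I may assume $\mu_1(S_1) = \mu_2(S_2)$, so the cut distance is computed directly as an infimum over couplings $\mu$ on $S_1 \times S_2$. The hypothesis then produces a sequence of couplings $(\mu_n)$ with $\|W_1^{\pi_1} - W_2^{\pi_2}\|_{\square,\mu_n} \to 0$.

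The key auxiliary fact is: if $\nu$ is a $\sigma$-finite measure on a space $\cT$ and $f \in L^1(\nu\otimes\nu)$ satisfies $\|f\|_{\square,\nu} = 0$, then $f = 0$ almost everywhere. This is a standard Dynkin $\pi$--$\lambda$ argument: the signed measure $E \mapsto \int_E f\,d(\nu\otimes\nu)$, which is $\sigma$-finite because $f \in L^1$, vanishes on the $\pi$-system of measurable rectangles $U\times V$ and hence, by uniqueness of $\sigma$-finite extensions applied to each half of a Hahn decomposition, vanishes on the entire product $\sigma$-algebra. It therefore suffices to produce a single coupling $\mu^\ast$ with $\|W_1^{\pi_1} - W_2^{\pi_2}\|_{\square,\mu^\ast} = 0$; the auxiliary fact will then force $W_1^{\pi_1} = W_2^{\pi_2}$ $(\mu^\ast\otimes\mu^\ast)$-a.e., giving $\|W_1^{\pi_1} - W_2^{\pi_2}\|_{1,\mu^\ast} = 0$ and $\delta_1(\W_1,\W_2) = 0$.

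The main obstacle is extracting such an optimal $\mu^\ast$ from $(\mu_n)$ when the $\mu_i$ are merely $\sigma$-finite rather than probability measures. My plan is to exhaust $S_1, S_2$ by nested subsets $S_i^{(k)}$ of finite measure, to apply Prokhorov's theorem to the restrictions $\mu_n|_{S_1^{(k)}\times S_2^{(k)}}$ (whose total masses are bounded by $\min(\mu_1(S_1^{(k)}), \mu_2(S_2^{(k)})) < \infty$), to extract subsequential weak limits by a diagonal procedure in $k$, and to glue them into a $\sigma$-finite limit $\mu^\ast$ whose marginals are $\mu_1$ and $\mu_2$. Integrability of $W_1, W_2$ ensures that for any $\eps > 0$ the bulk of the cut-norm integrals lives on $S_i^{(k)} \times S_i^{(k)}$ for $k$ large enough, up to an $L^1$-error at most $\eps$, which supplies the lower semicontinuity needed to conclude $\|W_1^{\pi_1} - W_2^{\pi_2}\|_{\square,\mu^\ast} \le \liminf_n \|W_1^{\pi_1} - W_2^{\pi_2}\|_{\square,\mu_n} = 0$, as required.
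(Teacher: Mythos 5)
Your outline of the easy direction, the auxiliary ``cut norm zero implies a.e.\ zero'' fact, and the overall strategy of extracting an optimizing measure via restrictions, Prokhorov, and a diagonal argument are all sound and in fact closely parallel the paper's Proposition~\ref{prop27}. But there is a genuine gap in the sentence asserting that you can ``glue them into a $\sigma$-finite limit $\mu^\ast$ \emph{whose marginals are $\mu_1$ and $\mu_2$}.'' That is false in general: when you restrict a coupling $\mu_n$ to $S_1^{(k)}\times S_2^{(k)}$, the marginals of the restriction are only \emph{dominated} by $\mu_i|_{S_i^{(k)}}$, because $\mu_n$ may carry mass from $S_1^{(k)}\times S_2$ out to $S_1^{(k)}\times (S_2\setminus S_2^{(k)})$. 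In the weak limit this mass can escape permanently (for instance, take $S_1=S_2=\R_+$ with Lebesgue measure and couplings that transport $[0,1]$ to $[n,n+1]$; the vague limit of the restrictions is deficient on $[0,1]$). So $\mu^\ast$ is generically a sub-coupling, not a coupling, and since $\delta_1$ is an infimum over genuine couplings, $\|W_1^{\pi_1}-W_2^{\pi_2}\|_{1,\mu^\ast}=0$ does not by itself give $\delta_1(\W_1,\W_2)=0$.

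This is precisely the point the paper's Proposition~\ref{prop27} is designed to handle: parts (ii)--(iii) there show that although $\mu^\ast$ may lose mass, the deficit $\mu_i-(\text{marginal of }\mu^\ast)$ is necessarily concentrated on the set $E_i=\{x: \int|W_i(x,\cdot)|\,d\mu_i=0\}$ where the graphon is trivially zero. Establishing this is nontrivial---the paper runs it through the Radon--Nikodym theorem and a Lebesgue differentiation argument. Once that is known, one patches $\mu^\ast$ back into a sequence of honest couplings $\mu_n$ agreeing with $\mu^\ast$ on $[0,n]^2$ (using Lemma~\ref{prop20}), and verifies $\|W_1^{\pi_1}-W_2^{\pi_2}\|_{1,\mu_n}\to 0$ because the patched-in mass lives on sets where the integrand vanishes. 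Your proposal omits both the marginal-deficit analysis and the reconstruction step, and neither is a formality: without them the argument ends with a non-coupling measure and no route to $\delta_1=0$. A secondary, more minor imprecision: justifying $\|W_1^{\pi_1}-W_2^{\pi_2}\|_{\square,\mu^\ast}=0$ from weak convergence is not simply ``lower semicontinuity''; the paper's proof approximates the indicators and the $W_i$ by continuous functions before passing to the limit, since $\int_{U\times V}(\cdot)\,d\mu\,d\mu$ need not be semicontinuous in $\mu$ for arbitrary measurable $U,V$.
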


The proposition will be proved in Appendix~\ref{sec:equiv}. (We will actually
prove a generalization of this proposition involving an invariant version of
the $L^p$ metric; see Proposition~\ref{prop:dcut-dp}.) The proof proceeds by
first showing (Proposition~\ref{prop27}) that if
$\delta_\square(\W_1,\W_2)=0$ for graphons $\W_i=(W_i,\scr S_i)$ with $\scr
S_i=(S_i,\cS_i,\mu_i)$ for $i=1,2$, then there exists a particular measure
$\mu$ on $S_1\times S_2$  such that
$\|W_1^{\pi_1}-W^{\pi_2}_2\|_{\square,\mu}=0$. Under certain conditions we
may assume that $\mu$ is a coupling measure, in which case it follows that
the infimum in the definition of $\delta_\square$ is a minimum; see
Proposition~\ref{prop:dcut-coupling} below.

To state our next proposition we define a coupling between two graphons
$\W_i=(W_i,\scr S_i)$ with $\scr S_i=(S_i,\cS_i,\mu_i)$ for $i=1,2$ as a pair
of graphons $\wt\W_i$ over a space of the form $\scr S=(S_1\times
S_2,\cS_1\times \cS_2,\mu)$, where $\mu$ is a coupling of $\mu_1$ and $\mu_2$
and $\wt\W_i=W_i^{\pi_i}$, and where as before, $\pi_i$ denotes the
projection from $S_1\times S_2$ onto $S_i$ for $i=1,2$.

\begin{proposition}\label{prop:dcut-coupling}
Let $\W_i$ be graphons over $\sigma$-finite Borel spaces $\scr S_i=(S_i,\mcl
S_i,\mu_i)$, and let $\wt S_i=\{x\in S_i\colon \int |W_i(x,y)|d\mu_i(y)>0\}$,
for $i=1,2$. If $\delta_\square(\W_1,\W_2)=0$, then the restrictions of
$\W_1$ and $\W_2$ to $\wt S_1$ and $\wt S_2$ can be coupled in such a way
that they are equal a.e.
\end{proposition}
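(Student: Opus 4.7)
The plan is to use Proposition~\ref{prop27} to produce a measure $\mu$ on $S_1\times S_2$ satisfying $\|W_1^{\pi_1}-W_2^{\pi_2}\|_{\square,\mu}=0$ (equivalently $W_1^{\pi_1}=W_2^{\pi_2}$ $\mu\times\mu$-almost everywhere), and then to argue that this $\mu$, restricted to $\wt S_1\times \wt S_2$, is the coupling required by the proposition.

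First, I would reduce to the case $S_i=\wt S_i$. Since for $x\notin\wt S_i$ we have $W_i(x,y)=0$ for $\mu_i$-a.e.\ $y$, the restriction of $\W_i$ to $\wt S_i$ is equivalent in the cut metric to $\W_i$ itself, and one still has $\delta_\square=0$ between the restrictions (an application of Definition~\ref{defn1} via trivial extensions). Thus the problem reduces to the following: given two graphons whose rows are non-trivial everywhere, and with vanishing cut distance, produce a coupling of their base measures under which the pullback graphons agree a.e.

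Next, to upgrade the measure $\mu$ from Proposition~\ref{prop27} to a genuine coupling, I would combine Proposition~\ref{prop7} (every graphon is equivalent to one over $(\R_+,\lambda)$) and Proposition~\ref{prop10} (equivalent formulations of the cut metric, in particular via pullbacks by measure-preserving maps). Together these should yield measure-preserving maps $\phi_i\colon(\R_+,\lambda)\to\wt S_i$ for which $W_1\circ\phi_1=W_2\circ\phi_2$ a.e.\ on $\R_+^2$. The pushforward $\mu:=(\phi_1,\phi_2)_*\lambda$ on $\wt S_1\times\wt S_2$ then has marginals $(\phi_i)_*\lambda=\mu_i|_{\wt S_i}$ by measure-preservation and satisfies $W_1^{\pi_1}=W_2^{\pi_2}$ $\mu$-a.e., which is exactly the coupling required.

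The main obstacle is the step of upgrading cut distance zero to the actual existence of measure-preserving maps $\phi_1,\phi_2$ realizing $W_1\circ\phi_1=W_2\circ\phi_2$ exactly, or equivalently, showing that the infimum in the definition of $\delta_\square$ is attained. I anticipate attacking this by a tightness argument applied to a minimizing sequence $\mu_n$ of couplings: the integrability of $W_i$ ensures that for each $\delta>0$ the set $\{x\in\wt S_i\colon \int|W_i(x,y)|d\mu_i(y)\geq\delta\}$ has finite $\mu_i$-measure, so the $\mu_n$ are tight when restricted to products of such sets, and a diagonal extraction as $\delta\downarrow 0$ (using $\sigma$-finiteness) yields a limit measure $\mu_\infty$ witnessing the required identity. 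The Borel $\sigma$-finite hypothesis is precisely what enables both this extraction and the measurable disintegration needed to verify that $\mu_\infty$ has the right marginal structure on $\wt S_1\times\wt S_2$.
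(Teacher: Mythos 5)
Your overall plan of invoking Proposition~\ref{prop27} and then restricting the resulting measure to $\wt S_1\times \wt S_2$ is the right one, and it is essentially what the paper does (see Remark~\ref{rmk1}). However, there is a genuine gap: you never establish that $\mu_1(\wt S_1)=\mu_2(\wt S_2)$. This equality is indispensable. On the one hand, if it fails, no coupling of $\mu_1|_{\wt S_1}$ and $\mu_2|_{\wt S_2}$ exists and the statement to be proved is vacuously false. On the other hand, Proposition~\ref{prop27} itself has the hypothesis $\mu_1(S_1)=\mu_2(S_2)$, so after reducing to $S_i=\wt S_i$ you cannot invoke it without knowing the restricted spaces have equal total mass. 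Your first step, ``reduce to $S_i = \wt S_i$,'' silently assumes this. The paper supplies the missing argument by first noting (via Proposition~\ref{prop:dcut-dp}) that $\delta_\square(\W_1,\W_2)=0$ forces $\delta_1(\W_1,\W_2)=0$, hence $\delta_\square(|\W_1|,|\W_2|)=0$, and then applying Lemma~\ref{lem:D_W} to the degree functions $D_{|W_i|}$ to conclude $\mu_1(\{D_{|W_1|}>0\})=\mu_2(\{D_{|W_2|}>0\})$, which is exactly $\mu_1(\wt S_1)=\mu_2(\wt S_2)$. Without this chain (or a substitute), the proof does not go through.

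Secondly, the detour through Propositions~\ref{prop7} and~\ref{prop10} to obtain measure-preserving maps $\phi_i\colon\R_+\to\wt S_i$ with $W_1\circ\phi_1 = W_2\circ\phi_2$ a.e.\ is not justified: Proposition~\ref{prop10}(a) expresses $\delta_\square$ as an \emph{infimum} over pairs of measure-preserving maps, but cut distance zero does not by itself imply the infimum is attained by a single pair. You acknowledge this in your last paragraph and propose a tightness argument, but this is essentially re-deriving Proposition~\ref{prop27} rather than using it. Moreover, even if a limit measure $\mu_\infty$ is extracted, one still needs the content of Proposition~\ref{prop27}(iii)--(iv) --- that mass can only ``leak'' through the degree-zero sets $E_i$ --- to conclude that the restriction of $\mu_\infty$ to $\wt S_1\times\wt S_2$ has the correct marginals $\mu_i|_{\wt S_i}$ and is therefore a coupling. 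Your sketch omits that argument. The cleaner path is: establish $\mu_1(\wt S_1)=\mu_2(\wt S_2)$ as above, restrict to $\wt S_i$, observe that $\mu_i(E_i)=0$ on the restricted spaces, and then cite Proposition~\ref{prop27}(iv) directly to obtain a coupling $\mu$ with $\|W_1^{\pi_1}-W_2^{\pi_2}\|_{\square,\mu}=0$, which gives $W_1^{\pi_1}=W_2^{\pi_2}$ $\mu\times\mu$-a.e.
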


The proposition will be proved in Appendix~\ref{sec:equiv}. Note that
\citet*[Theorem~5.3]{janson16} independently proved a similar result,
building on a previous version of the present paper which did not yet contain
Proposition~\ref{prop:dcut-coupling}.  His result states that if the cut
distance between two graphons over $\sigma$-finite Borel spaces is zero, then
there are trivial extensions of these graphons such that the extensions can
be coupled so as to be equal almost everywhere. It is easy to see that our
result implies his, but we believe that with a little more work, it
should be possible to deduce ours from his as well.

\begin{remark}
Note that the classical theory of graphons on probability spaces appears as a
special case of the above definitions by taking $\scr S$ to be a probability
space.  Our definition of the cut metric $\delta_\square$ is equivalent to
the standard definition for graphons on probability spaces; see, for example,
papers by \citet*{denseconv1} and \citet*{janson-survey}. Note that
$\delta_\square$ is not a true metric, only a pseudometric, but we call it a
metric to be consistent with existing literature on graphons. However, it is
a metric on the set of equivalence classes as derived from the equivalence
relation in Definition \ref{defn1} (iii).
\end{remark}

We work with graphons defined on general $\sigma$-finite measure spaces,
rather than graphons on $\R_+$, since particular underlying spaces are more
natural to consider for certain random  graphs or networks. However, the
following proposition shows that every graphon is equivalent to a graphon
over $\R_+$.

\begin{proposition}
For each graphon $\mcl W=(W,\scr{S})$ there exists a graphon $\mcl
W'=(W',\R_+)$ such that $\delta_\square(\W,\W')=0$. \label{prop7}
\end{proposition}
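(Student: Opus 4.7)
The plan is to approximate $\W$ by step functions on $\scr S$ with nested finite partitions, transport each step function to a matching step function on $\R_+$ by matching cell sizes, and pass to the limit in $L^1$.

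Since $W\in L^1(\mu\times\mu)$ and $\scr S$ is $\sigma$-finite, I choose an increasing sequence of finite-measure sets $A_n\uparrow S$ and a nested sequence of finite measurable partitions $\cP_n=\{S^n_1,\dots,S^n_{k_n}\}$ of $A_n$ (with $\cP_{n+1}$ refining $\cP_n$ on $A_n$) such that the step functions $W_n:=\sum_{i,j}c^n_{ij}\mathbf{1}_{S^n_i\times S^n_j}$, where $c^n_{ij}$ is the mean of $W$ on $S^n_i\times S^n_j$, satisfy $\|W_n-W\|_1\to 0$; this is possible by density of simple functions in $L^1$. In parallel, I pick a compatible nested system of adjacent intervals $I^n_i\subset\R_+$ with $\lambda(I^n_i)=\mu(S^n_i)$ and define $W_n'(x,y):=c^n_{ij}$ for $x\in I^n_i$, $y\in I^n_j$, and $0$ elsewhere. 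Because the cell sizes match, $\|W_m'-W_n'\|_1=\|W_m-W_n\|_1$ whenever $\cP_m$ refines $\cP_n$, so $(W_n')$ is Cauchy in $L^1(\R_+\times\R_+)$; let $W'$ be its limit and set $\W':=(W',\R_+)$.

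The core step is to show $\delta_\square(\W_n,\W_n')=0$ via an explicit coupling. After passing to a trivial extension if needed so that $\mu(S)=\infty$ (which preserves cut distance zero by Lemma~\ref{prop21}), I define a coupling $\nu$ of $\mu$ and $\lambda$ on $S\times\R_+$ as follows: on each matched block $S^n_i\times I^n_i$ I put $\mu(S^n_i)^{-1}\mu|_{S^n_i}\otimes\lambda|_{I^n_i}$, which has total mass $\mu(S^n_i)$; on $(S\setminus A_n)\times(\R_+\setminus\bigcup_iI^n_i)$ I use any coupling of the two $\sigma$-finite measures of equal infinite total mass, which exists by Lemma~\ref{prop20}; elsewhere $\nu$ vanishes. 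A routine check shows that $\nu$ has marginals $\mu$ and $\lambda$, and that on each matched product $(S^n_i\times I^n_i)\times(S^n_j\times I^n_j)$ one has $W_n^{\pi_1}=c^n_{ij}=W_n'^{\pi_2}$, while both functions vanish outside the matched region; hence $W_n^{\pi_1}=W_n'^{\pi_2}$ holds $(\nu\otimes\nu)$-almost everywhere, giving $\delta_\square(\W_n,\W_n')=0$.

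The proof then concludes via the pseudometric triangle inequality (Proposition~\ref{prop8}) together with the standard bound $\delta_\square\leq\|\cdot\|_1$ for graphons over a common measure space:
\[
\delta_\square(\W,\W')\leq\delta_\square(\W,\W_n)+\delta_\square(\W_n,\W_n')+\delta_\square(\W_n',\W')\leq\|W-W_n\|_1+0+\|W_n'-W'\|_1\longrightarrow 0.
\]
The main subtlety I anticipate is constructing the off-diagonal piece of $\nu$ without any Borel-space hypothesis on $\scr S$; this is exactly what Lemma~\ref{prop20} supplies for arbitrary $\sigma$-finite measure spaces of equal total mass. A minor bookkeeping issue is handling the case $\mu(S)<\infty$, which is absorbed into the argument by first performing a trivial extension to infinite mass.
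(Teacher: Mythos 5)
Your proof is correct, and it takes a genuinely different route from the paper's. The paper proves Proposition~\ref{prop7} by first establishing Lemma~\ref{prop6}, that every graphon is a pullback of a graphon on a $\sigma$-finite Borel measure space (via a Cantor-cube embedding and the Doob--Dynkin lemma), and Lemma~\ref{prop9}, that every atomless $\sigma$-finite Borel space is isomorphic to an interval with Lebesgue measure; it then composes these maps and invokes pullback-invariance of $\delta_\square$. You instead approximate $W$ in $L^1$ by step functions $W_n$ over nested finite partitions of increasing finite-measure sets, transport each $W_n$ to a step function $W_n'$ on $\R_+$ with matching cell sizes, and exhibit an explicit coupling making $W_n^{\pi_1}=W_n'^{\pi_2}$ a.e., so that $\delta_\square(\W_n,\W_n')=0$; the conclusion then follows from completeness of $L^1$, the triangle inequality (Proposition~\ref{prop8}, Lemma~\ref{prop15}), and trivial-extension invariance (Lemma~\ref{prop21}). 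Your route avoids the Borel-space structure theory entirely, relying only on $L^1$ density of step functions, the existence of couplings (Lemma~\ref{prop20}), and Lemma~\ref{prop21}, all of which the paper has already proved; it is in this sense more elementary and self-contained. The paper's route is shorter once Lemmas~\ref{prop6} and~\ref{prop9} are in place, and those lemmas earn their keep because they are reused elsewhere (for example in the proofs of Theorem~\ref{prop1} and Proposition~\ref{prop10}). One small point worth making explicit in your write-up: the existence of \emph{nested} partitions realizing $\|W_n-W\|_1\to 0$ follows from taking common refinements of an arbitrary $L^1$-approximating sequence of partitions together with the $L^1$-contraction property of conditional expectation (which applies blockwise here since each block has finite $\mu\times\mu$ measure).
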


The proof of the proposition follows a similar strategy as the proof of the
analogous result for probability spaces by \citet*[Theorem 3.2]{BCL10} and
\citet*[Theorem 7.1]{janson-survey}, and will be given in Appendix
\ref{sec:equiv}. The proof uses in particular the result that an atomless
$\sigma$-finite Borel space is isomorphic to an interval equipped with
Lebesgue measure (Lemma~\ref{prop9}).

\subsection{Graph Convergence}
\label{sec:Convergence-results}

To define graph convergence in the cut metric, one traditionally
\citep*{graph-hom,ls-graphlimits,denseconv1} embeds the set of graphs into the
set of graphons via the following map. Given any finite weighted graph $G$ we
define the \emph{canonical graphon} $\mcl W^G=(W^G,[0,1])$ as follows. Let
$v_1,\dots,v_n$ be an ordering of the vertices of $G$. For any $v_i\in V(G)$
let $\alpha_i>0$ denote the weight of $v_i$, for any $(v_i,v_j)\in E(G)$ let
$\beta_{ij}\in\R$ denote the weight of the edge $(v_i,v_j)$, and for
$(v_i,v_j)\not\in V(G)$ define $\beta_{ij}=0$. By rescaling the vertex
weights if necessary we assume without loss of generality that
$\sum_{i=1}^{|V(G)|}\alpha_i=1$. If $G$ is simple all vertices have weight
$|V(G)|^{-1}$, and we define $\beta_{ij}:=\1_{(v_i,v_j)\in E(G)}$. Let
$I_1,\dots,I_n$ be a partition of $[0,1]$ into adjacent intervals of lengths
$\alpha_1,\dots,\alpha_n$ (say the first one closed, and all others half
open), and finally define $W^G$ by
\[
W^G(x_1,x_2) = \beta_{ij} \quad\text{if}\quad x_1\in I_i {\text{ and }} x_2\in I_j.
\]
Note that $W^G$ depends on the ordering of the vertices, but that different
orderings give graphons with cut distance zero. We define a sequence of
weighted, finite graphs $G_n$ to be \emph{sparse}\footnote{Note that in the
case of weighted graphs there are multiple natural definitions of what it
means for a sequence of graphs to be sparse or dense. Instead of considering
the $L^1$ norm as in our definition, one may for example consider the
fraction of edges with non-zero weight, either weighted by the vertex weights
or not. In the current paper we do not define what it means for a sequence of
weighted graphs to be dense, since it is not immediate which definition is
most natural, and since the focus of this paper is sparse graphs.} if
$\|W^{G_n}\|_1\rta 0$ as $n\rta\infty$. Note that this generalizes the
definition we gave in the very beginning of Section~\ref{sec2} for simple
graphs.

A sequence $(G_n)_{n\in\N}$ of  graphs is then defined to be \emph{convergent
in metric} if $\mcl W^{G_n}$ is a Cauchy sequence in the metric
$\delta_\square$, and it is said to be \emph{convergent to a graphon} $\W$
if $\delta_\square(\mcl W^{G_n},\W)\to 0$. Equivalently, one can define
convergence of $(G_n)_{n\in\N}$ by identifying a weighted graph $G$ with the
graphon $(\beta(G),\scr{S}_G)$, where $\scr{S}_G$ consists of the vertex set
$V(G)$ equipped with the probability measure given by the weights $\alpha_i$
(or the uniform measure if $G$ has no vertex weights), and $\beta(G)$ is the
function that maps  $(i,j)\in V(G)\times V(G)$ to $\beta_{ij}(G)$.

In the classical theory of graph convergence a sequence of sparse graphs
converges to the trivial graphon with $W\equiv 0$. This follows immediately
from the fact that $\delta_\square(\W^{G_n},0)\leq \|W^{G_n}\|_1\to 0$ for
sparse graphs. To address this problem, \citet*{br-09} and \citet*{lp1}
considered the sequence of reweighted graphons $(\W^{G_n,r})_{n\in\N}$, where
$\mcl W^{G,r}:=(W^{G,r},[0,1])$ with $W^{G,r}:=\frac 1{\|W^G\|_1}W^G$ for any
graph $G$, and defined $(G_n)_{n\in\N}$ to be convergent iff
$(\W^{G_n,r})_{n\in\N}$ is convergent. The theory developed in the current
paper considers a different rescaling, namely a rescaling of the
\emph{arguments} of the function $W^G$, which, as explained after Definition
\ref{defn1a} below, is equivalent to \emph{rescaling the measure} of the
underlying measurable space.

We define the \emph{stretched canonical graphon} $\mcl W^{G,s}$ to be
identical to $\mcl W^{G}$ except that we ``stretch'' the function $W^G$ to a
function $W^{G,s}$ such that $\|W^{G,s}\|_1=1$. More precisely, $\mcl
W^{G,s}:=(W^{G,s}, \R_+)$, where \[ W^{G,s}(x_1,x_2):=
\begin{cases}
W^{G}\left(\|W^G\|_1^{1/2}x_1,\|W^G\|_1^{1/2} x_2\right) &\text{if } 0\leq x_1,x_2\leq\|W^G\|_1^{-1/2}, \text{ and}\\
0 &\text{otherwise}.
\end{cases}
\]
Note that in the case of a simple graph $G$, each node in $V(G)$ corresponds
to an interval of length $1/|V(G)|$ in the canonical graphon $\W^G$, while it
corresponds to an interval of length $1/\sqrt{2|E(G)|}$ in the stretched
canonical graphon.

It will sometimes be convenient to define stretched canonical graphons
for graphs with infinitely many vertices (but finitely%
\footnote{More generally, in the setting of weighted graphs, we can allow for
infinitely many  edges as long as $\|\beta(G)\|_1<\infty$.} many edges). Our
definition of $W^G$ makes no sense for simple graphs with infinitely many
vertices, because they cannot all be crammed into the unit interval. Instead,
given a finite or countably infinite  graph $G$ with vertex weights
$(\alpha_i)_{i\in V(G)}$ which do not necessarily sum to $1$ (and may even
sum to $\infty$), we define a graphon $\wt {\W}^G=(\wt W^G,\R_+)$ by setting
$\wt W^G(x,y)=\beta_{ij}(G)$ if $(x,y)\in I_i\times I_j$, and $\wt
W^G(x,y)=0$ if there exist no such pair $(i,j)\in V(G)\times V(G)$, with
$I_i$ being the interval $[a_{i-1}, a_i)$ where we assume the vertices of $G$
have been labeled $1,2,\dots$,  and $a_i=\sum_{1\leq k\leq i}\alpha_k$ for
$i=0,1,\dots$. The stretched canonical graphon will then be defined as the
graphon $\mcl W^{G,s}:=(W^{G,s},\R_+)$ with
\[ W^{G,s}(x_1,x_2):=
\wt W^{G}\left(\|\wt W^G\|_1^{1/2}x_1,\|\wt W^G\|_1^{1/2} x_2\right),
\]
a definition which can easily be seen to be equivalent to the previous one if
$G$ is a finite graph.

Alternatively, one can define a stretched graphon $G^s$ as a graphon over
$V(G)$ equipped with the measure $\wh\mu_G$, where
\[
\wh\mu_G(A)=\frac 1{\sqrt{\|\beta(G)\|_1}}
\sum_{i\in A}\alpha_i
\]
for any $A\subseteq V(G)$.  In the case where $\sum_i\alpha_i<\infty$, this
graphon is obtained from the graphon representing $G$ by rescaling the
probability measure
\[
\mu_G(A)=\frac 1{\sum_{i\in V(G)}\alpha_i}
\sum_{i\in A}\alpha_i
\]
to the measure $\wh\mu_G$, while the function $\beta(G)\colon V(G)\times
V(G)\to \R$ with $(i,j)\mapsto\beta_{ij}(G)$ is left untouched.

Note that any graphon with underlying measure space $\R_+$ can be
``stretched'' in the same way as $W^G$; in other words, given any graphon
$\W=(W,\R_+)$ we may define a graphon $(W^\phi,\R_+)$, where $\phi\colon
\R_+\to\R_+$ is defined to be the linear map such that $\|W^\phi\|_1=1$,
except when $\|W\|_1=0$, in which case we define the stretched graphon to be
$0$. But for graphons over general measure spaces, this rescaling is
ill-defined.  Instead, we consider a different, but related, notion of
rescaling, by rescaling the measure of the underlying space, a notion which
is the direct generalization of our definition of the stretched graphon
$G^s$.

\begin{definition}
\begin{itemize}
\item[(i)] For two graphons $\mcl W_i=(W_i,\scr{S}_i)$ with
    $\scr{S}_i=(S_i,\mcl S_i,\mu_i)$ for $i=1,2$, define the
    \emph{stretched cut metric} $\delta_\square^s$ by
\[
\delta_\square^s(\mcl W_1,\mcl W_2) :=
\delta_\square(\wh{\mcl W}_1,\wh{\mcl W}_2),
\]
where $\wh {\mcl W}_i:=(W_i,\wh{\scr{S}}_i)$ with $\wh{\scr
S}_i:=(S_i,\cS_i,\wh\mu_i)$ and $\wh\mu_i:=\|W_i\|_1^{-1/2}\mu_i$. (In
the particular case where $\|W_i\|_1=0$, we define $\wh {\mcl
W}_i:=(0,{\scr{S}}_i)$.) Identifying $G$ with the graphon ${\wt
{\W}}^G$ introduced above, this also defines the stretched distance between
two  graphs, or a graph and a graphon.
\item[(ii)] A sequence of graphs $(G_n)_{n\in\N}$ or graphons
    $(\W_n)_{n\in\N}$ is called \emph{convergent in the stretched cut
 metric} if they form a Cauchy sequence for this metric; they are called
\emph{convergent to a graphon} $\W$ \emph{for the stretched cut metric} if
$\delta^s_\square(G_n,\W)\to 0$ or $\delta^s_\square(\W_n,\W)\to 0$,
respectively.
\end{itemize}
\label{defn1a}
\end{definition}

Note that for the case of graphons over $\R_+$, the above notion of
convergence is equivalent to the one involving the stretched graphons
$\W_i^s=(W_i^s,\R_+)$ of $\W_i$ defined by
\[
W_i^{s}(x_1,x_2):=W_i\left(\|W_i\|_1^{1/2}x_1,\|W_i\|_1^{1/2} x_2\right).
\]
To see this, just note that by the obvious coupling between $\lambda$ and
$\wh{\mu_i}$, where in this case $\wh{\mu_i}$ is a constant multiple of
Lebesgue measure, we have $\delta_\square(\W_i^s,\wh{\W}_i)=0$, and hence
$\delta_\square^s(\W_1,\W_2)=\delta_\square(\W_1^s,\W_2^s)$. As a
consequence, we have in particular that
$\delta_\square^s(G,G')=\delta_\square(G^s,(G')^s)=\delta_\square(\W^{G,s},\W^{G',s})$
for any two graphs $G$ and $G'$. Note also that the stretched cut metric does
not distinguish two graphs obtained from each other by deleting isolated
vertices, in the sense that
\begin{equation}
\label{iso-ver-inv}
\delta_\square^s(G,G')=0
\end{equation}
whenever $G$ is obtained from $G'$ by removing a set of  vertices that have
degree $0$ in $G'$.

The following basic example illustrates the difference between the notions of
convergence in the classical theory of graphons, the approach for sparse
graphs taken by \citet*{br-09} and \citet*{lp1}, and the approach of the
current paper. Proposition~\ref{prop13} below makes this comparison more
general.

\begin{examp}
Let $\alpha\in(0,1)$. For any $n\in\N$ let $G_n$ be an Erd\H{o}s-R\'{e}nyi
graph on $n$ vertices with parameter $n^{\alpha-1}$; i.e., each two vertices
of the graph are connected independently with probability $n^{\alpha-1}$. Let
$\wt G_n$ be a simple graph on $n$ vertices, such that $\lfloor
n^{(1+\alpha)/2}\rfloor$ vertices form a complete subgraph, and $n-\lfloor
n^{(1+\alpha)/2}\rfloor$ vertices are isolated. Both graph sequences are
sparse, and hence their canonical graphons converge to the trivial graphon
for which $W\equiv 0$, i.e., $\delta_\square(\mcl
W^{G_n},0),\delta_\square(\mcl W^{\wt G_n},0)\rta 0$, where we let 0 denote
the mentioned trivial graphon. The sequence $(G_n)_{n\in\N}$ converges to
$\W_1:=(\1_{[0,1]^2},[0,1])$ with the notion of convergence introduced by
\citet*{br-09} and \citet*{lp1}, but does not converge for
$\delta_\square^s$. The sequence $(\wt G_n)_{n\in\N}$ converges to $\W_1$ for
the stretched cut metric, i.e., $\delta^s_\square(\wt
G_n,\W_1)=\delta_\square(\W^{\wt G_n,s},\W_1)\rta 0$, but it does not
converge with the notion of convergence studied by \citet*{br-09} and
\citet*{lp1}. \label{ex1}
\end{examp}

The sequence $(\wt G_n)_{n\in\N}$ defined above illustrates one of our
motivations to introduce the stretched cut metric. One might argue that this
sequence of graphs should converge to the same limit as a sequence of
complete graphs; however, earlier theories for graph convergence are too
sensitive to isolated vertices or vertices with very low degree to
accommodate this.

The space of all $[0,1]$-valued graphons over $[0,1]$ is compact under the
cut metric \citep*{ls-analyst}. This implies that every sequence of simple
graphs is subsequentially convergent to some graphon under $\delta_\square$,
when we identify a graph $G$ with its canonical graphon $\W^G$. Our
generalized definition of a graphon, along with the introduction of the
stretched canonical graphon $\W^{G,s}$ and the stretched cut metric
$\delta_\square^s$, raises the question of whether a similar result holds in
this setting. We will see in Theorem~\ref{prop1} and Corollary
\ref{cor:subseq-conv} below that the answer is yes, provided we restrict
ourselves to uniformly bounded graphons and impose a suitable regularity
condition; see Definition~\ref{defn2}. The sequence $(G_n)_{n\in\N}$ in
Example~\ref{ex1} illustrates that we may not have subsequential convergence
when this regularity condition is not satisfied.

\begin{definition}
Let $\wt{\scr W}$ be a set of uniformly bounded graphons. We say that
$\wt{\scr W}$ has \emph{uniformly regular tails} if for every $\ep>0$ we can
find an $M>0$ such that for every $\mcl W=(W,\scr{S})\in\wt{\scr W}$ with
$\scr S=(S,\cS,\mu)$, there exists $U\in\cS$ such that $\|W-W\1_{U\times
U}\|_1<\ep$ and $\mu(U)\leq M$. A set $\mcl G$ of graphs has \emph{uniformly
regular tails} if $\|\beta(G)\|_1<\infty$ for all $G\in \mcl G$ and the
corresponding set of stretched canonical graphons $\{\mcl
W^{G,s}\,:\,G\in\mcl G\}$ has uniformly regular tails. \label{defn2}
\end{definition}

\begin{remark}
It is immediate from the definition that a set of simple graphs $\mcl G$ has
uniformly regular tails if and only if for each $\eps>0$ we can find $M>0$
such that the following holds. For all $G\in\mcl G$, assuming the vertices of
$G$ are labeled by degree (from largest to smallest) with ties resolved in an
arbitrary way,
\[
\sum_{i\leq \lceil M\sqrt{ |E(G)|} \rceil} \op{deg}(i;G) \leq
\eps|E(G)|.
\]
\end{remark}

In Lemma~\ref{prop26} in Appendix~\ref{sec8} we will prove that for a set of
graphs with uniformly regular tails we may assume the sets $U$ in the above
definition correspond to sets of vertices. Note that if a collection
$\wt{\scr W}$ of graphons has uniformly regular tails, then every collection
of graphons which can be derived from $\wt{\scr W}$ by adding a finite number
of the graphons to $\wt{\scr W}$ will still have uniformly regular tails. In
other words, if $\wt{\scr W},M,\ep$ are such that the conditions of
Definition~\ref{defn2} are satisfied for all but finitely many graphons in
$\wt{\scr W}$, then the collection $\wt{\scr W}$ has uniformly regular tails.

The following theorem shows that a necessary and sufficient condition for
subsequential convergence is the existence of a subsequence with uniformly
regular tails.

\begin{theorem}\label{prop1}
Every sequence $(\W_n)_{n\in\N}$ of uniformly bounded graphons with uniformly
regular tails converges subsequentially to some graphon $\W$ for the cut
metric $\delta_\square$. Moreover, if $\W_n$ is non-negative then every
$\delta_\square$-Cauchy sequence of uniformly bounded, non-negative graphons
has uniformly regular tails.
\end{theorem}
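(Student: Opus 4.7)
My plan is to prove the two directions separately, with the first (subsequential convergence) proceeding by truncation plus classical Lov\'asz--Szegedy compactness, and the second (Cauchyness implies regular tails for non-negative graphons) exploiting the identity $\|W\|_\square = \|W\|_1$ for non-negative $W$ together with a Radon--Nikodym transport argument through an approximately optimal coupling.

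For the first direction, I would first invoke Proposition~\ref{prop7} to assume without loss of generality that every $\W_n$ is defined on $(\R_+,\mcl B,\lambda)$. By uniform regularity of tails, for each $k\in\N$ there exist $M_k$ and sets $U_n^{(k)}\subseteq\R_+$ with $\lambda(U_n^{(k)})\leq M_k$ and $\|W_n - W_n\1_{U_n^{(k)}\times U_n^{(k)}}\|_1 < 1/k$; after a measure-preserving rearrangement (which preserves cut-equivalence), I would arrange these sets simultaneously to be nested intervals $[0,M_k]$, with $M_1<M_2<\cdots$ WLOG. Restricted to $[0,M_k]$, each $\W_n$ is a uniformly bounded graphon on a space of finite measure, so the classical Lov\'asz--Szegedy compactness theorem (applied after rescaling to a probability space) yields a subsequence convergent for $\delta_\square$. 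A standard diagonal extraction produces a single subsequence $(\W_{n_j})$ whose restrictions to every $[0,M_k]$ converge to a graphon $\W^{(k)}$; these are compatible across $k$ up to cut-equivalence and glue to a graphon $\W_\infty$ on $\R_+$, and the triangle inequality together with the uniform $1/k$ tail bound yields $\delta_\square(\W_{n_j},\W_\infty)\to 0$.

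For the second direction, fix $\eps>0$ and use Cauchyness to choose $m$ with $\delta_\square(\W_n,\W_m)<\eps$ for all $n\geq m$. Since $W_m\in L^1$ and $\scr S_m$ is $\sigma$-finite, pick $U_m\subseteq S_m$ with $\mu_m(U_m)\leq M_m<\infty$ and $\int_{(U_m\times U_m)^c} W_m\,d\mu_m^2 < \eps$. For each $n\geq m$, let $\mu$ be a near-optimal coupling of $\mu_n$ and $\mu_m$ on $S_n\times S_m$, and define $p_n:=d\nu_n/d\mu_n$ where $\nu_n(A):=\mu(A\times U_m)$; then $p_n\in[0,1]$ with $\int p_n\,d\mu_n=\mu_m(U_m)\leq M_m$, so $U_n:=\{p_n\geq 1/2\}$ satisfies $\mu_n(U_n)\leq 2M_m$ uniformly in $n$. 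The key calculation is to express $\int p_n(x)f_n(x)\,d\mu_n(x)$, with $f_n(x):=\int W_n(x,y)\,d\mu_n(y)$, as $\int_{(S_n\times U_m)\times(S_n\times S_m)} W_n^{\pi_n}\,d\mu^2$; the cut-norm bound together with the non-negativity of $W_m$ then shows this is within $O(\eps)$ of $\int_{U_m\times S_m} W_m\,d\mu_m^2\geq \|W_m\|_1-\eps$. Combined with $|\|W_n\|_1-\|W_m\|_1|\leq \delta_\square(\W_n,\W_m)<\eps$ (which uses $\|\cdot\|_\square=\|\cdot\|_1$ for non-negative graphons), this forces $\int(1-p_n)f_n\,d\mu_n=O(\eps)$ and hence $\|W_n-W_n\1_{U_n\times U_n}\|_1=O(\eps)$. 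The finitely many $n<m$ are handled individually by integrability, possibly at the cost of enlarging $M$.

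The main obstacle I foresee is in the first direction: arranging the sets $U_n^{(k)}$ simultaneously to be nested intervals $[0,M_k]$ while preserving cut-equivalence is subtle because the rearrangements for different $k$ must be compatible. A clean way is to construct these rearrangements inductively, using the fact that atomless $\sigma$-finite Borel spaces are isomorphic to $\R_+$ (Lemma~\ref{prop9}) and ensuring that the measure-preserving map used at stage $k+1$ agrees with that at stage $k$ on $[0,M_k]$. The second direction is cleaner in structure, but its argument is specific to non-negative graphons: without non-negativity, the transfer of $L^1$ tail concentration through the cut-norm bound breaks down.
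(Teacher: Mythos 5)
Your argument for the second direction (Cauchy implies uniformly regular tails, non-negative case) is correct and takes a genuinely different route from the paper. The paper first reduces to atomless Borel spaces, passes to graphons over $\R_+$ via Lemma~\ref{prop9}, and invokes Proposition~\ref{prop10}(c) to realize a near-optimal cut distance by an isomorphism $\phi_n$; it then pushes a fixed tail set $\wt A_N$ forward by $\phi_n$ and exploits $\|\cdot\|_\square=\|\cdot\|_1$ on non-negative functions. Your version works directly with a near-optimal coupling $\mu$, the Radon--Nikodym derivative $p_n=d\nu_n/d\mu_n$ (well-defined since $\nu_n\leq\mu_n$), and the sublevel set $U_n=\{p_n\geq 1/2\}$, pairing the cut-norm bound on the rectangle $(S_n\times U_m)\times(S_n\times S_m)$ with $|\|W_n\|_1-\|W_m\|_1|\leq\delta_\square(\W_n,\W_m)$. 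Your approach avoids the atomless reduction and the machinery of Proposition~\ref{prop10} entirely; both arguments use non-negativity at the same conceptual places (replacing cut-norm control on a rectangle by $L^1$ control, and relating $\|\cdot\|_1$ to integrals).

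The first direction contains a genuine gap in the gluing step. After truncating to $[0,M_k]$ and applying the classical compactness theorem, you obtain subsequential limits $\W^{(k)}$ on $[0,M_k]$, each determined only up to cut-equivalence, via couplings internal to a black-box invocation of the theorem. These couplings may scramble $[0,M_j]$ with $[M_j,M_k]$, so there is no relation between a chosen representative of $\W^{(k)}|_{[0,M_j]}$ and $\W^{(j)}$, and ``compatible up to cut-equivalence'' has no content as stated: restriction does not descend to cut-equivalence classes. The fix you propose — building the rearrangements inductively so that the map at stage $k+1$ extends that at stage $k$ — addresses arranging the tail sets $U_n^{(k)}$ as nested intervals, which is a solvable problem, but it does not reach the real obstruction, namely the rearrangements that the compactness theorem silently applies when producing its subsequential limit. (One can show using the uniform tail bound that $\delta_\square(\W^{(j)},\W^{(k)})\leq 1/j$, so $(\W^{(k)})$ is Cauchy in $\delta_\square$ — but concluding convergence of this Cauchy sequence is precisely the theorem one is trying to prove.) The paper's proof resolves this by not treating compactness as a black box: it constructs a single family of equitable Szemer\'edi partitions $\mcl P_{n,k}$ of $S_n$ that are explicitly refining in $k$ (property (iv) of the construction), which forces the step-function limits $U_k$ to be a martingale in $k$ and lets them be glued by almost-everywhere martingale convergence. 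That nesting, obtained by running the regularity argument coherently across scales, is the missing ingredient in your outline and is not recoverable by post-composing with measure-preserving maps at the outer level.
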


The proof of the theorem will be given in Appendix~\ref{sec:compact}. The
most challenging part of the proof is to show that uniform regularity of
tails implies subsequential convergence. We prove in Lemma~\ref{prop25} that
the property of having uniformly regular tails is invariant under certain
operations, which allows us to prove subsequential convergence similarly as
in the setting of dense graphs, i.e., by approximating the graphons by step
functions and using a martingale convergence theorem.

Two immediate corollaries of Theorem~\ref{prop1} are the following results.

\begin{corollary}
\label{cor:complete} The set of all $[0,1]$-valued graphons is complete for
the cut metric $\delta_\square$, and hence also for $\delta_\square^s$.
\end{corollary}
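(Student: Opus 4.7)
The plan is to derive both completeness statements from Theorem \ref{prop1}, using the standard fact that a Cauchy sequence containing a convergent subsequence converges to that same limit. Let $(\W_n)_{n\in\N}$ be a $\delta_\square$-Cauchy sequence of $[0,1]$-valued graphons. Since such graphons are uniformly bounded by $1$ and non-negative, the second assertion of Theorem \ref{prop1} implies that $(\W_n)_{n\in\N}$ has uniformly regular tails. The first assertion then furnishes a subsequence $(\W_{n_k})_{k\in\N}$ converging in $\delta_\square$ to some graphon $\W$, and the triangle inequality for the pseudometric $\delta_\square$ (Proposition \ref{prop8}) gives $\delta_\square(\W_n,\W)\to 0$.

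The main technical point is to verify that $\W=(W,\scr S)$ can be taken to be $[0,1]$-valued. I would show that replacing $W$ by its truncation $\wt W := (W\wedge 1)\vee 0$ leaves the equivalence class unchanged. The idea is that if $W$ took values outside $[0,1]$ on a set of positive measure, then for any coupling realizing a small cut distance between $\W_{n_k}$ and $\W$, one could localize the sets $U,V$ in the cut norm definition to the regions $\{W^{\pi_1}>1\}$ and $\{W^{\pi_1}<0\}$ to obtain a contribution to $\|W^{\pi_1}-W_{n_k}^{\pi_2}\|_\square$ bounded away from zero (since $W_{n_k}\in[0,1]$), contradicting $\delta_\square(\W_{n_k},\W)\to 0$. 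This is the only step requiring real care; it adapts the standard argument from the classical theory of $[0,1]$-valued graphons over probability spaces to our possibly infinite base spaces.

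For completeness in $\delta^s_\square$, let $(\W_n)_{n\in\N}$ be a $\delta^s_\square$-Cauchy sequence of $[0,1]$-valued graphons. By Definition \ref{defn1a}(i) the rescaled graphons $\wh\W_n$ are $[0,1]$-valued and form a $\delta_\square$-Cauchy sequence, so by the case just proved they converge in $\delta_\square$ to some $[0,1]$-valued graphon $\wh\W$. Taking $U=V$ equal to the full product space in the definition of the cut norm shows that $\delta_\square$-convergence of non-negative graphons forces convergence of their total integrals; since $\|\wh W_n\|_1\in\{0,1\}$ for every $n$, we conclude $\|\wh W\|_1\in\{0,1\}$, so the rescaling in Definition \ref{defn1a}(i) acts trivially on $\wh\W$. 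Hence $\delta^s_\square(\W_n,\wh\W)=\delta_\square(\wh\W_n,\wh\W)\to 0$, which completes the proof.
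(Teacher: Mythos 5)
Your overall strategy — derive completeness from Theorem~\ref{prop1} via the Cauchy-plus-convergent-subsequence argument, and then reduce $\delta^s_\square$-completeness to $\delta_\square$-completeness via the rescaled graphons $\wh\W_n$ — is precisely what the paper has in mind when it calls Corollary~\ref{cor:complete} an ``immediate'' consequence of Theorem~\ref{prop1}, and your treatment of the stretched case is correct (including the observation that the total integral of a non-negative graphon is $1$-Lipschitz under $\delta_\square$, so $\|\wh W\|_1\in\{0,1\}$).

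The one step that does not go through as written is your justification that the limit $\W=(W,\scr S)$ can be taken $[0,1]$-valued. You propose to ``localize the sets $U,V$ in the cut norm definition to the regions $\{W^{\pi_1}>1\}$ and $\{W^{\pi_1}<0\}$.'' But $W^{\pi_1}-W_{n_k}^{\pi_2}$ is a function on $(S\times S_{n_k})\times(S\times S_{n_k})$, so $\{W^{\pi_1}>1\}$ is a subset of that fourfold product, whereas the sets $U,V$ appearing in the cut norm $\|\cdot\|_{\square,\mu}$ must be subsets of the coupling space $S\times S_{n_k}$; the set you want to use is simply not an admissible choice. Even setting this aside, ``the integrand exceeds $\eta$ on a set of positive measure'' does not by itself bound the cut norm from below, because the cut norm samples the integrand only over product sets $U\times V$ and allows arbitrary cancellation on such sets. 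The clean way to dispose of this point is to look inside the proof of Theorem~\ref{prop1}: the limit $\mcl U=(U,\R_+)$ is produced there as an almost-everywhere pointwise limit of step functions $U_k\colon\R_+^2\to[0,1]$, each of which is in turn a pointwise limit of the functions $W_{n,k}$ obtained by averaging the original $W_n$ over finite partitions. Since averaging preserves the range $[0,1]$, every $W_{n,k}$ and hence every $U_k$ and hence $U$ itself takes values in $[0,1]$, and no truncation is needed. With that replacement your argument is complete.
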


\begin{corollary}
\label{cor:subseq-conv} Let $(G_n)_{n\in\N}$ be a sequence of finite graphs
with non-negative, uniformly bounded edge weights such that
$|E(G_n)|<\infty$ for each $n\in\N$. Then the following hold:
\begin{enumerate}
\item[(i)] If $(G_n)_{n\in\N}$ has uniformly regular tails, then
    $(G_n)_{n\in\N}$ has a subsequence that converges to some graphon $\W$
    in the stretched cut metric.
\item[(ii)] If $(G_n)_{n\in\N}$ is a $\delta^s_\square$-Cauchy sequence,
    then it has uniformly regular tails.
\item[(iii)] If $(G_n)_{n\in\N}$ is a $\delta^s_\square$-Cauchy sequence,
    then it converges to some graphon $\W$  in the stretched cut metric.
\end{enumerate}
\end{corollary}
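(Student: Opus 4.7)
The plan is to reduce all three statements to Theorem \ref{prop1} applied to the stretched canonical graphons $\W^{G_n,s}$. Observe first that since the edge weights of $G_n$ are non-negative and uniformly bounded by some constant $C<\infty$, and since stretching only rescales the domain, each $\W^{G_n,s}$ is a non-negative graphon taking values in $[0,C]$. Moreover, by definition of the stretching, $\|W^{G_n,s}\|_1=1$ whenever $|E(G_n)|>0$ (the case $|E(G_n)|=0$ gives the zero graphon and is handled trivially). Finally, by Definition \ref{defn1a}, a sequence of graphs is Cauchy (resp.\ convergent to $\W$) in $\delta_\square^s$ if and only if the associated sequence of stretched canonical graphons is Cauchy (resp.\ convergent to $\W$) in $\delta_\square$.

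For part (i), I would invoke Definition \ref{defn2}: by hypothesis the set $\{\W^{G_n,s}:n\in\N\}$ has uniformly regular tails. Since this set consists of uniformly bounded graphons, the first half of Theorem \ref{prop1} yields a subsequence that converges in $\delta_\square$ to some graphon $\W$, and by the above equivalence this is the desired subsequential convergence of $(G_n)$ in $\delta_\square^s$.

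For part (ii), assume $(G_n)$ is $\delta_\square^s$-Cauchy, so that $(\W^{G_n,s})$ is $\delta_\square$-Cauchy. Because these graphons are non-negative and uniformly bounded, the second half of Theorem \ref{prop1} implies that the set $\{\W^{G_n,s}:n\in\N\}$ has uniformly regular tails. By Definition \ref{defn2} this is exactly the statement that $(G_n)$ has uniformly regular tails.

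For part (iii), I would combine the previous two parts with the standard fact that in a pseudometric space a Cauchy sequence which has a convergent subsequence must itself converge to the same limit. Explicitly, if $(G_n)$ is $\delta_\square^s$-Cauchy, then part (ii) gives that it has uniformly regular tails, and then part (i) produces a subsequence $(G_{n_k})$ converging to some graphon $\W$ in $\delta_\square^s$. Given $\varepsilon>0$, choose $N$ such that $\delta_\square^s(G_n,G_m)<\varepsilon/2$ for $n,m\geq N$, and choose $k$ with $n_k\geq N$ and $\delta_\square^s(G_{n_k},\W)<\varepsilon/2$; the triangle inequality for $\delta_\square^s$ (which follows from Proposition \ref{prop8}) then gives $\delta_\square^s(G_n,\W)<\varepsilon$ for all $n\geq N$. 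There is no substantive obstacle here: the entire corollary is essentially a packaging of Theorem \ref{prop1} through the definitions of the stretched graphon and of uniform regularity of tails for graphs.
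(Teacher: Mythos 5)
Your proof is correct and is exactly the intended reduction to Theorem~\ref{prop1} via Definitions~\ref{defn1a} and~\ref{defn2}; the paper states this corollary as ``immediate'' from the theorem and offers no separate argument, and your packaging (apply the theorem to $\{\W^{G_n,s}\}$, then transport via the definition of the stretched metric, and finish (iii) with the standard Cauchy-plus-convergent-subsequence argument in a pseudometric space) is the right one. One small point worth tightening: the blanket statement that $\delta_\square^s$-convergence of $(G_n)$ to a graphon $\W$ is equivalent to $\delta_\square$-convergence of $(\W^{G_n,s})$ to $\W$ only holds when $\hat\W=\W$, i.e.\ when $\|W\|_1=1$; this is indeed satisfied by the limit Theorem~\ref{prop1} produces, because $\|W^{G_n,s}\|_1=1$ and for non-negative graphons $|\|W_1\|_1-\|W_2\|_1|\le\delta_\square(\W_1,\W_2)$ (take $U=V=$ the whole space in the cut norm over any coupling), but you should say so explicitly rather than assert the equivalence unconditionally.
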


The former of the above corollaries makes two assumptions: (i) the
graphons are uniformly bounded, and (ii) the graphons are non-negative. We
remark that both of these conditions are necessary.

\begin{remark}
The set of all $\R_+$-valued graphons is not complete for the cut metric
$\delta_\square$; see for example the argument of \citet*[Proposition
2.12(b)]{lp1} for a counterexample. The set of all $[-1,1]$-valued graphons
is also not complete, as the following example suggested to us by Svante
Janson illustrates. For each $n\in\N$ let $\cV_n=(V_n,\R_+)$ be a
$\{-1,1\}$-valued graphon supported in $[n-1,n]^2$ satisfying
$\|V_n\|_\square<2^{-n}$ and $\|V_n\|_1=1$, by defining $\cV_n$ to be an
appropriately rescaled version of a graphon for a sufficiently large
Erd\H{o}s-R\'{e}nyi random graph with edge density $1/2$. Define
$\W_n=(W_n,\R_+)$ by $W_n:=\sum_{k=1}^{n} V_k$, and assume there is a graphon
$\W=(W,\R_+)$ such that $\lim_{n\rta\infty}\delta_\square(\W,\W_n)=0$. Then
we can find a sequence of measure-preserving transformations
$(\phi_n)_{n\in\N}$ with $\phi_n\colon \R_+\to\R_+$, such that
$\lim_{n\rta\infty}\|W^{\phi_n}-W_n\|_\square=0$. This implies that
$\lim_{n\rta\infty}\|W^{\phi_n}\1_{[k-1,k]^2}-V_k\|_\square=0$ for each
$k\in\N$. Since $V_k$ is a graphon associated with an Erd\H{o}s-R\'{e}nyi
random graph it is a step graphon. For any intervals $I,J\subseteq\R_+$ such
that $V_k|_{I\times J}=1$ or  $V_k|_{I\times J}=-1$ we have
$\lim_{n\rta\infty}\|(W^{\phi_n}\1_{[k-1,k]^2}-V_k)\1_{I\times
J}\|_\square=0$, so since $W$ takes values in $[-1,1]$ we have
$\lim_{n\rta\infty}\|W^{\phi_n}\|_{L^1(I\times J)}=\|V_k\|_{L^1(I\times J)}$.
Since $\|V_k\|_{L^1([k-1,k]^2)}=1$ this implies that
$\lim_{n\rta\infty}\|W^{\phi_n}\|_{L^1([k-1,k]^2)}= 1$. We have obtained a
contradiction to the assumption that $\W$ is a graphon, since for each
$n\in\N$ we have $\|W\|_1\geq \sum_{k=1}^{\infty}
\|W^{\phi_n}\|_{L^1([k-1,k]^2)}$.
\end{remark}

\begin{remark}
For comparison, \citet*[Theorem~5.1]{ls-analyst} proved that $[0,1]$-valued
graphons on the probability space $[0,1]$ (and hence any probability space)
form a compact metric space under $\delta_\square$.  Compactness fails in our
setting, because convergence requires uniformly regular tails, but
completeness still holds.
\end{remark}

Our next result compares the theory of graph convergence developed by
\citet*{lp1,lp2} with the theory developed in this paper. First we will
define the \emph{rescaled cut metric} $\delta_\square^r$. A sequence of
graphs is convergent in the sense considered by \citet*{lp1,lp2} iff it
converges for this metric.  For two graphons $\mcl W_1=(W_1,\scr{S}_1)$ and
$\mcl W_2=(W_2,\scr{S}_2)$, where $\scr{S}_1$ and $\scr{S}_2$ are measure
spaces of the same total measure, define $\wt W_1:=\|W_1\|_1^{-1} W_1$, $\wt
W_2:=\|W_2\|_1^{-1} W_2$, and
\[
\delta_\square^r(\mcl W_1,\mcl W_2) := \inf_\mu \|\wt W_1^{\pi_1}-\wt W_2^{\pi_2}\|_{\square,S_1 \times S_2,\mu},
\]
where we take the infimum over all measures $\mu$ on $S_1\times S_2$ with
marginals $\mu_1$ and $\mu_2$, respectively. For any graphs $G$ and $G'$ we
let $\mcl W^G$ and $\mcl W^{G'}$, respectively, denote the canonical graphons
associated with $G$ and $G'$, and for any graphon $\mcl W$ we define
\[
\delta_\square^r(G,\mcl W) :=  \delta_\square^r(\mcl W^G,\mcl W),\qquad
\delta_\square^r(G,G') :=  \delta_\square^r(\mcl W^G,\mcl W^{G'}).
\]

For the notion of convergence studied by \citet*{lp1}, \emph{uniform upper
regularity} plays a similar role to that of regularity of tails in the
current paper. More precisely, subsequential uniform upper regularity for a
sequence of graphs or graphons defined over a probability space is equivalent
to subsequential convergence to a graphon for the metric $\delta_\square^r$
\citep*[Appendix~C]{lp1}.  The primary conceptual
difference is that the analogue of Corollary~\ref{cor:complete} does not hold
in the theory studied by \citet*{lp1}.

We will now define what it means for a sequence of graphs or graphons to be
uniformly upper regular. A \emph{partition} of a measurable space $(S,\cS)$
is a finite collection $\mcl P$ of disjoint elements of $\cS$ with union $S$.
For any graphon $\mcl W=(W,\scr{S})$ with $\scr{S}=(S,\cS,\mu)$ and a
partition $\mcl P$ of $(S,\cS)$ into parts of nonzero measure, define $\mcl
W_{\mcl P}$ by averaging $W$ over the partitions. More precisely, if $\mcl
P=\{I_i\,:\,i=1,\dots,m\}$ for some $m\in\N$, define $\mcl W_{\mcl
P}:=((W)_{\mcl P},\scr{S})$, where
\[
(W_{\mcl P})(x_1,x_2):= \frac{1}{\mu(I_i)\mu(I_j)} \int_{I_i\times I_j}
W(x'_1,x'_2)\,dx'_1\,dx'_2\qquad \text{if } (x_1,x_2)\in I_i\times I_j.
\]
A sequence $(\mcl W_n)_{n\in\N}$ of graphons $\mcl W_n=(W_n,\scr S_n)$ over
probability spaces $\scr{S}_n=(S_n,\cS_n,\mu_n)$ is \emph{uniformly upper
regular} if there exists a function $K\colon (0,\infty)\to(0,\infty)$ and a
sequence $\{\eta_n\}_{n\in\N}$ of positive real numbers converging to zero,
such that for every $\ep>0$, $n\in\N$, and partition $\mcl P$ of $S_n$ such
that the $\mu_n$-measure of each part is at least $\eta_n$, we have
\[
\|(W_n)_{\mcl P}\1_{|(W_n)_{\mcl P}|\geq K(\ep)}\|_1\leq \ep.
\]
For any graph $G$ define the \emph{rescaled canonical graphon} $\mcl
W^{G,r}=(W^{G,r},[0,1])$ of $G$ to be equal to the canonical graphon $\mcl
W^G$ of $G$, except that we rescale the graphon such that $\|W^{G,r}\|_1=1$.
More precisely, we define $\W^{G,r}:=(W^{G,r},[0,1])$ with
$W^{G,r}:=\|W^G\|^{-1}_1W^G$. We say that a sequence of graphs
$(G_n)_{n\in\N}$ is uniformly upper regular if $(\W^{G_n,r})_{n\in\N}$ is
uniformly upper regular, where we only consider partitions $\mcl P$
corresponding to partitions of $V(G_n)$, and we require every vertex of $G_n$
to have weight less than a fraction $\eta_n$ of the total weight of $V(G_n)$.

The following proposition, which will be proved in Appendix~\ref{sec8},
illustrates the very different nature of the sparse graphs studied by
\citet*{lp1,lp2} and the graphs studied in this paper.

\begin{proposition}
Let $(G_n)_{n\in\N}$ be a sequence of simple graphs satisfying
$|V(G_n)|<\infty$ for each $n\in\N$.
\begin{itemize}
\item[(i)] If $(G_n)_{n\in\N}$ is sparse it cannot both be uniformly upper
    regular and have uniformly regular tails; hence it cannot converge for
    both metrics $\delta_\square^s$ and $\delta_\square^r$ if it is sparse.
\item[(ii)] Assume $(G_n)_{n\in\N}$ is dense and has convergent edge
    density. Then $(G_n)_{n\in\N}$ is a Cauchy sequence for
    $\delta_\square^s$ iff it is a Cauchy sequence for $\delta_\square^r$.
    If we do not assume convergence of the edge density, being a Cauchy
    sequence for $\delta_\square^r$ (resp.\ $\delta_\square^s$) does not
    imply being a Cauchy sequence for $\delta_\square^s$ (resp.\
    $\delta_\square^r$).
\end{itemize}
\label{prop13}
\end{proposition}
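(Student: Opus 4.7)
The plan for part~(i) is a direct contradiction argument. Fix some $\eps\in(0,1/2)$. Uniform regularity of tails, together with the reduction of Lemma~\ref{prop26}, provides $M>0$ and, for each $n$, a vertex set $U_n\subseteq V(G_n)$ with $|U_n|\leq M\sqrt{2|E(G_n)|}$ such that at least $(1-\eps)|E(G_n)|$ edges of $G_n$ have both endpoints in $U_n$. Uniform upper regularity, applied to the rescaled canonical graphons $\mcl W^{G_n,r}$, yields $K=K(\eps)>0$ and a sequence $\eta_n\to 0$ such that every partition $\mcl P$ of $V(G_n)$ into parts of normalized measure at least $\eta_n$ satisfies $\|(W^{G_n,r})_{\mcl P}\1_{\{|\cdot|\geq K\}}\|_1\leq \eps$. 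I then use the two-part partition $\mcl P_n=\{U_n',V(G_n)\setminus U_n'\}$, where $U_n'\supseteq U_n$ is enlarged (if $|U_n|<\eta_n|V(G_n)|$) by arbitrary extra vertices so that $|U_n'|\geq\eta_n|V(G_n)|$. On the block $U_n'\times U_n'$, the averaged value of $W^{G_n,r}$ equals $|V(G_n)|^2|E(U_n')|/(|U_n'|^2|E(G_n)|)\geq (1-\eps)|V(G_n)|^2/|U_n'|^2$, which tends to infinity because $|U_n'|^2$ is at most $\max(2M^2|E(G_n)|,\eta_n^2|V(G_n)|^2)$, while $1/\rho(G_n)\to\infty$ by sparsity and $1/\eta_n^2\to\infty$ by construction. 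Hence for all large $n$ this value exceeds $K$, and the $L^1$ mass it contributes is $|E(U_n')|/|E(G_n)|\geq 1-\eps>\eps$, contradicting uniform upper regularity. The second clause of~(i) follows: Corollary~\ref{cor:subseq-conv}(ii) gives uniformly regular tails from $\delta_\square^s$-convergence, while the analogous characterization of $\delta_\square^r$-convergence via uniform upper regularity cited from \citet*{lp1} above gives uniform upper regularity from $\delta_\square^r$-convergence, and both cannot coexist for a sparse sequence.

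For the ``iff'' in part~(ii), the key observation is that for any graph $G$ with $|E(G)|>0$, the stretched canonical graphon $\mcl W^{G,s}$ is obtained from the rescaled canonical graphon $\mcl W^{G,r}$ by multiplying the edge function by $\rho(G)$ and the underlying measure by $\rho(G)^{-1/2}$. A direct change-of-variables shows that these compensating operations preserve both the $L^1$ norm and the cut norm. Under the hypothesis $\rho_n\to c>0$ the two factors are bounded and convergent, so using the triangle inequality I would write
\[
\delta_\square^s(G_n,G_m)\leq \delta_\square^r(G_n,G_m)+\alpha_{n,m},
\]
where $\alpha_{n,m}$ denotes the cut distance between the same graphon $\mcl W^{G_m,r}$ stretched by the two different factors $\rho_n^{-1/2}$ and $\rho_m^{-1/2}$, and vanishes as both densities approach $c$ (this uses uniform boundedness of the rescaled graphons, which follows from denseness). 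A symmetric argument gives the reverse inequality, so the two Cauchy properties are equivalent.

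For the counterexamples in the final clause of~(ii), I would interleave two dense sequences with different limiting edge densities. For a sequence that is $\delta_\square^r$-Cauchy but not $\delta_\square^s$-Cauchy, take $G_{2k}=K_{2k}$ and let $G_{2k+1}$ be a Paley graph (or any deterministic quasi-random graph) of density $1/2$ on $2k+1$ vertices: both rescaled graphons converge to the constant graphon $1$ on $[0,1]$, while the stretched graphons converge to $\1_{[0,1]^2}$ and $\tfrac12\1_{[0,\sqrt 2]^2}$ respectively, which are inequivalent since their $L^2$ norms differ. For a $\delta_\square^s$-Cauchy but not $\delta_\square^r$-Cauchy sequence, interleave $K_n$ with the disjoint union of $K_{\lfloor n/2\rfloor}$ and $\lceil n/2\rceil$ isolated vertices: by \eqref{iso-ver-inv} both stretched versions converge to $\1_{[0,1]^2}$, whereas the rescaled versions converge to the constant $1$ on $[0,1]$ and to $4\cdot\1_{[0,1/2]^2}$ respectively, which are inequivalent graphons. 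The main obstacle is making the ``iff'' rigorous: the stretched graphons $\mcl W^{G_n,s}$ and $\mcl W^{G_m,s}$ live on measure spaces of different total mass, so one must carefully construct couplings on their trivial extensions to $\R_+$ and quantify the error $\alpha_{n,m}$ caused by the mismatched stretchings when $\rho_n\neq\rho_m$; the convergent-density hypothesis is precisely what forces this error to vanish.
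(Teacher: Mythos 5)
Your proof is correct and follows essentially the same route as the paper's: a two-part partition argument for~(i) exploiting the tension between sparsity (which forces the density of the rescaled graphon on the concentrated block to blow up) and uniform upper regularity, and Lemmas~\ref{prop15} and~\ref{prop16} for the ``iff'' in~(ii), with analogous interleaving counterexamples for the final clause. The minor differences are cosmetic: you phrase the partition in~(i) directly in terms of vertex sets $U_n'$ (after enlarging $U_n$ to satisfy the size floor $\eta_n$), whereas the paper works with the interval $[0,a_n']\subseteq[0,1]$; and for the ``iff'' you organize the argument around the cut-distance-preserving operator $T_\rho$ (multiply the edge function by $\rho$ and the measure by $\rho^{-1/2}$), comparing $T_{\rho_n}(\W^{G_m,r})$ with $T_{\rho_m}(\W^{G_m,r})$, whereas the paper compares both $\W^{G_n,s}$ and $\W^{G_n,r}$ to the fixed intermediate rescaling $W^{G_n}(\rho^{1/2}\cdot\phantom{},\rho^{1/2}\cdot\phantom{})$. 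One small remark on phrasing: what you actually need for $\alpha_{n,m}\to 0$ is that $\rho_n/\rho_m\to 1$ together with $\|W^{G_m,r}\|_1=1$ (feeding into Lemmas~\ref{prop15} and~\ref{prop16}), not really ``uniform boundedness of the rescaled graphons'' per se — the convergent positive density is what makes the error terms vanish. For the counterexamples, using a Paley graph in place of an Erd\H{o}s--R\'{e}nyi graph to deterministically realize a density-$1/2$ quasirandom sequence is a fine substitute, and your $L^2$-norm (equivalently, degree-distribution via Lemma~\ref{lem:D_W}) argument for inequivalence of the stretched limits is a clean way to make explicit a point the paper leaves implicit.
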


Many natural properties of graphons are continuous under the cut metric, for
example certain properties related to the degrees of the vertices. For
graphons defined on probability spaces it was shown by
\citet*[Section~2.6]{w-estimation} that the appropriately normalized degree
distribution is continuous under the cut metric. A similar result holds in
our setting, but the normalization is slightly different: instead of the
proportion of vertices whose degrees are at least $\lambda$ times the average
degree, we will consider a normalization in terms of the square root of the
number of edges. Given a graph $G$ and vertex $v \in V(G)$, let $d_G(v)$
denote the degree of $v$, and given a graphon $\W=(W,(S,\cS,\mu))$, define
the analogous function $D_W \colon S \to \R$ by
\[
D_W(x) = \int_S W(x,y) \, d\mu(y).
\]
The following proposition is an immediate consequence of Lemma~\ref{lem:D_W}
in Appendix~\ref{sec4},
which compares the functions $D_{W_1}$ and $D_{W_2}$ for graphons that are
close in the cut metric.

\begin{proposition}
\label{pro:deg-conv} Let $\W_n=(W_n,(S_n,\cS_n,\mu_n))$ be a sequence of
graphons that converge to a graphon $\W=(W,(S,\cS,\mu))$ in the cut metric
$\delta_\square$, and let $\lambda>0$ be a point where the function
$\lambda\mapsto \mu(\{D_W>\lambda\})$ is continuous. Then
$\mu_n(\{D_{W_n}>\lambda\})\to \mu(\{D_W>\lambda\})$. In particular,
\[
\frac 1{\sqrt{2|E(G_n)|}}\left|\left\{v\in V(G_n)\colon d_{G_n}(v)> {\lambda}\sqrt{2|E(G_n)|}\right\} \right|
\to \mu(\{D_{W^s}>\lambda\})
\]
whenever  $G_n$ is a sequence of finite simple graphs that converge to a
graphon $\W^s$ in the stretched cut metric and $\mu(\{D_{W^s}>\lambda\})$ is
continuous at $\lambda$.
\end{proposition}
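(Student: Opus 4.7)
The plan is to derive Proposition~\ref{pro:deg-conv} from Lemma~\ref{lem:D_W}, which I expect to establish the quantitative bound
\[
\int_{S_1\times S_2}|D_{W_1}(\pi_1)-D_{W_2}(\pi_2)|\,d\nu \;\leq\; 2\,\|W_1^{\pi_1}-W_2^{\pi_2}\|_{\square,\nu}
\]
for any coupling $\nu$ of (trivially extended) graphons $\W_1,\W_2$ of equal total mass. To see this bound, I would set $V_\pm := \{(x_1,x_2): \pm(D_{W_1}(x_1)-D_{W_2}(x_2))>0\}$; applying the cut-norm supremum with $U=V_\pm$ and $V$ the entire coupled space identifies the resulting double integral as $\pm\int_{V_\pm}(D_{W_1}(\pi_1)-D_{W_2}(\pi_2))\,d\nu$, each bounded in absolute value by $\|W_1^{\pi_1}-W_2^{\pi_2}\|_{\square,\nu}$; summing the two inequalities yields the claim. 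Integrability of $W_i$ guarantees $D_{W_i}\in L^1$, so taking $V$ to be the full space causes no issue even when $\mu_i(S_i)=\infty$.

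Given this lemma, for the first part of the proposition I would choose couplings $\nu_n$ of the (trivially extended) underlying measures with $\|W_n^{\pi_1}-W^{\pi_2}\|_{\square,\nu_n}\to 0$, obtaining $D_{W_n}(\pi_1)\to D_W(\pi_2)$ in $L^1(\nu_n)$; by Markov's inequality, $\nu_n(\{|D_{W_n}(\pi_1)-D_W(\pi_2)|\geq\delta\})\to 0$ for each $\delta>0$. Using the inclusion
\[
\{D_{W_n}(\pi_1)>\lambda\}\cap\{D_W(\pi_2)\leq\lambda-\delta\}\;\subseteq\; \{|D_{W_n}(\pi_1)-D_W(\pi_2)|\geq\delta\}
\]
and its counterpart with the roles of $D_{W_n}$ and $D_W$ swapped, together with the marginal identities $\nu_n\circ\pi_1^{-1}=\mu_n$ and $\nu_n\circ\pi_2^{-1}=\mu$, I would deduce for each $\delta\in(0,\lambda)$ the sandwich
\[
\mu(\{D_W>\lambda+\delta\}) - o(1) \;\leq\; \mu_n(\{D_{W_n}>\lambda\}) \;\leq\; \mu(\{D_W>\lambda-\delta\}) + o(1).
\]
Letting $n\to\infty$ first and then $\delta\to 0^+$, continuity at $\lambda$ of the monotone function $\lambda'\mapsto\mu(\{D_W>\lambda'\})$ squeezes the two outer bounds to $\mu(\{D_W>\lambda\})$, proving the first claim.

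For the second claim I would apply the first part to $\W_n:=\W^{G_n,s}$ and $\W:=\W^s$, noting that on the stretched canonical graphon $\W^{G,s}$ of a simple finite graph $G$, each vertex $v$ corresponds to an interval of length $1/\sqrt{2|E(G)|}$ on which $D_{W^{G,s}}$ takes the constant value $d_G(v)/\sqrt{2|E(G)|}$, so a direct computation gives
\[
\mu(\{D_{W^{G_n,s}}>\lambda\}) = \frac{1}{\sqrt{2|E(G_n)|}}\,\bigl|\{v\in V(G_n) : d_{G_n}(v)>\lambda\sqrt{2|E(G_n)|}\}\bigr|.
\]
The main technical obstacle I foresee is establishing the $L^1$-to-cut-norm bound of Lemma~\ref{lem:D_W} cleanly in the $\sigma$-finite setting (the Markov step above requires $\nu_n$-integrability of $|D_{W_n}(\pi_1)-D_W(\pi_2)|$, which comes directly from the lemma), together with verifying that the trivial extension used to couple spaces of possibly different total mass does not alter the super-level sets $\{D_W>\lambda\}$ for $\lambda>0$; this last point is straightforward, since the extended graphon vanishes off the original support and hence its degree function is zero there.
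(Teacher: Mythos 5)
Your proposal is correct and follows essentially the same route as the paper. The paper's Lemma~\ref{lem:D_W} is exactly the superlevel-set bound you derive: its proof reconstructs the same $L^1$ estimate $\int|D_{W_1}(\pi_1)-D_{W_2}(\pi_2)|\,d\nu\leq 2\|W_1^{\pi_1}-W_2^{\pi_2}\|_{\square,\nu}$ by taking $V$ to be the full coupled space, applies Markov's inequality, and uses the same set-inclusion argument; the paper then deduces the proposition via the identical squeeze using continuity at $\lambda$, and treats the reduction from the graph statement to the graphon statement (your explicit computation of $D_{W^{G,s}}$) as immediate.
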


Our final result in this section, which will be proved in
Appendix~\ref{sec8}, is that graphs which converge for the stretched cut
metric have unbounded average degree under certain assumptions, a result
which also holds for graphs that converge under the rescaled cut metric
\citep*[Proposition~C.15]{lp1}.

\begin{proposition}
Let $(G_n)_{n\in\N}$ be a sequence of finite simple graphs such that the
number of isolated vertices in $G_n$ is $o(|E(G_n)|)$ and such that
$\lim_{n\rta\infty}|E(G_n)|=\infty$. If  there is a graphon $\W$ such that
$\lim_{n\rta\infty}\delta^s_\square(G_n,\W)=0$, then $(G_n)_{n\in\N}$ has
unbounded average degree. \label{prop:unbdd_avg_deg}
\end{proposition}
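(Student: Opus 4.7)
The plan is to argue by contradiction. Suppose the average degree is bounded, i.e., $2|E(G_n)|/|V(G_n)|\leq D$ for some constant $D$ and all $n$ large. Combined with the hypothesis that the number of isolated vertices is $o(|E(G_n)|)$, the number $N_n$ of non-isolated vertices of $G_n$ satisfies
\[
N_n\;\geq\;\frac{2|E(G_n)|}{D}-o(|E(G_n)|)\;\geq\;\frac{|E(G_n)|}{D}
\]
for all sufficiently large $n$.

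Next, since $\delta_\square^s(G_n,\W)\to 0$, the sequence $(G_n)_{n\in\N}$ is $\delta_\square^s$-Cauchy and so, as the stretched canonical graphons $\W^{G_n,s}$ are $[0,1]$-valued, has uniformly regular tails by Corollary~\ref{cor:subseq-conv}(ii). Fix $\epsilon\in(0,1/(4D))$. Applying the definition of uniformly regular tails to the $\W^{G_n,s}$, together with Lemma~\ref{prop26} (which allows the captor sets to be chosen as unions of vertex intervals), we obtain $M>0$ so that for every large $n$ there is a set $A_n\subseteq V(G_n)$ with $|A_n|\leq M\sqrt{2|E(G_n)|}$ satisfying $\|W^{G_n,s}-W^{G_n,s}\1_{U_n\times U_n}\|_1<\epsilon$, where $U_n$ is the union of the subintervals of the stretched graphon corresponding to the vertices in $A_n$. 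A direct computation shows $\|W^{G_n,s}\1_{U_n\times U_n}\|_1=e_{G_n}(A_n)/|E(G_n)|$, where $e_{G_n}(A_n)$ denotes the number of edges of $G_n$ with both endpoints in $A_n$; hence $e_{G_n}(A_n)\geq(1-\epsilon)|E(G_n)|$, so the number of edges with at least one endpoint outside $A_n$ is at most $\epsilon|E(G_n)|$.

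To conclude, estimate in two ways the set $B_n\subseteq V(G_n)$ of non-isolated vertices lying outside $A_n$. Every edge incident to $B_n$ has at least one endpoint outside $A_n$, so
\[
|B_n|\;\leq\;\sum_{v\in B_n}d_{G_n}(v)\;\leq\;2\epsilon|E(G_n)|,
\]
since each such edge contributes at most $2$ to the degree sum. On the other hand,
\[
|B_n|\;\geq\;N_n-|A_n|\;\geq\;\frac{|E(G_n)|}{D}-M\sqrt{2|E(G_n)|}\;\geq\;\frac{|E(G_n)|}{2D}
\]
for all large $n$, since $|E(G_n)|\to\infty$. Combining the two bounds yields $\epsilon\geq 1/(4D)$, contradicting the choice $\epsilon<1/(4D)$.

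The main obstacle is the passage from the analytic statement of uniformly regular tails for the stretched graphons to the combinatorial bound $e_{G_n}(A_n)\geq(1-\epsilon)|E(G_n)|$ with $A_n$ an actual set of vertices; this is precisely the content of Lemma~\ref{prop26}. Once that translation is in place, the argument is a short double-counting that exploits the near-linear lower bound on the number of non-isolated vertices that bounded average degree forces.
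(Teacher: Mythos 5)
Your proof is correct and takes essentially the same route as the paper's: both argue by contradiction that bounded average degree is incompatible with the uniform regularity of tails furnished by Theorem~\ref{prop1} (equivalently, Corollary~\ref{cor:subseq-conv}(ii)), exploiting that each non-isolated vertex contributes mass at least $1/(2|E(G_n)|)$ to $\|W^{G_n,s}\|_1$. The only cosmetic difference is that the paper works directly with an arbitrary Borel set $I$ of measure $<M$ and lower-bounds $\int_{(\R_+\setminus I)\times\R_+}W^{G_n,s}$ via row integrals, whereas you invoke Lemma~\ref{prop26} to pass to a vertex set $A_n$ and then double count edges and non-isolated vertices; the two arguments are equivalent, and yours is slightly more explicit about the reduction to the case of $o(|E(G_n)|)$ isolated vertices, which the paper dispatches in one sentence.
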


The proof of the proposition proceeds by showing that graphs with bounded
average degree and a divergent number of edges cannot have uniformly regular
tails.

\subsection{Random Graph Models}
\label{sec:W-random-results}

In this section we will present two random graph models associated with a
given $[0,1]$-valued graphon $\mcl W=(W,\scr{S})$ with $\scr S=(S,\cS,\mu)$.

Before defining these models, we introduce some notation.  In particular, we
will introduce the notion of a graph process, defined as a stochastic process
taking values in the set of labeled graphs with finitely many edges and
countably many vertices, equipped with a suitable $\sigma$-algebra.
Explicitly, consider a family of graphs ${\mcl G}=( G_t)_{t\geq 0}$, where
the vertices have labels in $\N$. Let $\mathbb G$ denote the set of simple
graphs  with finitely many edges and countably many vertices, such that the
vertices have distinct labels in $\N$. Observe that a graph in this space can
be identified with an element of $\{0,1 \}^{\N\cup\binom{\N}{2}}$.  We equip
$\{0,1 \}^{\N\cup\binom{\N}{2}}$ with the product topology and $\mathbb G$
with the subspace topology $\mathbb T$.  Recall that a stochastic process is
c\`{a}dl\`{a}g if it is right-continuous with a left limit at every point.
Observe that the topological space $(\mathbb G, \mathbb T)$ is Hausdorff,
which implies that a convergent sequence of graphs has a unique limit. The
$\sigma$-algebra on $\mathbb G$ is the Borel $\sigma$-algebra induced by
$\mathbb T$.

\begin{definition}
\label{def:graph-process} A \emph{graph process} is a c\`{a}dl\`{a}g
stochastic process ${\mcl G}=( G_t)_{t\geq 0}$ taking values in the space of
graphs $\mathbb G$ equipped with the topology $\mathbb T$ defined above. The
process is called \emph{projective} if for all $s<t$,  $G_s$ is an induced
subgraph of $G_t$.
\end{definition}

We now define the graphon process already described informally in the
introduction. Sample a Poisson random measure $\mcl V$ on $\BB R_+\times S$
with intensity given by $\lambda\times\mu$ (see the book of \citel{cinlar-book},
Chapter~VI, Theorem~2.15), and identify $\mcl V$ with the collection of
points $(t,x)$ at which $\mcl V$ has a point
mass.%
\footnote{We see that this collection of points exists by observing that for
any measurable set $A\subset \R_+\times S$ of finite measure, we may sample
$\{ (t,x)\in\mcl V\cap A\}$ by first sampling the total number of points
$N_A\in\N\cup\{0 \}$ in the set (which is a Poisson random variable with
parameter $\mu(A)$), and then sampling $N_A$ points independently at random
from $A$ using the measure $\mu|_A$ renormalized to be a probability measure.
Note that our Poisson random measure is not necessarily a random counting
measure as defined for example by \citet*{cinlar-book}, since in general, not
all singletons $(t,x)$ are measurable, unless we assume that the singletons
$\{x\}$ in $S$ are measurable.} Let $\wt G$ be a graph with vertex set $\mcl
V$, such that for each pair of vertices $v_1=(t_1,x_1)$ and $v_2=(t_2,x_2)$
with $v_1\neq v_2$, there is an edge between $v_1$ and $v_2$ with probability
$W(x_1,x_2)$, independently for any two $v_1,v_2$. Note that $\wt G$ is a
graph with countably infinitely many vertices, and that the set of edges is
also countably infinite except if $W$ is equal to 0 almost everywhere. For
each $t\geq 0$ let $\wt G_t$ be the induced subgraph of $\wt G$ consisting
only of the vertices $(t',x)$ for which $t'\leq t$. Finally define $G_t$ to
be the induced subgraph of $\wt G_t$ consisting only of the vertices having
degree at least one. While $\wt G_t$ is a graph on infinitely many vertices
if $\mu(S)=\infty$, it has finitely many edges almost surely, and thus $G_t$
is a graph with finitely many vertices. We view the graphs $G_t$ and $\wt
G_t$ as elements of $\mathbb G$ by enumerating the points of $\mcl V$ in an
arbitrary but fixed way.

When $\mu(S)<\infty$ the set of graphs $\{\wt G_t\,:\,t\geq 0\}$ considered
above is identical in law to a sequence of $\mcl W$-random graphs as defined
by \citet*{ls-graphlimits} for graphons over $[0,1]$ and, for example, by
\citet*{bjr07} for graphons over general probability spaces. More precisely,
defining a stopping time $t_n$ as the first time when $|V(\wt G_t)|=n$ and
relabeling the vertices in $V(\wt G_{t_n})$ by labels in $[n]$, we have that
the sequence $\{\wt G_{t_n}\,:\,n\in \N\}$ has the same distribution as the
sequence of random graphs generated from $\mcl W$, except for the fact that
$\mu$ should be replaced by the probability measure $\wt\mu=\frac
1{\mu(S)}\mu$, a fact which follows immediately from the observation that a
Poisson process with intensity $t\mu$ conditioned on having $n$ points is
just a distribution of $n$ points chosen i.i.d.\  from the distribution $\wt
\mu $. In the case when $\mu(S)=\infty$ it is primarily the graphs $G_t$
(rather than $\wt G_t$) which are of interest for applications, since the
graphs $\wt G_t$ have infinitely many (isolated) vertices. But from a
mathematical point of view, both turn out to be useful.

\begin{definition}
Two graph processes $(G_t^1)_{t\geq 0}$ and $(G_t^2)_{t\geq 0}$ are said to
be \emph{equal up to relabeling of the vertices} if there is a bijection
$\phi\colon \bigcup_{t\geq 0} V( G_t^1)\to \bigcup_{t\geq 0}V(G_t^2)$ such
that $\phi(G_t^1)=G_t^2$ for all $t\geq 0$, where $\phi(G_t^1)$ is the graph
whose vertex and edge sets are $\{\phi(i)\}_{i\in V(G_t)}$ and
$\{\phi(i)\phi(j)\}_{ij\in E(G_t)}$, respectively. Two graph processes
$(G_t^1)_{t\geq 0}$ and $(G_t^2)_{t\geq 0}$ are said to be \emph{equal in law
up to relabeling of the vertices} if they can be coupled in such a way that
a.s., the two families are equal up to relabeling of the vertices.
\label{def:relab}
\end{definition}

Note that in order for the notion of ``equal in law up to relabeling of the
vertices'' to be well defined, one needs to show that the event that two
graph processes $(G_t)_{t\geq 0}$ and $(\wt G_t)_{t\geq 0}$ are equal up to
relabeling is measurable. The proof of this fact is somewhat technical and
will be given in Appendix~\ref{sec:measurability}.

\begin{definition}
Let $\mcl W=(W,\scr{S})$ be a $[0,1]$-valued graphon. Define $\wt{\mcl
G}(\mcl W)=(\wt G_t(\mcl W))_{t\geq 0}$ (resp.\ $\mcl G(\mcl W)=( G_t(\mcl
W))_{t\geq 0}$) to be a random family of graphs with the same law as the
graphs $(\wt G_t)_{t\geq 0}$ (resp.\ $(G_t)_{t\geq 0}$) defined above.
\begin{itemize}
\item[(i)] A random family of simple graphs is called a \emph{graphon
    process  without isolated vertices} generated by $\W$ if it has the
    same law as $\mcl G(\mcl W)$ up to relabeling of the vertices, and it
    is called a \emph{graphon process with isolated vertices} generated by
    $\W$ if it has the same law as $\wt{\mcl G}(\mcl W)$ up to relabeling
    of the vertices.
\item[(ii)] A random family $\wt{\mcl G}=(\wt G_t)_{t\geq 0}$ of simple
    graphs is called a \emph{graphon process} if there exists a graphon
    $\W$ such that after removal of all isolated vertices, $\wt{\mcl G}$
    has the same law as $\mcl G(\W)$ up to relabeling of the vertices.
\end{itemize}
If $\mcl G=(G_t)_{t\geq 0}$ is a graphon process, then we refer to $G_t$ as
the \emph{graphon process at time $t$}. \label{defn3}
\end{definition}

Given a graphon $\W=(W,\scr{S})$ one can define multiple other natural random
graph models; see below. However, the graph models of Definition~\ref{defn3}
have one property which sets them apart from these models: exchangeability.
To formulate this, we first recall that a random measure $\xi$ in the first
quadrant $\R_+^2$ is jointly exchangeable iff for every $h>0$, permutation
$\sigma$ of $\N$, and $i,j\in\N$,
\[
\xi(I_i\times I_j) \eqD \xi(I_{\sigma(i)}\times I_{\sigma(j)}),\qquad \text{where } I_k:=[h(k-1),hk].
\]
Here $\eqD$ means equality in distribution, and, as usual, a random measure
on $\R_+^2$ is a measure drawn from some probability distribution over the
set of all Borel measures on $\R_+^2$, equipped with the minimal
$\sigma$-algebra for which the functions $\mu\mapsto \mu(B)$ are measurable
for all Borel sets $B$.

To relate this notion of exchangeability to a property of a graphon process,
we will assign a random measure $\xi({\mcl G})$ to an arbitrary projective
graph process ${\mcl G}=(G_t)_{t\geq 0}$.  Defining the birth time $t_v$ of a
vertex $v\in V({\mcl  G})$ as the infimum over all times $t$ such that $v\in
V( G_t)$, we  define a random measure $\xi=\xi({\mcl G})$ on $\R_+^2$ by
\begin{equation}
\xi(\mcl G):=\sum_{(u,v)\in E({\mcl G})} \delta_{(t_u,t_v)},
\label{eq5}
\end{equation}
where each edge $(u,v)=(v,u)$ is counted twice so that the measure is
symmetric. If $\mcl G$ is a graphon process with isolated vertices, i.e.,
$\mcl G=\wt{\mcl G}(\mcl W)$ for some graphon $\mcl W$, it is easy to see
that at any given time, at most one vertex is born, and that at time $t=0$,
$G_t$ is empty. In other words,
\begin{equation}
V(G_0)=\emptyset
\quad\text{ and }\quad
|V(G_{t})\backslash V( G_{t^-})|\leq 1 \text{ for all } t>0.
\label{eq19}
\end{equation}

It is not that hard to check that the measure $\xi$ is jointly exchangeable
if ${\mcl G}$ is a graphon process with isolated vertices%
\footnote{This is one of the instances in which the family $\wt{\mcl G}(\W)$
is more useful that the family ${\mcl G}(\W)$: the latter only contains
information about when a vertex first appeared in an edge in $\wt{\mcl
G}(\W)$, and not information about when it was born.} generated from some
graphon $\W$. But it turns out that the converse is true as well, provided
the sequence has uniformly regular tails. The following theorem will be
proved in Appendix~\ref{sec6}, and as with \citet*{caron-fox} we will rely on the
Kallenberg theorem for jointly exchangeable measures \cite[Theorem~9.24]{kallenberg-exch}
for this description. \citet*{veitchroy} have
independently formulated and proved a similar theorem, except that their
version does not include integrability of the graphon or uniform tail
regularity of the sequence of random graphs.

Before stating our theorem, we note that given a locally finite symmetric
measure $\xi$ that is a countable sum of off-diagonal, distinct atoms of
weight one in the interior of $\R_+^2$, we can always find a projective
family of simple graphs $ G_t$ obeying the condition \eqref{eq19} and the
other assumptions we make above, and that up to vertices which stay isolated
for all times, this family of graphs is uniquely determined by $\xi$ up to
relabeling of the vertices. Any projective family of countable simple graphs
$\mcl G$ with finitely many edges at any given time can be transformed into
one obeying the condition \eqref{eq19} (by letting the vertices appear in the
graph $G_t$ exactly at the time they were born and merging vertices born at
the same time, and then labeling vertices by their birth time), provided the
measure $\xi({\mcl G})$ has only point masses of weight one, and has no
points on the diagonal and the coordinate axes.

\begin{theorem}
Let $\wt{\mcl G}=(\wt G_t)_{t\geq 0}$ be a projective family of random
simple graphs which satisfy \eqref{eq19}, and define $\xi=\xi(\wt{\mcl G})$
by \eqref{eq5}. Then the following two conditions are equivalent:
\begin{itemize}
\item[(i)] The measure $\xi$ is a jointly exchangeable random measure and
    $(\wt G_t)_{t\geq 0}$ has uniformly regular tails.
\item[(ii)] There is a $\R_+$-valued random variable $\alpha$ such that
    $\mcl W_\alpha=(W_\alpha,\R_+)$ is a $[0,1]$-valued graphon almost
    surely, and such that  conditioned on $\alpha$, $(\wt G_t)_{t\geq 0}$
    (modulo vertices that are isolated for all $t\geq 0$) has the law of
    $\wt {\mcl G}(\W_\alpha)$ up to relabeling of the vertices.
\end{itemize}
\label{prop5}
\end{theorem}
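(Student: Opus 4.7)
The plan is to prove the two directions separately, with (ii) $\Rightarrow$ (i) being a direct verification and (i) $\Rightarrow$ (ii) being the main content, built on Kallenberg's representation theorem for jointly exchangeable random measures on $\R_+^2$ (Theorem~9.24 of his \emph{Exchangeability} book).

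For (ii) $\Rightarrow$ (i), condition on $\alpha$ and assume $\wt{\mcl G}$ is distributed as $\wt{\mcl G}(\W_\alpha)$ up to relabeling. The birth times of $\wt{\mcl G}(\W_\alpha)$ arise as the time-coordinates of a Poisson point process on $\R_+\times\R_+$ of unit intensity (the second coordinate carrying the mark in $\R_+$), so joint exchangeability of $\xi$ follows from invariance of this Poisson process under arbitrary measure-preserving rearrangements of the time axis, combined with conditional independence of the edge indicators given the marks. For uniformly regular tails, fix $\eps>0$ and use $W_\alpha\in L^1(\R_+^2)$ to choose $R=R(\eps,\alpha)$ with $\int_{\R_+^2\setminus[0,R]^2}W_\alpha\,d\mu\,d\mu\leq\eps\|W_\alpha\|_1$; Poisson concentration then implies that for $t$ large, the induced subgraph on vertices with mark in $[0,R]$ contains at least a $(1-2\eps)$ fraction of the edges of $\wt G_t$ while having at most $O_\eps(\sqrt{|E(\wt G_t)|})$ vertices, giving the $M$ required in Definition~\ref{defn2}.

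For (i) $\Rightarrow$ (ii), apply Kallenberg's representation theorem to $\xi$. Conditional on a directing random variable $\alpha$, this expresses $\xi$ as a sum of several exchangeable components: a Lebesgue part on the diagonal, singular components supported on coordinate lines through atoms of auxiliary Poisson processes, a ``graphon'' component driven by a unit-intensity Poisson point process $\{(\theta_i,\eta_i)\}_{i\in\N}$ on $\R_+^2$ together with i.i.d.\ uniforms $\zeta_{\{i,j\}}$ through a measurable function $f$, and further singular pieces. The structural properties of $\xi$ then eliminate all but the graphon component: off-diagonality kills the diagonal term; condition \eqref{eq19} (at most one vertex per birth time) kills the ``line'' components; and the fact that atoms have weight one forces $f$ to be $\{0,1\}$-valued, with symmetry of $\xi$ forcing $f$ to be symmetric in its first two arguments. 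Setting $W_\alpha(x,y):=\Pr[f(x,y,\zeta)=1\mid\alpha]$ produces a symmetric $[0,1]$-valued measurable function on $\R_+^2$, and conditionally on $\alpha$ the resulting graph process agrees in law with $\wt{\mcl G}(\W_\alpha)$ up to relabeling and up to vertices that remain isolated for all $t\geq 0$.

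The main obstacle is translating uniformly regular tails into integrability of $W_\alpha$, so that $\W_\alpha=(W_\alpha,\R_+)$ is a genuine graphon in our sense. If $\|W_\alpha\|_1=\infty$ on an event of positive probability, then no subset $U\subseteq\R_+$ of Lebesgue measure $O(\sqrt{|E(\wt G_t)|})$ can capture a $(1-\eps)$ fraction of the edges of $\wt G_t$ for large $t$, contradicting Definition~\ref{defn2} applied to the stretched canonical graphon. The same tail regularity is also what forces the singular ``line'' components in the Kallenberg decomposition to vanish, since each such component would create vertices whose degree grows faster than $\sqrt{|E(\wt G_t)|}$; coordinating these two uses of the hypothesis against the various pieces of Kallenberg's decomposition is the technically delicate step.
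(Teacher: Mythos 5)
Your overall strategy coincides with the paper's: both directions are proved by the same two moves, with (ii)$\Rightarrow$(i) a routine verification and (i)$\Rightarrow$(ii) resting on Kallenberg's representation theorem for jointly exchangeable random measures, with uniform regularity of tails supplying the extra information needed to reduce the representation to the single ``graphon'' term and to conclude $W_\alpha\in L^1$. Your (ii)$\Rightarrow$(i) argument is a more hands-on version of what the paper gets for free by citing Theorem~\ref{prop4}(i) together with Theorem~\ref{prop1}; that's a legitimate alternative.

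The inaccuracies are all in (i)$\Rightarrow$(ii), and two of them are more than cosmetic. First, you credit condition~\eqref{eq19} with killing the ``line'' components of the Kallenberg decomposition. That is not what does the work. The Lebesgue-diagonal term $\beta\lambda_D$, the planar Lebesgue term $\gamma\lambda^2$, and the terms of the form $h(\alpha,x_j)(\delta_{t_j}\otimes\lambda)$ and its mirror vanish because $\xi$ is by construction a countable sum of unit Dirac masses, hence a purely atomic measure; any component that puts Lebesgue mass on a line or the plane is incompatible with that, independent of~\eqref{eq19}. On the other hand, the remaining non-graphon components---the ``star'' terms with $g,g'$ and the isolated-edge terms with $l,l'$---are sums of unit point masses and are perfectly compatible with~\eqref{eq19}; they are eliminated by uniform tail regularity, not by structural considerations.

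Second, your heuristic for why tail regularity kills the $g$ and $l$ terms (``each such component would create vertices whose degree grows faster than $\sqrt{|E(\wt G_t)|}$'') is wrong and would lead the proof astray. The $g$-term produces stars and the $l$-term produces isolated edges; the problematic vertices are the leaves and the endpoints of isolated edges, which have degree \emph{one}. The actual obstruction is that these components would produce $\Theta(|E(\wt G_t)|)$ vertices of degree one; each such vertex occupies an interval of length $(2|E(\wt G_t)|)^{-1/2}$ in the stretched canonical graphon, so together they occupy a set of measure $\Theta(\sqrt{|E(\wt G_t)|})\to\infty$ which nevertheless carries a non-vanishing fraction of the $L^1$ mass. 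This is exactly what uniform tail regularity forbids, and the paper isolates it as Lemma~\ref{prop12}, which in turn yields the edge-count limits \eqref{eq8} used to kill $g$ and $l$.

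Finally, your sketch for $\|W_\alpha\|_1<\infty$ needs an input you have not named. Writing $I=\{x\colon D_{W_\alpha}(x)\geq 1\}$, one must first know that $\lambda(I)<\infty$ and that $\|W_\alpha\1_{I^c\times I^c}\|_1<\infty$; these are \emph{not} consequences of tail regularity but of local finiteness of $\xi$ via Kallenberg's Theorem~9.25(iii)--(iv), which the paper invokes explicitly. Once you have those, the mass of $W_\alpha$ on $I\times I$ is bounded by $\lambda(I)^2$, so $\|W_\alpha\|_1=\infty$ forces the cross part $I\times I^c\cup I^c\times I$ to carry all but a vanishing fraction of the edges; since the $I$-vertices occupy measure $o(1)$ in the stretched graphon (the $I$-vertex count is $\Theta(t)$ while $\sqrt{|E(\wt G_t)|}$ is superlinear in $t$), no bounded set can capture a fixed fraction of the cross-edges, giving the contradiction. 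Without the preliminary finiteness facts, the assertion that ``no subset $U$ of measure $O(\sqrt{|E|})$ captures $1-\eps$ of the edges'' does not obviously follow, since one has no control over how the edge mass is distributed.
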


Recall that we called two graphons equivalent if their distance in the cut
metric $\delta_\square$ is zero. The following theorem shows that this notion
of equivalence is the same as equivalence of the graphon process generated
from two graphons, in the sense that the resulting random graphs have the
same distribution. Note that in (ii) we only identify the law of $\wt G_t$ up
to vertices that are isolated for all times; it is clear that if we extend
the underlying measure space $\scr S$ and extend $\W$ trivially to this
measure space, the resulting graphon is equivalent to $\W$ and the law of the
graphs $G_t$ remains unchanged, while the law of $\wt G_t$ might change due
to additional isolated vertices.

\begin{theorem}
For $i=1,2$ let $\mcl W_i=(W_i,\scr{S}_i)$ be $[0,1]$-valued graphons, and
let $(\wt G^i_t)_{t\geq 0}$ and $( G^i_t)_{t\geq 0}$ be the graphon
processes generated from $\mcl W_i$ with and without, respectively, isolated
vertices. Then the following statements are equivalent:
\begin{itemize}
\item[(i)] $\delta_\square(\mcl W_1,\mcl W_2)=0$.
\item[(ii)] After removing all vertices which are isolated for all times,
    $(\wt G^1_t)_{t\geq 0}$ and $(\wt G^2_t)_{t\geq 0}$ are equal in law up
    to relabeling of the vertices.
\item[(iii)] $(G^1_t)_{t\geq 0}$ and $(G^2_t)_{t\geq 0}$ are equal in law
    up to relabeling of the vertices.
\end{itemize}
\label{prop11}
\end{theorem}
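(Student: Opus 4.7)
The plan is to prove the cycle (i) $\Rightarrow$ (ii) $\Rightarrow$ (iii) $\Rightarrow$ (i).

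\textbf{(i) $\Rightarrow$ (ii).} First I would reduce to the Borel setting via Proposition~\ref{prop7}: pick Borel replacements $\W_i'=(W_i',\R_+)$ with $\delta_\square(\W_i,\W_i')=0$, so that once the Borel case of (i) $\Rightarrow$ (ii) is established, a transitive coupling argument yields it for the original pair. Assuming now both $\scr S_i$ are Borel, Proposition~\ref{prop:dcut-coupling} supplies a measure $\nu$ on $\wt S_1\times \wt S_2$ coupling $\mu_1|_{\wt S_1}$ and $\mu_2|_{\wt S_2}$ with $W_1\circ\pi_1=W_2\circ\pi_2$ holding $(\nu\otimes\nu)$-a.e. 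Sample a Poisson random measure $\Pi$ on $\R_+\times \wt S_1\times \wt S_2$ of intensity $\lambda\otimes\nu$, independent Poisson processes on $\R_+\times(S_i\setminus \wt S_i)$, and an independent family of $\mathrm{Uniform}(0,1)$ edge-decision variables indexed by unordered pairs of vertices. Projection of $\Pi$ onto $\R_+\times S_i$ gives, together with the auxiliary process, a Poisson random measure of the correct intensity for constructing $\wt G^i_t$. The shared edge-decision variables together with the pointwise identity of the $W_i$'s imply that the two graphs agree under the canonical vertex bijection from $\Pi$; the auxiliary vertices in $S_i\setminus \wt S_i$ satisfy $W_i(x,\cdot)=0$ a.e.\ and hence remain isolated for all times, so they vanish after removing eternally isolated vertices.

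\textbf{(ii) $\Rightarrow$ (iii).} This is essentially automatic: for any graphon process, $G_t$ is the induced subgraph of $\wt G_t$ on vertices with at least one neighbor in $\wt G_t$, and any vertex with this property is non-eternally-isolated. Hence $G^i_t$ is a measurable function of $(\wt G^i_s)_{s\ge 0}$ after removing eternally isolated vertices, and any coupling that realizes the equality in law in (ii) restricts to a coupling witnessing the equality in (iii). Measurability of the relevant events uses Appendix~\ref{sec:measurability}.

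\textbf{(iii) $\Rightarrow$ (i).} A direct calculation with the Poisson construction yields $\E[|E(G^i_t)|]=\tfrac{t^2}{2}\|W_i\|_1$, so (iii) forces $\|W_1\|_1=\|W_2\|_1=:c$. If $c=0$ both graphons are a.e.\ zero and $\delta_\square(\W_1,\W_2)=0$ is trivial. Otherwise, Proposition~\ref{prop4} gives $\delta_\square^s(G^i_t,\W_i)\to 0$ (at least in probability). Couple the graphon processes so that $G^1_t$ and $G^2_t$ are equal up to relabeling almost surely; since $\delta_\square^s$ is an isomorphism invariant, $\delta_\square^s(G^1_t,G^2_t)=0$ a.s. Passing to a subsequence along which both convergences hold almost surely and applying the triangle inequality yields $\delta_\square^s(\W_1,\W_2)=0$. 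Finally, the rescaling $\wh\mu_i=c^{-1/2}\mu_i$ defining $\delta_\square^s$ is a uniform scaling of both underlying measures by the same factor, so that $\delta_\square(\W_1,\W_2)=c\cdot\delta_\square^s(\W_1,\W_2)$; hence $\delta_\square(\W_1,\W_2)=0$ as well.

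The main obstacle is (i) $\Rightarrow$ (ii). The delicate point is threading the coupling measure $\nu$ from Proposition~\ref{prop:dcut-coupling} through the Poisson construction so that the same edge-decisions produce the same graphs on both sides simultaneously, and certifying that ``equal up to relabeling of the vertices'' in the sense of Definition~\ref{def:relab} (which requires a single bijection of the full vertex sets) is genuinely measurable; this is where the technical results of Appendix~\ref{sec:measurability} enter. The reduction to Borel spaces via Proposition~\ref{prop7} also requires verifying that the graphon process of an equivalent Borel graphon is distributionally the same, which is precisely what the Borel case of (i) $\Rightarrow$ (ii) is designed to give.
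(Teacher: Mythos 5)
Your proposal is correct and takes essentially the same route as the paper, so only minor comparative remarks are in order. For (i)\,$\Rightarrow$\,(ii), you invoke Proposition~\ref{prop:dcut-coupling} uniformly and thread the resulting coupling measure $\nu$ through a joint Poisson construction with shared edge-decision variables; this is exactly the mechanism in the paper's infinite-measure case, where Proposition~\ref{prop27}(i),(iv) produces the coupling and the process is built from a Poisson point process on $(S_1\times S_2)\times\R_+$ with intensity $\mu\times\lambda$. The paper additionally splits off the case $\mu_i(I_i)<\infty$ and discharges it by citing the dense-graph identification (Theorem~8.10 of Janson's survey); your uniform treatment avoids that case split at the modest cost of relying on Proposition~\ref{prop:dcut-coupling} throughout, which is fine since that proposition is proved for all $\sigma$-finite Borel spaces. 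Your explicit reduction to Borel spaces via Proposition~\ref{prop7} is something the paper leaves implicit, and it is a genuine prerequisite for applying Proposition~\ref{prop27}, so spelling it out is an improvement. For the converse direction the paper proves (ii)\,$\Rightarrow$\,(i) and asserts (iii)\,$\Rightarrow$\,(i) follows by the same argument; you close the implications as a cycle by proving (ii)\,$\Rightarrow$\,(iii)\,$\Rightarrow$\,(i), which is logically tidier. Your (iii)\,$\Rightarrow$\,(i) matches the paper's argument — couple the processes, apply Theorem~\ref{prop4}(i), use the triangle inequality on the stretched cut metric — while adding two careful steps the paper elides: deducing $\|W_1\|_1=\|W_2\|_1$ (via $\E|E(G_t^i)|=\tfrac{t^2}{2}\|W_i\|_1$; a.s.\ coincidence of the edge counts also gives this directly via Proposition~\ref{prop2}), and converting $\delta^s_\square=0$ into $\delta_\square=0$ by the scaling identity $\delta_\square(\W_1,\W_2)=c\,\delta^s_\square(\W_1,\W_2)$ when $\|W_1\|_1=\|W_2\|_1=c>0$. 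One small note: Theorem~\ref{prop4}(i) already gives almost-sure convergence, so the hedge ``at least in probability'' and the subsequent passage to a subsequence are unnecessary, though harmless.
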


The theorem will be proved in Appendix~\ref{sec3}. We show that (i) implies
(ii) and (iii) by using Proposition~\ref{prop27}, which says that the infimum
in the definition of $\delta_\square$ is attained under certain assumptions
on the underlying graphons. We show that (ii) or (iii) imply (i) by using
Theorem~\ref{prop4}(i).

As indicated before, in addition to the graphon processes defined above,
there are several other natural random graph models generated from a graphon
$\W$. Consider a sequence of probability measures $(\mu_n)_{n\in\N}$ on
$(S,\cS)$, and construct a sequence of random graphs $G_n$ as follows. Start
with a single vertex $(1,x_1)$ with $x_1$ sampled from $\mu_1$.  In step $n$,
sample $x_n$ from $\mu_n$, independently from all vertices and edges sampled
so far,  and for each $i=1,\dots k$, add an edge between $(i,x_i)$ and
$(n,x_n)$ with probability $W(x_i,x_n)$, again independently for each $i$
(and independently of all vertices and edges chosen before). Alternatively,
sample an infinite sequence of independent features $x_1,x_2,\dots$
distributed according to $\mu_1,\mu_2,\dots$, and let $G$ be the graph on
infinitely many vertices with vertex set identified with $\{(n,x_n)\,:\,n\in
\N\}$, such that for any two $n_1,n_2\in\N$ there is an edge between
$(n_1,x_{n_1})$ and $(n_2,x_{n_2})$ independently with probability
$W(x_{n_1},x_{n_2})$. For each $n\in\N$ let $G_n$ be the induced subgraph of
$G$ consisting only of the vertices $(k,x_k)$ for which $k\leq n$.

It was proven by \citet*{denseconv1} that dense $\W$-random graphs generated
from graphons on probability spaces converge to $\W$.  The following theorem
generalizes this to graphon processes, as well as for the alternative model
defined in terms of a suitable sequence of measures $\mu_n$.

\begin{theorem}
Let $\mcl W=(W,\scr{S})$ with $\scr S=(S,\cS,\mu)$ be a $[0,1]$-valued
graphon. Then the following hold:
\begin{itemize}
\item[(i)]  Almost surely $\lim_{t\rta\infty}\delta^s_\square(\mcl W,\wt
    G_t(\mcl W))=0$ and $\lim_{t\rta\infty}\delta^s_\square(\mcl W,G_t(\mcl
    W))=0$.
\item[(ii)] Let $(G_n)_{n\in N}$ be the sequence of simple graphs generated
    from $\W$ with arrival probabilities $\mu_n:=\mu(S_n)^{-1}\mu|_{S_n}$
    as described above, where we assume $\bigcup_{n\in\N} S_n=S$,
    $S_n\subseteq S_{n+1}$, and $0<\mu(S_n)<\infty$ for all $n\in\N$, and
    $W$ is not equal to 0 almost everywhere. Then
    a.s.-$\lim_{n\rta\infty}\delta_\square^s(\mcl W,G_n)=0$ if and only if
    $\sum_{n=1}^\infty \mu(S_n)^{-1}=\infty$.
\end{itemize}
\label{prop4}
\end{theorem}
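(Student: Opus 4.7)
My strategy is to reduce both parts to the classical convergence theorem for dense $\W$-random graphs on probability spaces, via truncation of $S$ to a finite-measure set and careful tracking of the rescaling factors of the stretched cut metric. For part (i), since $G_t(\W)$ is obtained from $\wt G_t(\W)$ by removing isolated vertices, \eqref{iso-ver-inv} will give $\delta_\square^s(\wt G_t(\W), G_t(\W)) = 0$; the two statements are thus equivalent, and it will suffice to prove $\delta_\square^s(\W, \wt G_t(\W)) \to 0$ almost surely.

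Fixing $\eps > 0$ and using that $W \in L^1$ with $\mu$ $\sigma$-finite, I will pick $K \subseteq S$ with $\mu(K) < \infty$ and $\|W - W\1_{K\times K}\|_1 < \eps$, writing $\W_K$ for the restriction of $\W$ to $K$. I will couple $\wt G_t(\W_K)$ with $\wt G_t(\W)$ using the same underlying Poisson point process on $\R_+\times S$ (restricted to $\R_+\times K$ in the former case) and the same independent edge coin flips. The triangle inequality
\[
\delta_\square^s(\W, \wt G_t(\W)) \leq \delta_\square^s(\W, \W_K) + \delta_\square^s(\W_K, \wt G_t(\W_K)) + \delta_\square^s(\wt G_t(\W_K), \wt G_t(\W))
\]
will then reduce the task to three bounds. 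The first term will be $O(\eps/\|W\|_1)$ directly from the $L^1$ approximation and the stretched-metric rescaling. The third term will be controlled by the proportion of edges of $\wt G_t(\W)$ not present in $\wt G_t(\W_K)$, whose expectation equals $\|W-W\1_{K\times K}\|_1/\|W\|_1 < \eps/\|W\|_1$ and which concentrates by the law of large numbers for Poisson functionals. For the middle term, I will use that, since $\mu(K)<\infty$, conditional on $N \sim \mathrm{Poisson}(t\mu(K))$ the features of $\wt G_t(\W_K)$ are i.i.d.\ from the probability measure $\mu|_K/\mu(K)$, so $\wt G_t(\W_K)$ is a classical $W|_{K\times K}$-random graph on $N$ vertices. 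The theorem of \citet*{ls-graphlimits} and \citet*{denseconv1} will then yield convergence in the ordinary cut metric; since the stretched and ordinary cut metrics differ only by a common rescaling when the $L^1$ norms match, and since $|E(\wt G_t(\W_K))| \sim t^2\|W|_{K\times K}\|_1/2$ almost surely by the law of large numbers, this will translate into $\delta_\square^s(\W_K, \wt G_t(\W_K)) \to 0$.

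For part (ii), I will observe that the expected empirical feature measure after $n$ steps is $\nu_n := \sum_{k=1}^n \mu_k$, which for any $A \subseteq S_m$ with $m$ fixed equals $\nu_n(A) = \mu(A)\sum_{k=m}^n \mu(S_k)^{-1}$. Writing $t_n := \sum_{k=1}^n \mu(S_k)^{-1}$, this plays the role of the Poisson ``time'' of part (i). When $\sum_n \mu(S_n)^{-1} = \infty$, one has $t_n \to \infty$, and a law of large numbers will show that the empirical feature measure on each $S_m$ is asymptotic to $t_n \mu|_{S_m}$; the truncation argument of part (i) will then go through with $t_n$ in place of $t$, giving $\delta_\square^s(G_n, \W) \to 0$ almost surely. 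Conversely, when $\sum_n \mu(S_n)^{-1} < \infty$, the expected total number of vertices ever sampled in any fixed $A \subseteq S_m$ with $\mu(A) < \infty$ is $\mu(A)\sum_{k\geq m}\mu(S_k)^{-1} < \infty$, so Borel-Cantelli will imply that only finitely many such vertices appear almost surely; since $W$ is not almost everywhere zero, some bounded region carries positive $\W$-mass, and the almost surely bounded number of sampled vertices there will preclude convergence in $\delta_\square^s$.

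The principal obstacle will be the careful bookkeeping of the rescaling factor $\|W\|_1^{-1/2}$ as it propagates through the truncation, the Poisson count fluctuations, and the transition between finite-measure and probability-space formulations of the classical convergence theorem. The most delicate step will be the converse direction of (ii): converting the Borel-Cantelli observation into a quantitative nonvanishing lower bound on $\delta_\square^s(G_n,\W)$ will require exhibiting an explicit cut-norm witness, for instance a positive-$\W$-mass region where the rescaled empirical edge mass of $G_n$ must remain too small to approximate the target graphon.
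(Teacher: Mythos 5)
Your plan for part~(i) matches the paper's proof in all essentials: reduce to the with-isolated-vertices case via \eqref{iso-ver-inv}, truncate to a finite-measure set $K$, use the classical dense $\W$-random graph convergence of \citet*{ls-graphlimits} and \citet*{denseconv1} on the truncation, and account for the mismatch in the stretched-metric rescalings via the a.s.\ edge-count asymptotics from Proposition~\ref{prop2} and the change-of-measure estimate of Lemma~\ref{prop16}. The paper arranges this as a four-term rather than a three-term triangle inequality, and it first reduces to $\scr S=\R_+$ via Lemma~\ref{lem:prop7}, but those are presentational differences, not different ideas. Your sufficiency argument for part~(ii) is also in the paper's spirit; the paper builds an explicit coupling of $(G_n)$ with a graphon process $(\wt G_t)_{t\geq 0}$ using the stopping times $t_n$, which is a cleaner way of packaging the ``empirical feature measure looks Poisson'' heuristic you describe, and it sidesteps having to prove a law of large numbers for non-identically-distributed feature draws.

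The necessity direction of part~(ii) is where you have a genuine gap, and your own closing paragraph flags the right spot but proposes the wrong fix. Showing that, a.s., only finitely many sampled features land in a given finite-measure set does not by itself bound $\delta_\square^s(G_n,\W)$ from below: the cut metric infimizes over all couplings, so a ``fixed positive-$\W$-mass region where the empirical edge mass is too small'' is not a valid witness without controlling every coupling, which is essentially as hard as the original problem. The paper takes a different route. It first shows $\E[|E(G_n)|]$ is uniformly bounded (this is where $\sum_n\mu(S_n)^{-1}<\infty$ is used quantitatively), so $|E(G_n)|$ and hence $\W^{G_n,s}$ a.s.\ stabilizes at some random $\W^{G_N,s}$. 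Assuming convergence, $\W^{G_N,s}$ is a.s.\ equivalent to $\W$, from which one extracts a deterministic step graphon $\wh\W$ equivalent to $\W$; only then does one learn that $\{D_W>0\}$ has \emph{finite} measure (via Lemma~\ref{lem:D_W} applied to $\wh\W$), which your sketch silently assumes in order to pick a finite-measure region carrying all of $\W$'s mass. The contradiction is then obtained by showing that $E(G_n)=\emptyset$ for all $n$ with \emph{positive probability}: Borel--Cantelli handles all steps after some $n_0$, and the paper controls the initial steps by a total-variation coupling of $(G_n)_{n\le n_0}$ with graphs sampled from $\wh\W$, exploiting that $\wh W$ is a step function and the graphs are loop-free. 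On that positive-probability event the stretched graphon is identically $0$, which contradicts a.s.\ convergence since $\|W\|_1>0$. Both the ``$\{D_W>0\}$ has finite measure'' deduction and the positive-probability empty-edge-set argument are missing from your plan and are where the real work lies; without them the proposal does not close the converse.
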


We will prove the theorem in Appendix~\ref{sec3}. Part (i) of the theorem is
proved by observing that for any set $A\subseteq S$ of finite measure, the
induced subgraph of $\wt G_t$ consisting of the vertices with feature in $A$
has the law of a graph generated from a graphon over a probability space.
This implies that we can use convergence results for dense graphs to conclude
the proof. In our proof of part (ii) we first show that the condition on
$S_n$ is necessary for convergence, by showing that otherwise $E(G_n)$ is
empty for all $n\in\N$ with positive probability. We show that the condition
on $S_n$ is sufficient by constructing a coupling of $(G_n)_{n\in\N}$ and a
graphon process $(\wt G_t)_{t\geq 0}$.

\subsection{Left Convergence}
\label{sec7}

Left convergence is a notion of convergence where we consider subgraph counts
of small test graphs. Existing literature defines left convergence both for
dense graphs \citep*{ls-graphlimits} and for bounded degree graphs
\citep*{left-right-conv}, with a different renormalization factor to adjust for
the difference in edge density. We will operate with a definition of subgraph
density with an intermediary renormalization factor, to take into account
that our graphon process satisfies $\omega(|V(G_t)|)=|E(G_t)|=O(|V(G_t)|^2)$.
For dense graphs our definition of left convergence coincides with the
standard definition in the theory of dense graphs.

For a simple graph $F$  and a simple graph $G$ define $\op{hom}(F,G)$ to be
the number of adjacency preserving maps $\phi\colon V(F)\to V(G)$, i.e., maps
$\phi$ such that if $(v_1,v_2)\in E(F)$, then $(\phi(v_1),\phi(v_2))\in
E(G)$, and define $\op{inj}(F,G)$ be the number of such maps that are
injective.

Define the \emph{rescaled homomorphism density} $h(F,G)$ and the \emph{
rescaled injective homomorphism density} $h_{\mathrm{inj}}(F,G)$ of $F$ in
$G$ by
\[
h(F,G) := \frac{\op{hom}(F,G)}{(2|E(G)|)^{|V(F)|/2}}
\quad\text{and}\quad
h_{\mathrm{inj}}(F,G) := \frac{\op{inj}(F,G)}{(2|E(G)|)^{|V(F)|/2}}.
\]
For any $[0,1]$-valued graphon $\mcl W=(W,\scr{S})$ we define the rescaled
homomorphism density of $F$ in $\mcl W$ by
\[
h(F,\mcl W) := \|W\|_{1}^{-|V(F)|/2} \int_{S^{|V(F)|}} \prod_{(i,j)\in E(F)} W(x_i,x_j)\,dx_1\dots dx_{|V(F)|}.
\]
Note that in general, $h(F,\W)$ need not be finite.  Take, for example,
$\W=(W,\R_+)$ to be a graphon of the form
\[
W(x,y) = \begin{cases} 1 & \textup{if $y\leq f(x)$, and}\\
0 & \textup{otherwise,}
\end{cases}
\]
where
\[
f(x) = \begin{cases} x^{-1/2} & \textup{if $0 \le x \le 1$, and}\\
x^{-2} & \textup{if $x \ge 1$.}
\end{cases}
\]
Let $D_W(x)=\int_{\R_+} W(x,y)\,dy$.  Then $D_W$ is in $L^1(\R_+)$, but not
in $L^k(\R_+)$ for any $k\geq 2$. Thus if $F$ is a star with $k\geq 2$
leaves, then $h(F,\mcl W) := \|W\|_{1}^{-(k+1)/2} \int_{\R_+}D_W^k(x)
dx=\infty$. Proposition~\ref{prop3}(ii) below, whose proof is based on
Lemma~\ref{lem:h-bd} in Appendix~\ref{sec5}, gives one criterion which
guarantees that $h(F,\mcl W)<\infty$ for all simple connected graphs $F$.

\begin{definition}
A sequence $(G_n)_{n\in\N}$ is left convergent if its edge density is
converging, and if for every simple connected graph $F$ with at least two
vertices, the limit $\lim_{n\rta\infty} h(F,G_n)$ exists and is finite.
Left convergence is defined similarly for a continuous-time family of graphs
$(G_t)_{t\geq 0}$.
\end{definition}

For dense graphs left convergence is equivalent to metric convergence
\citep*{denseconv1}. This equivalence does not hold for our graphs, but
convergence of subgraph densities (possibly with an infinite limit) does
hold for graphon processes.

\begin{proposition}
\begin{itemize}
\item[(i)] If $\mcl W=(W,\scr{S})$ is a $[0,1]$-valued graphon and
    $(G_t)_{t>0}$ is a graphon process, then for every simple connected
    graph $F$ with at least two vertices,
    \[
    \lim_{t\rta\infty}h_{\mathrm{inj}}(F,G_t)
    =h(F,\mcl W)\in[0,\infty]
    \] almost surely.
\item[(ii)] In the setting of (i), if $D_W(x):=\int_S W(x,x')\,d\mu(x')$ is
    in $L^p$ for all $p\in [1,\infty)$, then $h(F,\W)<\infty$ for every
    simple connected graph $F$ with at least two vertices and
\[
   \lim_{t\rta\infty}h(F,G_t)
   = \lim_{t\rta\infty}h_{\mathrm{inj}}(F,G_t)
   = h(F,\mcl W)
\]
almost surely, so in particular $(G_t)_{t>0}$ is left convergent.
\item[(iii)]  Assume $(G_n)_{n\in\N}$ is a sequence of simple graphs with
    bounded degree such that \[\lim_{n\rta\infty}|V(G_n)|=\infty\] and
    $E(G_n)\neq\emptyset$ for all sufficiently large $n$. Then
    $(G_n)_{n\in\N}$ is trivially left convergent, and $\lim_{n\rta\infty}
    h(F,G_n)=0$ for every connected $F$ for which $|V(F)|\geq 3$.
\item[(iv)] Left convergence does not imply convergence for
    $\delta_\square^s$, and convergence for $\delta_\square^s$ does not
    imply left convergence.
\end{itemize}
\label{prop3}
\end{proposition}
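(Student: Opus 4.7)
Part (i) exploits the Poisson point process structure of the graphon process. Writing $k := |V(F)|$ and $I(F,\W) := \int_{S^k} \prod_{(i,j)\in E(F)} W(x_i,x_j)\, d\mu^k$, the $k$-th factorial moment (Campbell--Mecke) formula applied to the Poisson point process of features with intensity $t\mu$ gives
\[
\E\bigl[\op{inj}(F, G_t)\bigr] = t^k\, I(F, \W) \quad \text{and} \quad \E\bigl[2|E(G_t)|\bigr] = t^2\, \|W\|_1.
\]
To promote these to a.s.\ statements, I would exhaust $S$ by finite-measure sets $A_1 \subset A_2 \subset \cdots$ with $\bigcup_j A_j = S$. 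The induced subgraph $G_t[A_j]$ on vertices with features in $A_j$ is, conditional on its vertex count $N_j \sim \text{Poisson}(t\mu(A_j))$, a classical dense $\W|_{A_j}$-random graph on $N_j$ i.i.d.\ samples from $\mu|_{A_j}/\mu(A_j)$. The known a.s.\ convergence of subgraph densities in dense $W$-random graphs over probability spaces, together with $N_j/t \to \mu(A_j)$ a.s.\ (Poisson law of large numbers), gives $\op{inj}(F, G_t[A_j])/t^k \to I(F, \W|_{A_j})$ a.s. Since $\op{inj}(F, G_t) \geq \op{inj}(F, G_t[A_j])$, the $\liminf$ is at least $I(F, \W|_{A_j})$; letting $j\to\infty$ and using monotone convergence gives $\liminf \geq I(F, \W)$. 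When $I(F,\W)<\infty$, a matching $\limsup$ follows from Chebyshev applied to the second factorial moment (again Campbell--Mecke), using that the expected number of $F$-copies with a vertex in $S\setminus A_j$ equals $t^k(I(F,\W) - I(F,\W|_{A_j}))$; when $I(F,\W)=\infty$ the $\liminf$ bound alone suffices. Combined with the analogous convergence $2|E(G_t)|/t^2 \to \|W\|_1$ a.s., this yields $h_{\textup{inj}}(F, G_t) \to h(F, \W)$.

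For part (ii), Lemma~\ref{lem:h-bd} supplies $h(F, \W) < \infty$ for every simple connected $F$ with $\geq 2$ vertices. It remains to show $h(F, G_t) - h_{\textup{inj}}(F, G_t) \to 0$. Expanding
\[
\op{hom}(F, G_t) = \op{inj}(F, G_t) + \sum_{F'} c_{F'}\, \op{inj}(F', G_t),
\]
where $F'$ ranges over simple-graph quotients of $F$ by non-trivial vertex partitions not identifying adjacent vertices, each such $F'$ is a connected simple graph with $|V(F')| < k$ whose degree function also lies in every $L^p$; by part (i), $\op{inj}(F', G_t) = O(t^{|V(F')|})$ a.s., giving a contribution of order $t^{|V(F')|-k}\to 0$ to $h(F, G_t)$.

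For part (iii), bounded maximum degree $D$ forces $2|E(G_n)|/|V(G_n)|^2 \leq D/|V(G_n)|\to 0$, so the edge density converges to zero. For a connected $F$ with $|V(F)| \geq 3$, rooting a homomorphism at some edge of $F$ and extending each further vertex greedily with at most $D$ choices gives $\op{hom}(F, G_n) \leq 2|E(G_n)|\, D^{|V(F)|-2}$, whence
\[
h(F, G_n) \leq D^{|V(F)|-2}\, (2|E(G_n)|)^{1-|V(F)|/2} \to 0
\]
provided $|E(G_n)|\to\infty$, which is the case of interest (one reduces to it by discarding isolated vertices, which preserve $h(F,\cdot)$ for connected $F$ and preserve $\delta^s_\square$ by \eqref{iso-ver-inv}; the remaining non-isolated part has $|V|\to\infty$ and hence $|E|\to\infty$ under bounded degree).

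For part (iv), counterexamples in both directions are needed. First, choose a $[0,1]$-valued graphon $\W = (W, \R_+)$ with $D_W \in L^1 \setminus L^{\ell}$ for some $\ell \geq 2$ (e.g.\ a suitably tuned Caron--Fox graphon with heavy-tailed sociability). Theorem~\ref{prop4}(i) gives $\delta_\square^s$-convergence of the graphon process to $\W$, while part (i) applied to $F$ the star with $\ell$ leaves yields $h(F, G_t) \to h(F, \W) = \|W\|_1^{-(\ell+1)/2}\int D_W^\ell = \infty$, so the sequence is not left convergent. For the converse, the sequence of $n$-cycles $(C_n)_{n\in\N}$ is bounded-degree with $|V(C_n)|\to\infty$, so by part (iii) it is trivially left convergent. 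However, any vertex subset $V_U \subseteq V(C_n)$ decomposing into $c$ consecutive arcs contains only $|V_U|-c \leq |V_U|$ edges of $C_n$, so capturing $\geq (1-\eps)n$ edges forces $|V_U| \geq (1-\eps)n$ and the stretched measure $|V_U|/\sqrt{2n}\to\infty$, showing that $(C_n)$ fails uniform regularity of tails; by Corollary~\ref{cor:subseq-conv}(ii), $(C_n)$ is not $\delta_\square^s$-Cauchy.
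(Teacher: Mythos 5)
Your parts (ii) and (iii) match the paper's arguments in substance (the paper bounds $\op{hom}(F,G_n)\leq |V(G_n)|d^{|V(F)|-1}$ while you root at an edge and get $\leq 2|E(G_n)|D^{|V(F)|-2}$; either works). Your examples for (iv) are genuinely different and both valid: the paper's first counterexample alternates a dense Cauchy sequence $\wt G_n$ with the same graphs augmented by a $|E(\wt G_n)|^{7/8}$-star at a random vertex, and its second adds $|E(\wt G_n)|$ isolated edges to a left-convergent sequence; your Caron--Fox graphon with $D_W\in L^1\setminus L^\ell$ and your $n$-cycle sequence are equally clean, and in the second case the appeal to Lemma~\ref{prop26} and Corollary~\ref{cor:subseq-conv}(ii) is exactly right.

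For (i), however, there is a genuine gap in the $\limsup$ direction when $h(F,\W)<\infty$. Your exhaustion argument correctly shows $\liminf_{t\to\infty}t^{-k}\op{inj}(F,G_t)\geq I(F,\W):=\int_{S^k}\prod_{(i,j)\in E(F)} W\,d\mu^k$ a.s., and disposes of the case $I(F,\W)=\infty$. But the matching $\limsup$ via ``Chebyshev applied to the second factorial moment'' does not go through in the generality claimed: finiteness of the first moment $I(F,\W)$ does not imply finiteness of the second factorial moment of $\op{inj}(F,G_t)$ or of the excess $X_t^{(j)}:=\op{inj}(F,G_t)-\op{inj}(F,G_t[A_j])$. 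Already for $F=K_2$, the second factorial moment picks up, from pairs of edges sharing a vertex, a term proportional to $t^3\int_S D_W^2\,d\mu$, which is infinite whenever $D_W\in L^1\setminus L^2$ --- and the paper's worked example immediately before the proposition is exactly such a graphon. With infinite variance, Chebyshev gives nothing, and Fatou on the $\liminf$ only identifies $\liminf Y_t=I(F,\W)$ a.s., not the existence of the limit. The paper sidesteps this entirely by observing that $Y_{-t}:=t^{-k}\op{inj}(F,G_t)$ is a backwards martingale (conditional on the graph at time $t$, the graph at time $s<t$ retains each vertex independently with probability $s/t$, so $\E[Y_{-s}\,|\,\cS_{-t}]=Y_{-t}$); the reverse martingale convergence theorem then gives a.s.\ convergence using only $L^1$ control, and the Kolmogorov $0$--$1$ law identifies the limit as the constant $h(F,\W)$. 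To close your argument you would either need to invoke this martingale structure, or impose an additional integrability hypothesis (such as the $L^p$ assumption that only enters in part (ii)) and argue with a suitably truncated graphon --- neither of which is present in your sketch.
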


The proposition will be proved in Appendix~\ref{sec5}. Part (i) is immediate
from Proposition~\ref{prop2}, which is proved using martingale convergence
and that $\op{inj}(F,G_{-t})$ appropriately normalized evolves as a backwards
martingale. Part (ii) is proved by using that $h(F,G_t)$ and
$h_{\op{inj}}(F,G_t)$ are not too different under certain assumptions on the
underlying graphon. Part (iii) is proved by bounding $\op{hom}(F,G_n)$ from
above, and part (iv) is proved by constructing explicit counterexamples.

\begin{remark}
\label{rem:W-notin-L1} While we stated the above proposition for graphons,
i.e., for the case when $W\in L^1$, the main input used in the proof,
Proposition~\ref{prop2} below, does not require an integrable $W$, but just
the measurability of the function $W\colon S\times S\to [0,1]$.
\end{remark}

\acks{We thank Svante Janson for his careful reading of an earlier version of
the paper and for his numerous suggestions, and we thank Edoardo Airoldi for
helpful discussions. Holden was supported by an internship at Microsoft
Research New England and by a doctoral research fellowship from the Norwegian
Research Council.}

\appendix

\section{\texorpdfstring{Cut Metric and Invariant $L^p$ Metric}{Cut Metric and Invariant L\string^p Metric}}
\label{sec4}

The main goal of this appendix is to prove Proposition~\ref{prop8}, which
says that $\delta_\square$ and $\delta_1$ are well defined and pseudometrics.
In the course of our proof, we will actually generalize this proposition, and
show that it can be extended to the invariant $L^p$ metric $\delta_p$,
provided the two graphons are non-negative and in $L^p$.

We start by defining the distance $\delta_p(\mcl W_1,\mcl W_2)$ for two such
graphons $\mcl W_1=(W_1,\scr{S}_1)$ and $\mcl W_2=(W_1,\scr{S}_2)$ over two
spaces $\scr{S}_1=(S_1,\mcl S_1,\mu_i)$ and $\scr{S}_2=(S_2,\mcl S_2,\mu_2)$
of equal total measure, in which case we set
\[
\delta_{p}(\mcl W_1,\mcl W_2) := \inf_\mu \|W_1^{\pi_1}-W_2^{\pi_2}\|_{p,S_1 \times S_2,\mu},
\]
where, as before, $\pi_1$ and $\pi_2$ are the projections from $S_1\times
S_2$ to $S_1$ and $S_2$, respectively, and the infimum is over all couplings
$\mu$ of $\mu_1$ and $\mu_2$. If $\mu_1(S_1)\neq \mu_2(S_2)$ we define
$\delta_p(\W_1,\W_2)$ by trivially extending $\W_1$ and $\W_2$ to two
graphons $\wt\W_1$ and $\wt\W_2$, respectively, over measure spaces of equal
measure, and defining $\delta_{p}(\mcl W_1,\mcl W_2):=
\delta_{p}(\wt\W_1,\wt\W_2)$, just as in Definition~\ref{defn1} (ii).

\begin{proposition}
\label{prop:delta-p} For $i=1,2$, let $\mcl W_i=(W_i,\scr{S}_i)$ be
non-negative graphons over $\scr{S}_i=(S_i,\mcl S_i,\mu_i)$ with $W_i\in
L^p(S_i\times S_i)$  for some $p\in (1,\infty)$. Then $\delta_p(\mcl W_1,\mcl
W_2)$ is well defined. In particular, $\delta_p(\mcl W_1,\mcl W_2)$ does not
depend on the choice of extensions $\wt{\W}_1$ and $\wt{\W}_2$. Furthermore,
$\delta_p$ is a pseudometric on the space of non-negative graphons in $L^p$.
\end{proposition}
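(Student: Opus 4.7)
The plan is to follow the structure of the proof of Proposition~\ref{prop8}, making only those adjustments required to pass from $p=1$ to general $p\in(1,\infty)$ and from signed to non-negative $W_i$. The argument splits naturally into four steps: existence of a coupling (hence finiteness of the infimum), invariance under trivial extensions, the two easy pseudometric axioms, and finally the triangle inequality, which is the main obstacle.

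First I would invoke Lemma~\ref{prop20} to obtain, whenever $\mu_1(S_1)=\mu_2(S_2)\in(0,\infty]$, at least one coupling $\mu$ of $\mu_1$ and $\mu_2$, so the infimum defining $\delta_p$ is over a non-empty set. Finiteness of $\|W_1^{\pi_1}-W_2^{\pi_2}\|_{p,\mu}$ for any such coupling follows from Minkowski's inequality together with the marginal conditions $(\pi_i)_*\mu=\mu_i$ and $W_i\in L^p(S_i\times S_i)$, giving the bound $\|W_1\|_p+\|W_2\|_p$.

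Second, I would prove an $L^p$-analogue of Lemma~\ref{prop21}, namely that $\delta_p(\W_1,\W_2)$ does not depend on the choice of trivial extensions, which in particular makes Definition~(ii) unambiguous. Following the two-step strategy indicated for the cut norm, I would first verify the statement for step graphons by a direct calculation: in the enlarged space, any coupling of the extensions can be modified by rearranging mass within the added regions where both $\wt W_i$ vanish, without affecting the $L^p$ cost, so the restricted-space and extended-space infima coincide. I would then extend to general non-negative $W_i\in L^p$ by approximating each $W_i$ in $L^p$ by step functions and passing to the limit, using continuity of the map $W\mapsto \|W_1^{\pi_1}-W_2^{\pi_2}\|_{p,\mu}$ uniformly in the coupling $\mu$ (again by Minkowski).

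Third, symmetry is immediate by swapping the roles of $\pi_1,\pi_2$, and $\delta_p(\W,\W)=0$ is obtained from the diagonal coupling: the pushforward of $\mu$ under the measurable map $x\mapsto(x,x)$ is a coupling of $\mu$ with itself, and on this coupling the integrand $W^{\pi_1}-W^{\pi_2}$ vanishes identically. The hard part is the triangle inequality. Given three graphons (which by the previous step we may assume are over spaces of equal total measure) and couplings $\nu_{12}$ of $(\mu_1,\mu_2)$ and $\nu_{23}$ of $(\mu_2,\mu_3)$ realizing values within $\varepsilon$ of the respective infima, I would disintegrate both over $\mu_2$ and paste the resulting conditional kernels to build a measure $\nu_{123}$ on $S_1\times S_2\times S_3$ whose $(S_1\times S_2)$- and $(S_2\times S_3)$-marginals are $\nu_{12}$ and $\nu_{23}$. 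Its $(S_1\times S_3)$-marginal is then a coupling of $\mu_1,\mu_3$, and Minkowski on $L^p(\nu_{123})$ gives
\[
\|W_1^{\pi_1}-W_3^{\pi_3}\|_{p,\nu_{123}}\leq \|W_1^{\pi_1}-W_2^{\pi_2}\|_{p,\nu_{12}}+\|W_2^{\pi_2}-W_3^{\pi_3}\|_{p,\nu_{23}};
\]
letting $\varepsilon\to 0$ completes the argument.

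The principal technical obstacle in the last step is that the disintegration/gluing construction requires some regularity of the middle space $\scr S_2$, which is not assumed in the statement. I would handle this by first reducing, via Proposition~\ref{prop7} (whose proof only uses the well-definedness of $\delta_\square$, established independently), to the case where each $\scr S_i$ is a $\sigma$-finite Borel space — or, if one prefers to avoid even that dependency, by noting that invariance under trivial extensions lets us replace each $\scr S_i$ by its trivial extension to any convenient Borel $\sigma$-finite space of the same total mass, after which standard regular conditional probabilities exist and the gluing goes through.
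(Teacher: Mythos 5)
Your overall plan mirrors the paper's: establish existence of a coupling (Lemma~\ref{prop20}), invariance under trivial extensions (the $L^p$ analogue of Lemma~\ref{prop21}), and the triangle inequality. The first step, including the Minkowski bound $\|W_1^{\pi_1}-W_2^{\pi_2}\|_{p,\mu}\leq\|W_1\|_p+\|W_2\|_p$, is fine. But there are two places where the proposal diverges from the paper in ways that open gaps.

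For the trivial-extension invariance, your sketch --- ``rearranging mass within the added regions where both $\wt W_i$ vanish'' --- does not address the real difficulty, which is the \emph{cross} mass coupling old $S_1$ with new $\wt S_2\setminus S_2$ and vice versa. The paper's Lemma~\ref{prop21} reduces to step functions, passes to a permutation-type coupling, and then performs a swap operation that moves cross mass onto diagonal blocks; the crucial inequality in that swap, of the form $|a-b|^p\leq |a|^p+|b|^p$, is true for $p=1$ always but for $p>1$ only when $a,b$ have the same sign, which is exactly where non-negativity enters. (Indeed the paper records a counterexample with signed graphons and $p>1$.) Your sketch never invokes non-negativity, so as written it would ``prove'' the false signed version; the step needs the explicit swap-and-compare argument.

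For the triangle inequality, you take a genuinely different route from the paper: abstract disintegration and gluing of $\nu_{12}$ and $\nu_{23}$ over the common $\mu_2$-marginal, rather than the paper's reduction to step functions followed by the explicit gluing formula of Lemma~\ref{prop24}. Conceptually cleaner, but this is where the real gap lies. Disintegrating $\nu_{12}$ over $\mu_2$ to obtain a kernel $S_2\to\cP(S_1)$ requires regularity of the \emph{outer} space $S_1$ (standard Borel / Radon), not of the middle space $S_2$ as you suggest, and similarly for $S_3$. Neither of your proposed fixes supplies this. A trivial extension of $\scr S_i$ by definition contains $\scr S_i$ as a measurable subspace, so if $\scr S_i$ is not Borel, no trivial extension of it is Borel either. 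And passing to Borel-space models $\W_i'$ via Proposition~\ref{prop7} (or Lemma~\ref{lem:prop7}) and then transporting the triangle inequality back requires showing $\delta_p(\W_1,\W_2)=\delta_p(\W_1',\W_2)$, which in the hard direction is a lifting problem: given a coupling of $\mu_1'$ with $\mu_2$, one must lift it along the measure-preserving map $\phi_1\colon S_1\to S_1'$ to a coupling of $\mu_1$ with $\mu_2$, and this lifting itself requires disintegrating over a non-Borel space. Using the triangle inequality here would be circular. The paper sidesteps all of this by first approximating the $W_i$ by step functions (Lemma~\ref{prop15} plus density of step functions in $L^p$) and then writing down a completely explicit gluing measure built from the finitely many block intersection masses $\mu'(A_i\times B_j)$, $\mu''(B_j\times C_k)$ --- a construction that works over arbitrary $\sigma$-finite spaces and avoids any appeal to regular conditional probabilities. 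If you apply the step-function reduction before attempting the gluing, your approach collapses into the paper's and the regularity issues evaporate; as stated, however, the disintegration step is not justified.
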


We will prove Proposition~\ref{prop:delta-p} at the same time as
Proposition~\ref{prop8}. We will also establish an estimate (Lemma
\ref{prop16}) saying that two graphons are close in the cut metric if we
obtain one from the other by slightly modifying the measure of the underlying
measure space. Finally we state and prove a lemma, Lemma~\ref{lem:D_W}, that
immediately implies Proposition~\ref{pro:deg-conv}.

The following lemma will be used in the proof of Propositions~\ref{prop7} and
\ref{prop10}. The analogous result for probability spaces can for example be
found in a paper by \citet*[Theorem A.7]{janson-survey}, and the extension to
$\sigma$-finite measure spaces is straightforward.

\begin{lemma}
Let $\scr S=(S,\cS,\mu)$ be an atomless $\sigma$-finite Borel space. Then
$\scr S$ is isomorphic to $([0,\mu(S)),\mcl B,\lambda)$, where $\mcl B$ is
the Borel $\sigma$-algebra and $\lambda$ is Lebesgue measure. \label{prop9}
\end{lemma}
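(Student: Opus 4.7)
The plan is to reduce to the classical Borel isomorphism theorem for atomless standard Borel probability spaces, which asserts that any such space admits a bimeasurable, bijective, measure-preserving map to $([0,1],\mcl B,\lambda)$, and then to glue countably many such isomorphisms to handle the $\sigma$-finite case.

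\textbf{Case $\mu(S)<\infty$.} Here I would normalize $\mu$ to the probability measure $\mu/\mu(S)$, invoke the classical theorem to obtain an isomorphism $\phi_0\colon(S,\cS,\mu/\mu(S))\to([0,1],\mcl B,\lambda)$, and compose with the linear rescaling $y\mapsto \mu(S)y$ to land in $([0,\mu(S)],\mcl B,\lambda)$. To replace the closed interval by the half-open one $[0,\mu(S))$, I would modify $\phi_0$ along a single countable orbit (cyclically shifting the preimages of a countable set of points containing the right endpoint) so the new map misses $\mu(S)$ altogether; this is harmless since atomlessness of $\scr S$ forces every singleton to be null.

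\textbf{Case $\mu(S)=\infty$.} Using $\sigma$-finiteness, I write $S=\bigcup_n B_n$ with $\mu(B_n)<\infty$, disjointify by $A_n=B_n\setminus\bigcup_{k<n}B_k$, and discard any $A_n$ with $\mu(A_n)=0$. Since $\mu(S)=\infty$, infinitely many $A_n$ have positive finite measure, and I reindex them as $A_1,A_2,\dots$. Each $(A_n,\cS|_{A_n},\mu|_{A_n})$ inherits the structure of a Borel space (measurable subsets of standard Borel spaces are again standard Borel) and is atomless (an atom of a subspace would be an atom of $S$), so the finite case yields an isomorphism $\phi_n\colon A_n\to[0,\mu(A_n))$. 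Setting $a_0=0$ and $a_n=\sum_{k\le n}\mu(A_k)$, I translate $\phi_n$ to a bijection onto $[a_{n-1},a_n)$ and glue to obtain $\phi\colon S\to\bigsqcup_n[a_{n-1},a_n)=[0,\infty)=[0,\mu(S))$. Because the $A_n$ and the intervals $[a_{n-1},a_n)$ form measurable partitions of domain and codomain, $\phi$ is a bimeasurable bijection, and it is measure-preserving on each piece, hence globally.

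\textbf{Main obstacle.} The one nontrivial input is the classical Borel isomorphism theorem, which I would cite rather than reprove (e.g., Janson's survey, Theorem~A.7, already pointed to in the excerpt). Everything else is organizational bookkeeping: the inheritance of the standard-Borel and atomless properties by measurable subsets, the handling of the half-open versus closed endpoint in the finite-measure case, and the observation that a countable measurable glue of bimeasurable bijections along partitions of domain and codomain is again bimeasurable. None of these is a genuine difficulty, so the proof really consists of setting up the decomposition and invoking the classical theorem on each piece.
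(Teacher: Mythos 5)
Your proof is correct and follows essentially the same route as the paper's: invoke the classical isomorphism theorem for atomless Borel probability spaces (Janson, Theorem A.7) on the finite-measure case, then disjointify a $\sigma$-finite decomposition and glue piecewise isomorphisms onto consecutive intervals for the infinite case. The one place you are more careful than the paper is in the finite-measure case, where you explicitly convert the closed interval $[0,\mu(S)]$ to the half-open $[0,\mu(S))$ via a countable-orbit shift — the paper addresses this only in the infinite-measure branch (by composing with an isomorphism $[0,\mu(S_k)]\to[0,\mu(S_k))$) and silently elides it when $\mu(S)<\infty$; the only nit in your write-up is that "discard any $A_n$ with $\mu(A_n)=0$" should instead absorb those null pieces into a neighboring positive-measure $A_m$, since the isomorphism is required to be a genuine bijection on all of $S$, not just a.e.
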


\begin{proof}
For $\mu(S)<\infty$, this holds because every atomless Borel probability
space is isomorphic to $[0,1]$ equipped with the Borel $\sigma$-algebra and
Lebesgue measure \citep*[Theorem~A.7]{janson-survey}. For $\mu(S)=\infty$ we
use that by the hypotheses of $\sigma$-finiteness there exist disjoint sets
$S_k\in\cS$ for $k\in\N$ such that $S=\bigcup_{k=1}^\infty S_k$ and
$\mu(S_k)<\infty$ for all $k\in\N$. For each $k\in\N$ we can find
isomorphisms $\phi\colon [0,\mu(S_k)]\to [0,\mu(S_k))$ and $\wt{\phi}\colon
S_k\to [0,\mu(S_k)]$. It follows by considering the composed map
$\phi\circ\wt\phi$ that $S_k$ is isomorphic to $[0,\mu(S_k))$. The lemma
follows by constructing an isomorphism from $S$ to $\R_+$ where each set
$S_k$ is mapped onto an half-open interval of length $\mu(S_k)$.
\end{proof}

The first statement of Proposition~\ref{prop8}, i.e., the existence of a
coupling, follows directly from the following more general result.

\begin{lemma}
For $k=1,2$ let $\scr S_k=(S_k,\cS_k,\mu_k)$ be a $\sigma$-finite measure
space such that $\mu_1(S_1)=\mu_2(S_2)\in (0,\infty]$. Let $D_k\in\cS_k$, and
let $\wt\mu$ be a measure on the product space $D_1\times D_2$, where $D_k$
is equipped with the induced $\sigma$-algebra from $S_k$.  Assume the
marginals $\wt\mu_1$ and $\wt\mu_2$ of $\wt\mu$ are bounded above by
$\mu_1|_{D_1}$ and $\mu_2|_{D_2}$, respectively, and that either
$D_1=D_2=\emptyset$ or $\mu_k(S_k\setminus D_k)=\infty$ for $k=1,2$. Then
there exists a coupling $\mu$ of $\scr S_1$ and $\scr S_2$, such that
$\mu|_{D_1\times D_2}=\wt\mu$. \label{prop20}
\end{lemma}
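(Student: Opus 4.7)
The plan is to decompose $S_1\times S_2$ into four blocks $D_1\times D_2$, $D_1\times D_2^c$, $D_1^c\times D_2$, $D_1^c\times D_2^c$ (with $D_k^c:=S_k\setminus D_k$), place $\wt\mu$ on the first block, and construct $\mu$ separately on each of the remaining three blocks as a coupling of a prescribed pair of residual marginals. The key prerequisite is a basic coupling lemma: any two $\sigma$-finite measures $\eta_1,\eta_2$ of equal total mass admit a coupling. If the total mass $M$ is finite, $M^{-1}\eta_1\otimes\eta_2$ works; if it is infinite, choose $\sigma$-finite partitions $X_k=\sqcup_n A^k_n$ into pieces of positive finite mass $a^k_n$, let $A^{k,\Sigma}_n:=\sum_{m\le n}a^k_m$, and set $c_{ij}:=\lambda\bigl((A^{1,\Sigma}_{i-1},A^{1,\Sigma}_i]\cap (A^{2,\Sigma}_{j-1},A^{2,\Sigma}_j]\bigr)$ using Lebesgue measure $\lambda$. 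This ``staircase'' array has row sums $a^1_i$ and column sums $a^2_j$, so $\mu^\star:=\sum_{i,j}(c_{ij}/(a^1_i a^2_j))(\eta_1|_{A^1_i}\otimes\eta_2|_{A^2_j})$ is a coupling of $\eta_1$ and $\eta_2$.

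If $D_1=D_2=\emptyset$ then $\wt\mu=0$ and the result follows immediately from the basic lemma applied to $\mu_1,\mu_2$. In the main case $\mu_k(D_k^c)=\infty$, set $\alpha:=\mu_1(D_1)-\wt\mu_1(D_1)\in[0,\infty]$ and $\beta:=\mu_2(D_2)-\wt\mu_2(D_2)\in[0,\infty]$. I would next construct sub-measures $\sigma_k:=f_k\cdot\mu_k|_{D_k^c}$ for suitable measurable $f_k\colon D_k^c\to[0,1]$ satisfying $\sigma_1(D_1^c)=\beta$, $\sigma_2(D_2^c)=\alpha$, and chosen so that both residuals $(1-f_k)\,\mu_k|_{D_k^c}$ still have infinite total mass. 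When $\alpha$ is finite, $\sigma$-finiteness of $\mu_2|_{D_2^c}$ lets us take $f_2$ equal to $1$ on finitely many pieces of a countable disjoint decomposition of $D_2^c$ into sets of finite measure, $0$ outside those pieces and an additional cap piece, and equal to a suitable constant in $[0,1]$ on the cap piece to achieve $\int f_2\,d\mu_2=\alpha$; the residual is trivially infinite. When $\alpha=\infty$, simply take $f_2\equiv \tfrac12$, so both $\sigma_2$ and $\mu_2|_{D_2^c}-\sigma_2$ equal $\tfrac12\mu_2|_{D_2^c}$ and have infinite mass. Define $\sigma_1$ on $D_1^c$ analogously.

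Now apply the basic coupling lemma three times to obtain a coupling $\mu^{12}$ on $D_1\times D_2^c$ of $\mu_1|_{D_1}-\wt\mu_1$ and $\sigma_2$ (both of total mass $\alpha$), a coupling $\mu^{21}$ on $D_1^c\times D_2$ of $\sigma_1$ and $\mu_2|_{D_2}-\wt\mu_2$ (both of total mass $\beta$), and a coupling $\mu^{22}$ on $D_1^c\times D_2^c$ of $\mu_1|_{D_1^c}-\sigma_1$ and $\mu_2|_{D_2^c}-\sigma_2$ (both of infinite equal mass by construction of $\sigma_k$). Setting $\mu:=\wt\mu+\mu^{12}+\mu^{21}+\mu^{22}$, viewed as measures on $S_1\times S_2$ extended by zero outside their defining blocks, we have $\mu|_{D_1\times D_2}=\wt\mu$ by construction, and tabulating the marginal contributions block by block on $D_1\cup D_1^c$ and $D_2\cup D_2^c$ recovers $\mu_1$ and $\mu_2$ respectively. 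The main obstacle is the careful bookkeeping of total masses when $\alpha$ or $\beta$ is infinite, which is precisely why we insist that the $\sigma_k$ leave an infinite residual on $D_k^c$.
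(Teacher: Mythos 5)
Your proof is correct. The basic coupling lemma and its staircase construction for measures of infinite total mass are essentially the same as in the paper's first two paragraphs. Your handling of the general case, however, takes a genuinely different route. The paper sets $\mu = \wt\mu + \wh\mu^{(1)} + \wh\mu^{(2)}$, where $\wh\mu^{(1)}$ is a coupling of $\tfrac12\mu_1|_{D_1^c}$ with $\tfrac12\mu_2|_{D_2^c}+(\mu_2|_{D_2}-\wt\mu_2)$ and $\wh\mu^{(2)}$ is a coupling of $\tfrac12\mu_1|_{D_1^c}+(\mu_1|_{D_1}-\wt\mu_1)$ with $\tfrac12\mu_2|_{D_2^c}$. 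Because each side of each auxiliary coupling already contains the infinite piece $\tfrac12\mu_k|_{D_k^c}$, the two sides have equal (infinite) mass automatically, and there is no need to manufacture sub-measures $\sigma_k$ of a prescribed mass $\alpha$ or $\beta$, nor to distinguish the cases $\alpha,\beta$ finite or infinite. Your four-block decomposition requires the density-function construction of $f_k$ and the attendant bookkeeping to keep the residual on $D_k^c$ infinite, but in exchange gives more explicit control: $\mu-\wt\mu$ is realized as a sum of three genuine couplings, one supported on each block away from $D_1\times D_2$, whereas the paper's $\wh\mu^{(1)}$ and $\wh\mu^{(2)}$ may each smear mass across several blocks. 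Both approaches yield the lemma; the paper's is slightly slicker, yours is more granular.
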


\begin{proof}
First we consider the case when $D_1=D_2=\emptyset$. If
$\mu_1(S_1)=\mu_2(S_2)<\infty$ we define $\mu$ to be proportional to the
product measure of $\mu_1$ and $\mu_2$. Explicitly, for $A\in \mcl S_1$ and
$B\in \mcl S_2$, we set $\mu(A\times B)=\mu_1(A)\mu_2(B)/\mu_1(S_1)$. This
clearly gives $\mu(S_1\times B)=\mu_2(B)$ and $\mu(A\times S_2)=\mu_1(A)$, as
required.

If $\mu_1(S_1)=\mu_2(S_2)=\infty$, we consider partitions of $S_1$ and $S_2$
into disjoint sets of finite measure, with $S_k=\bigcup_{i \ge 1} A^k_i$ for
$k=1,2$. Let $I_1,I_2,\dots$ and $J_1,J_2,\dots$ be decompositions of
$[0,\infty)$ into adjacent intervals of lengths $\mu_1(A^1_1), \mu_1(A^1_2),
\dots$ and $\mu_2(A^2_1), \mu_2(A^2_2), \dots$, respectively. We then define
a measure $\mu$ on $(S_1\times S_2, \mcl S_1\times \mcl S_2)$ by
\[
\mu(A\times B)=
\sum_{i,j\geq 1}
\frac{\lambda(I_i\cap I_j)}{\lambda(I_i)\lambda(J_j)}
\mu_1(A\cap A^1_i)\mu_2(B\cap A^2_j),\quad \text{for } A\in\mcl S_1, B\in \mcl S_2.
\]
As a weighted sum of product measures, $\mu$ is a measure, and inserting
$A=S_1$ or $B=S_2$, one easily verifies that $\mu$ has marginals $\mu_1$ and
$\mu_2$. This completes the proof of the lemma in the case that
$D_1=D_2=\emptyset$.

Now we consider the general case. Decomposing $D_1$ and $D_2$ into disjoint
sets of finite mass with respect to $\mu_1$ and $\mu_2$, $D_k=\bigcup_{i \ge
1} D_i^k$ with $\mu_k(D_i^k)<\infty$, we define measures $\wh\mu_k^{(\ell)}$
on $S_k$ for $k,\ell=1,2$ by
\[
\begin{split}
\wh\mu^{(1)}_1(A)&=\frac 12\mu_1(A\cap (S_1\backslash D_1))\text{ for all } A\in \cS_1,\qquad\\
\wh\mu^{(1)}_2(B) &= \frac 12 \mu_2(B\cap (S_2\backslash D_2))
+\sum_{i\geq 1}\Bigl[\mu_2(B\cap D_i^2)-\wt\mu_2(B\cap D_i^2)\Bigr] \text{ for all } B\in \cS_2,\qquad \\
\wh\mu_1^{(2)}(A) &=\frac 12 \mu_1(A\cap (S_1\backslash D_1))
+\sum_{i\geq 1}\Bigl[\mu_1(A\cap D_i^1)-\wt\mu_1(A\cap D_i^1) \Bigr]\text{ for all } A\in \cS_1, \text{ and}\\
\wh\mu_2^{(2)}(S_1)&=\frac 12\mu_2(B\cap(S_2\backslash D_2))\text{ for all } B\in \cS_2.
\end{split}
\]
Note that $\wh\mu_1^{(\ell)}(S_1)=\wh\mu_2^{(\ell)}(S_2)=\infty$ for
$\ell=1,2$ by our assumption $\mu_k(S_k\backslash D_k)=\infty$ for $k=1,2$.
By the result for the case $D_1=D_2=\emptyset$, for $\ell=1,2$, we can find
couplings $\wh\mu^{(\ell)}$ of $\wh\mu_1^{(\ell)}$ and $\wh\mu_2^{(\ell)}$ on
$S_1\times S_2$. Extending the measure $\wt\mu$ to a measure on $S_1\times
S_2$ by assigning measure $0$ to all sets which have an empty intersection
with $D_1\times D_2$, the measure $\mu:=\wh\mu^{(1)}+\wh\mu^{(2)}+\wt\mu$ has
the appropriate marginals. To see that $\mu|_{D_1\times D_2}=\wt\mu$, we note
that $\wh\mu^{(1)}(D_1\times S_2)=\wh\mu^{(1)}_1(D_1)=0$ and
$\wh\mu^{(2)}(S_1\times D_2)=\wh\mu^{(2)}_2(D_2)=0$, implying in particular
that $\wh\mu^{(1)}(D_1\times D_2)=\wh\mu^{(2)}(D_1\times D_2)=0$.
\end{proof}

\begin{corollary}
\label{cor-to-prop20} For $k=1,2$ let $\scr S_k=(S_k,\cS_k,\mu_k)$ be a
$\sigma$-finite measure space such that $\mu_1(S_1)=\mu_2(S_2)\in
(0,\infty]$, and let $\mu$ be a coupling of $\mu_1$ and $\mu_2$. Let
$D_k\in\cS_k$ be such that $\mu(D_1\times (S_2\setminus
D_2))=\mu((S_1\setminus D_1)\times D_2)\in(0,\infty]$. Then there exists a
coupling $\wt\mu$ of $\mu_1$ and $\mu_2$ such that $\wt\mu$ is supported on
$(D_1\times D_2)\cup ((S_1\setminus D_1)\times (S_2\setminus D_2))$ and
$\wt\mu\geq\mu$ on $(D_1\times D_2)\cup ((S_1\setminus D_1)\times
(S_2\setminus D_2))$.
\end{corollary}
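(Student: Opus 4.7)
The plan is to construct $\wt\mu$ by working separately on the two ``diagonal'' blocks $A := D_1 \times D_2$ and $E := (S_1 \setminus D_1) \times (S_2 \setminus D_2)$, starting from $\mu|_A$ and $\mu|_E$ and topping each one up with just enough extra mass to make the marginals equal $\mu_1$ and $\mu_2$. Write also $B := D_1 \times (S_2 \setminus D_2)$ and $C := (S_1 \setminus D_1) \times D_2$, so the hypothesis reads $\mu(B) = \mu(C) \in (0, \infty]$.

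First I would introduce the marginals $\mu_1^A(U) := \mu(U \times D_2)$ for measurable $U \subseteq D_1$ and $\mu_2^A(V) := \mu(D_1 \times V)$ for measurable $V \subseteq D_2$ of $\mu|_A$, noting that these are $\sigma$-finite positive measures dominated by $\mu_1|_{D_1}$ and $\mu_2|_{D_2}$ respectively, since $\mu$ is a coupling. Hence the differences $\alpha_1 := \mu_1|_{D_1} - \mu_1^A$ and $\alpha_2 := \mu_2|_{D_2} - \mu_2^A$ are honest $\sigma$-finite positive measures on $D_1$ and $D_2$, and a direct calculation gives
\[
\alpha_1(D_1) = \mu_1(D_1) - \mu(A) = \mu(B) = \mu(C) = \mu_2(D_2) - \mu(A) = \alpha_2(D_2),
\]
a common value in $(0,\infty]$. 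I would then invoke Lemma~\ref{prop20} to produce a coupling $\nu_A'$ of $\alpha_1$ and $\alpha_2$ on $D_1 \times D_2$ and set $\nu_A := \mu|_A + \nu_A'$; by construction $\nu_A$ is supported on $A$, satisfies $\nu_A \geq \mu|_A$, and has marginals $\mu_1|_{D_1}$ and $\mu_2|_{D_2}$. The identical argument, with $D_k$ replaced by $S_k \setminus D_k$, produces a measure $\nu_E$ on $E$ with $\nu_E \geq \mu|_E$ and marginals $\mu_1|_{S_1 \setminus D_1}$ and $\mu_2|_{S_2 \setminus D_2}$.

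Finally I would set $\wt\mu := \nu_A + \nu_E$, extended by zero off $A \cup E$. Support and the inequality $\wt\mu \geq \mu$ on $A \cup E$ are immediate from the construction, and the marginals agree with $\mu_1$ and $\mu_2$ by a block-wise check: for any $U \in \cS_1$,
\[
\wt\mu(U \times S_2) = \nu_A((U \cap D_1) \times D_2) + \nu_E((U \setminus D_1) \times (S_2 \setminus D_2)) = \mu_1(U \cap D_1) + \mu_1(U \setminus D_1) = \mu_1(U),
\]
and symmetrically for the second marginal, so $\wt\mu$ is a coupling. I do not anticipate any serious obstacle, since all of the substantive measure-theoretic content is already packaged inside Lemma~\ref{prop20}; the only small point to verify is that the defect differences $\alpha_k$ are genuine $\sigma$-finite positive measures, which follows from their domination by the $\sigma$-finite measures $\mu_k|_{D_k}$ (and analogously for the $E$-block).
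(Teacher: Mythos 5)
Your proof is correct and follows essentially the same strategy as the paper's: restrict $\mu$ to the two diagonal blocks, compute the marginal defects relative to $\mu_1$ and $\mu_2$, observe that the blockwise defect masses match by the hypothesis $\mu(B)=\mu(C)$, and fill in the missing mass via Lemma~\ref{prop20} applied separately on $D_1\times D_2$ and $(S_1\setminus D_1)\times(S_2\setminus D_2)$. The only cosmetic difference is that you split the defect measures into their block restrictions $\alpha_k,\beta_k$ before invoking the coupling lemma, whereas the paper works with the full defects $\delta_k=\mu_k-\mu_k'$ and notes their blockwise masses agree; the substance is the same.
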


\begin{proof}
Let $\mu'$ be the restriction of $\mu$ to $(D_1\times D_2)\cup ((S_1\setminus
D_1)\times (S_2\setminus D_2))$, let $\mu'_1$ and $\mu_2'$ be its marginals,
and let $\delta_i=\mu_i-\mu'_i$. Then $\delta_1(D_1)=\mu(D_1\times
(S_2\setminus D_2))$ and $\delta_2(D_2)=\mu((S_1\setminus D_1)\times
D_2)=\delta_1(D_1)$ by the hypotheses of the corollary. In a similar way,
$\delta_1(S_1\setminus D_1) =\mu((S_1\setminus D_1)\times
D_2)=\delta_2(S_2\setminus D_2)$. With the help of the previous lemma, and
considering the domains $D_1\times D_2$ and $(S_1\setminus D_1)\times
(S_2\setminus D_2)$ separately, we then construct a coupling $\delta$ of
$\delta_1$ and $\delta_2$ that has support on $(D_1\times D_2)\cup
((S_1\setminus D_1)\times (S_2\setminus D_2))$. Setting
$\tilde\mu=\mu'+\delta$ we obtain the statement of the corollary.
\end{proof}

The following basic lemma will be used multiple times throughout this
appendix. The analogous result for probability spaces can be found for
example in a paper by \citet*[Lemma~6.4]{janson-survey}.

\begin{lemma}
Let $p\geq 1$, let $\scr S_i=(S_i,\cS_i,\mu_i)$ for $i=1,2$ be such that
$\mu_1(S_1)=\mu_2(S_2)$, and let $\mcl W_1=(W_1,\scr{S}_1)$, $\mcl
W'_1=(W'_1,\scr{S}_1)$, and $\mcl W_2=(W_2,\scr{S}_2)$ be graphons in $L^p$.
Defining $\delta_\square$ and $\delta_p$ as in Definition~\ref{defn1}(i), we
have
\[
\delta_\square(\mcl W_1,\mcl W_2)
\leq \delta_\square(\mcl W'_1,\mcl W_2) + \|W_1-W'_1\|_\square
\leq \delta_\square(\mcl W'_1,\mcl W_2) + \|W_1-W'_1\|_1
\]
and
\[
\delta_p(\mcl W_1,\mcl W_2)
\leq \delta_p(\mcl W'_1,\mcl W_2) + \|W_1-W'_1\|_p.
\]
\label{prop15}
\end{lemma}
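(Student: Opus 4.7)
The plan is to establish both inequalities at the level of couplings by a triangle inequality, and then take infima. The key observation is that for any coupling $\mu$ of $\mu_1$ with $\mu_2$, pulling back a function $h$ on $S_1\times S_1$ via the projection $\pi_1\colon S_1\times S_2\to S_1$ interacts well with both the $L^p$ norm and the cut norm, because $\mu$ has marginal $\mu_1$.

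First I would verify the \emph{pullback identity} for the $L^p$ norm. For any $h\in L^p(S_1\times S_1,\mu_1\times\mu_1)$, Fubini (applicable since $\mu$ is $\sigma$-finite and the integrand $|h^{\pi_1}|^p$ depends only on the $S_1$-coordinates of each factor) combined with the marginal property gives
\[
\|h^{\pi_1}\|_{p,\mu\times\mu}^p
= \int_{S_1^2} |h(x_1,x_2)|^p\, d\mu_1(x_1)\,d\mu_1(x_2)
= \|h\|_{p,\mu_1\times\mu_1}^p.
\]
For the cut norm I would prove an inequality $\|h^{\pi_1}\|_{\square,\mu}\leq \|h\|_{\square,\mu_1}$. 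Given $U,V\subseteq S_1\times S_2$, define measures $\nu_U,\nu_V$ on $S_1$ by $\nu_U(A):=\mu(U\cap(A\times S_2))$ and similarly for $V$. Since $\nu_U,\nu_V\leq\mu_1$, Radon--Nikodym yields densities $f_U,f_V\colon S_1\to[0,1]$ with $\nu_U=f_U\,\mu_1$ and $\nu_V=f_V\,\mu_1$, and Fubini gives
\[
\int_{U\times V} h^{\pi_1}\, d\mu\, d\mu
= \int_{S_1\times S_1} h(x,y)\, f_U(x)\, f_V(y)\, d\mu_1(x)\, d\mu_1(y).
\]
A standard layer-cake argument (writing $f=\int_0^1 \1_{\{f>t\}}\,dt$ for $f\in[0,1]$) then bounds the right-hand side by $\|h\|_{\square,\mu_1}$, yielding the claimed cut-norm inequality.

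With these two ingredients, for any coupling $\mu$ the triangle inequality (in $L^p(\mu\times\mu)$, or for the cut seminorm) gives
\[
\|W_1^{\pi_1}-W_2^{\pi_2}\|_{\ast,\mu}
\leq \|W_1'^{\pi_1}-W_2^{\pi_2}\|_{\ast,\mu}
+ \|(W_1-W_1')^{\pi_1}\|_{\ast,\mu},
\]
where $\ast$ stands for either $p$ or $\square$. Applying the pullback identity (respectively inequality), the last term equals $\|W_1-W_1'\|_{p,\mu_1\times\mu_1}$ when $\ast=p$ and is bounded by $\|W_1-W_1'\|_{\square,\mu_1\times\mu_1}$ when $\ast=\square$; crucially, this bound is independent of the coupling $\mu$. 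Taking the infimum over all couplings $\mu$ therefore yields the two main estimates. The trailing bound $\|W_1-W_1'\|_\square\leq\|W_1-W_1'\|_1$ is immediate from the definition of the cut norm as a sup over indicator integrals.

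The only potentially delicate step is the cut-norm inequality $\|h^{\pi_1}\|_{\square,\mu}\leq\|h\|_{\square,\mu_1}$, since passing to the product space $S_1\times S_2$ enlarges the class of available test sets $U,V$. The Radon--Nikodym plus layer-cake argument handles this by reducing integration against product indicators on $S_1\times S_2$ to integration against $[0,1]$-valued weights on $S_1$, which is controlled by the cut norm itself. Everything else is a routine triangle inequality combined with taking infima, and no additional hypotheses beyond $\mu_1(S_1)=\mu_2(S_2)$ (so that Lemma~\ref{prop20} supplies a coupling) and $W_1,W_1',W_2\in L^p$ are required.
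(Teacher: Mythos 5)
Your proof is correct and follows the same overall plan as the paper's: pass to a fixed coupling $\mu$, apply the triangle inequality for the cut norm (resp.\ $L^p$ norm) on $S_1\times S_2$, identify the error term $\|(W_1-W_1')^{\pi_1}\|$ as a quantity that does not depend on $\mu$, and take the infimum over couplings. The difference is in how carefully the pullback step is justified. The paper simply asserts the equality $\|(W_1'-W_1)^{\pi_1}\|_{\square,S_1\times S_2,\mu}=\|W_1'-W_1\|_{\square,S_1,\mu_1}$ without comment. The direction $\geq$ is the obvious one (test sets of the form $A\times S_2$ and $B\times S_2$ reproduce the original cut norm), but the direction $\leq$ --- which is the one actually needed --- is exactly what you isolate and prove via the Radon--Nikodym / layer-cake reduction of arbitrary test sets $U,V\subseteq S_1\times S_2$ to $[0,1]$-valued weights $f_U,f_V$ on $S_1$. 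That extra care is a genuine improvement: it makes explicit why enlarging the family of test sets does not increase the cut norm. Your $L^p$ pullback identity via Fubini is the same computation the paper implicitly uses. One small remark: the layer-cake step relies on the standard fact that the supremum of $|\int h\,f\,g|$ over $[0,1]$-valued $f,g$ equals the supremum over indicator functions; it would be worth citing or stating that fact explicitly, since it is the crux of the reduction.
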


\begin{proof}
The second bound on $\delta_\square(\mcl W_1,\mcl W_2)$ is immediate, so the
rest of the proof will consist of proving the first bound on
$\delta_\square(\mcl W_1,\mcl W_2)$ as well as the bound on $\delta_p(\mcl
W_1,\mcl W_2)$. Let $\mu$ be a measure on $(S_1\times S_2,\cS_1\times \cS_2)$
with marginals $\mu_1$ and $\mu_2$, respectively, and let $\pi_i\colon
S_1\times S_2\to S_i$ denote projections for $i=1,2$. Since
$\|\cdot\|_\square$ clearly satisfies the triangle inequality,
\[
\begin{split}
\delta_\square(\mcl W_1,\mcl W_2)
&\leq \|W^{\pi_1}_1-W^{\pi_2}_2\|_{\square,S_1\times S_2,\mu}\\
&\leq \|(W'_1)^{\pi_1}-W^{\pi_2}_2\|_{\square,S_1\times S_2,\mu}
+ \|(W'_1)^{\pi_1}-W^{\pi_1}_1\|_{\square,S_1\times S_2,\mu}\\
&=\|(W'_1)^{\pi_1}-W^{\pi_2}_2\|_{\square,S_1\times S_2,\mu}
+ \|W'_1-W_1\|_{\square,S_1,\mu_1}.
\end{split}
\]
The desired result follows by taking an infimum over all couplings. The bound
on $\delta_p(\mcl W_1,\mcl W_2)$ follows in the same way from the triangle
inequality for $\|\cdot\|_p$.
\end{proof}

\begin{remark}
We state the above lemma only for the case when $\mu_1(S_1)=\mu_2(S_2)$,
since we have not yet proved that $\delta_\square$ and $\delta_p$ are well
defined otherwise. However, once we have proved this, it is a direct
consequence of Definition~\ref{defn1}(ii) that the above lemma also holds
when $\mu_1(S_1)\neq\mu_2(S_2)$.
\end{remark}

\begin{definition}
Let $(S,\cS)$ be a measurable space, and consider a function $W\colon S\times
S\to\R$. Then $W$ is a \emph{step function} if there are some $n\in\N$,
disjoint sets $A_i\in\cS$ satisfying $\mu(A_i)<\infty$ for
$i\in\{1,\dots,n\}$, and constants $a_{i,j}\in\R$ for $i,j\in\{1,\dots,n\}$
such that
\[
W=\sum_{i,j\in\{1,\dots,n\}} a_{i,j}\1_{A_i\times A_j}.
\]
\label{defn4}
\end{definition}

Note that in order for $W$ to be a step function it is not sufficient that it
is simple, i.e., that it attains a finite number of values; the sets on which
the function is constant are required to be product sets. The set of step
functions is dense in $L^1$; hence Lemma~\ref{prop15} implies that every
graphon can be approximated arbitrarily closely by a step function for the
$\delta_\square$ metric.

\begin{lemma}
Let $p\geq 1$, let $\cW_1=(W_1,\scr S_1)$ and $\cW_2=(W_2,\scr S_2)$ be
graphons, and let $S_1=\bigcup_{i\in I} A_i$ and $S_2=\bigcup_{k\in J} B_k$
for finite index sets $I$ and $J$ such that $A_i\cap A_{i'}=\emptyset$ for
$i\neq i'$ and $B_j\cap B_{j'}=\emptyset$ for $j\neq j'$. Suppose $W_1$ and
$W_2$ are step functions of the form
\[
W_1 = \sum_{i,i'\in I} a_{i,i'}\1_{A_i\times A_{i'}}\quad\text{and}\quad
W_2 = \sum_{j,j'\in J} b_{j,j'}\1_{B_j\times B_{j'}},
\]
where $a_{i,i'}$ and $b_{j,j'}$ are constants in $\R$. Let $\mu$ and $\mu'$
be two coupling measures on $S_1\times S_2$, such that $\mu(A_i\times B_k)
=\mu'(A_i\times B_k)$ for all $(i,k)\in I\times J$. Then
$\|W_1^{\pi_1}-W_2^{\pi_2}\|_{\square,\mu}=\|W_1^{\pi_1}-W_2^{\pi_2}\|_{\square,\mu'}$
and
$\|W_1^{\pi_1}-W_2^{\pi_2}\|_{p,\mu}=\|W_1^{\pi_1}-W_2^{\pi_2}\|_{p,\mu'}$.
\label{prop23}
\end{lemma}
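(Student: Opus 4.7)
The plan is to observe that both pullbacks $W_1^{\pi_1}$ and $W_2^{\pi_2}$ are step functions on $(S_1\times S_2)\times(S_1\times S_2)$ with respect to the common refinement $\{C_{(i,k)}\times C_{(i',k')}\}$, where $C_{(i,k)}:=A_i\times B_k$. Indeed, $W_1^{\pi_1}$ takes the constant value $a_{i,i'}$ and $W_2^{\pi_2}$ the value $b_{k,k'}$ on $C_{(i,k)}\times C_{(i',k')}$, so the difference $h:=W_1^{\pi_1}-W_2^{\pi_2}$ equals $\sum_{(i,k),(i',k')}(a_{i,i'}-b_{k,k'})\mathbf{1}_{C_{(i,k)}\times C_{(i',k')}}$. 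The hypothesis that $\mu$ and $\mu'$ agree on each product cell $A_i\times B_k$ is precisely the statement that $\mu(C_{(i,k)})=\mu'(C_{(i,k)})$ for every $(i,k)\in I\times J$.

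For the $L^p$ equality, Fubini gives
\[
\|h\|_{p,\mu}^p \;=\; \sum_{(i,k),(i',k')}|a_{i,i'}-b_{k,k'}|^p\,\mu(C_{(i,k)})\,\mu(C_{(i',k')}),
\]
an expression that depends only on the cell masses, so the value is the same under $\mu$ and $\mu'$. For the cut norm, I would first establish the standard relaxation
\[
\|h\|_{\square,\mu} \;=\; \sup_{f,g\colon S_1\times S_2\to[0,1]}\left|\int f(x)\,g(y)\,h(x,y)\,d\mu(x)\,d\mu(y)\right|,
\]
which follows because for fixed $g$ the map $f\mapsto \int fg h\,d\mu d\mu$ is linear in $f$ and its sup over the convex set $\{0\leq f\leq 1\}$ is attained at indicators (bathtub principle), and symmetrically for $g$. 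Substituting the step function form of $h$ and using Fubini, the inner integral simplifies to $\sum_{\alpha,\beta}h_{\alpha,\beta}\,u_\alpha\, v_\beta$ with $u_\alpha:=\int_{C_\alpha}f\,d\mu\in[0,\mu(C_\alpha)]$ and similarly for $v_\beta$. Since the choice $f=\sum_\alpha t_\alpha\mathbf{1}_{C_\alpha}$ with $t_\alpha\in[0,1]$ realizes an arbitrary tuple $u_\alpha=t_\alpha\mu(C_\alpha)\in[0,\mu(C_\alpha)]$ independently across cells, the cut norm reduces to the finite-dimensional bilinear extremal problem
\[
\|h\|_{\square,\mu} \;=\; \sup_{u_\alpha,v_\beta\in[0,\mu(C_\alpha)],\,[0,\mu(C_\beta)]}\Bigl|\sum_{\alpha,\beta}h_{\alpha,\beta}\,u_\alpha\, v_\beta\Bigr|,
\]
which once again depends only on the cell masses $\mu(C_\alpha)$.

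The step that requires care is the passage from the original supremum over measurable sets $U,V$ to the $[0,1]$-valued relaxation, because without it one might worry that the ranges $\{\mu(U\cap C_\alpha)\colon U\in\cS_1\otimes\cS_2\}$ and $\{\mu'(U'\cap C_\alpha)\colon U'\in\cS_1\otimes\cS_2\}$ could differ; for instance, one of the couplings might introduce atoms inside some $C_\alpha$ that prevent certain intermediate values from being realized. The relaxation sidesteps this subtlety entirely, because every value in $[0,\mu(C_\alpha)]$ is realized by $\int_{C_\alpha}t_\alpha\,d\mu=t_\alpha\mu(C_\alpha)$ for some constant $t_\alpha\in[0,1]$, regardless of the finer structure of the coupling. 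Once this is in place, both equalities $\|h\|_{\square,\mu}=\|h\|_{\square,\mu'}$ and $\|h\|_{p,\mu}=\|h\|_{p,\mu'}$ follow directly from $\mu(C_{(i,k)})=\mu'(C_{(i,k)})$.
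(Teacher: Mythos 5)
Your proof is correct and follows essentially the same route as the paper's: reduce everything to quantities that depend only on the cell masses $\mu(A_i\times B_j)$. The $L^p$ case is handled identically. For the cut norm, the paper computes the bilinear form $\int_{U\times V}(W_1^{\pi_1}-W_2^{\pi_2})\,d\mu\,d\mu=\sum_{i,i',j,j'}\mu(U\cap(A_i\times B_j))\,\mu(V\cap(A_{i'}\times B_{j'}))(a_{i,i'}-b_{j,j'})$ and asserts, somewhat tersely, that the supremum may be taken over sets $U',V'$ that are unions of cells; the implicit justification is that a bilinear form over a product of boxes $\prod_\alpha[0,\mu(C_\alpha)]$ is extremized at a vertex, and vertices are always realized by cell-unions. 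You reach the same finite-dimensional extremal problem via the standard $[0,1]$-valued relaxation of the cut norm, which has the advantage of making explicit \emph{why} the optimization genuinely only sees the masses $\mu(C_\alpha)$ (every coordinate in $[0,\mu(C_\alpha)]$ is hit by a constant multiple of $\mathbf 1_{C_\alpha}$, sidestepping any worry about which intermediate values $\mu(U\cap C_\alpha)$ can realize when the coupling has atoms). This is a slightly more verbose but cleaner treatment of the step the paper labels ``clear''; the substance is the same.
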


\begin{proof}
For all $U,V\subseteq S_1\times S_2$,
\begin{equation}
\int_{U\times V} \big(W_1^{\pi_1} - W_2^{\pi_2} \big)\,d\mu\,d\mu
= \sum_{i,i',j,j'} \mu(U\cap (A_i\times B_j)) \mu(V\cap (A_{i'}\times B_{j'}))(a_{i,i'}-b_{j,j'}).
\label{eq32}
\end{equation}
From the form of this expression and the definition of $\|\cdot\|_\square$
it is clear that we may assume there are sets $U',V'\subseteq S_1\times S_2$
such that for all $i,j$,
\[ A_i\times B_{j}\subseteq U'
\quad\text{or}\quad (A_i\times B_{j})\cap
U'=\emptyset
\]
and
\[
A_i\times B_{j}\subseteq V' \quad\text{or}\quad (A_i\times B_{j})\cap
V'=\emptyset,
\]
and such that
\[
\|W_1^{\pi_1} - W_2^{\pi_2}\|_{\square,\mu}
= \int_{U'\times V'} \big(W_1^{\pi_1} - W_2^{\pi_2}\big)\,d\mu\, d\mu.
\]
Hence it follows from \eqref{eq32} that
$\|W_1^{\pi_1}-W_2^{\pi_2}\|_{\square,\mu}=\|W_1^{\pi_1}-W_2^{\pi_2}\|_{\square,\mu'}$
if $\mu(A_i\times B_j) =\mu'(A_i\times B_j)$ for all $i,j$. The proof for the
$L^p$ metric follows from the fact that
\[
\|W_1^{\pi_1} - W_2^{\pi_2}\|_{p,\mu}^p
=  \sum_{i,i',j,j'} |a_{i,i'}-b_{j,j'}|^p
\mu{(A_i\times B_{j})}\mu(A_{i'}\times B_{j'}).
\]
\end{proof}

\begin{corollary}\label{cor:cut-norm-dep-on-mu}
Let $p\geq 1$ and for $k=1,2$, let $\cW_k=(W_k,\scr S_k)$ with $\scr
S_k=(S_k,\cS_k,\mu_k)$ be graphons in $L^p$.  For $k=1,2$,  let $\wt{\scr
S}_k=(\wt S_k,\wt\cS_k,\wt\mu_k)$ and $\wh{\scr S}_k=(\wh
S_k,\wh\cS_k,\wh\mu_k)$ be extensions of $\scr S_k$ with $\wt\mu_1(\wt
S_1)=\wt\mu_2(\wt S_2)\in(0,\infty]$ and $\wh\mu_1(\wh S_1)=\wh\mu_2(\wh
S_2)\in(0,\infty]$, and let $\wt W_k$ and $\wh W_k$ be the trivial extensions
of $W_k$ to $\wt{\scr S}_k$ and $\wh{\scr S}_k$. Let $\wt\mu$ and $\wh\mu$ be
couplings of $\wt\mu_1$ and $\wt\mu_2$, and $\wh\mu_1$ and $\wh\mu_2$,
respectively. If $\wt\mu$ and $\wh\mu$ agree on $S_1\times S_2$, then $\|\wt
W_1^{\pi_1}-\wt W_2^{\pi_2}\|_{\square,\wt\mu}= \|\wh W_1^{\pi_1}-\wh
W_2^{\pi_2}\|_{\square,\wh\mu}$ and $\|\wt W_1^{\pi_1}-\wt
W_2^{\pi_2}\|_{p,\wt\mu}= \|\wh W_1^{\pi_1}-\wh W_2^{\pi_2}\|_{p,\wh\mu}$.
\end{corollary}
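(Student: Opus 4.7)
The key structural observation is that, for each $k\in\{1,2\}$, the trivial extension $\wt W_k$ (respectively $\wh W_k$) vanishes off $S_k\times S_k$, so neither the cut norm nor the $L^p$ norm of $\wt W_1^{\pi_1}-\wt W_2^{\pi_2}$ sees the ``corner'' $(\wt S_1\setminus S_1)\times(\wt S_2\setminus S_2)$, and the mass on the other pieces of $\wt S_1\times\wt S_2$ is determined by $\mu_1,\mu_2$ and the restriction of the coupling to $S_1\times S_2$. The plan is to first establish the identity for step functions by a direct block calculation, and then extend to general graphons by an approximation argument using Lemma~\ref{prop15}.

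\textbf{Reduction to step functions.} Since step functions are dense in $L^p(S_k\times S_k,\mu_k\times\mu_k)$, for any $\eps>0$ I would choose step functions $W_k'$ on $S_k$ (relative to some partition $\{A_i\}$ of $S_1$ and $\{B_j\}$ of $S_2$) with $\|W_k-W_k'\|_p<\eps$, and similarly $\|W_k-W_k'\|_1<\eps$ for the cut-norm case. Because trivial extension does not alter $L^p$ or $L^1$ norms, the triangle inequality in the coupling-fixed version of Lemma~\ref{prop15} yields
\[
\bigl|\,\|\wt W_1^{\pi_1}-\wt W_2^{\pi_2}\|_{\square,\wt\mu}-\|(\wt W_1')^{\pi_1}-(\wt W_2')^{\pi_2}\|_{\square,\wt\mu}\,\bigr|\leq \|W_1-W_1'\|_1+\|W_2-W_2'\|_1<2\eps,
\]
and analogously with $\wh$ in place of $\wt$, and with $\|\cdot\|_p$ in place of $\|\cdot\|_\square$. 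Hence it suffices to prove the identity when $W_1$ and $W_2$ are step functions.

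\textbf{Step-function calculation.} Given the partitions $\{A_i\}$ and $\{B_j\}$ of $S_1$ and $S_2$, I would augment them to partitions $\{A_1,\dots,A_n,\wt S_1\setminus S_1\}$ of $\wt S_1$ and $\{B_1,\dots,B_m,\wt S_2\setminus S_2\}$ of $\wt S_2$, and form the induced block partition $\{C_{i,j}\}$ of $\wt S_1\times\wt S_2$, with the ``corner'' block $C_{*,*}=(\wt S_1\setminus S_1)\times(\wt S_2\setminus S_2)$. The crucial point is that if $x\in C_{*,*}$ then $\pi_1(x)\notin S_1$ and $\pi_2(x)\notin S_2$, so both $\wt W_1^{\pi_1}(x,\cdot)$ and $\wt W_2^{\pi_2}(x,\cdot)$ are identically zero; thus the difference $\wt W_1^{\pi_1}-\wt W_2^{\pi_2}$ vanishes on $(C_{*,*}\times\wt S_1\times \wt S_2)\cup(\wt S_1\times\wt S_2\times C_{*,*})$. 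It follows, exactly as in the proof of Lemma~\ref{prop23}, that the cut-norm integrals can be restricted to sets $U,V$ avoiding $C_{*,*}$, and that both the cut norm and the $L^p$ norm of the step function are determined entirely by the masses $\wt\mu(C_{i,j})$ with $(i,j)\neq(*,*)$. A direct check using the marginal constraints shows that these masses coincide with the corresponding $\wh\mu(C_{i,j})$: one has $\wt\mu(A_i\times B_j)=\wh\mu(A_i\times B_j)$ by hypothesis; and $\wt\mu(A_i\times(\wt S_2\setminus S_2))=\mu_1(A_i)-\sum_j\wt\mu(A_i\times B_j)$ by the marginal relation $\wt\mu(A_i\times\wt S_2)=\mu_1(A_i)$, which equals $\wh\mu(A_i\times(\wh S_2\setminus S_2))$ by the same computation on the $\wh$ side; and symmetrically for $(\wt S_1\setminus S_1)\times B_j$.

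\textbf{Conclusion and main obstacle.} Combining the step-function identity with the approximation bound and letting $\eps\to 0$ gives the desired equalities. The only real technicality is the bookkeeping in the step-function calculation; once one observes that the corner block $C_{*,*}$ contributes nothing to either norm (both for the cut norm, where one can always shrink $U,V$ to avoid it, and for the $L^p$ norm, where the pointwise integrand vanishes whenever either coordinate lies in $C_{*,*}$), the equality of masses on the remaining cells is forced by the marginal conditions, and the result follows from the explicit block formulas in Lemma~\ref{prop23}.
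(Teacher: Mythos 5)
Your argument follows essentially the same route as the paper's proof: reduce to step functions via Lemma~\ref{prop15} and density, then use the block formulas underlying Lemma~\ref{prop23} together with the marginal constraints to show the norms depend only on the restriction of the coupling to $S_1\times S_2$. You are slightly more explicit than the paper about why the corner block $(\wt S_1\setminus S_1)\times(\wt S_2\setminus S_2)$ contributes nothing, but the underlying mechanism is identical.
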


\begin{proof}
By Lemma~\ref{prop15} and the fact that step functions are dense in $L^1$ and
in $L^p$, it is sufficient to prove the corollary for step functions.  The
corollary then follows from Lemma~\ref{prop23} by observing that for two sets
$A\in {\cS}_1$ and $B\in {\cS}_2$ with finite measure $\mu_1(A)$ and
$\mu_2(B)$, the $\wt\mu$ measure of sets of the form $A\times (\wt
S_2\setminus S_2)$ and $(\wt S_1\setminus S_1)\times B$ can be expressed as
$\mu_1(A) -\wt\mu(A\times S_2)$ and $\mu_2(B) -\wt\mu(S_1\times B)$,
respectively, implying that $\|\wt W_1^{\pi_1}-\wt
W_2^{\pi_2}\|_{\square,\wt\mu}$ and $\|\wt W_1^{\pi_1}-\wt
W_2^{\pi_2}\|_{p,\wt\mu}$ depend only on the restriction of $\wt\mu$ to
$S_1\times S_2$, and similarly for $\|\wh W_1^{\pi_1}-\wh
W_2^{\pi_2}\|_{\square,\wh\mu}$ and $\|\wh W_1^{\pi_1}-\wh
W_2^{\pi_2}\|_{p,\wh\mu}$.
\end{proof}

Lemma~\ref{prop23} is also used in the proof of the triangle inequality in
the following lemma. The proof follows the same strategy as the proof by
\citet*[Lemma 6.5]{janson-survey} for the case of probability spaces.

\begin{lemma}
Let $p\geq 1$. For $i=1,2,3$ let $\cW_i=(W_i,\scr S_i)$ with $\scr
S_i=(S_i,\cS_i,\mu_i)$ be a graphon in $L^p$, such that
$\mu_1(S_1)=\mu_2(S_2)=\mu_3(S_3)\in(0,\infty]$. Defining $\delta_\square$
and $\delta_p$ as in Definition~\ref{defn1}(i), we have
\[
\delta_\square(\cW_1,\cW_3)
\leq \delta_\square(\cW_1,\cW_2)+\delta_\square(\cW_2,\cW_3)
\quad\text{and}\quad
\delta_p(\cW_1,\cW_3)
\leq \delta_p(\cW_1,\cW_2)+\delta_p(\cW_2,\cW_3).
\]
\label{prop24}
\end{lemma}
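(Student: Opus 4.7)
\medskip\noindent
\textbf{Proof plan for Lemma~\ref{prop24}.}
The strategy is to imitate the classical argument for graphons over probability spaces, where the triangle inequality follows by gluing couplings along the common middle marginal $\mu_2$, but to circumvent the fact that general $\sigma$-finite measures over abstract measurable spaces need not admit disintegrations. We do this by first reducing to step functions (Definition~\ref{defn4}), for which the gluing becomes an essentially finite-dimensional combinatorial construction.

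\medskip\noindent
\emph{Reduction to step functions.} Since step functions are dense in $L^1$ (and in $L^p$), for any $\varepsilon>0$ we can choose step function graphons $\cW_i'=(W_i',\scr S_i)$ supported on subsets of finite $\mu_i$-measure with $\|W_i-W_i'\|_1<\varepsilon$ (resp.\ $\|W_i-W_i'\|_p<\varepsilon$). Lemma~\ref{prop15} applied three times gives
\[
\delta_\square(\cW_1,\cW_3)\le \delta_\square(\cW_1',\cW_3')+2\varepsilon
\quad\text{and}\quad
\delta_\square(\cW_i',\cW_j')\le \delta_\square(\cW_i,\cW_j)+2\varepsilon
\]
(and analogously for $\delta_p$). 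Hence it suffices to prove the triangle inequality for step functions, and then let $\varepsilon\to 0$.

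\medskip\noindent
\emph{Gluing for step functions.} Fix couplings $\mu_{12},\mu_{23}$ nearly achieving $\delta_\square(\cW_1',\cW_2')$ and $\delta_\square(\cW_2',\cW_3')$. After passing to a common refinement we may assume the $W_i'$ are adapted to finite partitions $\{A_k^{(1)}\},\{A_l^{(2)}\},\{A_m^{(3)}\}$ of finite-measure subsets $D_i\subseteq S_i$. By Lemma~\ref{prop23} the cut norm of a difference of such step functions against any coupling depends only on the values the coupling assigns to products of partition atoms. Define
\[
c_{km}:=\sum_{l\colon\mu_2(A_l^{(2)})>0}\frac{\mu_{12}(A_k^{(1)}\times A_l^{(2)})\,\mu_{23}(A_l^{(2)}\times A_m^{(3)})}{\mu_2(A_l^{(2)})}
\]
and, more refinedly, $\alpha_{klm}:=\mu_{12}(A_k^{(1)}\times A_l^{(2)})\mu_{23}(A_l^{(2)}\times A_m^{(3)})/\mu_2(A_l^{(2)})$, with the convention $0/0=0$. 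A direct calculation shows $\sum_m c_{km}=\mu_1(A_k^{(1)})$, $\sum_k c_{km}=\mu_3(A_m^{(3)})$, and that the $\alpha_{klm}$ are consistent with the marginals $\mu_{12},\mu_{23}$. Spreading $\alpha_{klm}$ uniformly (as a product measure) inside each $A_k^{(1)}\times A_l^{(2)}\times A_m^{(3)}$ yields a partial measure on $D_1\times D_2\times D_3$ with the correct restricted marginals, which we extend to a genuine triple coupling $\mu_{123}$ on $S_1\times S_2\times S_3$ by applying Lemma~\ref{prop20} to absorb the possibly infinite excess mass outside $D_1\times D_2\times D_3$.

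\medskip\noindent
\emph{Deriving the inequality.} On $(S_1\times S_2\times S_3)^2$ the functions $W_i^{\pi_i}$ satisfy the pointwise identity $W_1^{\pi_1}-W_3^{\pi_3}=(W_1^{\pi_1}-W_2^{\pi_2})+(W_2^{\pi_2}-W_3^{\pi_3})$, so Minkowski (for $L^p$) and subadditivity of the cut norm give
\[
\|W_1^{\pi_1}-W_3^{\pi_3}\|_{\square,\mu_{123}}\le \|W_1^{\pi_1}-W_2^{\pi_2}\|_{\square,\mu_{123}}+\|W_2^{\pi_2}-W_3^{\pi_3}\|_{\square,\mu_{123}}.
\]
Since $W_1^{\pi_1}-W_2^{\pi_2}$ depends only on the first two factors (and similarly for the other two norms), the standard equivalence between $\{0,1\}$- and $[0,1\hspace{-.03in}]$-valued test functions in the cut norm shows that each norm on the right equals the corresponding cut norm against the pairwise marginal $\mu_{12}$ or $\mu_{23}$ (and analogously, the left side dominates $\|W_1^{\pi_1}-W_3^{\pi_3}\|_{\square,\mu_{13}}$ with $\mu_{13}$ the $(1,3)$-marginal of $\mu_{123}$). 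Taking the infimum over the initial couplings $\mu_{12},\mu_{23}$ yields the triangle inequality for step functions, and Step~1 completes the proof. The analogous computation for $\delta_p$ is immediate from Minkowski once the same gluing is in hand.

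\medskip\noindent
\emph{Main obstacle.} The delicate point is ensuring that the finite-dimensional combinatorial gluing $\alpha_{klm}$ extends to an honest coupling of the $\sigma$-finite measures on the full spaces $S_1,S_2,S_3$, and not just a measure on $D_1\times D_2\times D_3$; this is precisely the content of Lemma~\ref{prop20}, and it is why we first trim each $W_i$ to have finite-measure support before gluing.
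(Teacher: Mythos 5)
Your overall plan — reduce to step functions via Lemma~\ref{prop15}, then glue the two near-optimal couplings along the middle marginal using a finite-dimensional combinatorial formula, invoking Lemma~\ref{prop23} to control the cut norm — is exactly the strategy the paper follows. However, there is a concrete gap in the gluing step that traces back to the decision to restrict the partitions to finite-measure sets $D_i\subsetneq S_i$.

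The assertion ``a direct calculation shows $\sum_m c_{km}=\mu_1(A_k^{(1)})$'' is not correct as stated. Writing out the sum,
\[
\sum_m c_{km}=\sum_l \frac{\mu_{12}(A_k^{(1)}\times A_l^{(2)})}{\mu_2(A_l^{(2)})}\,\mu_{23}(A_l^{(2)}\times D_3),
\]
and since $\mu_{23}$ is a coupling on the full $S_2\times S_3$, its second coordinate is free to escape $D_3$, so $\mu_{23}(A_l^{(2)}\times D_3)\le\mu_2(A_l^{(2)})$ with no reason for equality. Summing over $l$ then gives at most $\mu_{12}(A_k^{(1)}\times D_2)\le\mu_1(A_k^{(1)})$, again with strict inequality in general. (An extreme case: if $\mu_{23}$ happens to route all of $D_2$ into $S_3\setminus D_3$, your glued $\alpha_{klm}$ vanish identically even though $\mu_{12}|_{D_1\times D_2}$ need not.) Consequently the ``restricted marginals'' of the glued partial measure do \emph{not} agree with $\mu_{12}|_{D_1\times D_2}$ and $\mu_{23}|_{D_2\times D_3}$, so Lemma~\ref{prop23} does not let you replace the cut norm against the glued measure by the cut norm against $\mu_{12}$ or $\mu_{23}$ — which is the step the whole triangle inequality rests on. A secondary issue: Lemma~\ref{prop20} as stated constructs a coupling of two $\sigma$-finite spaces, so it does not directly produce the three-way coupling you invoke; you would need a separate argument (or to settle for the $(1,3)$-marginal and apply the two-space lemma).

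The paper avoids all of this by not trimming: using $\sigma$-finiteness, it takes countable partitions $S_i=\bigcup_j A_j^{(i)}$ of the \emph{entire} spaces into atoms of positive finite measure and defines the triple measure $\mu$ by the analogous formula summed over all (countably many) triples. Then the marginal calculations close exactly (for instance, $\sum_m\mu''(B_j\times C_m)=\mu_2(B_j)$ since the partition of $S_3$ is exhaustive), $\mu$ is a genuine coupling of all three measures with no extension step required, and Lemma~\ref{prop23} applies verbatim. If you want to salvage your version, you would need to first replace $\mu_{12}$ and $\mu_{23}$ by couplings concentrated on $(D_1\times D_2)\cup((S_1\setminus D_1)\times(S_2\setminus D_2))$ (resp.\ the analogous set for indices $2,3$) — something Corollary~\ref{cor-to-prop20} in principle permits — and verify, via Corollary~\ref{cor:cut-norm-dep-on-mu}, that this does not increase the relevant cut norms; but it is simpler and cleaner to adopt the countable-partition formula directly.
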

\begin{proof}
By Lemma~\ref{prop15} and since step functions are dense in $L^1$, we may
assume that $W_i$ is a step function for $i=1,2,3$. Let $\scr
S_1=\bigcup_{j=1}^\infty A_j$ (resp.\ $\scr S_2=\bigcup_{j=1}^\infty B_j$,
$\scr S_3=\bigcup_{j=1}^\infty C_j$) be such that $W_1|_{A_j\times A_k}$
(resp.\ $W_2|_{B_j\times B_k}$, $W_3|_{C_j\times C_k}$) is constant for all
$j,k\in\N$, and assume without loss of generality that
$\mu_1(A_j),\mu_2(B_j),\mu_3(C_j)\in(0,\infty)$ for all $j\in\N$. Throughout
the proof we abuse notation slightly and let $\pi_i$ denote projection onto
$S_i$ from any space which is a product of $S_i$ and another space; for
example, $\pi_1$ denotes projection onto $S_1$ from $S_1\times S_2\times
S_3$, $S_1\times S_2$, and $S_1\times S_3$.

Let $\ep>0$, and let $\mu'$ (resp.\ $\mu''$) be a coupling measure on
$S_1\times S_2$ (resp.\ $S_2\times S_3$) such that
\[
\|W_1^{\pi_1} - W_2^{\pi_2}\|_{\square,\mu'}<\delta_\square(\W_1,\W_2)+\ep
\quad\text{and}\quad
\|W_2^{\pi_2} - W_3^{\pi_3}\|_{\square,\mu''}<\delta_\square(\W_2,\W_3)+\ep.
\]
We define a measure $\mu$ on $S_1\times S_2\times S_3$ for any $E\subseteq
S_1\times S_2\times S_3$ which is measurable for the product $\sigma$-algebra
by
\[
\mu(E) = \sum_{i,j,k} \frac{\mu'(A_i\times B_j) \mu''(B_j\times C_k)}{\mu_2(B_j)}
\frac{\mu_1\times\mu_2\times\mu_3(E\cap(A_i\times B_j\times C_k))}{\mu_1(A_i)\mu_2(B_j)\mu_3(C_k)}.
\]
By a straightforward calculation (see, for example, the paper by
\citel{janson-survey}, Lemma~6.5) the three mappings $\pi_l\colon (S_1\times
S_2\times S_3,\mu)\to(S_l,\mu_l)$ for $l=1,2,3$ are measure-preserving.
Furthermore, if $\wt\mu'$ is the pushforward measure of $\mu$ for the
projection $\pi_{12}\colon S_1\times S_2\times S_3\to S_1\times S_2$, then
\[
\wt \mu'(A_i\times B_j) = \mu'(A_i\times B_j)\quad\text{for all } i,j.
\]
By Lemma~\ref{prop23} and since $\pi_{12}\colon (S_1\times S_2\times
S_3,\mu)\to (S_1\times S_2,\wt\mu')$ is measure-preserving,
\[
\|W_1^{\pi_1}-W_2^{\pi_2}\|_{\square,S_1\times S_2,\mu'}
= \|W_1^{\pi_1}-W_2^{\pi_2}\|_{\square,S_1\times S_2,\wt\mu'}
= \|W_1^{\pi_1}-W_2^{\pi_2}\|_{\square,S_1\times S_2\times S_3,\mu}.
\]
Hence,
\[
\|W_1^{\pi_1}-W_2^{\pi_2}\|_{\square,S_1\times S_2\times S_3,\mu} < \delta_\square(\W_1,\W_2) + \ep.
\]
Similarly,
\[
\|W_2^{\pi_2}-W_3^{\pi_3}\|_{\square,S_1\times S_2\times S_3,\mu}
< \delta_\square(\W_2,\W_3) + \ep.
\]
Letting $\wh\mu$ be the pushforward measure on $S_1\times S_3$ of $\mu$ for
the projection $\pi_{13}\colon S_1\times S_2\times S_3\to S_1\times S_3$, we
have
\[
\|W_1^{\pi_1}-W_3^{\pi_3}\|_{\square,S_1\times S_3,\wh\mu}
= \|W_1^{\pi_1}-W_3^{\pi_3}\|_{\square,S_1\times S_2\times S_3,\mu}.
\]
Since the cut norm $\|\cdot\|_\square$ clearly satisfies the triangle
inequality,
\[
\begin{split}
\delta_\square(\W_1,\W_3)
&\leq \|W_1^{\pi_1}-W_3^{\pi_3}\|_{\square,S_1\times S_3,\wh\mu}
=    \|W_1^{\pi_1}-W_3^{\pi_3}\|_{\square,S_1\times S_2\times S_3,\mu}\\
&\leq \|W_1^{\pi_1}-W_2^{\pi_2}\|_{\square,S_1\times S_2\times S_3,\mu} +
\|W_2^{\pi_2}-W_3^{\pi_3}\|_{\square,S_1\times S_2\times S_3,\mu}\\
&< \delta_\square(\cW_1,\cW_2) + \delta_\square(\cW_2,\cW_3)+2\ep.
\end{split}
\]
Since $\ep$ was arbitrary this completes our proof for $\delta_\square$.  The
proof for $\delta_p$ is identical.
\end{proof}

\begin{lemma}
Let  $\W_i=(W_i,\scr S_i)$ with $\scr S_i=(S_i,\cS_i,\mu_i)$ be a graphon for
$i=1,2$, such that $\mu_1(S_1)=\mu_2(S_2)\in(0,\infty]$. For $i=1,2$ let
$\wt{\scr S}_i=(\wt S_i,\wt\cS_i,\wt\mu_i)$ be an extension of $\scr S_i$,
such that $\wt\mu_1(\wt S_1)=\wt\mu_2(\wt S_2)\in(0,\infty]$, and let
$\wt{\W}_i$ be the trivial extension of $\W_i$ to $\wt{\scr S}_i$. Then
$\delta_\square(\W_1,\W_2)=\delta_\square(\wt\W_1,\wt\W_2)$ and
$\delta_1(\W_1,\W_2)=\delta_1(\wt\W_1,\wt\W_2)$, where $\delta_\square$ and
$\delta_1$ are as in Definition~\ref{defn1}(i). If $p>1$ and $\W_1$ and
$\W_2$ are non-negative graphons in $L^p$, then the result holds for
$\delta_p$ as well. \label{prop21}
\end{lemma}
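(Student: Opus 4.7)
The plan is to establish each of the three claimed equalities ($\delta_\square$, $\delta_1$, and under non-negativity $\delta_p$) by proving both inequalities separately.

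The direction $\delta_\square(\wt\W_1,\wt\W_2)\le\delta_\square(\W_1,\W_2)$ (and its analogues for $\delta_1,\delta_p$) follows by lifting couplings without changing the relevant norm. Given any coupling $\mu$ of $\mu_1,\mu_2$ on $S_1\times S_2$, I would apply Lemma~\ref{prop20} (with $D_1=D_2=\emptyset$) to the two $\sigma$-finite measures $\wt\mu_k|_{\wt S_k\setminus S_k}$, which share the common total mass $r:=\wt\mu_1(\wt S_1)-\mu_1(S_1)=\wt\mu_2(\wt S_2)-\mu_2(S_2)$, producing a coupling $\delta$ on $(\wt S_1\setminus S_1)\times(\wt S_2\setminus S_2)$; then $\wt\mu:=\mu+\delta$ is a coupling of $\wt\mu_1,\wt\mu_2$. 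Because $\wt W_k$ vanishes off $S_k\times S_k$, the integrand $\wt W_1^{\pi_1}-\wt W_2^{\pi_2}$ vanishes whenever either $(x_1,x_2)$ or $(x_1',x_2')$ lies in $(\wt S_1\setminus S_1)\times(\wt S_2\setminus S_2)$, so the $\mu\otimes\delta$, $\delta\otimes\mu$, and $\delta\otimes\delta$ contributions to the cut-norm and $L^p$-norm integrals all vanish. This gives $\|\wt W_1^{\pi_1}-\wt W_2^{\pi_2}\|_{\square,\wt\mu}=\|W_1^{\pi_1}-W_2^{\pi_2}\|_{\square,\mu}$, and likewise in $L^p$; taking infima over $\mu$ yields the inequality.

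For the reverse direction, I would first reduce to the case of step functions. By Lemma~\ref{prop15} and the density of step functions in $L^1$ (respectively $L^p$), one may approximate each $W_k$ by a step function $W_k^\varepsilon$ with arbitrarily small $\|W_k-W_k^\varepsilon\|_1$ (resp.\ $\|\cdot\|_p$), observe that the trivial extensions $\wt W_k^\varepsilon$ satisfy the same approximation bounds, and apply Lemma~\ref{prop15} to transfer the equality from step functions to the general case up to an $O(\varepsilon)$ error that vanishes as $\varepsilon\to 0$.

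For step functions, Lemma~\ref{prop23} expresses each norm as an explicit function $F$ of the matrix of cell masses $m_{ij}=\mu(A^1_i\times A^2_j)$, where the partitions of $S_k$ (respectively $\wt S_k$) include a single ``zero cell'' $A^k_0$ (respectively $\wt A^k_0$) on which $W_k$ vanishes. The crucial combinatorial observation is that since the step-function coefficients satisfy $a^k_{0,\cdot}=a^k_{\cdot,0}=0$, the function $F$ does not depend on the $(0,0)$-entry $m_{00}$. Passing to the extended problem only enlarges the marginal on the zero cell by $r$, and since this shift cancels in the balance condition $\sum_j m_{0j}-\sum_i m_{i0}=\mu_1(A^1_0)-\mu_2(A^2_0)$, the only difference in the feasible sets of $(m_{ij})_{(i,j)\ne(0,0)}$ is that the upper bounds $\sum_j m_{0j}\le\mu_1(A^1_0)$ and $\sum_i m_{i0}\le\mu_2(A^2_0)$ are relaxed by $r$. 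For any extended-feasible $\wt m$ with $\wt m_{00}\ge r$, the direct construction $m_{ij}:=\wt m_{ij}$ for $(i,j)\ne(0,0)$ together with $m_{00}:=\wt m_{00}-r$ produces a non-extended-feasible matrix $m$ with $F(m)=F(\wt m)$. The hard part, which is the explicit calculation alluded to after Proposition~\ref{prop8}, is the case $\wt m_{00}<r$: one must first rearrange mass within $\wt m$---using the $r$ worth of slack in the zero-cell marginals to raise $\wt m_{00}$ up to $r$---via a transport-style swap among the entries $\wt m_{0j},\wt m_{i0}, \wt m_{ij}$ for $i\in I_1,j\in I_2$ which is chosen to offset any $F$-increase from the bulk-cell modifications against the compensating $F$-decreases in the row-$0$ and column-$0$ entries. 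This last step is the principal obstacle, but once it is carried out the construction above closes the argument uniformly for all three metrics.
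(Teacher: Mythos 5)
Your easy direction ($\delta_\square(\wt\W_1,\wt\W_2)\le\delta_\square(\W_1,\W_2)$, via lifting a coupling by supplementing it with an arbitrary coupling of the two residual measures $\wt\mu_k|_{\wt S_k\setminus S_k}$) is correct and is essentially the same device the paper uses. The reduction to step functions via Lemma~\ref{prop15} and density of step functions in $L^1$ (resp.\ $L^p$) is likewise correct and mirrors the paper.

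The gap is in the hard direction for step functions, and you have named it yourself: your handling of the case $\wt m_{00}<r$ is not a proof but an aspiration. You propose a ``transport-style swap among the entries $\wt m_{0j},\wt m_{i0},\wt m_{ij}$\ldots{} chosen to offset any $F$-increase,'' and then write that ``this last step is the principal obstacle, but once it is carried out the construction above closes the argument.'' As stated, this is not carried out. Moreover it is not at all obvious that a 2$\times$2 mass transfer $\wt m_{00}\!+\!\epsilon,\ \wt m_{0j}\!-\!\epsilon,\ \wt m_{i0}\!-\!\epsilon,\ \wt m_{ij}\!+\!\epsilon$ is always non-increasing in $F$: the term at $(i,j)$ contributes via coefficients $a_{i,i'}-b_{j,j'}$ which are genuinely nonzero, and the decrease in the row-$0$/column-$0$ entries contributes via coefficients $a_{i,i'}$ or $-b_{j,j'}$ alone; whether these offset depends on the optimal $U,V$ in a way that needs a real argument, and possibly on the $i,j$ chosen. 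Nothing in the proposal isolates why a good choice exists. The same difficulty recurs, independently, for $L^p$.

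The paper avoids this difficulty rather than solving it head-on. After the same reductions (non-atomic base, finite support, step functions), it further subdivides so that \emph{all} cells have equal measure $c$, rationalizes the coupling to one whose entries are multiples of $c/M$, and then observes that such a coupling is a permutation of $[KM]$. At that point the mass transfer becomes a clean permutation swap: find $i_1,i_2\le n<j_1,j_2$ with $\wh\sigma(i_1)=j_1$ and $\wh\sigma(j_2)=i_2$, and redefine $\sigma(i_1):=i_2$, $\sigma(j_2):=j_1$. The key to showing this does not increase $\|\wh W_1-\wh W_2^{\sigma}\|_\square$ is that because $\sigma(j_2)>n$, both $\wh W_1$ and $\wh W_2^{\sigma}$ vanish on row and column $j_2$, so one may freely adjust the optimal $U,V$ to satisfy the synchronization condition~\eqref{eq-35}; under that condition the integral is literally the same for $\sigma$ and $\wh\sigma$. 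The equal-mass discretization is precisely what makes the swap atomic and the bookkeeping exact, and it is the missing mechanism in your outline. Without either supplying a proof of your transport lemma or descending to the permutation level as the paper does, the argument for the reverse inequality is incomplete.
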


\begin{remark}
We remark that the assumption of non-negativity is necessary for the lemma to
hold when $p>1$. If for example $\W_1=(\1,[0,1])$ and $\W_2=(-\1,[0,1])$ are
graphons over $[0,1]$, and if $\wt{\W}_1$ and $\wt{\W}_2$ are the trivial
extensions to $[0,2]$, then $\delta_p(\W_1,\W_2)=2$ and
$\delta_p(\wt\W_1,\wt\W_2)=2^{1/p}$.
\end{remark}

\begin{proof}
We start with the proof for the cut metric. We will first prove the result
for the case when $\wt S_i=\N$ and $S_i=S:=\{1,\dots,n\}$ for $i=1,2$ and
some $n\in\N$, $\cS_i$ and $\wt{\cS}_i$ are the associated discrete
$\sigma$-algebras, and $\wt\mu_i(x)=c$ for all $x\in \wt S_i$ and some $c\in
(0,1)$.

First we will argue that
\[
\delta_\square(\W_1,\W_2)\geq\delta_\square(\wt\W_1,\wt\W_2).
\]
By Definition~\ref{defn1}(i) it is sufficient to prove that for each coupling
measure $\mu$ on $S\times S$ we can define a coupling measure $\wt\mu$ on
$\N\times\N$ such that $\|W_1^{\pi_1}-W_2^{\pi_2}\|_{\square,\mu}=\|\wt
W_1^{\wt\pi_1}-\wt W_2^{\wt\pi_2}\|_{\square,\wt\mu}$. But this is immediate,
since we can define $\wt\mu$ such that $\wt\mu|_{S\times S}=\mu$, and
$\wt\mu(A_1\times A_2)=c|A_1\cap A_2|$ for $A_i \subseteq \N\backslash S$.

Next we will prove that
\begin{equation}
\delta_\square(\W_1,\W_2)\leq\delta_\square(\wt\W_1,\wt\W_2).
\label{eq28}
\end{equation}
Again by Definition~\ref{defn1}(i), it will be sufficient to prove that given
any coupling measure $\wt\mu$ on $\N\times\N$ we can find a coupling measure
$\mu$ on $S\times S$ such that
$\|W_1^{\pi_1}-W_2^{\pi_2}\|_{\square,\mu}\leq\|\wt W_1^{\pi_1}-\wt
W_2^{\pi_2}\|_{\square,\wt\mu}$.

By the following argument we may approximate $\|\wt W_1^{\pi_1}-\wt
W_2^{\pi_2}\|_{\square,\wt\mu}$ arbitrarily well by replacing $\wt\mu$ with a
coupling measure which is supported on $(\wh S\times\wh
S)\cup((\N\backslash\wh S)\times(\N\backslash\wh S))$, where $\wh
S:=\{1,\dots,K\}$ for some sufficiently large $K\in\N$. Indeed, by
Corollary~\ref{cor-to-prop20}, given a coupling measure $\wt\mu$ on
$\N\times\N$ and $K\in\N$, we can define a measure $\wh\mu$ supported on
$(\wh S\times\wh S)\cup((\N\backslash\wh S)\times(\N\backslash\wh S))$ such
that $\wh\mu \geq \wt\mu$ on $(\wh S\times\wh S)\cup((\N\backslash\wh
S)\times(\N\backslash\wh S))$. It is easy to see from the construction of
this measure in the proof of Corollary~\ref{cor-to-prop20} that when $K$
converges to infinity, the measure $\wh\mu$ converges to $\wt\mu$ when
restricted to $(S\times\N)\cup( \N\times S)$ (for example for the topology
where we look at the maximum difference of the measure assigned to any set in
$(S\times\N)\cup( \N\times S)$). Therefore the corresponding cut norms also
converge. This shows that we may assume  that $\wt\mu$ is supported on $(\wh
S\times\wh S)\cup((\N\backslash\wh S)\times(\N\backslash\wh S))$ for some
$K\in\N$ when proving \eqref{eq28}.

Let $\wt\mu'$ be the restriction of $\wt\mu$ to $\wh S\times\wh S$. Then
$\|\wt{W}_1-\wt{W}_2\|_{\square,\N\times\N,\wt\mu}=
\|\wh{W}_1-\wh{W}_2\|_{\square,\wh S\times\wh S,\wt\mu'}$ where
$\wh{\cW}_i=(\wh W_i,\wh{\scr S}_i)$ is the trivial extensions of $\cW_i$ to
the measure space $\wh{\scr S}_i$ associated with $\wh S_i$. We will prove
that we may assume without loss of generality that $\wt\mu'$ corresponds to a
permutation of $\wh S$. By choosing $M\in\N$ sufficiently large we can
approximate $\|\wh{W}_1-\wh{W}_2\|_{\square,\wt\mu'}$ arbitrarily well by
replacing $\wt\mu'$ with a measure such that each element $(i,j)\in \wh
S\times \wh S$ has a measure which is an integer multiple of $c/M$; hence we
may assume $\wt\mu'$ is on this form. Each such $\wt\mu'$ can be described in
terms of a permutation $\sigma'$ of $[KM]$ via
$\wt\mu'((i,j))=\sum_{\ell=1}^{KM} c/M\delta_{i,\lceil
\ell/M\rceil}\delta_{j,\lceil\sigma'(\ell)/M\rceil}$. Let
$\wh{\W}'_i=(\wh{W}'_i,[KM])$ be the graphon such that each $j\in[KM]$ has
measure $c/M$, and such that $\wh{W}'_i=(\wh W_i)^{\phi}$ for the
measure-preserving map $\phi\colon [KM]\to[K]$ defined by $\phi(j):=\lceil
j/M\rceil$. Using Proposition~\ref{prop23} and the above observation on
describing $\wt\mu'((i,j))$ in terms of a permutation $\sigma'$ of $[KM]$ we
see that
$\|\wh{W}_1-\wh{W}_2\|_{\square,\wt\mu'}=\|\wh{W}'_1-(\wh{W}'_2)^{\sigma'}\|_{\square}$.
Upon replacing $\wh W_i$ by $\wh W'_i$ throughout the proof, we may assume
that the measure $\wt\mu'$ is a permutation.

To complete the proof it is therefore sufficient to consider some permutation
$\wh\sigma$ of $\wh S$ and prove that we can find a permutation $\sigma$ of
$\wh S$ mapping $S$ to $S$ such that
\begin{equation}
\|\wh W_1-\wh W_2^{\wh\sigma}\|_{\square} \geq
\| \wh W_1- \wh W_2^{\sigma}\|_{\square}.
\label{eq-32}
\end{equation}
We modify the permutation $\wh\sigma$ step by step to obtain a permutation
mapping $S$ to $S$. Abusing notation slightly we let $\wh\sigma$ and $\sigma$
denote the old and new, respectively, permutations in a single step. In each
step choose $i_1,i_2\leq n$ and  $ j_1,j_2>n$ such that $\wh\sigma(i_1)=j_1$
and $\wh\sigma(j_2)=i_2$; if such $i_1,i_2,j_1,j_2$ do not exist we know that
$\wh\sigma$ maps $S$ to $S$. Then define $\sigma(i_1):=i_2$ and
$\sigma(j_2):=j_1$, and for $k\not\in \{i_1,j_2\}$ define
$\sigma(k):=\wh\sigma(k)$. We have $\|\wh W_1-\wh W_2^{\sigma}\|_\square \leq
\|\wh W_1-\wh W_2^{\wh\sigma}\|_\square$ by the following argument. Let
$U,V\subseteq \N$ be such that $\|\wh W_1-\wh
W_2^{\sigma}\|_\square=|\int_{U\times V} \big(\wh W_1-\wh
W_2^{\sigma}\big)\,dx\, dy|$. Since $\sigma(j_2)>n$ (implying that both $\wh
W_1$ and $\wh W_2^{\sigma}$ are trivial on $(j_2\times \N)$ and $(\N\times
j_2)$) the following identity holds if we define $U':=U\backslash\{j_2\}$ or
$U':=U\cup\{j_2\}$, and if we define $V':=V\backslash\{j_2\}$ or
$V':=V\cup\{j_2\}$:
\[
\int_{U\times V} \big(\wh W_1-\wh W_2^{\sigma}\big)\,dx\, dy = \int_{U'\times V'} \big(\wh W_1-\wh W_2^{\sigma}\big)\,dx\, dy.
\]
In other words, $\int_{U\times V} (\wh W_1-\wh W_2^{\sigma})\,dx\, dy$ is
invariant under adding or removing $j_2$ from $U$ and/or $V$. Therefore we
may assume without loss of generality that
\begin{equation}
j_2\in U \text{ iff } i_1\in U, \qquad
j_2\in V \text{ iff } i_1\in V,
\label{eq-35}
\end{equation}
since if \eqref{eq-35} is not satisfied we may redefine $U$ and $V$ such that
\eqref{eq-35} holds, and we still have $\|\wh W_1-\wh
W_2^{\sigma}\|_\square=|\int_{U\times V} (\wh W_1-\wh W_2^{\sigma})\,dx\, dy|$.
The assumption \eqref{eq-35} implies that
$\int_{U\times V} (\wh W_1-\wh
W_2^{\sigma})\,dx\, dy=\int_{U\times V} (\wh W_1-\wh W_2^{\wh\sigma})\,dx\, dy$,
which implies \eqref{eq-32} since we can obtain a permutation $\sigma$
mapping $S$ to $S$ in finitely many steps as described above.

Now we will prove the lemma for general graphons. We will reduce the problem
step by step to a problem with additional conditions on the measure spaces
involved, until we have reduced the problem to the special case considered
above.

First we show that we may assume $\scr S_i$ and $\wt{\scr S}_i$ are
non-atomic. Define $S'_i:=S_i\times[0,1]$ and $\wt S'_i:=\wt S_i\times[0,1]$,
let $\scr S'_i$ and $\wt{\scr S}'_i$ be the corresponding atomless product
measure spaces when $[0,1]$ is equipped with Lebesgue measure, and let
$\W'_i=(W'_i,\scr S'_i)$ and $\wt{\W}'_i=(\wt W'_i,\wt{\scr S}'_i)$ be
graphons such that $W'_i=(W_i)^{\pi_i^1}$ and $\wt W'_i=(\wt
W_i)^{\wt\pi_i^1}$, where $\pi_i^1\colon {\scr S}'_i\to {\scr S}_i$ and
$\wt\pi_i^1\colon \wt{\scr S}'_i\to \wt{\scr S}_i$ are the projection maps on
the first coordinates. By considering the natural coupling of $\wt S'_i$ and
$\wt S_i$ it is clear that $\delta_\square(\wt W'_i,\wt W_i)=0$. It therefore
follows from the triangle inequality that $\delta_\square(\wt W_1,\wt
W_2)=\delta_\square(\wt W'_1,\wt W'_2)$. Similarly,
$\delta_\square(W_1,W_2)=\delta_\square(W'_1,W'_2)$. In order to prove that
$\delta_\square(\wt W_1,\wt W_2)=\delta_\square(W_1,W_2)$ it is therefore
sufficient to prove that $\delta_\square(\wt W'_1,\wt
W'_2)=\delta_\square(W'_1,W'_2)$. Since $\scr S'_i$ and $\wt{\scr S}'_i$ are
atomless and $\wt W'_i$ is a trivial extension of $W'_i$ it is therefore
sufficient to prove the lemma for atomless measure spaces.

Next we will reduce the general case to the case when $\wt\mu_i(\wt
S_i)=\infty$. If $\wt\mu_i(\wt S_i)<\infty$ we extend $\wt{\scr S}_i$ to a
space $\wh{\scr S}_i$ of infinite measure, and let $\wh{\cW}_i$ be the
trivial extension of $\wt{\cW}_i$ to $\wh{\scr S}_i$. Assuming we have proved
the lemma for the case when the extended measure spaces have infinite
measure, it follows that
\[
\delta_\square(\cW_1,\cW_2)
= \delta_\square(\wh{\cW}_1,\wh{\cW}_2)
= \delta_\square(\wt{\cW}_1,\wt{\cW}_2);
\]
hence the lemma also holds for the case when $\wt\mu_i(\wt S_i)<\infty$.

Next we prove that we may assume $\mu_i(S_i)<\infty$. We proceed similarly as
in the previous paragraph, and assume $\mu_i(S_i)=\infty$. By Lemma
\ref{prop15} we may assume $W_i$ are supported on sets of finite measure, and
we let $\wh{\scr S}_i=(\wh S_i,\wh{\cS}_i,\wh\mu_i)$ be a restriction of
$\scr S_i$ such that $\text{supp}(W_i)\subseteq \wh S_i\times \wh S_i$ and
$\wh\mu_i(\wh S_i)<\infty$. Since $\scr S_i$ is non-atomic we may assume
$\wh{\mu}_1(\wh S_1)=\wh{\mu}_2(\wh S_2)$. Define the graphon
$\wh{\cW}_i=(\wh W_i,\wh{\scr S}_i)$ to be such that $\cW_i$ is the trivial
extension of $\wh{\cW}_i$ to $\scr S_i$. Assuming we have proved the lemma
for the case when $\mu_i(S_i)<\infty$, it follows that
\[
\delta_\square(\cW_1,\cW_2)
= \delta_\square(\wh{\cW}_1,\wh{\cW}_2)
= \delta_\square(\wt{\cW}_1,\wt{\cW}_2);
\]
hence the lemma also holds for the case when $\mu_i(S_i)<\infty$.

Next we will prove that we may assume $W_i$ is a step function for $i=1,2$,
such that each step has the same measure $c>0$. Step functions are dense in
$L^1$, and hence it is immediate from Lemma~\ref{prop15} that we may assume
$W_i$ is a step function. We may assume that the measure of each step is a
rational multiple of $\mu_i(S_i)$; if this is not the case we may adjust the
steps slightly (because $\scr S_i$ is non-atomic, we can choose subsets of
the steps of any desired measures, by Exercise~2 from \S41 in the book of
\citel{halmos}) to obtain this. Assuming each step has a measure which is a
rational multiple of $\mu_i(S_i)$ we may subdivide each step such that each
step obtains the same measure $c>0$, again using the exercise in the book by
\citet*{halmos}.

Assume $W_i$ are step functions consisting of $k\in\N$ steps each having
measure $c>0$, and that $\mu_i(S_i)<\infty$ and $\wt\mu_i(\wt S_i)=\infty$. Let
$\cW'_i=(W'_i,[n])$ (resp.\ $\wt{\W}'_i=(\wt{W}'_i,\N)$) be a graphon over
$[n]:=\{1,\dots,n\}$ (resp.\ $\N$) equipped with the discrete
$\sigma$-algebra, such that each $j\in[n]$ (resp.\ $j\in\N$) has measure $c$,
and such that $W_i=(W'_i)^{\phi_i}$ (resp.\ $\wt W_i=(\wt W'_i)^{\wt\phi_i}$)
for a measure-preserving map $\phi_i\colon S_i\to [k]$ (resp.\
$\wt\phi_i\colon \wt S_i\to \N$). Then
$\delta_\square(\cW_1,\cW_2)=\delta_\square(\cW'_1,\cW'_2)$ and
$\delta_\square(\wt{\cW}_1,\wt{\cW}_2)=\delta_\square(\wt{\cW}'_1,\wt{\cW}'_2)$.
By the special case we considered in the first paragraphs of the proof,
$\delta_\square(\cW'_1,\cW'_2)=\delta_\square(\wt{\cW}'_1,\wt{\cW}'_2)$.
Combining the above identities, our desired result
$\delta_\square(\cW_1,\cW_2)=\delta_\square(\wt{\cW}_1,\wt{\cW}_2)$ follows.

To prove the result for the metric $\delta_p$, we follow the steps above. The
only place where the proof differs is in the proof of \eqref{eq-32}. Let
$\sigma$, $\wh\sigma$, and $i_1,i_2,j_1,j_2$ be as in the proof of
\eqref{eq-32}. We would like to show that
\[
\| \wh W_1- \wh W_2^{\sigma}\|_p^p\leq \|\wh W_1-\wh W_2^{\wh\sigma}\|_p^p.
\]
Writing both sides as a sum over $(i,j)\in \wh S^2$ we consider the
following three cases separately: (i) $(i,j)\in (\wh S\setminus
\{i_1,j_2\})^2$, (ii) $(i,j)\in \{i_1,j_2\}\times (\wh S\setminus
\{i_1,j_2\})$ or $(i,j)\in (\wh S\setminus \{i_1,j_2\})\times \{i_1,j_2\}$,
and (iii) $(i,j)\in \{i_1,j_2\}\times \{i_1,j_2\}$. In case (i) the terms are
identical on the left side and on the right side. For dealing with case (ii)
it is sufficient to prove that for an arbitrary $i\in (\wh S\setminus
\{i_1,j_2\})$,
\begin{equation*}
\begin{split}
|\wh W_1(i_1,i)- \wh W_2^{\sigma}(i_1,i)|^p&
+ |\wh W_1(j_2,i)-\wh W_2^{\sigma}(j_2,i)|^p\\
&\leq
|\wh W_1(i_1,i)- \wh W_2^{\wh\sigma}(i_1,i)|^p
+ |\wh W_1(j_2,i)-\wh W_2^{\wh\sigma}(j_2,i)|^p.
\end{split}
\end{equation*}
This is equivalent to
\begin{equation*}
|\wh W_1(i_1,i)- \wh W_2(i_2,\wh\sigma(i))|^p
\leq
|\wh W_1(i_1,i)|^p
+ |\wh W_2(i_2,\wh\sigma(i))|^p.
\end{equation*}
This inequality is obviously true if either $p=1$ or both $\wh W_1$ and
$\wh W_2$ are non-negative. For case (iii) we need to show that
\[
\sum_{i,j\in \{i_1,j_2\}}| \wh W_1(i,j)- \wh W_2^{\sigma}(i,j)|^p\leq\sum_{i,j\in \{i_1,j_2\}} |\wh W_1(i,j)-\wh W_2^{\wh\sigma}(i,j)|^p.
\]
Writing out both sides we see that this is equivalent to
\[
| \wh W_1(i_1,i_1)- \wh W_2(i_2,i_2)|^p\leq | \wh W_1(i_1,i_1)|^p+|\wh W_2(i_2,i_2)|^p,
\]
which is again true if either $p=1$ or both $\wh W_1$ and
$\wh W_2$ are non-negative.
\end{proof}

\begin{proof}[Proof of Proposition~\ref{prop8} and Proposition~\ref{prop:delta-p}]
The existence of a coupling follows from Lemma~\ref{prop20} with
$D_1=D_2=\emptyset$.

To prove the next statement of the proposition, i.e., that the value of
$\delta_\square(\W_1,\W_2)$ is independent of the extensions $\wt{\scr S}_i$,
we consider alternative extensions $\wh{\scr S}_i=(\wh
S_i,\wh\cS_i,\wh\mu_i)$ of $\scr S_i$ for $i=1,2$, and let $\wh{\W}_i$ denote
the trivial extension of $\W_i$ to $\wh{\scr S}_i$. By Lemma~\ref{prop21} it
is sufficient to consider the case when $\wt\mu_i(\wt S_i\backslash
S_i)=\wh\mu_i(\wh S_i\backslash S_i)=\infty$, since if the extensions do not
satisfy this property we can extend them to a space of infinite measure, and
Lemma~\ref{prop21} shows that the cut norm is unchanged. It is sufficient to
prove that, given any coupling measure $\wt\mu$ on $\wt S_1\times\wt S_2$, we
can find a coupling measure $\wh\mu$ on $\wh S_1\times\wh S_2$, such that
\begin{equation}
 \|\wt W_1^{\pi_1}-\wt W_2^{\pi_2}\|_{\square,\wt S_1\times \wt S_2,\wt\mu}
 =
 \|\wh W_1^{\pi_1}-\wh W_2^{\pi_2}\|_{\square,\wh S_1\times \wh S_2,\wh\mu}.
 \label{eq2}
\end{equation}
By Corollary~\ref{cor:cut-norm-dep-on-mu}, the left side  of \eqref{eq2} only
depends on $\wt\mu$ restricted to $S_1\times S_2$; in a similar way, the
right side only depends on $\wh\mu$ restricted to $S_1\times S_2$. We
therefore can define an appropriate measure $\wh\mu$ on $\wh S_1\times \wh
S_2$ by defining $\wh\mu|_{S_1\times S_2}=\wt\mu|_{S_1\times S_2}$, and
extending it to a coupling measure on $\wh S_1\times\wh S_2$ by Lemma
\ref{prop20}; this yields \eqref{eq2}.

The function $\delta_\square$ is clearly symmetric and non-negative. To prove
that it is a pseudometric it is therefore sufficient to prove that it
satisfies the triangle inequality. This is immediate by Lemma~\ref{prop24}
and the definition of $\delta_\square$ as given in Definition
\ref{defn1}(ii).

The proof for the metric $\delta_1$ follows exactly the same steps.

Using the statements of Lemma~\ref{prop21},
Corollary~\ref{cor:cut-norm-dep-on-mu}, and Lemma~\ref{prop24} for $p>1$, the
above proof of Proposition~\ref{prop8} immediately generalizes to the
invariant $L^p$ metric $\delta_p$ as long as the graphons in question are
non-negative graphons in $L^p$ (in addition to being in $L^1$, as required by
the definition of a graphon).  This proves Proposition~\ref{prop:delta-p}.
\end{proof}

For two graphons $\W=(W,\scr S)$ and $\wt{\W}=(\wt W,\wt{\scr S})$ for which
$\scr S=\wt{\scr S}$, it is immediate that $\delta_\square(\W,\wt{\W})\leq
\|W-\wt W\|_1$. The following lemma gives an analogous bound when $W$ and
$\wt W$ are defined on the same measurable space and $W=\wt W$, but the
measures are not identical. If $\mu$ and $\wt\mu$ are two measures on the
same measurable space $(S,\mcl S)$ and $a\geq 0$ we write $\mu\leq a\wt{\mu}$
to mean that $\mu(A)\leq a\wt{\mu}(A)$ for every $A\in\mcl S$.

\begin{lemma}
Let $\W=(W,\scr S)$ with $\scr S=(S,\mcl S,\mu)$ and $\wt{\W}=(W,\wt{\scr
S})$ with $\wt{\scr S}=(S,\mcl S,\wt\mu)$ be graphons, and assume there is an
$\ep\in(0,1)$ such that $\mu\leq \wt\mu\leq (1+\ep)\mu$. Then
$\delta_\square(\W,\wt{\W})\leq 3\ep\|W\|_{1,\mu}$. \label{prop16}
\end{lemma}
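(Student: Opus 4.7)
The plan is to realize $\W$ and $\wt\W$ as pullbacks from a common enlarged measure space, so that their cut distance can be bounded by a direct $L^1$-computation on that common space. Since $\mu\le\wt\mu\le(1+\ep)\mu$, both $\mu\ll\wt\mu$ and $\wt\mu\ll\mu$, so by Radon--Nikodym (using $\sigma$-finiteness) there is a measurable $f\colon S\to[0,\ep]$ with $d\wt\mu = (1+f)\,d\mu$.

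Let $\lambda$ denote Lebesgue measure on $[0,1+\ep]$, and define the measurable sets
\[
T_1 := S\times[0,1]\subseteq T_2 := \{(x,t)\in S\times[0,1+\ep] : 0\le t\le 1+f(x)\},
\]
equipped with $\rho$, the restriction of $\mu\times\lambda$. Let $\phi_i\colon T_i\to S$ be projection onto the first coordinate. Using Fubini, $\phi_1\colon(T_1,\rho|_{T_1})\to(S,\mu)$ and $\phi_2\colon(T_2,\rho)\to(S,\wt\mu)$ are both measure-preserving, since the $x$-fiber of $T_1$ has $\lambda$-measure $1$ and the $x$-fiber of $T_2$ has $\lambda$-measure $1+f(x)$. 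Let $\W_i := (W^{\phi_i},T_i,\rho|_{T_i})$; then the measure-preserving couplings supported on the graphs of $\phi_i$ give $\delta_\square(\W,\W_1)=0$ and $\delta_\square(\wt\W,\W_2)=0$. Let $\W_1^*$ denote the trivial extension of $\W_1$ to $(T_2,\rho)$; by Lemma~\ref{prop21} and Definition~\ref{defn1}(ii), $\delta_\square(\W_1,\W_1^*)=0$. By the triangle inequality (Proposition~\ref{prop8}), it thus suffices to estimate $\delta_\square(\W_1^*,\W_2)$, and since these are defined over the same space this is bounded by $\|W^*-W^{\phi_2}\|_{1,\rho}$, where $W^*$ denotes the trivial extension of $W^{\phi_1}$ to $T_2\times T_2$.

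For the $L^1$-estimate, observe that on $T_1\times T_1$ the two functions coincide (both equal $W(\phi_2(\cdot),\phi_2(\cdot))$ since $\phi_1=\phi_2$ there), while on $(T_2\times T_2)\setminus(T_1\times T_1)$ the difference equals $-W^{\phi_2}$. Using the measure-preserving properties, $\int_{T_2\times T_2}|W^{\phi_2}|\,d\rho^2=\|W\|_{1,\wt\mu}$ and $\int_{T_1\times T_1}|W^{\phi_2}|\,d\rho^2=\|W\|_{1,\mu}$, so
\[
\|W^*-W^{\phi_2}\|_{1,\rho}=\|W\|_{1,\wt\mu}-\|W\|_{1,\mu}=\int_{S\times S}|W(x,y)|\bigl((1+f(x))(1+f(y))-1\bigr)d\mu(x)\,d\mu(y).
\]
Bounding $(1+f(x))(1+f(y))-1=f(x)+f(y)+f(x)f(y)\le 2\ep+\ep^2$ pointwise and applying $\ep^2\le\ep$ (since $\ep<1$) gives $\|W^*-W^{\phi_2}\|_{1,\rho}\le(2\ep+\ep^2)\|W\|_{1,\mu}\le 3\ep\|W\|_{1,\mu}$, which yields the claim.

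The main conceptual step is the construction of the common pullback space $T_2$; after that, the bookkeeping reduces to standard properties of pullbacks, trivial extensions, and a routine Fubini calculation. The only minor technicality is that $\mu(S)$ and $\wt\mu(S)$ may differ, but this is absorbed transparently by invoking Definition~\ref{defn1}(ii) and Lemma~\ref{prop21} when comparing $\W_1$ with its trivial extension $\W_1^*$.
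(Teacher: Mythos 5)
Your proof is correct, and it takes a genuinely different route to the same conclusion. The paper's proof decomposes $\wt\mu=\mu+\mu'$ additively (with $\mu':=\wt\mu-\mu$), passes to the disjoint union $S''=S\sqcup S'$ of two copies of $S$, extends $\W$ trivially to $S''$, and then estimates the cut norm of $\wt W^{\pi_1}-(W'')^{\pi_2}$ against an explicit ``diagonal'' coupling of $\wt\mu$ with $\mu+\mu'$, splitting each rectangle $U''\times V''$ into four pieces and bounding the three nontrivial pieces by $\ep\|W\|_1$, $\ep\|W\|_1$, and $\ep^2\|W\|_1$, respectively. You instead invoke Radon--Nikodym to write $d\wt\mu=(1+f)\,d\mu$ with $f\in[0,\ep]$, encode both measures as projections from a fattened cylinder $T_1\subset T_2\subset S\times[0,1+\ep]$, and reduce to an exact $L^1$ computation $\|W^*-W^{\phi_2}\|_{1,\rho}=\|W\|_{1,\wt\mu}-\|W\|_{1,\mu}$, which you then bound by the same factor $2\ep+\ep^2\le 3\ep$. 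The end calculation is effectively identical, but your route is a bit more canonical: it makes the ``extra mass'' geometric (the fiber height encodes $d\wt\mu/d\mu$), replaces the hand-built coupling by two pullbacks plus a trivial extension, and yields an exact $L^1$ formula rather than just a cut-norm bound. The paper's route avoids invoking Radon--Nikodym and Fubini, instead reasoning directly with signed-measure bookkeeping and the definition of the coupling; in exchange it requires carefully verifying the four-way split of $U''\times V''$. Both are valid and of comparable length. One small notational slip: you write $\W_i:=(W^{\phi_i},T_i,\rho|_{T_i})$, whereas in the paper's conventions a graphon is a pair $(W,\scr S)$, so this should read $\W_i:=(W^{\phi_i},(T_i,\cdot,\rho|_{T_i}))$; the argument is unaffected.
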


\begin{proof}
To distinguish between the graphons $\W$ and $\wt{\W}$ we will write
$\wt{\W}=(\wt{W},\wt{\scr S})$ and $\wt{\scr S}=(\wt S,\wt{\cS},\wt\mu)$, but
recall throughout the proof that $\wt W=W$ and $(\wt S,\wt{\cS})=(S,\cS)$.
Define $(S',\cS'):=(S,\cS)$, $\mu':=\wt\mu-\mu$, and $\scr
S':=(S',\cS',\mu')$, and let $\scr S''$ be the disjoint union of $\scr S$ and
$\scr S'$. Let $\W''=(W'',\scr{S}'')$ be the trivial extension of $\W$ to
$\scr S''$, and note that $\W''$ and $\wt{\W}$ are graphons over spaces of
equal total measure. Since $\delta_\square(\W'',\W)=0$ it is sufficient to
prove that $\delta_\square(\wt{\W},\W'')\leq 3\ep\|W\|_{1,\mu}$. Let $\wh\mu$
be the coupling measure on $\wt S\times S''$ such that if $\wt A\in\wt{\cS}$,
$A\in\cS$, and $A'\in\cS'$, then
\begin{equation}
\wh\mu(\wt A\times (A\cup A'))
=\mu(\wt A\cap A) + \mu'(\wt A\cap A').
\label{eq27}
\end{equation}
To complete the proof of the lemma it is sufficient to show that for all
measurable sets $U'',V''\subseteq \wt S\times S''$,
\begin{equation}
\bigg|
\int_{U''\times V''} (\wt W^{\pi_1} - (W'')^{\pi_2}) \,d\wh\mu\, d\wh\mu \bigg|
\leq 3\ep \|W\|_{1,\mu},
\label{eq23}
\end{equation}
where $\pi_1\colon \wt S\times S''\to \wt S$ (resp. $\pi_2\colon \wt S\times
S''\to S''$) is the projection onto the first (resp.\ second) coordinate of
$\wt S\times S''$. Let $U,V\subseteq \wt S\times S$ and $U',V'\subseteq \wt
S\times S'$ be such that $U''=U\cup U'$ and $V''=V\cup V'$. Recall that since
$S'=S$ we may also view $U',V'$ as sets in $\wt S\times S$, and we denote
these sets by $U'_S,V'_S$, respectively. By first using $W''|_{(S''\times
S'')\backslash (S\times S)}=0$ while $\wh\mu$-almost surely $\wt W^{\pi_1}
=(W'')^{\pi_2}$ on $(\tilde S\times  S)^2$, then using $\mu'\leq \ep\mu$, and
then using $\wt W=W$ and \eqref{eq27}, we obtain the estimate \eqref{eq23}:
\[
\begin{split}
\bigg|\int_{U''\times V''} & \big(\wt W^{\pi_1} - (W'')^{\pi_2} \big) \,d\wh\mu\, d\wh\mu\bigg|\\
&= \bigg|\int_{U\times V'} \wt W^{\pi_1} \,d\wh\mu\, d\wh\mu
+ \int_{U'\times V} \wt W^{\pi_1} \,d\wh\mu\, d\wh\mu
+ \int_{U'\times V'} \wt W^{\pi_1} \,d\wh\mu\, d\wh\mu\bigg|\\
& \leq \ep\int_{U\times V'_S} |\wt W|^{\pi_1} \,d\wh\mu\, d\wh\mu
+ \ep\int_{U'_S\times V} |\wt W|^{\pi_1} \,d\wh\mu\, d\wh\mu
+ \ep^2\int_{U'_S\times V'_S} |\wt W|^{\pi_1} \,d\wh\mu\, d\wh\mu\\
&\leq 3\ep \|W\|_{1}.
\end{split}
\]
\end{proof}

We close this appendix with a lemma that immediately implies
Proposition~\ref{pro:deg-conv}.

\begin{lemma}
Let $\eps\geq 0$ and let $\W=(W,(S,\cS,\mu))$ and $\W'=(W',(S',{\cS'},\mu'))$
be two graphons with $\delta_\square(\W,\W')\leq \eps^2/2$. Then
\[
\mu(\{D_W> \lambda+2\eps\})-\eps\leq\mu'(\{D_{ W' }> \lambda+\eps\})\leq \mu(\{D_{ W }> \lambda\})+\eps
\]
for all $\lambda\geq 0$.
\label{lem:D_W}
\end{lemma}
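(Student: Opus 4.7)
My plan is to exploit the coupling characterization of $\delta_\square$ together with the observation that integrating the difference $W^{\pi_1} - (W')^{\pi_2}$ against the second factor of the coupling reproduces the degree functions. First I would reduce to the case $\mu(S) = \mu'(S')$ by passing to trivial extensions of $\W$ and $\W'$; this does not change $\delta_\square$ by Lemma~\ref{prop21}, and it does not change $\mu(\{D_W > c\})$ or $\mu'(\{D_{W'} > c\})$ for any $c > 0$, since the degree function of a trivially extended graphon vanishes on the added region. After this reduction, for every $\delta > 0$ I can pick a coupling $\nu$ of $\mu$ and $\mu'$ on $S \times S'$ with $\|W^{\pi_1} - (W')^{\pi_2}\|_{\square,\nu} \leq \eps^2/2 + \delta$.

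The key input is that, by the marginal property of $\nu$, for every $(x, x')$ one has $\int_{S \times S'} (W(x, y) - W'(x', y'))\, d\nu(y, y') = D_W(x) - D_{W'}(x')$. Applying the cut-norm bound with the second set equal to all of $S \times S'$ (legitimate since $W^{\pi_1} - (W')^{\pi_2} \in L^1(\nu \otimes \nu)$) therefore gives
\[
\left|\int_A \bigl(D_W(x) - D_{W'}(x')\bigr)\, d\nu(x, x')\right| \leq \frac{\eps^2}{2} + \delta
\]
for every measurable $A \subseteq S \times S'$.

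I would then feed into this estimate the two disagreement sets $A_+ := \{D_W \leq \lambda,\ D_{W'} > \lambda + \eps\}$ and $A_- := \{D_W > \lambda + 2\eps,\ D_{W'} \leq \lambda + \eps\}$. On $A_+$ the integrand exceeds $\eps$, so $\nu(A_+) \leq \eps/2 + \delta/\eps$; writing $\mu'(\{D_{W'} > \lambda + \eps\}) = \nu(A_+) + \nu(\{D_W > \lambda\} \times \{D_{W'} > \lambda + \eps\})$ and bounding the second summand by $\mu(\{D_W > \lambda\})$ yields the upper inequality after sending $\delta \to 0$. A symmetric calculation on $A_-$, together with $\mu(\{D_W > \lambda + 2\eps\}) = \nu(A_-) + \nu(\{D_W > \lambda + 2\eps\} \times \{D_{W'} > \lambda + \eps\})$, gives the lower inequality. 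The degenerate case $\eps = 0$, in which the division by $\eps$ fails, follows from the $\eps > 0$ case by letting $\eps \searrow 0$ and using right-continuity of $\lambda \mapsto \mu(\{D_W > \lambda\})$ and $\lambda \mapsto \mu'(\{D_{W'} > \lambda\})$. I do not expect any genuine obstacle; the only technical point is the bookkeeping around the fact that the infimum in $\delta_\square$ need not be attained, which is absorbed by the $\delta > 0$ slack.
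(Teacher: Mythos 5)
Your proof is correct and follows essentially the same approach as the paper: both reduce to a near-optimal coupling, use the marginal property to turn the cut-norm bound on $W^{\pi_1}-(W')^{\pi_2}$ into a bound on $\int_A(D_W-D_{W'})\,d\nu$ for arbitrary $A$, and conclude by decomposing the superlevel sets of $D_W$ and $D_{W'}$ over the coupling. The paper first derives the intermediate $L^1$ estimate $\int|D_W-D_{W'}|\,d\wh\mu\leq(\eps')^2$ and then applies Markov's inequality, whereas you bound $\nu(A_\pm)$ directly by applying the cut-norm estimate to the two disagreement sets; these amount to the same argument.
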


\begin{proof}
Since the trivial extension of a graphon $\W$  only changes the measure of
the set $\{D_W=0\}$, we may assume without loss of generality that
$\mu(S)=\mu'(S')$. Let $\pi_1$ and $\pi_2$ be the projections from  $S\times
S'$ onto the two coordinates, let $\eps'>\eps$, and let $\wh\mu$ be a
coupling of $\mu$ and $\mu'$ such that
\[
\|W^{\pi_1}-{(W')}^{\pi_2}\|_{\square,\wh\mu}\leq \frac{{(\eps')^2}}2.
\]
By definition of the cut metric, this implies that
\[
\left|\int_U (D_W(x)-D_{W'}(x'))\,d\wh\mu(x,x')\right|\leq\frac{{(\eps')^2}}2
\]
for all $U\subseteq S\times S'$.  Applying this bound for $U=\{(x,x')\in
S\times S' \colon D_W(x)-D_{W'}(x')\geq 0\}$ and $U=\{(x,x')\in S\times S'
\colon D_W(x)-D_{W'}(x')\leq 0\}$, this implies that
\[
\int_{S\times S'}\big|D_W(x)-D_{W'}(x')\big|\,d\wh\mu(x,x')\leq {{(\eps')^2}},
\]
which in turn implies that
\[
\wh\mu\left(\{(x,x')\in S\times S'\colon \big|D_W(x)-D_{W'}(x')\big|\geq \eps'\}\right)\leq \eps'.
\]
As a consequence
\[
\begin{split}
\mu(\{D_W> \lambda+2\eps'\})-\eps'
&\leq {\wh\mu}(\{(x,x')\colon D_W(x)>\lambda+2\eps'\text{ and } \big|D_W(x)-D_{W'}(x')\big|< \eps'\})
\\
&\leq {\wh\mu}(\{(x,x')\colon D_{W'}(x')>\lambda+\eps'\text{ and } \big|D_W(x)-D_{W'}(x')\big|< \eps'\})
\\
&\leq {\mu'}(\{D_{W'}>\lambda +\eps'\}).
\end{split}
\]
Taking $\eps'\downarrow \eps$ and using monotone convergence we obtain the
first inequality in the statement of the lemma.  The second is proved in the
same way.
\end{proof}

\begin{proof}[Proof of Proposition~\ref{pro:deg-conv}]
Let $\eps_n=\delta_\square(\W_n,\W)$, and choose $n$ large enough so that
$\eps_n< \lambda$. By  Lemma~\ref{lem:D_W},
\[
\mu_n(\{D_{W_n}> \lambda\})\leq\mu(\{D_{ W }> \lambda-\eps_n\})+\eps_n.
\]
Since $\mu(\{D_{ W }> \lambda\})$ is assumed to be continuous at $\lambda$,
this gives
\[
\limsup_{n\to\infty}\mu_n(\{D_{W_n}> \lambda\})\leq\mu(\{D_{ W }> \lambda\}).
\]
The matching lower bound on the $\liminf$ is proved in the same way.
\end{proof}

\section{\texorpdfstring{Representation of Graphons Over $\R_+$}{Representation of Graphons Over R\_+}}
\label{sec:equiv}

In this appendix we will prove that every graphon is equivalent to a graphon
over $\R_+$ (Proposition~\ref{prop7}), and prove that under certain
assumptions on the underlying measure space of a graphon the cut metric can
be defined in a number of equivalent ways (Proposition \ref{prop10}).

The first statement of the following lemma is a generalization of the
analogous result for probability spaces, which was considered by
\citet*[Corollary~3.3]{BCL10} and \citet*[Lemma~7.3]{janson-survey}. It will
be used to prove Theorem~\ref{prop1} and Proposition~\ref{prop7}. We proceed
similarly to the proof by \citet*[Lemma~7.3]{janson-survey}, but in this case
we also need to argue that underlying measure space of the constructed
graphon is $\sigma$-finite, and we include an additional result on atomless
measure spaces.

\begin{lemma}
Every graphon $\mcl W=(W,\scr{S})$ with $\scr{S}=(S,\cS,\mu)$ is a pullback
by a measure-preserving map of a graphon on some $\sigma$-finite Borel
measure space. If $\scr S$ is atomless, the $\sigma$-finite Borel space can
be taken to be atomless as well. \label{prop6}
\end{lemma}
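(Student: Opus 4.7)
The plan adapts the argument for probability spaces (see \citet*[Lemma 7.3]{janson-survey}), with modifications to handle $\sigma$-finiteness and, in the atomless case, to preserve atomlessness. First, I would construct a countably generated sub-$\sigma$-algebra $\cS_0\subseteq\cS$ with respect to which $W$ is measurable and which is fine enough to make $\mu$ $\sigma$-finite on $\cS_0$. Since $W\in L^1(\scr S\times\scr S)$, it is the a.e.\ pointwise limit of $\cS\times\cS$-measurable simple functions, and the level sets of each such simple function lie in the $\sigma$-algebra generated by countably many rectangles $A\times B$ with $A,B\in\cS$. Adjoining to the countable family of these rectangle factors a countable family of finite-measure sets covering $S$ (which exists by $\sigma$-finiteness of $\mu$) yields a countable family $(A_n)_{n\in\N}$ whose generated $\sigma$-algebra $\cS_0$ has the required properties.

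Next, I would map $S$ into $\{0,1\}^\N$ via $\phi(x):=(\1_{A_n}(x))_{n\in\N}$ and push $\mu$ forward to $\mu':=\phi_\ast\mu$ on the Borel $\sigma$-algebra $\mcl B$ of $\{0,1\}^\N$. Because the cylinder $\{y\colon y_n=1\}$ has $\mu'$-measure $\mu(A_n)<\infty$ and these cylinders cover $\phi(S)$, the measure $\mu'$ is $\sigma$-finite; since $\{0,1\}^\N$ is a Polish space, this gives a $\sigma$-finite Borel measure space $\scr S':=(\{0,1\}^\N,\mcl B,\mu')$. Because $\cS_0=\phi^{-1}(\mcl B)$ and $W$ is $\cS_0\times\cS_0$-measurable and symmetric, the Doob--Dynkin factorization lemma produces a symmetric Borel function $W'\colon\{0,1\}^\N\times\{0,1\}^\N\to\R$ with $W=W'\circ(\phi\times\phi)$ almost everywhere; integrability of $W'$ with respect to $\mu'\times\mu'$ is inherited from that of $W$ because $\phi$ is measure-preserving. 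Hence $(W,\scr S)$ is the pullback of the graphon $(W',\scr S')$ by $\phi$.

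The main obstacle is the atomless refinement. The pushforward $\mu'$ may acquire atoms even when $\mu$ is atomless, since an atom of $\mu'$ is a singleton $\{y\}$ with $\mu(\phi^{-1}\{y\})>0$, and $\phi^{-1}\{y\}=\bigcap_n A_n^{y_n}$ (with $A_n^0:=S\setminus A_n$ and $A_n^1:=A_n$) is an atom of $\cS_0$ in the set-theoretic sense that could carry positive $\mu$-mass. To remove these, I would iteratively enrich the generating family. Since the covering sets $A_n$ have finite measure, every such atom $E$ lies inside some $A_n$ and has $\mu(E)<\infty$, so by atomlessness of $\scr S$ combined with Sierpinski's theorem on the range of an atomless finite measure I can choose $C\subseteq E$ with $\mu(C)=\mu(E)/2$. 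At each stage there are only countably many positive-measure atoms to split (they are pairwise disjoint and fit inside sets of finite $\mu$-measure), so adjoining these splitting sets produces an increasing chain $\cS_0\subseteq\cS_1\subseteq\cdots$ of countably generated sub-$\sigma$-algebras; taking $\cS_\infty:=\sigma(\bigcup_k\cS_k)$ keeps the family countably generated. Any set-theoretic atom of $\cS_\infty$ is contained in an atom $E_k$ of $\cS_k$ at each stage, and the halving guarantees $\mu(E_k)\leq 2^{-k}\mu(E_0)$, forcing $\mu$-measure zero. Applying the pushforward construction of the previous paragraph to $\cS_\infty$ in place of $\cS_0$ therefore produces an atomless $\sigma$-finite Borel space of which $(W,\scr S)$ is the required pullback.
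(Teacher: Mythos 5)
Your proof is correct and takes essentially the same route as the paper: extract a countable family of generators making $W$ product-measurable, adjoin a countable family of finite-measure sets covering $S$, push forward along the indicator map into the Cantor cube, apply Doob--Dynkin, and (for the atomless refinement) iteratively halve so that no atom survives with positive mass. One small inaccuracy worth flagging: as written you assert $\mu(A_n)<\infty$ for every $n$ when verifying $\sigma$-finiteness of the pushforward, but the rectangle factors coming from the level sets of $W$ may well have infinite measure; either restrict the cylinder cover to those cylinders coming from the finite-measure covering sets (whose complement in $\{0,1\}^\N$ has zero pushforward mass since the covering sets exhaust $S$), or, as the paper does, first replace each rectangle factor $A$ by $\{A\cap(S_k\setminus S_{k-1})\}_{k}$ so that every generator has finite measure --- this preprocessing also guarantees, as you later use, that every atom of $\cS_0$ sits inside a finite-measure generator. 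Otherwise the argument, including your variant of directly halving the positive-measure atoms rather than uniformly halving all generators, is sound.
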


\begin{proof}
Let $S_0:=\emptyset$, and let $(S_k)_{k\in\N}$ be such that for each
$k\in\N$, we have $S_k\in\cS$, $S_k\subseteq S_{k+1}$, $\mu(S_k)<\infty$, and
$\bigcup_{k\in\N} S_k=S$. We claim that we can find a sequence of sets
$(A_i)_{i\in\N}$ satisfying the following properties: (i) if $\mcl
A:=\{A_i\,:\,i\in\N\}$ and $\cS_0:=\sigma(\mcl A)$, then $W$ is
$\cS_0\times\cS_0$-measurable, (ii) for all $k\in\N$ there exists $i\in\N$
such that $A_i=S_{k}\setminus S_{k-1}$, (iii) for each $i\in \N$ there exists
a $k\in\N$ such that $A_i\subseteq S_{k}\backslash S_{k-1}$, and (iv)
$\bigcup_{i\in \N} A_i=S$.  A set $\cA$ satisfying (i) can be constructed by
noting that each level set $\{(x_1,x_2)\in S\times S\,:\,W(x_1,x_2)<q\}$ for
$q\in\Q$ is measurable with respect to $\sigma(\sigma(\cA_q)\times
\sigma(\cA_q))$ for some countable set $\cA_q$ (this follows, for example, by
Lemma~3.4 in the paper of \citel{BCL10}). By adding the sets $S_{k}\setminus
S_{k-1}$ to $\cA$ we obtain a collection of sets satisfying (i) and (ii).
Given a set $\wt{\mcl A}$ satisfying (i) and (ii), we can easily obtain an
$\mcl A$ satisfying (i)--(iii) by replacing each $A\in \wt{\mcl A}$ with the
countable collection of sets $\{A\cap(S_{k}\backslash S_{k-1})\,:\,k\in\N\}$.
Finally, (ii) implies (iv).

Let $\mcl C=\{0,1\}^\infty$ be the Cantor cube equipped with the  product
topology, and define $\phi\colon S\to\mcl C$ by $\phi(x):=(\1_{x\in
A_i})_{i\in\N}$. Let $\nu$ be the pushforward measure of $\mu$ onto $\mcl C$
equipped with the  Borel $\sigma$-algebra. We claim that $\nu$ is a
$\sigma$-finite measure on $\mcl C$. For each $k\in\N$ define $\wh C_k:=\{
(a_i)_{i\in\BB N} \in \mcl C\,:\, a_i=0 \text{ if } A_i\not\subseteq
S_{k}\backslash S_{k-1} \}$ and $\wt C_k:=(\bigcup_{i\leq k}\wh C_{i})\cup
\wh C_0$, where $\wh C_0:= \mcl C\setminus (\bigcup_{i\in\N}\wh C_{i})
\subseteq \mcl C\setminus \phi(S)$, and observe that all the subsets of $\mcl
C$ just defined are measurable. The claim will follow if we can prove that
(a) $\nu(\wt C_k)<\infty$ for each $k\in\N$, and (b) $\bigcup_{k\in\N} \wt
C_k=\mcl C$. Property (a) is immediate since from the definition of $\nu$,
the fact $\nu(\wh C_0)=0$, and the properties (ii) and (iii) of $\cA$, which
imply that $\nu(\wh C_k) = \mu(\phi^{-1}(\wh C_k)) = \mu(S_{k}\backslash
S_{k-1})<\infty$. To prove (b) let $(a_i)_{i\in\N}\in \mcl C$. We want to
prove that $(a_i)_{i\in\N}\in \wt C_k$ for some $k\in\N$. If
$(a_i)_{i\in\N}\not\in \phi(S)$ we have $x\in\wh{\mcl C}_0$, so
$(a_i)_{i\in\N}\in \wt C_k$ for all $k\in\N$. If $(a_i)_{i\in\N}=\phi(x)$ for
some $x\in S$, then $x\in S_{k}\backslash S_{k-1}$ for exactly one $k\in\N$,
so $(a_i)_{i\in\N}=\phi(x)\in\wh C_k\subseteq\wt C_k$.

The argument in the following paragraph is similar to the proof by
\citet*[Lemma~7.3]{janson-survey}, but we repeat it for completeness. Since
the $\sigma$-field on $S$ generated by $\phi$ equals $\cS_0$, the
$\sigma$-field on $S\times S$ generated by $(\phi,\phi)\colon S^2\to \mcl
C^2$ equals $\cS_0\times\cS_0$. Since $W$ is measurable with respect to
$\cS_0\times\cS_0$ we may use the Doob-Dynkin Lemma (see, for example, the
book of \citel{kallenberg-prob}, Lemma~1.13) to conclude that there exists a
measurable function $V\colon \mcl C^2\to[0,1]$ such that $W=V^\phi$. We may
assume $V$ is symmetric upon replacing it by $\frac 12(V(x,x')+V(x',x))$.
This completes the proof of the main assertion, since $V$ is a graphon on a
$\sigma$-finite Borel measure space.

Finally we will prove the last claim of the lemma, i.e., that if $\scr S$ is
atomless we may take $\nu$ to be atomless as well. To prove this claim it is
sufficient to establish that the set $\mcl A$ in the above argument can be
modified in such a way that $\nu(x)=0$ for each $x\in \mcl C$. Given a
collection of sets $\mcl A$ satisfying (i)--(iv) above we define a new
collection of sets $\wt{\mcl A}$ as follows. First define $\mcl A_1:=\mcl A$,
and then define $\mcl A_k$ for $k>1$ inductively as follows. For any $k>1$
and $A\in \mcl A_{k-1}$, let $A^1\in\cS$ and $A^2\in\cS$ be disjoint sets
with union $A$ such that $\mu(A^1)=\mu(A^2)=\frac 12\mu(A)$; note that such
sets $A^1$ and $A^2$ can be found since $\scr S$ is atomless. Then define
$\mcl A_k:=\{A^1\,:\,A\in\mcl A_{k-1} \}\cup \{A^2\,:\,A\in\mcl A_{k-1} \}$,
and finally define $\wt{\mcl A}=\bigcup_{k\in\N} \mcl A_k$. Then $\wt{\mcl
A}$ is countable, satisfies (i)--(iv), and by defining the measure $\nu$
using $\wt{\mcl A}$ instead of $\mcl A$ we have $\nu(x)=0$ for every
$x\in\mcl C$. Proceeding as above with $\wt{\mcl A}$ instead of $\mcl A$ we
get $W=V^\phi$, where $V$ is a graphon over an atomless $\sigma$-finite Borel
space.
\end{proof}

Proposition~\ref{prop7} follows immediately from the following lemma, whose
proof in turn  follows a similar strategy as a proof by
\citet*[Theorem~7.1]{janson-survey}.

\begin{lemma}
\label{lem:prop7} Let $\mcl W=(W,\scr{S})$ be a graphon over an arbitrary
$\sigma$-finite space $\scr S$.
\begin{itemize}
\item[(i)] There are two graphons $\mcl W'=(W',\R_+)$
and  $\W''=(W'',\scr S'')$ and measure-preserving maps
    $\phi\colon S\to S''$ and $\phi'\colon [0,\mu(S)) \to S''$ such that
    $W=(W'')^{\phi}$ and $W'$ is the trivial extension of $(W'')^{\phi'}$
    from $[0,\mu(S))$ to $\R_+$.

\item[(ii)] If $\scr S$ is a Borel measure space, then we can find a
    measure-preserving map \[\phi'\colon [0,\mu(S))\to S\] such that
    $\W^{\phi'}$ is a graphon over $[0,\mu(S))$ equipped with the Borel
    $\sigma$-algebra and Lebesgue measure.

\item[(iii)]If $\scr S$ is an atomless Borel measure space, we may take
    $\phi'$ in (ii) to be an isomorphism between $\scr S$ and $[0,\mu(S))$.
\end{itemize}
\end{lemma}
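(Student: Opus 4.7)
The plan is to dispatch (iii) first, since it is an immediate consequence of Lemma~\ref{prop9}: if $\scr S$ is an atomless $\sigma$-finite Borel measure space, that lemma directly yields an isomorphism $\phi'\colon [0,\mu(S))\to S$, and the pullback $\W^{\phi'}$ is automatically a graphon over $[0,\mu(S))$ equipped with the Borel $\sigma$-algebra and Lebesgue measure. Measurability of $W^{\phi'}$ passes through $\phi'$, and $\|W^{\phi'}\|_1=\|W\|_1$ since $\phi'$ is measure-preserving.

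For (ii), I would decompose $S$ into its atoms and its atomless part. Since $\scr S$ is $\sigma$-finite and Borel, the collection of atoms is at most countable, say $A_1,A_2,\dots$ with $m_i:=\mu(A_i)>0$. A key preliminary fact I would use is that in a standard Borel space each atom $A_i$ contains a distinguished point $a_i\in A_i$ with $\mu(\{a_i\})=m_i$ (so that $A_i\setminus\{a_i\}$ is $\mu$-null); this reduces each atom to a point mass. The complement $S_0:=S\setminus\bigsqcup_i A_i$ is then an atomless $\sigma$-finite Borel subspace of $\scr S$, and by Lemma~\ref{prop9} it is isomorphic to $[0,\mu(S_0))$ with Lebesgue measure via some map $\psi\colon[0,\mu(S_0))\to S_0$. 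I would partition $[0,\mu(S))$ into disjoint half-open intervals $I_1,I_2,\dots$ of lengths $m_1,m_2,\dots$ followed by an interval (or half-line) $J$ of length $\mu(S_0)$, and define $\phi'$ to be $\equiv a_i$ on $I_i$ and to be $\psi$ on $J$. Then $\phi'$ is Borel measurable (piecewise a constant map on each $I_i$ and a Borel isomorphism on $J$), and a direct check, splitting an arbitrary measurable $B\subseteq S$ into its intersections with the $A_i$ and with $S_0$, shows that $\phi'$ is measure-preserving. Consequently, $\W^{\phi'}$ is a graphon over $[0,\mu(S))$ equipped with the Borel $\sigma$-algebra and Lebesgue measure.

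For (i), I would apply Lemma~\ref{prop6} to the given graphon $\W$ to obtain a $\sigma$-finite Borel measure space $\scr S''$, a graphon $\W''=(W'',\scr S'')$, and a measure-preserving map $\phi\colon S\to S''$ with $W=(W'')^\phi$. Because $\phi$ is measure-preserving, $\mu''(S'')=\mu(S)$. Applying part (ii) to $\W''$ produces a measure-preserving map $\phi'\colon[0,\mu(S))\to S''$; the desired graphon $\W'=(W',\R_+)$ is then obtained by letting $W'$ be the trivial extension of $(W'')^{\phi'}$ from $[0,\mu(S))^2$ to $\R_+^2$ by setting it to zero outside. All three objects $\W''$, $\phi$, and $\phi'$ then satisfy the required identities, and $W'\in L^1(\R_+\times\R_+)$ with $\|W'\|_1=\|W\|_1<\infty$.

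The main obstacle I anticipate is the fact about atoms used in (ii): to carry out the piecewise definition of $\phi'$ I need that every atom in a standard Borel measure space is, up to a null set, concentrated at a single point. This requires checking that an atom cannot be supported on an uncountable set on which every singleton has measure zero; I plan to handle it via the Borel isomorphism theorem, which would let one Borel-isomorphically identify such an atom with an uncountable Borel subset of $\R$ and then split it into measurable pieces of strictly intermediate mass, contradicting the defining property of an atom. Once this point mass reduction is in hand, the remaining steps (measurability of the piecewise $\phi'$, verification of the measure-preserving property, and assembling everything in (i)) are routine.
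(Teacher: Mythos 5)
Your treatment of (i) and (iii) mirrors the paper's: (iii) is a direct application of Lemma~\ref{prop9}, and (i) composes Lemma~\ref{prop6} with part (ii). Part (ii) is where you genuinely diverge. The paper does not decompose $S$ into atoms plus an atomless part. Instead it replaces $\scr S$ by the product $\scr S\times[0,1]$, which is automatically atomless, pulls $W$ back to $\wt W = W^\pi$ with $\pi\colon S\times[0,1]\to S$ the projection, applies Lemma~\ref{prop9} to obtain an isomorphism $\psi\colon[0,\mu(S))\to S\times[0,1]$, and sets $\phi'=\pi\circ\psi$, a composition of two measure-preserving maps. That single trick sidesteps any structure theory for atoms. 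Your route is also correct but buys its explicitness at the cost of two points that need to be written out rather than asserted. First, the claim that every atom in a Borel measure space reduces to a point mass needs a short proof: restrict $\mu$ to the atom and Borel-isomorph it into $\R$; if the atom is uncountable and all singletons are null, the function $t\mapsto\mu(A\cap(-\infty,t])$ is continuous and produces a subset of strictly intermediate measure, a contradiction; if the atom is countable, countable additivity forces some singleton to carry positive mass, which then must be all of it. Second, the phrase ``intervals $I_1,I_2,\dots$ followed by an interval $J$'' does not literally work when $\sum_i m_i=\infty$ and $\mu(S_0)>0$, since there is then no room after the $I_i$'s; you should instead partition $[0,\mu(S))$ into any countable disjoint family of Borel sets with the prescribed Lebesgue measures, observing that the atomless Borel piece reserved for $S_0$ is still isomorphic to $[0,\mu(S_0))$ by Lemma~\ref{prop9}, so the construction goes through. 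With those two details filled in, your argument is a valid, more hands-on alternative to the paper's product trick.
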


\begin{proof}
We start with the proof of (ii) and (iii). If $\scr S$ is an atomless Borel
space the statement is immediate from Lemma~\ref{prop9}.  If $\scr S$ has
atoms, we define a graphon $\wt{\mcl W}=(\wt W,\wt{\scr S})$ where $\wt{\scr
S}=(\wt S,\wt{\cS},\wt\mu)=(S\times[0,1],\cS\times\mcl B,\mu\times\lambda))$
is the product measure space and $\wt W:=(W)^\pi$, with $\pi\colon
S\times[0,1]\to S$ being the projection. Since $\wt{\scr S}$ is an atomless
Borel space we may again use Lemma~\ref{prop9}, giving the existence of an
isomorphism $\psi\colon [0,\wt\mu(\wt S))\to\wt S$ such that ${\wt W}^\psi$
is a graphon over $[0,\wt\mu(\wt S))$ equipped with Lebesgue measure.
Observing that $\wt\mu(\wt S)=\mu(S)$, we obtain statement (ii) with
$\phi'=\pi\circ\psi$.

To prove (i)  we use that by Lemma~\ref{prop6}, $\W$ can be expressed as
$(\W'')^\phi$ for a graphon $\W''$ on some Borel  space $\scr
S''=(S'',\cS'',\mu'')$ and some measure-preserving map $\phi\colon S\to S''$.
We then apply the just proven statement (ii) to the graphon $\W''$, and
define $\W'$ to be the trivial extension of $(\W'')^{\phi'}$ from
$[0,\mu(S))$ to $\R_+$.
\end{proof}

\begin{proof}[Proof of Proposition~\ref{prop7}]
The statement of the proposition follows from Lemma~\ref{lem:prop7} by
observing that
$\delta_\square(\W,\W')\leq\delta_\square(\W,\W'')+\delta_\square(\W'',\W')=
\delta_\square((\W'')^\phi,\W'')+\delta_\square(\W'',(\W'')^{\phi'})= 0$.
\end{proof}

The following proposition provides equivalent definitions of the cut metric
$\delta_\square$ under certain assumption on the underlying measure spaces.
See papers by \citet*[Lemma~3.5]{denseconv1} and \citet*[Theorem~6.9]{janson-survey} for
analogous results for probability spaces.

\begin{proposition}
For $j=1,2$ let $\mcl W_j=(W_j,\scr{S}_j)$ with $\scr{S}_j=(S_j,\cS_j,\mu_j)$
be a graphon satisfying $\mu_j(S_j)=\infty$. Then the following identities
hold, and thus (a)--(e) provide alternative definitions of $\delta_\square$
under certain assumptions on the underlying measure spaces:
\begin{itemize}
\item[(a)] If $S_j$ are Borel spaces, then $\delta_\square(\mcl W_1,\mcl
    W_2)=\inf_{\psi_1,\psi_2} \|W_1^{\psi_1}-W_2^{\psi_2}\|_\square$, where
    we take the infimum over measure-preserving $\psi_j\colon \R_+\to S_j$
    for $j=1,2$, where $\R_+$ is equipped with the Borel $\sigma$-algebra
    and Lebesgue measure.
\item[(b)] If $S_j$ are atomless Borel spaces, then $\delta_\square(\mcl
    W_1,\mcl W_2)=\inf_{\psi} \|W_1-W_2^{\psi}\|_\square$, where we take
    the infimum over measure-preserving $\psi\colon S_1\to S_2$.
\item[(c)] If $S_j$ are atomless Borel spaces, then $\delta_\square(\mcl
    W_1,\mcl W_2)=\inf_{\psi} \|W_1-W_2^{\psi}\|_\square$, where we take
    the infimum over isomorphisms $\psi\colon S_1\to S_2$.
\item[(d)] If $S_j=\BB R_+$, then $\delta_\square(\mcl W_1,\mcl
    W_2)=\inf_{\wt\sigma} \|W_1-W_2^{\wt\sigma}\|_\square$, where we take
    the infimum over all interval permutations $\wt\sigma$ (i.e.,
    $\wt\sigma$ maps $I_i$ to $I_{\sigma(i)}$ for some permutation $\sigma$
    of the non-negative integers, and $I_i:=[ih,(i+1)h]$ for some $h>0$).
\item[(e)] For $j=1,2$ let $(S_j^k)_{k\in\N}$ be increasing sets satisfying
    $\mu_j(S_j^k)<\infty$ and $\bigcup_{k\in\N} S_j^k=S_j$. Then
    $\delta_\square(\mcl W_1,\mcl W_2)=\lim_{k\rta\infty}
    \delta_\square(\mcl W_1|_{S_1^k},\mcl W_2|_{S_2^k})$, where
    $\cW_j|_{S_j^k}:=(W_j\1_{S_j^k\times S_j^k},\scr{S}_j^k)$ and
    $\scr{S}_j^k$ is the restriction of $\scr{S}^k$ to $S_j^k$.
\end{itemize}
\label{prop10}
\end{proposition}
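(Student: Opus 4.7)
The plan is to establish (a) first, since it is the foundational statement, then derive (b)--(d) from (a) by successively strengthening the class of maps via approximation arguments, and prove (e) separately using an $L^1$-approximation.

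For (a), the direction $\inf_{\psi_1,\psi_2}\|W_1^{\psi_1}-W_2^{\psi_2}\|_\square \ge \delta_\square(\W_1,\W_2)$ is immediate: measure-preserving $\psi_j\colon\R_+\to S_j$ induce a coupling $\mu=(\psi_1,\psi_2)_*\lambda$ of $\mu_1,\mu_2$, and by change of variables the cut norm is non-increasing under the measure-preserving pullback by $(\psi_1,\psi_2)$. For the reverse direction, given a coupling $\mu$ we seek measure-preserving $\psi_j\colon\R_+\to S_j$ realizing $\mu$ as a pushforward. If $(S_1\times S_2,\mu)$ is atomless, Lemma~\ref{prop9} gives an isomorphism $\Phi\colon\R_+\to S_1\times S_2$ and setting $\psi_j=\pi_j\circ\Phi$ yields equality of cut norms. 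To handle atoms in $\mu$, first replace each $\W_j$ by its trivial extension $\wt\W_j=(\wt W_j,\wt{\scr S}_j)$ over $\wt{\scr S}_j=(S_j\times[0,1],\cS_j\times\mcl B,\mu_j\times\lambda)$ with $\wt W_j(x,y)=W_j(\pi_1 x,\pi_1 y)$. Since the projection $\pi_1$ is measure-preserving, $\delta_\square(\W_j,\wt\W_j)=0$. Given a coupling $\mu$, the product $\wt\mu:=\mu\times\lambda\times\lambda$ defines an atomless coupling of $\wt\mu_1,\wt\mu_2$ on $\wt S_1\times\wt S_2$; the atomless case yields measure-preserving $\wt\psi_j\colon\R_+\to\wt S_j$, and $\psi_j:=\pi_1\circ\wt\psi_j$ gives the desired maps for $\W_j$ after a Fubini argument showing that integrating out the $[0,1]$-coordinates replaces the indicators of the test sets by $[0,1]$-valued weights on $(S_1\times S_2,\mu)$, which do not enlarge the cut norm.

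For (b)--(d), each $S_j$ is atomless Borel of infinite measure, hence isomorphic to $\R_+$ by Lemma~\ref{prop9}. Each statement refines the class of maps allowed, and the key reduction is always of the same type: for atomless spaces, any coupling can be approximated in cut norm by one supported on the graph of a measure-preserving map (giving (b) from (a) with $\psi$ obtained by composing $\psi_2$ with $\phi_1^{-1}$ for a chosen isomorphism $\phi_1\colon\R_+\to S_1$); any measure-preserving map between atomless Borel spaces can be approximated in cut norm by a bimeasurable bijection (giving (c) from (b)); and any isomorphism $\R_+\to\R_+$ can be approximated in cut norm by an interval permutation (giving (d) from (c) with $S_1=S_2=\R_+$). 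All three approximations reduce via Lemma~\ref{prop15} to step function approximation of $W_1,W_2$ in $L^1$, after which the underlying map can be adjusted to respect a finite partition into sets of equal measure by classical rearrangement techniques on atomless $\sigma$-finite Lebesgue spaces.

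Part (e) follows from dominated convergence: $W_j-W_j\1_{S_j^k\times S_j^k}\to 0$ in $L^1$, so Lemma~\ref{prop15} gives $\delta_\square(\W_j,\W_j|_{S_j^k})\to 0$, and the triangle inequality from Proposition~\ref{prop8} yields $\delta_\square(\W_1|_{S_1^k},\W_2|_{S_2^k})\to\delta_\square(\W_1,\W_2)$. The main obstacles are the careful handling of atoms in (a), which is resolved by the $[0,1]$-extension trick together with the Fubini argument identifying the cut norms on $(S_1\times S_2,\mu)$ and $(\wt S_1\times\wt S_2,\wt\mu)$, and the density/approximation arguments required for (b)--(d); the latter all reduce, via Lemma~\ref{prop15} and the density of step functions in $L^1$, to finitary rearrangement problems on atomless Lebesgue spaces, which can be handled by the same strategies as in the classical theory of graphons over probability spaces.
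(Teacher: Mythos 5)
Your overall scheme is sound and demonstrably different from the paper's. The paper's proof does not attempt to establish (a) first and walk forward; instead, after the easy chain $\delta_\square\le\delta_\square^{(b)}\le\delta_\square^{(c)}\le\delta_\square^{(d)}$ it proves the single reverse inequality $\delta_\square^{(d)}\le\delta_\square$ by truncating the graphons to $[0,M]^2$ via Lemma~\ref{prop15} and then invoking the classical result for probability spaces (Lemma~3.5 of Borgs et al.\ 2008 or Theorem~6.9 of Janson 2013) together with Lemma~\ref{prop21}; parts (b) and (c) then follow for free from the sandwich, and (a) is handled in one line via Lemma~\ref{lem:prop7} and the triangle inequality ($\delta_\square^{(a)}=\delta_\square^{(c)}$). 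Your approach proves (a) by hand (atomless case via Lemma~\ref{prop9} plus the $[0,1]$-factor trick to kill atoms plus a Fubini argument identifying the cut norm), and then tries to climb (a)$\to$(b)$\to$(c)$\to$(d) by three separate approximation arguments. This buys a more self-contained (a), at the cost of needing three approximation lemmas that, as you formulate them, amount to re-deriving pieces of the classical probability-space theory that the paper simply cites once.

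Two points where you should tighten things. First, your derivation of (b) from (a) as stated (``$\psi$ obtained by composing $\psi_2$ with $\phi_1^{-1}$ for a chosen isomorphism $\phi_1$'') does not go through directly: replacing $\psi_1$ by some fixed isomorphism $\phi_1$ changes $W_1^{\psi_1}$ to $W_1^{\phi_1}$, and there is no a priori reason for $\|W_1^{\psi_1}-W_1^{\phi_1}\|_\square$ to be small. What you actually need, and what you state in the next clause, is the assertion that any coupling can be approximated in cut norm by one concentrated on the graph of a measure-preserving map between the two atomless spaces; but this is precisely the nontrivial classical fact that the paper cites, so framing it as a corollary of (a) rather than as an independent input is misleading. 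Second, your approximation arguments for (b)$\to$(c) and (c)$\to$(d) are asserted rather than proved; they do follow from the reduction via Lemma~\ref{prop15} to bounded step functions on a bounded box and then classical equipartition/rearrangement, but since this is the entire content of the hard inequality $\delta_\square^{(d)}\le\delta_\square$, it would be cleaner to prove that one inequality directly (as the paper does), after which (b) and (c) are immediate, rather than split it into three steps. The $[0,1]$-extension plus Fubini argument for (a) in the presence of atoms is a nice alternative to the paper's one-line derivation of (a) from (c), and it is correct; just note that $\wt W_j$ is a pullback of $W_j$, not a trivial extension, and that you need the standard fact that replacing $\{0,1\}$-valued test functions by $[0,1]$-valued ones does not enlarge the cut norm. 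Part (e) is essentially identical to the paper's argument.
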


\begin{proof}[Proof of Proposition~\ref{prop10}]
Let $\delta_\square^{(a)}$, $\delta_\square^{(b)}$, $\delta_\square^{(c)}$,
and $\delta_\square^{(d)}$ denote the right sides of the equalities in
(a), (b), (c), and (d), respectively. For $j=1,2$ fix some arbitrary
sequence $(S_j^k)_{k\in\N}$ satisfying $\mu_j(S_j^k)<\infty$ for all
$k\in\N$, $S_j^k\subseteq S_j^{k+1}$, and $\bigcup_{k\in\N} S_j^k=S_j$.
Define $\delta_\square^{(e)}$ and $\delta_\square^{(e')}$ by
\[
\delta_\square^{(e)}(\mcl W_1,\mcl W_2) := \limsup_{k\rta\infty} \delta_\square(\mcl W^k_1,\mcl W^k_2)
\quad\text{ and }\quad
\delta_\square^{(e')}(\mcl W_1,\mcl W_2) := \liminf_{k\rta\infty} \delta_\square(\mcl W^k_1,\mcl W^k_2),
\]
where $\W_k^j=\cW_j|_{S_j^k}$. By Lemma~\ref{prop9} it is sufficient to
consider the case $\scr S=(\R_+,\mcl B,\lambda)$ in (b) and (c), since we can
consider graphons $(W_j^{\phi_j},\R_+)$ on $\R_+$, which satisfy
$\delta_\square((W_j^{\phi_j},\R_+),\W_j)=0$, by using measure-preserving
transformations $\phi_j\colon \R_+\to S_j$. Under this assumption we have
$\delta_\square \leq \delta_\square^{(b)}\leq \delta_\square^{(c)} \leq
\delta_\square^{(d)}$, since we take the infimum over smaller and smaller
sets of maps. By definition, $\delta_\square^{(e')}\leq\delta_\square^{(e)}$.
To complete the proof of the proposition it is therefore sufficient to prove
the following results: (i)
$\delta_\square^{(e)}\leq\delta_\square\leq\delta_\square^{(e')}$ for general
$\sigma$-finite measure spaces $\scr S_1,\scr S_2$ of infinite measure, (ii)
$\delta_\square^{(d)} \leq \delta_\square$ for $\scr S_1=\scr S_2=(\R_+,\mcl
B,\lambda)$, and (iii) $\delta^{(a)}_\square=\delta^{(c)}_\square$.

We will start by proving (i). Since
$\lim_{k\rta\infty}\|W_j-W_j\1_{S_j^k\times S_j^k}\|_1=0$, Lemma
\ref{prop15} implies that it is sufficient to prove
$\delta_\square^{(e)}\leq\delta_\square\leq\delta_\square^{(e')}$ for the
case when $\op{supp}(W_j)\subseteq S_j^k\times S_{j}^k$ for some
$k\in\N$. Under this assumption
$\delta_\square(\W_1,\W_2)=\delta_\square(\W_1|_{S_1^{k'}},\W_2|_{S_2^{k'}})$
for all $k'\geq k$, and (i) follows.

Now we will prove (ii). Since $\lim_{M\rta\infty}\|W_j-W_j \1_{|W_j|\leq
M}\1_{[0,M]^2}\|_1 = 0$ by the dominated convergence theorem, as above we may
assume by Lemma~\ref{prop15} that there is an $M>0$ such that $W$ is bounded
and $\op{supp}(W_j)\subseteq[0,M]^2$ for $j=1,2$. For $j=1,2$ define
$\wh{\W}_j=(\wh W_j,[0,M])$, where $\wh W_j:=W_j|_{[0,M]^2}$ is a bounded
graphon on $[0,M]^2$. By Lemma~\ref{prop21} in the current paper and Lemma
3.5 of \citet*{denseconv1} (or, equivalently, Theorem~6.9 of
\citel{janson-survey}),
\[
\delta_\square(\mcl W_1,\mcl W_2)
= \delta_\square(\wh {\mcl W}_1,\wh{\mcl W}_2)
= \inf_{\wh{\sigma}} \|\wh W_1-\wh W_2^{\wh\sigma}\|_\square
\geq \inf_{\wt\sigma} \|W_1-W_2^{\wt\sigma}\|_\square,
\]
where $\wh{\sigma}$ is an interval permutation of $[0,M]$ and $\wt{\sigma}$
is an interval permutation of $\R_+$.

Finally we will prove (iii). By Lemma~\ref{lem:prop7} there are
measure-preserving maps $\phi_j\colon \R_+\to S_j$ such that
$\delta_\square(\W_j,(\W_j)^{\phi_j})=0$. The triangle inequality then
implies that
$\delta_\square(\W_1,\W_2)=\delta_\square((\W_1)^{\phi_1},(\W_2)^{\phi_2})$.
Since $(\W_1)^{\phi_1}$ and $(\W_2)^{\phi_2}$ are graphons over atomless
Borel spaces, it follows that $\delta_\square^{(a)}=\delta_\square^{(c)}$.
\end{proof}

\begin{remark}
\label{rem:alt-def-delta-p} The proof of the above proposition clearly
generalizes to the metric $\delta_1$, the only additional ingredient being
the analogue of a result by \citet*[Theorem~6.9]{janson-survey} for the
metric $\delta_1$ \cite[Remark~6.13]{janson-survey}.  Using the results and
proof techniques of \citet*[Appendix~A]{w-estimation} instead of
\citet*[Theorem~6.9]{janson-survey}, it can also be generalized to the metric
$\delta_p$ for $p>1$, again provided both graphons are non-negative and in
$L^p$.
\end{remark}

We close this appendix by proving Proposition~\ref{prop:dcut-d1}. In fact, we
will prove a generalization of this proposition for the invariant $L^p$
metric $\delta_p$. The second statement of this  proposition involves the
distance $\delta_p(\W_1,\W_2)$ of graphons $\W_1=(W_1,\R_+)$ and
$\W_2=(W_2,\R_+)$ that are not necessarily non-negative, which means we do
not have Proposition~\ref{prop:delta-p} at our disposal to guarantee that
$\delta_p$ is well defined.  We avoid this problem by defining $\delta_p$ as
in Proposition~\ref{prop10}, i.e., by setting
\[
\delta_p(\W_1,\W_2):=\inf_{\phi\colon\R_+\to\R_+}\|W_1-W_2^\phi\|_p
\]
with the infimum going over isomorphisms. Note that by
Remark~\ref{rem:alt-def-delta-p}, for non-negative graphons in $L^p$, this
definition is equivalent to the one given at the beginning of
Appendix~\ref{sec4}.

\begin{proposition}
\label{prop:dcut-dp}
Let $p\geq 1$, and let $\W_1$ and $\W_2$ {be} graphons
in $L^p$.  Then
\begin{itemize}
\item[(i)] $\delta_1(\W_1,\W_2)=0$ if and only if
    $\delta_\square(\W_1,\W_2)=0$, and
\item[(ii)] if $\W_1$ and $\W_2$ are non-negative or graphons over $\R_+$,
    then $\delta_p(\W_1,\W_2)=0$ if and only if
    $\delta_\square(\W_1,\W_2)=0$.
\end{itemize}
\end{proposition}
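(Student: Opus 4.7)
Both implications split into the \emph{easy} direction (given vanishing of the stronger metric, the cut metric vanishes) and the \emph{hard} direction (the converse). For the easy direction of (i), the pointwise bound $\|\cdot\|_\square\leq\|\cdot\|_1$ gives $\delta_\square\leq\delta_1$, so $\delta_1(\W_1,\W_2)=0$ immediately forces $\delta_\square(\W_1,\W_2)=0$. For the easy direction of (ii), in the non-negative case a sequence of couplings $\mu_n$ with $\|W_1^{\pi_1}-W_2^{\pi_2}\|_{p,\mu_n}\to 0$ can be shown, via the uniform $L^1$ bounds $\|W_i^{\pi_i}\|_{1,\mu_n}=\|W_i\|_1$ and a weak-$*$ compactness argument, to admit a subsequential limit coupling $\mu^*$ under which $W_1^{\pi_1}=W_2^{\pi_2}$ almost everywhere, yielding $\delta_\square(\W_1,\W_2)=0$. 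The same reasoning handles graphons over $\R_+$ with $\delta_p$ defined through isomorphisms.

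The substantive content is the hard direction: $\delta_\square(\W_1,\W_2)=0 \Rightarrow \delta_1(\W_1,\W_2)=\delta_p(\W_1,\W_2)=0$. The plan is to apply Proposition~\ref{prop:dcut-coupling}, which for graphons over $\sigma$-finite Borel spaces produces a coupling $\wt\mu$ of $\mu_1|_{\wt S_1}$ and $\mu_2|_{\wt S_2}$ (where $\wt S_i=\{x\in S_i \colon \int|W_i(x,y)|\,d\mu_i(y)>0\}$) under which $W_1^{\pi_1}=W_2^{\pi_2}$ $\wt\mu$-almost everywhere. To reduce from general $\sigma$-finite spaces to Borel ones, I would use Lemma~\ref{lem:prop7}: every graphon is a measure-preserving pullback of a graphon on a $\sigma$-finite Borel space, and such pullbacks preserve all $L^q$ norms (couple the two spaces via the graph of the pullback map), so $\delta_\square$ and $\delta_q$ both vanish between a graphon and its Borel representative. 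The triangle inequality then transfers the problem to the Borel representatives, where Proposition~\ref{prop:dcut-coupling} applies.

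Given $\wt\mu$, I would extend it to a coupling $\mu$ of $\mu_1,\mu_2$ on $S_1\times S_2$ supported in $(\wt S_1\times\wt S_2)\cup((S_1\setminus\wt S_1)\times(S_2\setminus\wt S_2))$, trivially extending the smaller of $\scr S_1,\scr S_2$ first if $\mu_1(S_1)\neq\mu_2(S_2)$ so that the off-support marginals $\mu_1|_{S_1\setminus\wt S_1}$ and $\mu_2|_{S_2\setminus\wt S_2}$ have equal total mass; the existence of the required extension is supplied by Lemma~\ref{prop20}. Then I would verify $\|W_1^{\pi_1}-W_2^{\pi_2}\|_{q,\mu}=0$ for $q\in\{1,p\}$ by splitting the double integral over $(S_1\times S_2)^2$ into blocks according to which of $\wt S_1\times\wt S_2$ or $(S_1\setminus\wt S_1)\times(S_2\setminus\wt S_2)$ each of the two arguments lies in. On the ``support/support'' block $W_1=W_2$ a.e.\ by construction of $\wt\mu$; on every mixed block at least one argument of one of $W_1,W_2$ lies in some $S_i\setminus\wt S_i$, and Fubini applied to the defining identity $\int|W_i(x,\cdot)|\,d\mu_i(x)=0$ on $S_i\setminus\wt S_i$ forces that factor to vanish almost everywhere. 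For the $\R_+$ possibly-negative subcase of (ii), where $\delta_p$ is defined through isomorphisms rather than couplings, one further approximates the resulting coupling $\mu$ by graphs of isomorphisms using the technique behind Proposition~\ref{prop10}(c) and Remark~\ref{rem:alt-def-delta-p}.

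The main obstacle will be the case analysis in the final step: the coupling $\mu$ does not factorize as $\mu_1\otimes\mu_2$, so the Fubini argument on the mixed blocks must be phrased carefully in terms of the marginals of $\mu$ on each block, and the bookkeeping around trivial extensions when $\mu_1(S_1)\neq\mu_2(S_2)$ must be kept straight. Everything else is fairly mechanical once Proposition~\ref{prop:dcut-coupling} and Lemmas~\ref{lem:prop7} and~\ref{prop20} are in hand.
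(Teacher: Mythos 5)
Your outline has the right broad shape (reduce to Borel representatives via Lemma~\ref{lem:prop7}, produce a measure under which $W_1^{\pi_1}=W_2^{\pi_2}$ a.e., then compute the invariant $L^q$ distance directly), but there are two genuine gaps.

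\textbf{Circularity.} The key step in your hard direction is to invoke Proposition~\ref{prop:dcut-coupling}. But in this paper Proposition~\ref{prop:dcut-coupling} is \emph{derived from} Proposition~\ref{prop:dcut-dp}: Remark~\ref{rmk1} explains that Proposition~\ref{prop27} gives a measure whose marginals are merely \emph{dominated} by $\mu_1,\mu_2$, and promoting it to an actual coupling of the restrictions uses that $\delta_\square=0$ implies $\delta_1=0$ (i.e.\ the very statement you are trying to prove) together with Lemma~\ref{lem:D_W} to get $\mu_1(S_1\setminus E_1)=\mu_2(S_2\setminus E_2)$. So you cannot use Proposition~\ref{prop:dcut-coupling} here. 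What you can use is Proposition~\ref{prop27} alone, which is proved independently by a weak-$*$ compactness argument. It yields a measure $\mu$ on $S_1\times S_2$ with $\|W_1^{\pi_1}-W_2^{\pi_2}\|_{\square,\mu}=0$ but with sub-marginals, not exact marginals. The paper's fix is to restrict $\mu$ to $[0,n]^2$, extend each restriction $\mu|_{[0,n]^2}$ to an honest coupling $\mu_n$ using Lemma~\ref{prop20}, and observe that $\|W_1^{\pi_1}-W_2^{\pi_2}\|_{1,\mu_n}\to 0$ because the contribution of the region where $\mu_n$ differs from $\mu$ is a tail of an $L^1$ function. Your block-decomposition verification of $\|W_1^{\pi_1}-W_2^{\pi_2}\|_{q,\mu}=0$ is sound as far as it goes, but it presupposes the coupling that the circular reference to Proposition~\ref{prop:dcut-coupling} was meant to supply; with Proposition~\ref{prop27} in hand you need the truncation-and-extension step instead.

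\textbf{The direction $\delta_p=0\Rightarrow\delta_\square=0$.} You file this under ``easy'' and propose a weak-$*$ compactness argument to pass to a limit coupling under which $W_1^{\pi_1}=W_2^{\pi_2}$ a.e. This is neither easy nor clearly correct: over infinite measure spaces a weak-$*$ limit of couplings can lose mass and fail to be a coupling (exactly the obstruction Proposition~\ref{prop27} is careful about), and $\|\cdot\|_p$ for $p>1$ does not dominate $\|\cdot\|_1$ on an infinite measure space, so there is no automatic inequality to lean on. The paper's argument is elementary but different: fix $\eps>0$, choose finite-measure sets $A_1,A_2$ absorbing all but $\eps/2$ of the $L^1$ mass of $W_1,W_2$, set $A=A_1\cup\phi^{-1}(A_2)$ for a candidate isomorphism $\phi$ (so $\lambda(A)\le\lambda(A_1)+\lambda(A_2)=:M$, independent of $\phi$, because $\phi$ is measure-preserving), and apply H\"older on $A\times A$:
\[
\|W_1-W_2^\phi\|_1 \le \eps + M^{2-2/p}\|W_1-W_2^\phi\|_p.
\]
Taking the infimum over $\phi$ gives $\delta_1(\W_1,\W_2)\le\eps$, and then part (i) finishes. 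This localization trick -- choosing the finite-measure set to depend on $\phi$ but with a $\phi$-independent bound on its measure -- is what makes the step genuinely easy, and it is absent from your sketch. Once you replace the reliance on Proposition~\ref{prop:dcut-coupling} by Proposition~\ref{prop27} plus truncation, and replace the weak-$*$ argument for (ii) by the H\"older reduction, the rest of your outline (pullback to Borel spaces, transporting $\delta_\square$ and $\delta_q$ across pullbacks, passing between couplings and isomorphisms over $\R_+$ via Proposition~\ref{prop10} and Remark~\ref{rem:alt-def-delta-p}) matches the paper.
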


Proposition~\ref{prop:dcut-dp} (and hence Proposition~\ref{prop:dcut-d1}) and
Proposition~\ref{prop:dcut-coupling} follow from the next proposition.

\begin{proposition}
For $i=1,2$, let $\W_i=(W_i,\scr S_i)$ be a graphon over a Borel space $\scr
S_i$ such that $\delta_\square(\W_1,\W_2)=0$ and $\mu_1(S_1)=\mu_2(S_2)$.
Then there exists a measure $\mu$ on $S_1\times S_2$ such that
\begin{itemize}
\item[(i)] $\|W_1^{\pi_1}-W^{\pi_2}_2\|_{\square,\mu}=0$,
\item[(ii)] the first (resp.\ second) marginal of $\mu$ is dominated by
    $\mu_1$ (resp.\ $\mu_2$), i.e., for any $A\in\mcl S_1$ (resp.\
    $A\in\mcl S_2$) we have $\mu(A\times S_2)\leq \mu_1(A)$ (resp.\
    $\mu(S_1\times A)\leq \mu_2(A)$),
\item[(iii)] if $A\in\mcl S_1$ is such that $\mu(A\times S_2)<\mu_1(A)$
    then $\mu_1(A\cap E_1)>0$, where
\begin{equation*}
E_i:=\left\{x\in S_i\,:\, \int_{S_i}|W_i(x,x')|\,dx'=0 \right\}\quad \text{for } i=1,2,
\end{equation*}
and the same property holds with the roles of $\W_1$ and $\W_2$
interchanged, and
\item[(iv)] in particular, if $\mu_1(E_1)=\mu_2(E_2)=0$ with $E_i$ as in
    (iii), then $\mu$ is a coupling measure.
\end{itemize}
\label{prop27}
\end{proposition}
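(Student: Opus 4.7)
The plan is to construct $\mu$ as a weak subsequential limit of couplings with vanishing cut norm. First I would apply Definition~\ref{defn1}(i) with the hypothesis $\mu_1(S_1)=\mu_2(S_2)$ and $\delta_\square(\W_1,\W_2)=0$ to obtain a sequence of couplings $\mu_n$ of $\mu_1$ and $\mu_2$ with $\|W_1^{\pi_1}-W_2^{\pi_2}\|_{\square,\mu_n}\to 0$. Realizing the Borel spaces $\scr S_i$ as Borel subsets of Polish spaces and using $\sigma$-finiteness to write $S_i=\bigcup_p S_i^p$ with $\mu_i(S_i^p)<\infty$ and $S_i^p$ increasing, I would then extract a subsequential weak limit on each $S_1^p\times S_2^q$ via Ulam's theorem (tightness of finite measures on Polish spaces), Prokhorov's theorem, and a diagonal argument. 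This produces a subsequence (still denoted $\mu_n$) and a $\sigma$-finite measure $\mu$ on $S_1\times S_2$ such that $\mu_n\to\mu$ weakly on every $S_1^p\times S_2^q$.

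For property (i), I would approximate $W_1$ and $W_2$ in $L^1$ by step functions supported on finite-measure product sets. Lemma~\ref{prop15} controls the approximation error, while for step functions the cut norm reduces to a finite linear combination of $\mu$-masses of rectangles whose boundaries may be arranged to be $\mu$-continuity sets; weak convergence then yields $\|W_1^{\pi_1}-W_2^{\pi_2}\|_{\square,\mu}=0$. In fact, by the Dynkin $\pi$-$\lambda$ theorem applied to the algebra of rectangles in $(S_1\times S_2)^2$, this cut-norm identity upgrades to the pointwise identity $W_1^{\pi_1}=W_2^{\pi_2}$ holding $\mu\otimes\mu$-almost everywhere. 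Property (ii) follows from Portmanteau: $\mu(A\times S_2^q)\leq\liminf_n\mu_n(A\times S_2^q)\leq\mu_1(A)$ for every $A\in\mcl S_1$ of finite measure, and countable additivity extends this to arbitrary $A$. Property (iv) is then an immediate consequence of (ii) and (iii) upon comparing total marginal masses.

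The main obstacle will be establishing (iii), that the marginal deficit $\alpha=\mu_1-\nu_1$ (with $\nu_1$ the $\pi_1$-marginal of $\mu$) is concentrated on $E_1$. Heuristically, the pointwise identity $W_1^{\pi_1}=W_2^{\pi_2}$ $\mu\otimes\mu$-a.e.\ forces the $|W_i|$-degrees to match across the coupling: for $\mu$-a.e.\ $(x,y)\in S_1\times S_2$ we have $\int|W_1(x,x')|\,d\nu_1(x')=\int|W_2(y,y')|\,d\nu_2(y')$. If $\alpha(A)>0$ for some $A\subseteq S_1\setminus E_1$ on which $\int|W_1(x,\cdot)|\,d\mu_1$ is bounded below, the corresponding $\mu_n$-mass in $A\times(S_2\setminus S_2^q)$ persists as $n\to\infty$ for every fixed $q$, yet $W_2\in L^1$ forces the matching degree integrals on the $S_2$ side to concentrate on finite-measure sets—the tension between these should produce the contradiction. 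To make this rigorous I would use the disintegration $\mu_n(dx,dy)=\mu_1(dx)\kappa_n(x,dy)$, available since the $\scr S_i$ are Borel, and test the cut norm against rectangles of the form $U=A\times S_2$ and $V=S_1^p\times(S_2\setminus S_2^q)$, tracking the tight and escaping contributions of the kernels $\kappa_n(x,\cdot)$ separately. An alternative and possibly cleaner route is a maximality argument via Zorn's lemma applied to the poset of measures satisfying (i) and (ii), showing that any failure of (iii) permits a strict enlargement of $\mu$ through a suitable coupling of the deficit measures $\alpha|_{S_1\setminus E_1}$ and $\beta|_{S_2\setminus E_2}$ that preserves the cut-norm condition.
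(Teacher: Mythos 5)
Your overall strategy—take couplings $\mu_n$ with vanishing cut norm, pass to a weak subsequential limit on an exhaustion by finite-measure product sets, diagonalize, and read off (i) and (ii) from the limit—is essentially the same as the paper's, and it works. (The paper first reduces to the case $\scr S_i = \R_+$ via Lemma~\ref{lem:prop7} and Lemma~\ref{prop9}, which lets it restrict to compact squares $[0,M]^2$ and sidesteps the tightness bookkeeping you need with general Borel sets $S_1^p\times S_2^q$; your route is viable but costlier. Also note the paper separately dispatches the case $\mu_1(S_1)=\mu_2(S_2)<\infty$ by citing the known result for probability spaces, which you do not mention but which is a minor omission.) Your upgrade of the cut-norm identity to the pointwise identity $W_1^{\pi_1}=W_2^{\pi_2}$ holding $\mu\otimes\mu$-a.e.\ via a $\pi$-$\lambda$ argument is correct. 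For (ii), Portmanteau directly gives the bound only on open sets; you need an extra sentence invoking outer regularity, but this is minor. (iv) from (ii) and (iii) is immediate as you say.

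The genuine gap is (iii). You correctly identify the two ingredients in play—a deficit $\alpha$ of the first marginal means $\mu_n$-mass escapes to $A\times(S_2\setminus S_2^q)$ as $n\to\infty$ for each $q$, while $W_2\in L^1$ controls the $W_2^{\pi_2}$ contribution on those escaping rectangles—but you stop at ``the tension between these should produce the contradiction.'' The step you are missing is how to convert the tension into a statement about the deficit measure itself. The paper's argument is: (a) testing the cut norm against rectangles $U = A_1\times[K,\infty)$, $V = A_2\times\R_+$ and using $W_2\in L^1$ shows that for large $K$, $\int_{A_1\times A_2} W_1(x,x')\,d(\lambda - \mu_n^{1,K})(x)\,d\lambda(x')$ is uniformly small, where $\mu_n^{1,K}$ is the first marginal of $\mu_n|_{\R_+\times[0,K]}$; (b) approximate $W_1$ by a function in $C_c(\R_+^2)$ and use weak convergence of the restricted marginals to pass $n\to\infty$ and then $K\to\infty$, yielding $\int_{A_1\times A_2} W_1\,d(\lambda-\mu^1)\,d\lambda = 0$ for all $A_1,A_2$; (c) write $d(\lambda-\mu^1) = f\,d\lambda$ by Radon--Nikodym and apply the Lebesgue differentiation theorem to conclude $W_1(x,x')f(x)=0$ a.e., i.e., $f$ vanishes off $E_1$. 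Steps (b) and (c) are the precise content that ``produces the contradiction,'' and your sketch does not contain them. Your alternative Zorn's-lemma route also does not close the gap: to enlarge a maximal $\mu$ by a coupling $\delta$ of $\alpha|_{S_1\setminus E_1}$ and $\beta|_{S_2\setminus E_2}$, you must preserve $W_1^{\pi_1}=W_2^{\pi_2}$ a.e.\ with respect to $(\mu+\delta)\otimes(\mu+\delta)$, and in particular the cross terms $\mu\otimes\delta$ and $\delta\otimes\delta$ impose exactly the kind of constraint on $\delta$ whose solvability is what (iii) is asserting—so the argument is circular as stated.
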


\begin{remark}
The analogous statement to Proposition~\ref{prop27} for graphons over
probability spaces (see, for example, the paper of
\citel{janson-survey}, Theorem~6.16) states that when the underlying space
is a Borel probability space, the infimum in the definition of the cut
distance using couplings is attained. Proposition~\ref{prop27} says that the
same result is true in our setting of $\sigma$-finite measure spaces if we
make two additional assumptions: (a) the cut distance between the graphons is
zero, and (b) $\mu_i(E_i)=0$ for $i=1,2$, where $E_i$ is defined in part
(iii) of the proposition. We remark that both of these assumptions are
necessary; see the examples following this remark.

\citet*[Theorem~5.3]{janson16} proves a related result, stating that if the
cut distance of two graphons over $\sigma$-finite Borel spaces is zero, then
there are trivial extensions of these graphons such that the extensions can
be coupled so as to be equal almost everywhere. Proposition~\ref{prop27}
implies a similar result, namely Proposition~\ref{prop:dcut-coupling}, which
states that under this assumption, the restrictions of the two graphons to
the sets $S_i\setminus E_i$ can be coupled so that they are equal a.e.  To
see this, we note that by Proposition~\ref{prop:dcut-dp} two graphons
$W_1,W_2$ with cut distance zero have distance zero in the metric $\delta_1$,
which in turn implies that $|W_1|$ and $|W_2|$ have distance zero in
$\delta_1$ and hence in $\delta_\square$. By Lemma~\ref{lem:D_W}, this in
turn implies that $\mu_1(S_1\setminus E_1)=\mu_2(S_2\setminus E_2)$, which
allows us to use Proposition~\ref{prop27} to deduce the claim. \label{rmk1}
\end{remark}

\begin{examp}
Assumption (a) in Remark~\ref{rmk1} is necessary by the following
counterexample, which illustrates that there are graphons $\W_1,\W_2$ over
$\R_+$ such that
$\|W_1^{\pi_1}-W_2^{\pi_2}\|_{\square,\mu}>\delta_\square(\W_1,\W_2)$ for all
coupling measures $\mu$. Let $\W_1=(W_1,\R_+)$ be an arbitrary graphon such
that $W_1$ is strictly positive everywhere, and let $\W_2=(W_2,\R_+)$ be
defined by $W_2(x,y)=W_1(x-1,y-1)$ for $x,y\geq 1$, $W_2(x,y)=-1$ for
$x,y\in[0,1)$, and $W_2(x,y)=0$ otherwise. First observe that
$\delta_\square(\W_1,\W_2)\leq 1$, since if $\phi_n\colon\R_+\to\R_+$ is
defined by $\phi_n(x)=x+1$ for $x\in[0,n]$, $\phi_n(x)=x-n$ for
$x\in(n,n+1]$, and $\phi_n(x)=x$ for $x>n+1$, then
\[
\lim_{n\rta\infty} \|W_1-W_2^{\phi_n}\|_{\square} = 1.
\]
Then observe that $\|W_1^{\pi_1}-W_2^{\pi_2}\|_{\square,\mu}>1$ for all
coupling measures $\mu$, since if $S=\R_+^2$ and $T=\R_+\times[0,1]$ then
\[
\begin{split}
\|W_1^{\pi_1}-W_2^{\pi_2}\|_{\square,\mu}
&\geq \left| \int_{S\times T} W_1^{\pi_1}-W_2^{\pi_2}\,d\mu\,d\mu \right|\\
&= \left| \int_{\R_+\times T} W_1(x,\pi_1(y))\,d\lambda\,d\mu- \int_{\R_+\times[0,1]}W_2\,d\lambda\,d\lambda \right|>1.
\end{split}
\]
Assumption (b) is necessary by the following counterexample, which
illustrates that there are non-negative graphons $\W_1,\W_2$ over $\R_+$ such
that $\delta_\square(\W_1,\W_2)=0$ and $\|W_1-W_2\|_{\square,\mu}>0$ for all
coupling measures $\mu$. Letting $\W_1$ and $\W_2$ be defined as above,
except that $W_2(x,y)=0$ for all $x,y\in\R_+$ for which either $x<1$ or
$y<1$, we proceed as in case (a) to conclude that the graphons satisfy the
desired property.
\end{examp}

\begin{proof}[Proof of Proposition~\ref{prop27}]
First we note that for $\mu_1(S_1)=\mu_2(S_2)=1$, the proposition follows
immediately from a result of \citet*[Theorem~6.16]{janson-survey}, which in
fact gives $\mu$ as a coupling of $\mu_1$ and $\mu_2$. The case
$\mu_1(S_1)=\mu_2(S_2)=c<\infty$ with $c\neq 1$ can be reduced to the case
$c=1$ by considering the graphons $\W_i'=(W_i,\scr S_i')$ where $\scr S_i'$
is obtained from $\scr S_i$ by multiplying the measures $\mu_i$ by $1/c$,
turning them into probability measures.  All that is left to consider is
therefore the case $\mu_1(S_1)=\mu_2(S_2)=\infty$.

Next we argue that we may assume $\scr S_i$ is atomless for $i=1,2$. Assuming
the proposition has been proved for the case of atomless Borel measure
spaces, and given graphons $\W_i$ over arbitrary Borel measure spaces, we
define graphons $\wt{\W}_i$ over the measure space $\wt{\scr S}_i$ defined as
the product of $\scr S_i$ and $[0,1]$, such that $\wt{\W}_i=\W_i^{\phi_i}$
for the projection map $\phi_i\colon S_{i}\times[0,1]\to S_{i}$. Assume that
$\wt\mu$ is a measure on $\wt S_1\times \wt S_2$ such that the statements of
the proposition hold for $\wt\mu$ and $\wt{\W}_1,\wt{\W}_2$. Defining a
measure $\mu$ for $S_1\times S_2$ by letting $\mu$ be the pushforward of
$\wt\mu$ for the map $\wt S_1\times \wt S_2\to S_1\times S_2$ sending
$\big((x_1,r_1), (x_2,r_2)\big)\mapsto (x_1,x_2)$, one easily checks that
$\mu$ is a measure satisfying the conclusions of the proposition for $\W_1$
and $\W_2$. It follows that we may assume the spaces $\scr S_i$ are atomless
Borel measure spaces, and by Lemma~\ref{prop9} we may assume that they are
$\R_+$ equipped with the Lebesgue measure; we will make these assumptions in
the remainder of the proof.  In particular, we will no longer use the
notation $\mu_1$ and $\mu_2$ from the proposition statement, since they are
now both Lebesgue measure $\lambda$; it will be convenient to use the
notation $\mu_n$ for other purposes.

Consider a sequence of coupling measures $(\mu_n)_{n\in\N}$ such that
$\|W_1^{\pi_1}-W_2^{\pi_1}\|_{\square,\mu_n}\rta 0$. For any given $M>0$, let
$\mu_n^M=\mu_n|_{[0,M]^2}$.  The measures $\mu_n^M$ are not necessarily
coupling measures, but their marginals are dominated by the Lebesgue measure
on $[0,M]$, and they satisfy $\lim_{n\rta\infty} \|W_1^{\pi_1}-W_2^{\pi_2}
\|_{\square,{\mu_n^M}}=0$. Furthermore, as a sequence of measures of
uniformly bounded total mass over a compact metrizable space, they have a
subsequence that converges in the weak topology \cite[Theorem~5.1]{billingsley-book},
i.e., in the topology in which the integrals over all
continuous functions on $[0,M]^2$ converge. Let $\mu^M$ be some subsequential
limit, and note that as a limit of a sequence of measures having this
property, the marginals of $\mu^M$ are dominated by the Lebesgue measure on
$[0,M]$ as well. Note also that
$\mu_n^M\times\mu_n^M$ converges weakly to $\mu^M\times \mu^M$ along any
subsequence on which $\mu_n^M$ converges weakly to $\mu^M$
\citep*[Theorem~2.8]{billingsley-book}. We will argue
that
\begin{equation}
\left| \int_{A\times B}\Bigl( W_1^{\pi_1}-W_2^{\pi_2}\Bigr)\,d\mu^M\,d\mu^M \right|=0
\label{eq40}
\end{equation}
for all measurable subsets $A,B\subseteq [0,M]^2$.

Indeed, given two such subsets  and $\eps>0$, let $\wt W_i$ be continuous
functions over $[0,M]^2$ such that $\|W_i-\wt
W_i\|_{1,\lambda|_{[0,M]^2}}\leq \eps$ for $i=1,2$, and let
$f,g\colon[0,M]^2\to [0,1]$ be continuous functions such that $(\|\wt
W_1\|_\infty+\|\wt W_2\|_\infty)\|{\bf 1}_{A}-f\|_{1,\mu^M}\leq \eps$ and
$(\|\wt W_1\|_\infty+\|\wt W_2\|_\infty)\|{\bf 1}_{B}-g\|_{1,\mu^M}\leq \eps$
(existence of appropriate functions $\wt W_i,f,g$ follows from, for example,
\citel{stroock-integration}, Corollary~3.2.15). Using the fact that $|\int
f(x)g(y)(W_1^{\pi_1}(x,y)-W_2^{\pi_2}(x,y))\,d\mu_n^M\,d\mu_n^M| \leq
\|W_1^{\pi_1}-W_2^{\pi_2} \|_{\square,{\mu_n^M}}$ and the fact that the
marginals of $\mu_n^M$ and $\mu^M$ are dominated by the Lebesgue measure on
$[0,M]$, this allows us to conclude that for some sufficiently large $n$
chosen from the subsequence along which $\mu_n^M$ converges,
\begin{align*}
\bigg| \int_{A\times B}\Bigl( W_1^{\pi_1}-W_2^{\pi_2}\Bigr)\,d\mu^M\,d\mu^M \bigg|
&\leq
 \left|  \int_{A\times B}\Bigl(\wt W_1^{\pi_1}(x,y)-\wt W_2^{\pi_2}(x,y)\Bigr)\,d\mu^M\,d\mu^M
 \right|+2\eps
\\
&\leq
 \left| \int f(x)g(y) \Bigl(\wt W_1^{\pi_1}(x,y)-\wt W_2^{\pi_2}(x,y)\Bigr)\,d\mu^M\,d\mu^M
 \right|+4\eps
\\
&\leq
 \left| \int f(x)g(y) \Bigl(\wt W_1^{\pi_1}(x,y)-\wt W_2^{\pi_2}(x,y)\Bigr)\,d\mu_n^M\,d\mu_n^M
 \right|+5\eps
 \\
&\leq
 \left| \int f(x)g(y) \Bigl(W_1^{\pi_1}(x,y)- W_2^{\pi_2}(x,y)\Bigr)\,d\mu_n^M\,d\mu_n^M
 \right|+7\eps
 \\
&\leq
 \|W_1^{\pi_1}-W_2^{\pi_2}
\|_{\square,{\mu_n^M}}+7\eps \leq 8\eps.
\end{align*}
Since $\eps$ was arbitrary, this proves \eqref{eq40}.

For each $M\in\N$ we let $\mu^M$ be a measure as in the previous paragraph.
We may assume the subsequence along which $\mu_n^{M+1}$ converges to
$\mu^{M+1}$ is a subsequence of the subsequence along which $\mu_n^M$
converges to $\mu^M$. This implies that $\mu^{M+1}|_{[0,M]^2}=\mu^M$ for all
$M\in\N$, so there is a measure $\mu$ on $\R_+^2$ such that
$\mu|_{[0,M]^2}=\mu^M$. Furthermore, there is a subsequence of
$(\mu_n)_{n\in\N}$ converging weakly to $\mu$, such that for any $M\in\N$ the
measures $\mu_n|_{[0,M]^2}$ converge weakly to $\mu|_{[0,M]^2}$ along this
subsequence, and as a limit of measures with these properties, the measure
$\mu$ satisfies (ii), as well as
\[
\sup_{A,B\subseteq\R_+^2} \left| \int_{A\times B} W_1^{\pi_1}-W_2^{\pi_2}\,d\mu\,d\mu \right|=0,
\]
where, \emph{a priori}, the supremum is over measurable, bounded subsets
$A,B\subset \R_+^2$. But it is easy to see that if the supremum over these
sets is zero, then the same holds for the supremum over all measurable
subsets $A,B\subseteq \R_+^2$ (use (ii) to conclude that the integrand is in
$L^1$, which means it can be approximated by functions over bounded subsets
of $\R_+^4$). The property (i) of $\mu$ follows.

It remains to prove that $\mu$ satisfies (iii), since (iv) follows
immediately from (iii). Recall that the by definition of the measures
$\mu_n$,
\[
\sup_{A_1,A_2,K}\left|\int_{(A_1\times [K,\infty))\times (A_2\times \R_+)} W_1^{\pi_1}-W_2^{\pi_2}\,d\mu_n\,d\mu_n\right| \rta 0,
\]
where the supremum is over $A_1,A_2\subseteq\R_+$ and $K\geq 0$. Fix any
$\ep>0$, and observe that for all $K>1$ sufficiently large,
\[
\sup_{A_1,A_2}\left|\int_{(A_1\times [K,\infty))\times (A_2\times \R_+)}W_2^{\pi_2}\,d\mu_n\,d\mu_n\right|
\leq \sup_{A_1,A_2}\int_{[K,\infty)\times \R_+}|W_2|\,d\lambda\,d\lambda
 <\ep,
\]
so
\[
\begin{split}
\limsup_{n\rta\infty}
\sup_{A_1,A_2}&\left|\int_{(A_1\times [K,\infty))\times A_2} W_1(\pi_1(x),x')\,d\mu_n(x)\,d\lambda(x')\right|\\
&\qquad\qquad\qquad\qquad=
\limsup_{n\rta\infty}\sup_{A_1,A_2}\left|\int_{(A_1\times [K,\infty))\times (A_2\times \R_+)} W_1^{\pi_1}\,d\mu_n\,d\mu_n\right|
< \ep.
\end{split}
\]
Fix any $A_1,A_2\subseteq\R_+$, and observe from the above that for $K$
sufficiently large,
\[
\limsup_{n\rta\infty}\left|\int_{A_1\times A_2} W_1(x,x')\,d(\lambda-\mu^{1,K}_n)(x)\,d\lambda(x')\right|
< \ep,
\]
where $\mu^{1,K}_n$ is the projection of $\mu_n|_{\R_+\times [0,K]}$ onto the
first coordinate. Choose $\wt W_1\in C_c(\R_+^2)$ such that $\|\wt
W_1-W_1\|_1<\ep$, where $C_c(\R_+^2)$ is the space of continuous and
compactly supported functions on $\R_+^2$ (the existence of such a function
follows again from, for example, \citel{stroock-integration}, Corollary~3.2.15). For all $K>1$ sufficiently large,
\[
\left|\int_{A_1\times A_2} \wt W_1(x,x')\,d(\mu^1-\mu^{1, K})(x)\,d\lambda(x')\right|<\ep,
\]
where $\mu^{1,K}$ (resp.\ $\mu^1$) is the projection of $\mu|_{\R_+\times
[0,K]}$ (resp.\ $\mu$) onto the first coordinate. Next we claim that
$\mu_n^{1,K}|_{[0,K']}$ converges weakly to $\mu^{1,K}|_{[0,K']}$ for any
$K,K'>0$. To see that, we need to show that for any continuous function
$f\colon[0,K']\to\R_+$ the associated integral converges when $n\rta\infty$.
To this end, we approximate the function $(x,x')\mapsto f(x)\1_{x'\in[0,K]}$
(with $f(x)=0$ for $x>K'$) by a function  $g\colon\R_+^2\to\R$, where
$g(x,x')=\wh f(x) \chi(x')$, $\wh f\colon\R_+\to \R_+$ is a continuous
function with support in $[0,K']$ approximating $f$ and satisfying $\|\wh
f\|_\infty\leq\|f\|_\infty$, and $\chi\colon\R_+\to [0,1]$ is a continuous
function with support in $[0,K]$ approximating the indicator function of the
set $[0,K]$. Since $\wh f$ and $\chi$ can be chosen to be arbitrarily close
approximations in the $L^1$ norm and the marginals of $\mu_n$ are given by
Lebesgue measure, this implies the claim. Therefore we can find $n_K\in\N$
depending on $K$, such that for all $n\geq n_K$
\[
\left|\int_{A_1\times A_2} \wt W_1(x,x')\,d(\mu^{1,K}-\mu_n^{1,K})(x)\,d\lambda(x')\right|<\ep.
\]
Combining the above estimates and using the triangle inequality, we get that
for sufficiently large $K$ and $n\geq n_K$,
\begin{equation*}
\begin{split}
\left|\int_{A_1\times A_2} W_1(x,x')\,d(\lambda-\mu^1) (x)\,d\lambda(x')\right|&\leq
\left|\int_{A_1\times A_2} W_1(x,x')-\wt W_1(x,x')\,d(\lambda-\mu^1) (x)\,d\lambda(x')\right|\\
&\quad \phantom{}+ \left|\int_{A_1\times A_2} \wt W_1(x,x')\,d(\lambda-\mu_n^{1,K}) (x)\,d\lambda(x')\right|\\
&\quad \phantom{}+ \left|\int_{A_1\times A_2} \wt W_1(x,x') \,d(\mu_n^{1,K}-\mu^{1,K}) (x)\,d\lambda(x')\right|\\
&\quad \phantom{}+ \left|\int_{A_1\times A_2} \wt W_1(x,x') \,d(\mu^{1,K}-\mu^{1}) (x)\,d\lambda(x')\right|\\
&<4\ep.
\end{split}
\end{equation*}
Since $\ep>0$ was arbitrary this implies that
\begin{equation*}
\left|\int_{A_1\times A_2} W_1(x,x')\,d(\lambda-\mu^1) (x)\,d\lambda(x')\right| = 0.
\end{equation*}
Since $\lambda-\mu^1$ is absolutely continuous with respect to $\lambda$, we
know by the Radon-Nikodym theorem that there is a non-negative function $f$
such that $d(\lambda-\mu^1)(x)=f(x)\,d\lambda(x)$. The Lebesgue
differentiation theorem now says that $W_1(x,x')f(x)=0$ almost everywhere,
which implies (iii).
\end{proof}

\begin{proof}[Proof of Proposition~\ref{prop:dcut-dp}]
Since $\delta_\square(\W_1,\W_2)\leq\delta_1(\W_1,\W_2)$, we only need to
prove that $\delta_\square(\W_1,\W_2)=0$ implies $\delta_1(\W_1,\W_2)=0$ in
order to prove (i). Assume first that the graphons are over $\R_+$, and let
$\mu$ be as in Proposition~\ref{prop27}. Then $W_1^{\pi_1}-W_2^{\pi_2}=0$
$\mu$-almost everywhere. For each $n\in\N$ let $\mu_n$ be some arbitrary
coupling measure on $S_1\times S_2$ such that
$\mu_n|_{[0,n]^2}=\mu|_{[0,n]^2}$. Then
$\lim_{n\rta\infty}\|W_1^{\pi_1}-W_2^{\pi_2}\|_{1,\mu_n}=0$, so
$\delta_1(\W_1,\W_2)=0$. To obtain the result for graphons over general
measure spaces we use Proposition~\ref{prop7}, the triangle inequality, and
the fact that two graphons have distance zero for $\delta_\square$ and
$\delta_1$ if one is a pullback of the other.

For (ii) with graphons over $\R_+$ and $\delta_\square$ defined in terms of
measure-preserving transformations, we will first prove that
$\delta_\square(\W_1,\W_2)=0$ implies $\delta_p(\W_1,\W_2)=0$. This follows
by the exact same argument as in the preceding paragraph, i.e., by using
Proposition~\ref{prop27} to construct a measure $\mu$ and coupling measures
$\mu_n$ on $S_1\times S_2$.

Now we will prove that $\delta_p(\W_1,\W_2)=0$ implies
$\delta_\square(\W_1,\W_2)=0$, still assuming the graphons are over $\R_+$
and that $\delta_p$ is defined in terms of measure-preserving
transformations. By part (i) it is sufficient to show that
$\delta_p(\W_1,\W_2)=0$ implies $\delta_1(\W_1,\W_2)=0$. Fix $\eps>0$ and let
$A_1,A_2\subseteq\R_+$ be such that $\|W_i-W_i\1_{A_i\times A_i}\|_1<\eps/2$
and  $M:= \lambda(A_1)+\lambda(A_2)<\infty$. By H\"older's inequality, for
any isomorphism $\phi\colon\R_+\to\R_+$ and $A:=A_1\cup
\phi^{-1}(A_2)$,
\[
\begin{split}
\|W_1-W_2^\phi \|_1
&\leq
\|(\1-\1_{A\times A}) (W_1-W_2^\phi) \|_1+
\|(W_1-W_2^\phi)\1_{A\times A} \|_1\\
&\leq \eps + \|W_1-W_2^\phi \|_p
\cdot M^{2-2/p}.
\end{split}
\]
Taking the infimum over $\phi$ we see that $\delta_1(\W_1,\W_2) \leq \eps
+M^{2-2/p}\delta_p(\W_1,\W_2)=\eps$.  Since $\eps$ was arbitrary, this shows
that $\delta_1(\W_1,\W_2)=0$.

We get (ii) for non-negative graphons and $\delta_p$ defined in terms of
couplings by using Proposition~\ref{prop10} and
Remark~\ref{rem:alt-def-delta-p}, and to move from graphons over $\R_+$ to
graphons over a general $\sigma$-finite space we use Lemma~\ref{lem:prop7}.
\end{proof}

\section{Measurability properties of graph processes}
\label{sec:measurability}

Recall the definition of a graph process (Definition~\ref{def:graph-process})
and the measurable space of graphs $\BB G$, as well as what it means for two
graph processes to be equal up to relabeling of the vertices
(Definition~\ref{def:relab}). Before stating our main result about the
measurability of the relation of being equal up to relabeling, we state and
prove the following simple lemma.

\begin{lemma}\label{lem:measurability}
Let ${\mcl G}=( G_t)_{t\geq 0}$ be a graph process.
\begin{itemize}
\item[(i)]  Let $V\subset V'\subset\N$ and $E\subset E'\subset
    \binom{\N}{2}$ be finite sets. Then there are increasing sequences
    $(a_k )_{k\in\N}$ and $(b_k)_{k\in\N}$ of real numbers such that
\[
\{t\in\R_+\colon V'\cap V(G_t)=V \text{ and }E'\cap E(G_t)=E\}
=\bigcup_{k\in \N} [a_k,b_k).
\]
Furthermore, for any set of the form $T=\bigcup_{k\in \N} [a_k,b_k)$ with
$a_k,b_k$ as above, the event that $T=\{t\in\R_+\colon V'\cap V(G_t)=V
\text{ and }E'\cap E(G_t)=E\}$ is measurable.

\item[(ii)] For $i=1,2$ let $V_i\subset V_i'\subset\N$ and $E_i\subset
    E_i'\subset \binom{\N}{2}$ be finite sets, let ${\mcl G^i}=(
    G^i_t)_{t\geq 0}$ be a graph process, and let $T_i(\mcl G^i)$ be the
    set of times for which $V_i'\cap V(G^i_t)=V_i$ and  $E_i'\cap
    E(G^i_t)=E_i$. Then the event that $T_1(\mcl G^1)=T_2(\mcl G^2)$ is
    measurable.

\item[(iii)] The event that a specified vertex in $\N$ is isolated for all
    times is measurable.

\item[(iv)] If ${\mcl G}=( G_t)_{t\geq 0}$ is projective, then the birth
    time $t_v\in [0,\infty]$ of any vertex $v\in \N$ is measurable, and
    under the assumptions \eqref{eq19} from
    Section~\ref{sec:W-random-results}, the map defined in \eqref{eq5} is
    measurable, where the $\sigma$-algebra used on the space of measures is
    defined above \eqref{eq5}.
\end{itemize}
\end{lemma}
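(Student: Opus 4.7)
The key observation underlying all four parts is that a graph process is c\`adl\`ag in the product topology on $\{0,1\}^{\N\cup\binom{\N}{2}}$, so for every $v\in\N$ and every $e\in\binom{\N}{2}$ the coordinate maps $t\mapsto\1_{v\in V(G_t)}$ and $t\mapsto\1_{e\in E(G_t)}$ are right-continuous with left limits as $\{0,1\}$-valued functions. Such a function is locally constant on the right, so both of its level sets are \emph{right-open} in the sense that each point $t$ in the set admits a right neighborhood $[t,t+\eps)$ contained in the set. Any right-open subset of $\R_+$ is automatically a countable disjoint union of intervals $[a_k,b_k)$, and the function is uniquely determined by its restriction to the rationals via $f(t)=\lim_{q\downarrow t,\,q\in\Q}f(q)$.

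For part (i), the condition $V'\cap V(G_t)=V$ and $E'\cap E(G_t)=E$ is a finite conjunction of coordinate conditions, so it depends on only finitely many coordinates and inherits right-continuity. The set of such $t$ is therefore right-open and thus of the form $\bigcup_k[a_k,b_k)$. For the measurability claim, given a target set $T=\bigcup_k[a_k,b_k)$ which we may take to consist of disjoint intervals, I will show that the event $T_{V,V',E,E'}=T$ is equivalent to the statement that $\1_{q\in T_{V,V',E,E'}}=\1_{q\in T}$ for every rational $q\in\Q_+$: the nontrivial direction uses that both indicators are right-continuous, so agreement on a dense set of rationals forces pointwise agreement. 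This rewrites the event as a countable conjunction of measurable events.

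Part (ii) follows from the same rationalization: both $T_i(\mcl G^i)$ are right-closed sets in the above sense by part (i), so $T_1(\mcl G^1)=T_2(\mcl G^2)$ iff the two sets agree on $\Q_+$, once more a countable conjunction of measurable events. Part (iii) is analogous: if $v$ is non-isolated at some time $t$, some edge $(v,w)\in E(G_t)$ persists on a right neighborhood $[t,t+\eps)$ by c\`adl\`ag, hence $(v,w)\in E(G_q)$ for some rational $q$. Therefore the event ``$v$ is isolated for all $t\geq 0$'' equals the countable intersection $\bigcap_{w\in\N}\bigcap_{q\in\Q_+}\{(v,w)\notin E(G_q)\}$.

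For part (iv), projectivity gives $\{t_v\leq s\}=\{v\in V(G_s)\}$ for every $s\geq 0$: indeed, $v\in V(G_s)$ immediately yields $t_v\leq s$, while if $v\in V(G_{t'})$ for some $t'\leq s$ then projectivity forces $v\in V(G_s)$. Since each event $\{v\in V(G_s)\}$ is measurable, $t_v$ is a measurable $[0,\infty]$-valued random variable. To prove measurability of $\xi(\mcl G)$, it suffices to check that $\xi(B)$ is measurable for every Borel $B\subseteq\R_+^2$. Writing
\[
\xi(B)=\sum_{u,v\in\N}\1\{(u,v)\in E(\mcl G)\}\,\1\{(t_u,t_v)\in B\},
\]
projectivity together with~\eqref{eq19} implies that the edge status of $(u,v)$ is fixed as soon as both endpoints are present, so $(u,v)\in E(\mcl G)$ is equivalent to the existence of some rational $q$ with $(u,v)\in E(G_q)$, a countable disjunction of measurable events. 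The countable sum over $(u,v)\in\N^2$ is then measurable, proving that $\xi(\mcl G)$ is a measurable random measure. The only real subtlety across the whole lemma is the rational-reduction argument in part (i); once that is in place, the remaining parts reduce to bookkeeping.
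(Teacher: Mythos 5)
Your proof is correct and follows essentially the same line of reasoning as the paper's own (terse) proof: c\`adl\`ag implies the coordinate indicators $t\mapsto\1_{c\in G_t}$ are right-continuous $\{0,1\}$-valued functions, the relevant level sets are therefore unions of intervals $[a_k,b_k)$, and measurability is extracted via agreement on $\Q_+$. You fill in more of the details that the paper's one-paragraph proof leaves implicit, in particular spelling out the rational-reduction argument and the formula $\{t_v\le s\}=\{v\in V(G_s)\}$ in part~(iv). One small point worth making explicit: the converse implication in part~(i) (agreement on $\Q_+$ forces agreement everywhere) requires that $\1_T$ for the \emph{fixed target set} $T=\bigcup_k[a_k,b_k)$ is also right-continuous; this does hold because $a_k,b_k$ are non-decreasing (if $t\notin T$ yet $s_n\downarrow t$ with $s_n\in[a_{k_n},b_{k_n})$, then $t<a_{k_n}\le s_n$ forces $a_{k_n}\downarrow t$, which is impossible for a non-decreasing sequence with terms $>t$), but it deserves a sentence since a general countable union of $[a,b)$'s need not have a right-continuous indicator. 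Also, in part~(iv) the condition~\eqref{eq19} is not actually needed for the equivalence ``$(u,v)\in E(\mcl G)$ iff $(u,v)\in E(G_q)$ for some rational $q$'' --- projectivity plus c\`adl\`ag suffice; the paper's hypothesis~\eqref{eq19} is carried along because the map~\eqref{eq5} is only defined under it, not because it is logically required for the measurability step.
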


\begin{proof}
The time set considered in (i) takes the required form since the set of
graphs $\{G\colon V'\cap V(G)=V \text{ and }E'\cap E(G)=E\}$ is an open set
in $\BB G$ and since ${\mcl G}=( G_t)_{t\geq 0}$ is c\`{a}dl\`{a}g. The
measurability claim in (i) follows since the event in question occurs if and
only if the two time sets have the same intersection with $\Q$. The statement
(ii) follows by a similar argument. Both statements (iii) and (iv)
immediately follow from (i).
\end{proof}

\begin{proposition}
The event that two graph processes $(G_t)_{t\geq 0}$ and $(\wh G_t)_{t\geq
0}$ are equal up to relabeling is measurable.
\end{proposition}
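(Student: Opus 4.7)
My plan is to write the event as a countable Boolean combination of events measurable by Lemma~\ref{lem:measurability}(ii). Two preparatory reductions: first, by the c\`adl\`ag property, each graph process is determined by its rational-time values $(G_q)_{q \in \Q_+}$, so ``equal up to relabeling'' is equivalent to the existence of a bijection $\phi\colon V(\mcl G) \to V(\wh{\mcl G})$ with $\phi(G_q) = \wh G_q$ for every $q \in \Q_+$, where $V(\mcl G) := \bigcup_t V(G_t)$ and $V(\wh{\mcl G})$ are measurable subsets of $\N$. Second, I enumerate $V(\mcl G) = \{v_1 < v_2 < \cdots\}$ and $V(\wh{\mcl G}) = \{w_1 < w_2 < \cdots\}$ in order of label; these enumerations depend measurably on the processes.

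The crucial observation is that for any finite partial bijection $\psi\colon F_1 \to F_2$ between finite subsets of $\N$, the event that $\psi$ is \emph{consistent}---meaning $\psi$ maps the induced subgraph $G_t[F_1]$ to $\wh G_t[F_2]$ for every $t \geq 0$---is measurable. Indeed, consistency is equivalent to the finite conjunction, over every $V \subseteq F_1$ and $E \subseteq \binom{F_1}{2}$, of the equality of time sets $\{t : F_1 \cap V(G_t) = V \text{ and } \binom{F_1}{2} \cap E(G_t) = E\}$ and $\{t : F_2 \cap V(\wh G_t) = \psi(V) \text{ and } \binom{F_2}{2} \cap E(\wh G_t) = \psi(E)\}$; each such equality is measurable by Lemma~\ref{lem:measurability}(ii). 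Since there are only countably many finite partial bijections in total, the resulting family of consistency events is countable.

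The final step is to express the existence of a global consistent bijection as a countable combination of these local consistency events. I would build a tree $T$ whose nodes at level $n$ are consistent partial bijections $\psi$ satisfying $\{v_1, \dots, v_n\} \subseteq \operatorname{dom}(\psi)$ and $\{w_1, \dots, w_n\} \subseteq \operatorname{range}(\psi)$, with children defined by extension. A global consistent bijection exists iff $T$ has an infinite branch; by K\"onig's lemma this holds iff $T$ has a node at every level, which expresses the event as a countable intersection of measurable events.

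The main obstacle is ensuring that $T$ is finitely branching, since a naive enumeration allows countably many extensions of a given partial bijection (any as-yet-unmatched vertex is a candidate match). The workaround is to restrict each level to partial bijections whose domain and range lie in prescribed finite windows of the enumerations $\{v_k\}$ and $\{w_k\}$, growing sufficiently fast that any global consistent bijection still yields witnessing partial bijections at every level. Making this compatibility precise---choosing windows large enough to accommodate arbitrary global bijections while keeping each level finite---is the most delicate part of the argument, and will need to exploit the structure that at each fixed time $G_t, \wh G_t$ carry only finitely many edges.
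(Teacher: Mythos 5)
Your strategy---encoding the existence of a global consistent bijection as the existence of an infinite branch in a tree of consistent partial bijections, with K\"onig's lemma supplying compactness---is, at bottom, the same diagonalization argument as the paper's, which builds the bijection by iterated subsequence extraction from the maps in $A_n$. The consistency events you propose are the right measurable building blocks, and the reduction to rational times is already built into Lemma~\ref{lem:measurability}.

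The gap is exactly the obstacle you flag, and it cannot be closed by windowing alone: you are missing a reduction that first eliminates the vertices isolated for all times. For a vertex $v$ that is non-isolated at some time $t$, finiteness of $E(\wh G_t)$ leaves only finitely many candidates for $\phi(v)$, and that is what ultimately makes the branching finite. But a permanently isolated vertex has no edge-induced constraint at all: if both processes contain infinitely many permanently isolated vertices appearing at the same instant, any such vertex of $\mcl G$ can be consistently matched to any such vertex of $\wh{\mcl G}$, and no fixed finite window can contain the image of $v_1$ under every consistent bijection. Thus ``node at every level'' does not imply an infinite branch in this situation, and the claimed equivalence fails. The paper therefore peels these vertices off first: the set $V_0$ of permanently isolated vertices is measurable by Lemma~\ref{lem:measurability}(iii), the equivalence relation $\{t:i\in V(G_t)\}=\{t:j\in V(G_t)\}$ is measurable on $V_0$ by Lemma~\ref{lem:measurability}(ii), and the restricted processes are equal up to relabeling iff corresponding equivalence classes have equal cardinality (a countable Boolean combination of measurable events). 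Only after this reduction is the edge-finiteness argument available for \emph{every} remaining vertex, so that the tree (or, in the paper's language, the sets of candidate values for each $\phi_n(i)$ and $\phi_n^{-1}(i)$) becomes finitely branching and the K\"onig/diagonalization step goes through. Your windows must then be taken to be the (random, realization-dependent) finite sets of non-isolated vertices at suitable times, not a deterministic sequence fixed in advance; the measurability of this choice also needs to be justified, which the paper's subsequence formulation handles implicitly.
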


\begin{proof}
The proposition is immediate in the case where $\bigcup_{t\geq 0} V( G_t)$ or
$\bigcup_{t\geq 0} V( \wh G_t)$ is finite, since the set of maps
$\phi\colon[n]\to[n]$ is finite, and given any $\phi$ the event that this map
satisfies the requirements of Definition~\ref{def:relab} is measurable by
Lemma~\ref{lem:measurability}.  We may therefore assume that both
$\bigcup_{t\geq 0} V( G_t)$ and $\bigcup_{t\geq 0} V( \wh G_t)$ are infinite.
We may further assume without loss of generality that $\bigcup_{t\geq 0} V(
G_t)=\bigcup_{t\geq 0} V( \wh G_t)=\N$; we may do this upon relabeling the
vertices of both graph processes.

Next, we reduce the proof of the proposition to the case where no vertices
are isolated for all times. For $V\subseteq\N$ let $G_t^V$ denote the induced
subgraph of $G_t$ that has vertex set $V\cap V(G_t)$. Let $V_0\subseteq\N$
(resp.\ $\wh V_0\subseteq\N$) denote the set of vertices for $(G_t)_{t\geq
0}$ (resp.\ $(\wh G_t)_{t\geq 0}$) that are isolated for all times. Then
$(G_t)_{t\geq 0}$ and $(\wh G_t)_{t\geq 0}$ are equal up to relabeling of the
vertices if and only if this property holds for $(G^{V_0}_t)_{t\geq 0}$ and
$(\wh G^{\wh V_0}_t)_{t\geq 0}$ and for $(G^{\N\setminus V_0}_t)_{t\geq 0}$ and
$(\wh G^{\N\setminus \wh V_0}_t)_{t\geq 0}$. To reduce to the case in which no
vertices are isolated for all times, it is sufficient to show measurability
of the event that $(G^{V_0}_t)_{t\geq 0}$ and $(\wh G^{\wh V_0}_t)_{t\geq 0}$ are
equal up to relabeling of the vertices. We say that two vertices $i,j\in V_0$
are equivalent if $\{t\geq 0\,:\,i\in V(G_t^{V_0})\}=\{t\geq 0\,:\,j\in
V(G_t^{V_0})\}$. Equivalence for two vertices $i,j\in \wh V_0$ and for two
vertices $i\in V_0$ and $j\in \wh V_0$ is defined similarly. We observe that
$(G^{V_0}_t)_{t\geq 0}$ and $(\wh G^{\wh V_0}_t)_{t\geq 0}$ are equal up to
relabeling of the vertices if and only if each equivalence class has equal
cardinality in $V_0$ and $\wh V_0$. The latter event is measurable, since for
any two vertices the event that these two vertices are equivalent is
measurable by Lemma~\ref{lem:measurability}. Thus, we can assume that no
vertices are permanently isolated.

To complete the proof, we must determine whether there exists a bijection
$\phi_0\colon\N\to\N$ satisfying the properties of the map $\phi$ in
Definition~\ref{def:relab}, i.e., whether there is a bijective map
$\phi_0\colon\N\to\N$ such that for all times $t\geq 0$, $\phi_0(G_t)=\wh
G_t$.  We will construct such a map by first constructing a sequence of maps
$\phi_n$ defined on a growing sequence of domains, and then using a
subsequence construction to transform them into a map $\phi_0\colon\N\to\N$
with the desired properties. We will show that this construction succeeds if
and only if the two graph processes are equal up to relabeling.

To construct the maps $\phi_n$, we define $A_n^{0}$ to be the set of
injective maps $\phi \colon D \to \N$ such that $D$ is finite,
$\{1,\dots,\lceil n/2 \rceil \} \subseteq D$, and $\{1,\dots,\lfloor n/2
\rfloor \} \subseteq \phi(D)$.  Let $A_n$ to be the set of maps $\phi\in
A_n^{0}$ such that $\phi(G_t^{D})=\wh G_t^{\phi(D)}$ for all $t\geq 0$.  Note
that $A_n$ is non-empty for all $n$ if the two graph processes are equal up
to relabeling (just choose $\phi$ to be a restriction of the bijection
$\phi_0$). Note further that the set $A_n^0$ is countable, and that for each
$\phi\in A_n^{0}$ the event that $\phi\in A_n$ is measurable by
Lemma~\ref{lem:measurability}.

After these preparations, we are ready to construct the map $\phi_0$. First
we define $\phi_0(1)$. For each $j\in\N$ define the event $B_j$ by $B_j=
\bigcap_{n\in\N} B_{j,n}$, where $B_{j,n}$ is the event that there exists a
map $\phi \in A_n$ for which $\phi(1)=j$.  If the graph processes are equal
up to relabeling, then $B_j$ must occur for some $j$. We will prove that
conversely, if $B_j$ occurs for some $j$, then the graph processes are equal
up to relabeling. Since $B_j$ is a countable intersection of measurable
events, this will finish our proof that the event that the two graph
processes are equal up to relabeling is measurable.

To prove that the event $B_j$ implies the existence of a bijection $\phi_0$
such that $\phi_0(G_t)=\wh G_t$ for all $t\geq 0$, we first note that the
occurrence of $B_j$ implies the existence of a sequence  of maps $\phi_n\in
A_n$ such that $\phi_n(1)=j$.  Accordingly, we set $\phi_0(1)=j$. In the
second step of the construction (explained below), we will determine
$\phi_0^{-1}(1)$ by passing to a subsequence for which $\phi_n^{-1}(1)$ is
constant.  More generally, in the $k$th step of the construction, we will
pass to a subsequence to ensure that $\phi_n(i)$ is constant for $1 \le i \le
\lceil k/2 \rceil$ and $\phi_n^{-1}(i)$ is constant for $1 \le i \le \lfloor
k/2 \rfloor$.

We will carry out this construction by induction on $k$.  Suppose that we
have defined $\phi_0(1),\dots,\phi_0(\lceil k/2\rceil)$ and
$\phi_0^{-1}(1),\dots,\phi_0^{-1}(\lfloor k/2\rfloor)$ so that there exists a
sequence $(\phi_n^k)_{n\geq k}$ of maps $\phi_n^k\in A_n$ for which
\begin{align*}
\phi_n^k(i)&=\phi_0(i)
&&\text{for all $n\geq k$ and all $i\leq \lceil k/2\rceil$, and }
\\
(\phi_n^k)^{-1}(i)&=\phi_0^{-1}(i)
&&\text{for all $n\geq k$ and all $i\leq \lfloor k/2\rfloor$. }
\end{align*}
Assume first that $k$ is odd, in which case we need to define $\phi_0^{-1}(
\lfloor (k+1)/2\rfloor)$. Choose $t$ in such a way that $\lfloor
(k+1)/2\rfloor$ is not isolated in $\wh G_t$.  Then $(\phi_n^k)^{-1}( \lfloor
(k+1)/2\rfloor)$ cannot be isolated in $G_t$ either, and since $G_t$ contains
only a finite number of edges, we know there exists a finite set $V$ such
that for all $n\geq k$, $(\phi_n^k)^{-1}( \lfloor (k+1)/2\rfloor)\in V$. But
this implies that we can find a subsequence of $(\phi_n^k)_{{n\geq k}}$ on
which $(\phi_n^k)^{-1}( \lfloor (k+1)/2\rfloor)$ takes a fixed value, which
we use to define $\phi_0^{-1}( \lfloor (k+1)/2\rfloor)$. To conclude we need
to prove the existence of a sequence $\phi_n^{k+1}\in A_n$ satisfying the
induction hypothesis. For $n$ in the subsequence obtained above we define
$\phi_n^{k+1}=\phi_n^{k}$. To turn this subsequence into a sequence
$\phi_n^{k+1}\in A_n$ defined for all $n\geq k+1$, we can simply reuse
elements to fill in any gaps that occur before them, because $A_n \subset
A_m$ for $m < n$.  This completes the proof when $k$ is odd, and the even
case differs only in notation.
\end{proof}

\section{Random Graph Models}
\label{sec3}

The main goal of this appendix is to establish Theorems~\ref{prop11} and
\ref{prop4}. Proposition~\ref{prop2} will be used to prove left convergence
of graphon processes in Section~\ref{sec7}. It will also be applied in the
proof of Theorem~\ref{prop4}(i) in this appendix, where we need to consider
the normalized number of edges in a graphon process.

A result of \citet*[Corollary~2.6]{ls-graphlimits} in the setting of graphons
over probability spaces is closely related to the following proposition.
However, the proofs are different, even if both rely on martingale
techniques. Note that in the course of proving the below proposition we give
an alternative proof of a result of \citet*[Theorem~5.3]{veitchroy} for the
special case of graphs with no self-edges. Recall from Section~\ref{sec7}
that for a simple graph $F$ and a simple graph $G$ we let $\op{inj}(F,G)$
denote the number of injective adjacency preserving maps $\phi\colon V(F)\to
V(G)$.

\begin{proposition}
Let $\W=(W,\scr S)$, where $W\colon S\times S\to[0,1]$ is a symmetric,
measurable (but not necessarily integrable) function, and $\scr
S=(S,\cS,\mu)$ is a $\sigma$-finite measure space. Let $F$ be a simple graph
with vertex set $V(F)=\{1,\dots,k\}$ for $k\geq 2$, such that $F$ has no
isolated vertices. Then a.s.\
\begin{equation}
\lim_{t\rta\infty} t^{-k} \mathrm{inj}(F,G_t(\W))= \int_{S^k}\prod_{(i,j)\in E(F)} W(x_i,x_j)\,dx_1\cdots dx_k,
\label{eq9}
\end{equation}
where both sides should be read as elements of the extended non-negative
reals $[0,\infty]$. \label{prop2}
\end{proposition}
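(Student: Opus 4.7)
The plan is to reduce to the classical dense theory on a probability space, extend to the $\sigma$-finite case by exhaustion, and close the upper bound via a reverse martingale.

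\emph{Finite-measure case.} When $\mu(S)=c<\infty$, the Poisson construction realizes $\wt G_t$ as the induced subgraph on the first $N_t$ vertices of an infinite $W$-random graph $\wt G$ on $\N$ with i.i.d.\ features $y_i\sim\mu/c$, where $N_t$ is an independent rate-$c$ Poisson counting process; since $F$ has no isolated vertices, $\mathrm{inj}(F,G_t)=\mathrm{inj}(F,\wt G_t)$. The almost-sure form of the Lovász-Szegedy dense-graph sampling theorem (derived from an Azuma-Hoeffding bound on the vertex-exposure martingale and Borel-Cantelli) gives $n^{-k}\mathrm{inj}(F,\wt G[\{1,\dots,n\}])\to \int_{S^k}\prod W\,d(\mu/c)^k$ a.s., and combined with the strong law $N_t/t\to c$ this yields \eqref{eq9}.

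\emph{Exhaustion and lower bound.} For general $\sigma$-finite $\mu$, write $S=\bigcup_n S_n$ with $S_n\uparrow S$ and $\mu(S_n)<\infty$, and let $G_t^{(n)}$ be the induced subgraph of $G_t$ on vertices with features in $S_n$. This is itself a graphon process for $(W|_{S_n\times S_n},\mu|_{S_n})$; since $\mathrm{inj}(F,G_t^{(n)})\leq \mathrm{inj}(F,G_t)$ and $\int_{S_n^k}\prod W\,d\mu^k\uparrow \int_{S^k}\prod W\,d\mu^k$ by monotone convergence, we deduce $\liminf_{t\to\infty} t^{-k}\mathrm{inj}(F,G_t)\geq \int_{S^k}\prod W\,d\mu^k$ a.s., which already finishes the proof when the right-hand side is $+\infty$.

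\emph{Reverse martingale and upper bound.} When $C:=\int \prod W\,d\mu^k<\infty$, the key identity is that for $s<T$, conditional on the (combinatorial) graph $G_T$, the arrival times attached to the vertices of $G_T$ are i.i.d.\ Uniform$(0,T)$, independent of the graph structure, so each vertex of $G_T$ is kept in $G_s$ independently with probability $s/T$, and a fixed injective adjacency-preserving $F\to G_T$ descends to one into $G_s$ precisely when all $k$ of its image vertices are kept; hence
\[
\E[\mathrm{inj}(F,G_s)\mid G_T]=(s/T)^k\,\mathrm{inj}(F,G_T),
\]
i.e.\ $\E[I_s\mid G_T]=I_T$ with $I_t:=t^{-k}\mathrm{inj}(F,G_t)$. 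A conditional-independence argument (using that thinnings from different scales of the Poisson process are independent given the common finer scale) then upgrades this to show that along $t_n=n$, $(I_{n})_{n\in\N}$ is a reverse martingale with respect to the decreasing filtration $\mcl F_n:=\sigma(G_m:m\geq n)$. Doob's reverse-martingale convergence theorem yields a.s.\ and $L^1$ convergence of $I_n$; Campbell's formula gives $\E[I_t]=C$ for every $t$, so the limit has expectation $C$, and being $\geq C$ a.s.\ by the previous step it must equal $C$ a.s. The monotonicity of $\mathrm{inj}(F,G_t)$ in $t$ together with $((n+1)/n)^k\to 1$ interpolates this to continuous time. The main obstacle is setting up the reverse-martingale identity cleanly: one must recognise the independence between the arrival-time field and the graph structure (so that $G_s\mid G_T$ really is an $(s/T)$-Bernoulli thinning) and then verify the conditional-independence structure needed to pass from pairwise conditioning on $G_T$ to the full filtration $\mcl F_n$. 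Once this is done, the rest is routine reverse-martingale theory combined with the lower bound from Step~2.
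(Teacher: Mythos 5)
Your proof is correct and shares the backward-martingale engine with the paper, but the surrounding architecture differs in two substantive ways, and the comparison is instructive. First, you identify the limit by combining the $L^1$-limit identity $\E[I_\infty]=C$ (from reverse-martingale convergence together with the Mecke/Campbell computation) with the a.s.\ lower bound $I_\infty\geq C$ obtained from exhaustion by finite-measure pieces, and observe that a nonnegative integrable variable bounded below by its mean must equal it a.s. The paper instead shows $Y_{-\infty}$ is a.s.\ constant via the Kolmogorov 0--1 law, which requires a separate argument that $Y_{-\infty}$ is measurable with respect to $\sigma(\bigcup_{n\geq n_0}\cF_n)$ for every $n_0$ (this is the content of the estimate \eqref{eq17a} and the subsequent Borel--Cantelli step). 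Your route avoids that entirely, at the price of importing the a.s.\ form of the dense $\W$-random graph sampling theorem for the finite-measure case; the paper is self-contained, treating finite and infinite measure uniformly through the same martingale. Second, and this is the spot you yourself flag, you condition on the combinatorial graphs $\cF_n=\sigma(G_m:m\geq n)$, so your reverse-martingale identity $\E[I_m\mid\cF_n]=I_n$ needs a genuine argument to pass from conditioning on $G_T$ alone to the full future. The paper sidesteps this by conditioning on the \emph{feature-labeled} graphs $\wh G_t$, defining $\cS_{-t}=\sigma(\wh G_s:s\geq t)$; since the birth times of the points in $[0,t]\times S$ are, conditionally on the feature marginal and on the Poisson process on $(t,\infty)\times S$, i.i.d.\ uniform on $[0,t]$, the identity $\E[Y_{-s}\mid\cS_{-t}]=Y_{-t}$ is immediate, and the result on your coarser filtration then follows by the tower property because $I_n$ is $\cF_n$-measurable. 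So the independence-of-thinnings claim you gesture at is true, but the cleanest path to it is exactly the feature-labeling device in the paper's proof.
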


\begin{remark}
Note that the proposition makes no integrability assumptions on $W$. All that
is used is that $W$ is measurable.  As a consequence, the proposition can
deal with situations where, say, the triangle density converges, even though
$G_t(\W)$ has infinitely many edges.  An extreme example of such a behavior
can be obtained by taking $\W=(W,\R_+)$ to be the ``bipartite'' graphon
defined by $W(x,y)=\sum_{i,j\in\N} \1_{2i-2<x<2i-1}\1_{2j-1<y<2j} +
\sum_{i,j\in\N} \1_{2i-1<x<2i}\1_{2j-2<y<2j-1}$, leading to a sequence of
graphs $G_t(\W)$ where every vertex has a.s.\ infinite degree, while all
subgraph frequencies for graphs $F$ that are not bipartite converge to zero.
\end{remark}

\begin{proof}[Proof of Proposition~\ref{prop2}]
We first prove the proposition under the assumption that the right side of
\eqref{eq9} is finite. Throughout the proof we let $G_t:=\wt G_t(\W)$ be the
graphon process generated by $\W$ with isolated vertices. Note that the left
side of \eqref{eq9} is invariant under replacing $G_t(\W)$ with $\wt
G_t(\W)$, because $F$ has no isolated vertices. For each $t>0$ define
$Y_{-t}$ to be the left side of \eqref{eq9} (with $G_t(\W)$ replaced by
$G_t$), i.e.,
\[
Y_{-t}:=t^{-k} \op{inj}(F,G_t)
=t^{-k}\sum_{v_1,\dots,v_k\in V(G_t)}\prod_{(i,j)\in E(F)}\1_{(v_i,v_j)\in E(G_t)}.
\]
As a first step, we will prove that for each $t>0$,
\begin{equation}
\E[Y_{-t}]= \int_{S^k}\prod_{(i,j)\in E(F)} W(x_i,x_j)\,dx_1\cdots\,dx_k.
\label{eq17}
\end{equation}
We may assume that $\mu(S)<\infty$, since we can write $S$ as a union of
increasing sets $S_m$ of finite measure for each $m\in\N$, and by the monotone
convergence theorem it is sufficient to establish \eqref{eq17} with $W$
replaced by $W\1_{S_m\times S_m}$, and with $Y_{-t}$ defined in terms of
graphs where we only consider vertices $v=(t,x)$ for which $x\in S_m$. If
$N:=|V(G_t)|<k$, then $Y_{-t}=0$. If $N\geq k$, then
\[
\E[Y_{-t}\,|\,N]= \frac{1}{t^k} \frac{N!}{(N-k)!} \frac{1}{\mu(S)^k}
\int_{S^k}\prod_{(i,j)\in E(F)} W(x_i,x_j)\,dx_1\cdots\,dx_k,
\]
since we can form $\frac{N!}{(N-k)!}$ ordered sets of size $k$ from $V(G_t)$,
and the probability that a uniformly chosen injective map from $V(F)$ to
$V(G_t)$ is a homomorphism, is given by
\[
\frac{1}{\mu(S)^k}
\int_{S^k}\prod_{(i,j)\in E(F)} W(x_i,x_j)\,dx_1\cdots\,dx_k.
\]
Since $N$ has the law of a Poisson random variable with parameter $t\mu(S)$
we can conclude that \eqref{eq17} holds:
\[
\begin{split}
\E[Y_{-t}] &= \sum_{n=0}^\infty \Pr(N=n) \E[Y_{-t}\,|\,N=n]\\
&= \sum_{n=k}^{\infty} \frac{(t\mu(S))^n}{n!}e^{-t\mu(S)} \frac{1}{t^k}
\frac{n!}{(n-k)!} \frac{1}{\mu(S)^k} \int_{S^k}\prod_{(i,j)\in E(F)} W(x_i,x_j)\,dx_1\cdots\,dx_k\\
&= \int_{S^k}\prod_{(i,j)\in E(F)} W(x_i,x_j)\,dx_1\cdots\,dx_k,
\end{split}
\]
implying in particular that $Y_t$ is integrable for all $t<0$ given that for
now we assumed that the right side of \eqref{eq9} is finite. Note that in
particular this implies that $Y_t$ is a.s.\ finite, even though $W$ may be
such that the event that $G_t$ has infinitely many edges has non-zero
probability.

Let $\wh G_t$ be identical to $G_t$, except that a vertex $v=(t,x)\in V(G_t)$
is labeled only with $x$. In order words, conditioning on a realization of
$\wh G_t$ is equivalent to conditioning on a realization of $G_t$, except
that the time the different vertices were born (i.e., the time they appeared
in the graphon process $(\wt G_t)_{t\geq 0}$) is unknown. Note that since $S$
may have point masses multiple vertices of $\wh G_t$ may have the same label,
but they are still considered to be different. For $t\leq -1$, define
$\cS_{t}$ to be the $\sigma$-algebra generated by $(\wh G_{s})_{s\geq -t}$.
Then $\cS_s\subseteq \cS_t$ for $s\leq t\leq -1$, so $(\cS_t)_{t\leq -1}$ is
a filtration, and $Y_{t}$ is measurable with respect to $\wh G_{-t}$ and
hence $\cS_{t}$; in other words, $(Y_{t})_{t\leq -1}$ is adapted to the
filtration.

Let $t>s>0$. Given any $k$ distinct vertices in $\wh G_t$, the
probability (conditioned on $(\wh G_{t'})_{t'\geq t}$) that all $k$ vertices
are also in $\wh G_s$ is given by $(s/t)^k$. Since $\wh G_s$ is an induced
subgraph of $\wh G_t$,  it follows that
\[
\begin{split}
\E[Y_{-s}\,|\, \cS_{-t}]&=\E[Y_{-s}\,|\,( \wh G_{t'})_{t'\geq t}]\\
&= \frac{1}{s^k} \sum_{v_1,\dots,v_k\in V(\wh G_t)} \prod_{(i,j)\in E(F)}\1_{(v_i,v_j)\in E(G_t)} \cdot \phantom{}\\
& \qquad\qquad\qquad\qquad\qquad\qquad 
\P\big(v_1,\dots,v_k\in V(\wh G_s)\,|\,v_1,\dots,v_k\in V(\wh G_t)\big)\\
&= Y_{-t},
\end{split}
\]
proving that $(Y_{{t}})_{t<0}$ is a backwards martingale. The limit $Y_{-\infty}=\lim_{t\rta\infty,t\in\Q}
Y_{-t}$ exists almost surely \cite[Theorem~7.18]{kallenberg-prob}.
Since $\E[Y_{-t}]<\infty$ for all $t>0$ we know
that a.s., $Y_{-t}<\infty$ for all $t>0$, which implies that a.s.,
$|E(G_t)|<\infty$ for all $t>0$. Therefore $(Y_{t})_{t<0}$ has finitely many
discontinuities in any bounded interval, and is left-continuous with limits
from the right. It follows that $Y_{-\infty}=\lim_{t\rta\infty} Y_{-t}$;
i.e., we do not need to take the limit along rationals.

To complete the proof it is sufficient to prove that the limit $Y_{-\infty}$
is equal to the right side of \eqref{eq17} almost surely. To establish this
it is sufficient to prove that $Y_{-\infty}$ is equal to a deterministic
constant almost surely, since $(Y_{t})_{t<0}$ is uniformly integrable
\cite[Theorem~7.21]{kallenberg-prob}, which implies that $(Y_{t})_{t<0}$
converges to $Y_{-\infty}$ also in $L^1$.

We will use the Kolmogorov 0-1 law \cite[Theorem~1.1.2]{stroock-analyticview}
to deduce this. For any $n\in\N$ define $\mcl V_n:=\{(s,x)\in\mcl
V\,:\,n-1\leq s<n\}$. Let $\cF_n$ be the $\sigma$-algebra generated by the
set $\mcl V_n$ and the edges between the vertex set $\mcl V_n$ and the vertex
set $\bigcup_{1\leq m\leq n} \mcl V_m$. Since the randomness of the edges can
be represented as an infinite sequence of independent uniform random
variables, the $\sigma$-algebras $\cF_n$ can be considered independent even
if the edges considered in $\cF_n$ join vertices in $\mcl V_n$ and $\mcl
V_{m}$ for $m<n$. In order to apply the 0-1 law it is sufficient to prove
that $Y_{-\infty}$ is measurable with respect to the $\sigma$-algebra
generated by $\bigcup_{n\geq n_0} \cF_n$ for all $n_0\in \N$.

Define $Y_{-t,\geq n_0}$ in the same way as $Y_{-t}$, except that instead of
summing over vertices in $V(G_t)$, we sum over vertices in $V(G_t)\cap\mcl
V_{\geq n_0}$, where $\mcl V_{\geq n_0}=\bigcup_{n\geq n_0}\mcl V_n$. Since
$Y_{-t,\geq n_0}$ is  measurable with respect to the $\sigma$-algebra
generated by $\bigcup_{n\geq n_0} \cF_n$,
all we need to show is that for
all $n\geq n_0$, a.s., $Y_{-t}-Y_{-t,\geq n_0}\to 0$ as $t\to\infty$. The
difference between $Y_{-t}$ and $Y_{-t,\geq n_0}$ can then be bounded by
\[
\begin{split}
t^{-k}\sum_{{\substack{v_1,\dots,v_k\in V(G_t)\\ t_i\leq n_0 \text{ for some } i\in [k]}}}
\prod_{(i,j)\in E(F)}\1_{(v_i,v_j)\in E(G_t)},
\end{split}
\]
where $t_i$ is the time label of $v_i$. Conditioned on $v_1,\dots,v_k\in
V(G_t)$, the probability that at least one of them has time label $t_i\leq
t_0$ is bounded by $kt_0/t$. Continuing as in the proof of \eqref{eq17}, we
therefore obtain that
\begin{equation}
0\leq \E[Y_{-t}-Y_{-t,\geq n_0}]\leq k\Bigl(\frac {t_0}t\Bigr)\int_{S^k}\prod_{(i,j)\in E(F)} W(x_i,x_j)\,dx_1\cdots\,dx_k.
\label{eq17a}
\end{equation}
Since the limit $Y_{-\infty}=\lim_{t\to\infty} Y_{-t}$ exists, we can
calculate $Y_{-\infty}$ along any sequence, say the sequence
$(Y_{-n^2})_{n\in \N}$.  The bound \eqref{eq17a} combined with Markov's
inequality and the Borel-Cantelli lemma therefore implies that
$Y_{-\infty}=\lim_{n\to\infty}Y_{-n^2,\geq n_0}$, proving that $Y_{-\infty}$
is measurable with respect to the $\sigma$-algebra generated by
$\bigcup_{n\geq n_0} \cF_n$, as required for the application of the 0-1 law.

This completes the proof of the proposition under the assumption that the
right side of \eqref{eq9} is finite. If the right side is infinite, we note
that for any set $A$ of finite measure \eqref{eq9} holds with $W\1_{A\times
A}$ instead of $W$ on both the left side and the right side. We can make the
right side arbitrarily large by increasing $A$. The left side is monotone in
$A$, and therefore the limit inferior of the left side (with $W$, not
$W\1_{A\times A}$) is larger than any fixed constant, and hence is equal
$\infty$.
\end{proof}

\begin{proof}[Proof of Theorem~\ref{prop4} (i)]
Since the result of the theorem is immediate for $\|W\|_1=0$, we will assume
throughout the proof that $\|W\|_1>0$. Since $\delta_\square(\wt G_t,G_t)=0$
by \eqref{iso-ver-inv}, it is enough to prove the statement for either $(\wt
G_t)_{t\geq 0}$ or $(G_t)_{t\geq 0}$.

If $S$ has finite total mass, then, a.s., $|V(\wt G_t)|$ is finite for each
fixed $t\geq 0$, and conditioned on the size of $|V(\wt G_t)|$, the graph
$\wt G_t$ is  a $\W$-random graph in the sense of the theory of dense graph
convergence.  The results of \citet*{denseconv1} imply that
$\delta_\square(\wt G_t,\wt{\W})\to 0$ and $\|W^{\wt G_t}\|_1\to\|\wt{W}\|_1$
where $\wt\W=(\wt W,\wt{\scr S})$ is obtained from $\W$ by normalizing the
measure to a probability measure (giving, in particular, $\|\wt W\|_1=\|
W\|_1/\pi(S)^2$.) Combined with Lemma~\ref{prop16}, this implies that
$\delta_\square^s(\wt G_t,\W)\to 0$ when $\pi(S)<\infty$.

If $\pi(S)=\infty$, we use Lemma~\ref{lem:prop7}, and the observation that
two graphons generate graphon processes with the same law if one graphon is a
pullback of the other, to reduce the proof to the case $\scr S=(\R_+,\mcl
B,\lambda)$. Given  $0<\ep<1/2$ choose $M>0$ such that
$\|W-W\1_{[0,M]^2}\|_1<\ep\|W\|_1$, and define ${\W}_M$ to be the graphon
$W_M=W\1_{[0,M]^2}$ over $[0,M]$, and  $\wt G_t^M$ to be the induced subgraph
of $\wt G_t$ on the set of vertices $(s,x)$ such that $x\leq M$. Define $\wt
W^{\wt G_t^M,s}:= W^{\wt G_t^M}(\lambda_M\cdot\phantom{},\lambda_M\cdot\phantom{})$ with
$\lambda_M:= M^{-1}\|W\|_1^{1/2}$. In the cut metric $\delta_\square$, the
stretched graphon $\wt W^{\wt G_t^M,s}$ then converges to $\wt
W_M^s:=W_M(\|W\|_1^{1/2}\cdot\phantom{},\|W\|_1^{1/2}\cdot\phantom{})$, again by the convergence
of $\W$-random graphs for $\W$ defined on a probability space.

Furthermore, by Proposition~\ref{prop2} applied to the graph $F$ consisting
of a single edge, we have that a.s., the number of edges in $\wt G_t^M$
divided by $t^2$ converges to $\frac 12\|W \1_{[0,M]^2}\|_1$, so in
particular the time $t_M$ where $\wt G_t^M$ has at least one edge is a.s.\
finite. For the rest of this proof, we will always assume that $t\geq t_M$.

Defining $G_t'$ to be the graph obtained from $\wt G_t$ by removing all
isolated vertices $(s,x)$ from $V(\wt G_t)$ for which $x>M$, we note that by
\eqref{iso-ver-inv}, it is sufficient to prove that
$\delta_\square^s(G_t',\W)\to 0$. Recall that each vertex $v=(s,x)$ of $G_t'$
corresponds to an interval when we define the stretched canonical graphon
$\W^{G_t',s}$ of $G_t'$. Assume the intervals are ordered according the value
of $x$; i.e., if the vertices $v=(s,x)$ and $v'=(s',x')$ satisfy $x<x'$, then
the interval corresponding to $v$ is to the left on the real line of the
interval corresponding to $v'$. Noting that by our assumption $t\geq t_M$,
there exists at last one vertex $v=(s,x)$ in $G_t'$ such that $x\leq M$, we
define the graphon $\wt{\W}^{G_t',s}=(\wt W^{G_t',s},\R_+)$ to be a
``stretched'' version of $\W^{G_t'}$ such that the vertices $v=(s,x)$ for
which $x\in[0,M]$ correspond to the interval $[0,\lambda_M^{-1}]$. In other
words, $\wt W^{G_t',s}=W^{G_t',s}(r_t\cdot\phantom{},r_t\cdot\phantom{})$ for some appropriately
chosen constant $r_t>0$. To calculate $r_t$, we note that
$W^{{G_t'},s}=W^{{G_t'}}(\lambda\cdot\phantom{},\lambda\cdot\phantom{})$ with
$\lambda=\|W^{{G_t'}}\|_1^{1/2}=|V({G_t'})|^{-1}\sqrt{2|E({G_t'})|}$ and $
{\wt W}^{G_t',s}=W^{G_t'}(\lambda'\cdot\phantom{},\lambda'\cdot\phantom{})$ with
$\lambda'=\lambda_M|V(\wt G_t^M)||V({G_t'})|^{-1}$, giving
$r_t=\lambda'/\lambda=\sqrt{\lambda_M^2|V(\wt G_t^M)|^2/(2|E(G_t')|)}$. Since
$|V(\wt G_t^M)|$ is an exponential random variable with expectation $Mt$, and
$|E(G_t')|/t^2=|E(G_t)|/t^2\to \frac 12\|W\|_1$ a.s.\ by Proposition
\ref{prop2}, we have that, a.s., $r_t\to 1$ as $t\to\infty$. By the triangle
inequality, Lemma~\ref{prop15}, and the fact that
$\wt{W}^{G_t',s}|_{[0,\lambda_M^{-1}]^2}=\wt{W}^{{\wt G}_t^M,s}$,
\begin{equation}
\begin{split}
\delta^s_\square(\W,G_t')
\leq &\,\|W^{s}-\wt{W}_M^s\|_1+
\delta_\square\big(\wt{\W}_M^s,\wt{W}^{{\wt G}_t^M,s}\big)
\\
&+ \|\wt{W}^{G_t',s}|_{[0,\lambda_M^{-1}]^2}-\wt{W}^{G_t',s}\|_1 + \delta_\square(\wt{\W}^{G_t',s},\W^{G_t',s}).
\end{split}
\label{eq24}
\end{equation}
The first term on the right side of \eqref{eq24} is bounded by $\eps$ by
assumption, and the second converges to zero as already discussed above. The
third term on the right side of \eqref{eq24} is the product of $r_t^{-2}$ and
the fraction of edges of $G_t$ for which at least one vertex $v=(t,x)$
satisfies $x>M$. By Proposition~\ref{prop2} (applied with the random graphs
$\wt G_t^M$ and $\wt G_t$ and the same simple graph $F$ as above) and
$\lim_{t\rta\infty}r_t=1$ it follows that this term is less than $2\ep$ for
all sufficiently large $t>0$. The fourth term on the right side of
\eqref{eq24} converges to zero by $\lim_{t\rta\infty}r_t=1$ and Lemma
\ref{prop16}. Since $\ep>0$ was arbitrary we can conclude that
$\lim_{t\rta\infty}\delta_\square^s(\W,G_t)=0$.
\end{proof}

\begin{proof}[Proof of Theorem~\ref{prop4} (ii)]
First we will show that the condition $\sum_{n=1}^\infty
\mu(S_n)^{-1}=\infty$ is necessary. We will use proof by contradiction,
and assume $\sum_{n=1}^\infty \mu(S_n)^{-1}<\infty$ and
a.s.-$\lim_{n\rta\infty} \delta_\square^s(\W,G_n)=0$. We will obtain the
contradiction by proving that with positive probability $E(G_n)=\emptyset$
for all $n\in\N$ (which clearly contradicts a.s.-$\lim_{n\rta\infty}
\delta_\square^s(\W,G_n)=0$). By rescaling the measure of $\scr S$ we may
assume without loss of generality that $\|W\|_1=1$. Furthermore, we assume
that $\mu(S)=\infty$ by extending $\scr S$ and $W$ to a space of infinite
measure. Note that the condition $\bigcup_i S_i=S$ will not hold after such an
extension has been done, but we will not use this property in the proof. (The
property $\bigcup_i S_i=S$ is applied only in the second part of the proof,
where we show that the condition $\sum_{n=1}^\infty \mu(S_n)^{-1}=\infty$ is
sufficient.)

First we will prove that there is a random $N\in\N$ such that
$\W^{G_n,s}=\W^{G_N,s}$ (up to interval permutations) for all $n\geq N$.
Since $(|E(G_n)|)_{n\in\N}$ is increasing, in order to do this it is
sufficient to prove that $(|E(G_n)|)_{n\in\N}$ is bounded almost surely, and
by monotone convergence, this in turn follows once we show that
$\sup_{n\in\N} \E[|E(G_n)|]<\infty$. Letting $v_i\in V(G_i)$ denote the
vertex added in step $i\in\N$, and defining $S_0=\emptyset$, we obtain the
desired result:
\[
\begin{split}
\E[|E(G_n)|]
&= \sum_{1\leq i<j\leq n} \Pr[(v_i,v_j)\in E(G_n)]\\
&= \sum_{1\leq i<j\leq n} \frac{1}{\mu(S_i) \mu(S_j)}
\| W\1_{S_i\times S_j}\|_1\\
&\leq \sum_{i',j'=1}^n
\| W\1_{(S_{i'}\setminus S_{i'-1})\times (S_{j'}\setminus S_{j'-1})}\|_1
\sum_{i\geq i',j\geq j'} \frac{1}{\mu(S_i) \mu(S_j)}\\
&\leq \|W\|_1 \left(\sum_{n=1}^\infty \mu(S_n)^{-1} \right)^2\\
&<\infty.
\end{split}
\]
Since $\lim_{n\rta\infty}\delta^s_\square(G_n,\W)=0$ it follows that
$\delta_\square(\W^{G_N,s},\W)=0$.

We saw in the above paragraph that $\delta_\square(\W^{G_N,s},\W)=0$ a.s.\
for some random $N\in\N$. Therefore there is a deterministic step graphon
$\wh{\W}=(\wh W,\R_+)$ with values in $\{0,1\}$ such that
$\delta_\square(\wh{\W},\W)=0$. Since the set $\{D_{\wh W}>0\}$ has finite
measure, by Lemma~\ref{lem:D_W}, the set $A=\{D_W>0\}$ has finite measure as
well.  After changing $W$ on a set of measure $0$, we have $\supp W\subseteq
A\times A$. Note also that by Proposition~\ref{prop:dcut-d1} we have
$\delta_1(\wh W,W)=0$.

For any $n\in\N$ the probability that a feature sampled from the measure
$\mu_n$ is contained in $A$ is given by $\mu(A\cap S_n)/\mu(S_n)\leq
\mu(A)/\mu(S_n)$. Hence the Borel-Cantelli lemma implies that finitely many
vertices in $\bigcup_{n\geq 1}V(G_n)$ have a feature in $A$. Therefore we can
find a deterministic $n_0\in\N$ such that $\Pr(x_n\not\in A \text{ for all }
n\geq n_0)>0$. It follows that with uniformly positive probability
conditioned on $G_{n_0}$, no edges are added to $G_n$ after time $n_0$.

To conclude our proof (i.e., obtain a contradiction by proving that
$E(G_n)=\emptyset$ with positive probability) it is therefore sufficient to
prove that $E(G_{n_0})=\emptyset$ with positive probability. We will do this
by sampling a sequence of graphs $(\wh G_n)_{n\in\N}$ from $\wh{\W}$ which is
close in law to $(G_n)_{n\in\N}$, and use that $E(\wh G_n)=\emptyset$ with
positive probability since $\wh W$ is zero on a certain subdomain since the
graphs we consider have no loops. (Note that our approach would not have
worked if we allowed for loops; if, for example, $\wh{W}|_{[0,1]^2}\equiv 1$
and $S_1,S_2\subset [0,1]$ we would have had $\Pr(E(\wh G_n)=\emptyset)=0$.)
Let $\ep>0$, and recalling that $\delta_1(\wh{\W},\W)=0$, choose a coupling
measure $\wt\mu$ on $S\times\R_+$ such that $\|W^{\pi_1}-\wh
W^{\pi_2}\|_{1,\wt\mu}<\ep$. By using $\wt\mu$ we can sample two coupled
sequences of graphs $(G_n)_{1\leq n\leq n_0}$ and $(\wh G_n)_{1\leq n\leq
n_0}$, such that the two sequences have a law which is close in total
variation distance, $(G_n)_{1\leq n\leq n_0}$ has the law of the graphs in
the statement of the theorem, and $(\wh G_n)_{1\leq n\leq n_0}$ is sampled
similarly as $(G_n)_{n\in\N}$ but with $\wh{\W}$ instead of $\W$. More
precisely, for each $n\in\{1,\dots,n_0\}$ we sample $(x,\wh x)\in
S_n\times\R_+$ from the probability measure
$\mu(S_n)^{-1}\wt\mu|_{S_n\times\R_+}$, we let $x$ (resp.\ $\wh x$) be the
feature of the $n$th vertex of $G_n$ (resp.\ $\wh G_n$), and by using that
$\|W^{\pi_1}-\wh W^{\pi_2}\|_{1,\wt\mu}<\ep$ we can couple $(G_n)_{1\leq
n\leq n_0}$ and $(\wh G_n)_{1\leq n\leq n_0}$ such that for each
$n_1,n_2\in\{1,\dots,n_0\}$ for which $n_1\neq n_2$ we have
\[
\begin{split}
\P\Big(\big\{(x_{n_1},x_{n_2})&\in E(G_{n_0}),
(\wh x_{n_1},\wh x_{n_2})\not\in E(\wh G_{n_0}) \big\} \cup \phantom{}\\
&\qquad\qquad\qquad\big\{(x_{n_1},x_{n_2})\not\in E(G_{n_0}),
(\wh x_{n_1},\wh x_{n_2})\in E(\wh G_{n_0}) \big\} \Big)\\
&\leq  \mu(S_{n_1})^{-1} \mu(S_{n_2})^{-1}
\int_{(S_{n_1}\times\R_+)\times (S_{n_2}\times\R_+)}\left| W^{\pi_1} - \wh W^{\pi_2}\right|\,d\wt\mu\,d\wt\mu\\
&<\mu(S_{n_1})^{-1} \mu(S_{n_2})^{-1}\ep.
\end{split}
\]

Hence the total variation distance between the laws of $(G_n)_{1\leq n\leq
n_0}$ and $(\wh G_n)_{1\leq n\leq n_0}$ is bounded by
$n^2_0\mu(S_1)^{-2}\ep$. Since we can make this distance arbitrarily small by
decreasing $\ep$, in order to complete our proof it is sufficient to prove
that $E(\wh G_{n_0})=\emptyset$ with a uniformly positive probability for all
coupling measures $\wt\mu$. Write $\R_+=\bigcup_{n=0}^N A_n$, such that
$A_0,\dots,A_N$ correspond to the steps of the step function $\wh W$, with,
say, $A_0$ corresponding to the set of all $x$ such that $\int \wh
W(x,y)\,dy=0$. For any choice of $\wt\mu$ we can find a
$k=k_{\wt\mu}\in\{0,\dots,N\}$ such that $\wt\mu(S_1\times A_k)\geq
\mu(S_1)/(N+1)$. Therefore there is a uniformly positive probability that all
the vertices of $\wh G_{n_0}$ have a feature in $A_k$. On this event we have
$E(\wh G_{n_0})=\emptyset$, since $\wh W|_{A_k\times A_k}\equiv 0$ as the
graphs we consider are simple (i.e., they do not have loops). This completes
our proof that the condition $\sum_{n=1}^\infty \mu(S_n)^{-1}=\infty$ is
necessary.

Now we will prove that the condition $\sum_{n=1}^\infty \mu(S_n)^{-1}=\infty$
is sufficient to guarantee that a.s.-$\lim_{n\rta\infty}
\delta_\square^s(\W,G_n)=0$. We will couple $(G_n)_{n\in\N}$ to a graphon
process $(\wt G_t)_{t\geq 0}$ with isolated vertices. Fix $\ep>0$, and choose
$N\in\N$ sufficiently large such that $\|W-W\1_{S_N\times
S_N}\|_\square<\ep$. Sample $(\wt G_t)_{t\geq 0}$, and independently from
$(\wt G_t)_{t\geq 0}$, sample $(G_n)_{1\leq n\leq N}$ as described in the
statement of the theorem. Define $(t_n)_{n\geq N}$ inductively as follows
\[
t_N=0,\qquad t_n = \inf\{t>t_{n-1}\,:\,\text{there exists } x\in S_{n} \text{ such that } (t,x)\in V(\wt G_t) \}.
\]
Note that $t_n \to \infty$ a.s.\ as $n \to \infty$, because the increments
$t_n-t_{n-1}$ are independent and exponentially distributed with mean $\mu(S_n)^{-1}$,
and $\sum_{n=N+1}^\infty \mu(S_n)^{-1}=\infty$ by assumption.
For $n> N$ let $\wh G_n$ be the induced subgraph of $\wt G_{t_n}$ whose
vertex set is
\[
V(\wh G_n)=\{(t,x)\in V(\wt G_{t_n})\,:\, \text{there exists } \wt
n\in\{N+1,\dots,n\}\text{ such that }t=t_{\wt n} \}.
\]
For each $n>N$ let $G_n$
be the union of $G_N$ and $\wh G_n$, such that the edge set of $G_n$ is
given by $E(G_N)\cup E(\wh G_{n})$ in addition to independently sampled
edges between the vertices of $G_N$ and the vertices of $\wh G_{n}$, such
that the probability of connecting vertices with features $x$ and $x'$ is
$W(x,x')$, and such that $G_{n-1}$ is an induced subgraph of $G_n$. It is
immediate that $(G_n)_{n\in\N}$ has the same law as the sequence of graphs
$(G_n)_{n\in\N}$ described in the statement of the theorem.

We will prove that $|E(G_n)\backslash E(\wt G_{t_n})|=o(|E(\wt G_{t_n})|)$
and $|E(\wt G_{t_n})\backslash E(G_n)|<\ep |E(\wt G_{t_n})|$ for all large
$n\in\N$. This is sufficient to complete the proof of the theorem, since
$\ep>0$ was arbitrary, since $\lim_{n\rta\infty}\delta^s_\square(\W,\wt
G_{t_n})=0$ by part (i) of the theorem, and since $\delta_\square^s(G_n,\wt
G_{t_n})\rta 0$ as $n\rta\infty$ and $\ep\rta 0$ by the following argument.
Define $\wt{\W}^{G_n,s}:=(\wt W^{G_n,s},\R_+)$ and $\wt
W^{G_n,s}:=W^{G_n,s}(r_n^{-1}\cdot\phantom{},r_n^{-1}\cdot\phantom{})$ for
$r_n=|E(G_n)|^{1/2}|E(\wt G_{t_n})|^{-1/2}$; i.e., $\wt W^{G_n,s}$ is a
stretched version of $W^{G_n,s}$ defined such that each vertex of $G_n$
corresponds to an interval of length $(2|E(\wt G_{t_n})|)^{-1/2}$. Then each
vertex corresponds to an interval of length $(2|E(\wt G_{t_n})|)^{-1/2}$ both
for $\wt W^{G_n,s}$ and for $W^{\wt G_{t_n},s}$, so by ordering the vertices
appropriately when defining the graphons we have $\|\wt W^{G_n,s}-W^{\wt
G_{t_n},s}\|_1 \leq |E(G_n)\SymmDiff E(\wt G_{t_n})||E(\wt
G_{t_n})|^{-1}=o_n(1)+\ep$. For sufficiently small $\ep>0$ and large $n\in\N$
we have $|r_n-1|<\big||E(G_n)|^{1/2}-|E(\wt G_{t_n})|^{1/2}\big| |E(\wt
G_{t_n})|^{-1/2}<o_n(1)+\ep$, and hence Lemma~\ref{prop16} implies that
$\delta_\square(\W^{G_n,s},\wt{\W}^{G_n,s})<4\ep$ for all sufficiently small
$\ep>0$ and sufficiently large $n\in\N$. Combining the above estimates we get
that for all sufficiently small $\ep>0$ and sufficiently large $n\in\N$,
\[
\delta_\square^s(G_n,\wt G_{t_n})\leq
\delta_\square(\W^{G_n,s},\wt {\W}^{G_n,s}) + \delta_\square(\wt{\W}^{G_n,s},\W^{\wt G_{t_n},s})
\leq 4\ep + \|\wt W^{G_n,s}-W^{\wt G_{t_n},s}\|_1
\leq 6\ep.
\]

First we prove that conditioned on almost any realization of $G_N$,
$|E(G_n)\backslash E(\wt G_{t_n})|=o(|E(\wt G_{t_n})|)$ as $n\to \infty$.
Note that $E(G_n)\backslash E(\wt G_{t_n})$ consists of the edges in
$E(G_N)$, plus independently sampled edges between $V(G_N)$ and $V(\wh G_n)$.
Since $V(\wh G_n)\subset V(\wt G_{t_n})$, we overcount the latter if we
independently sample one edge for each $v\in V(G_N)$ and $v'\in V(\wt
G_{t_n})$, with the probability of an edge between $v$ and $v'$ given by $W$
evaluated at the features of $v$ and $v'$. Defining $\op{deg}(v;\wt G_{t_n})$
to be the number of edges between $v\in V(G_N)$ and $V(\wt G_{t_n})$ obtained
in this way, we thus have
\[
|E(G_n)\backslash E(\wt G_{t_n})|\leq |E(G_N)|+
 \sum_{v\in V(G_N)} \op{deg}(v;\wt G_{t_n}).
\]
By Proposition~\ref{prop2} applied with $F$ being the simple connected graph
with two vertices, $|E(\wt G_{t_n})|=\Theta(t_n^2)$.  In order to prove that
$|E(G_n)\backslash E(\wt G_{t_n})|=o(|E(\wt G_{t_n})|)$ it is therefore
sufficient to prove that, conditioned on almost any realization of $G_N$,
each vertex $v\in V(G_N)$ satisfies $\op{deg}(v;\wt G_{t_n})\leq Ct_n$ for
all sufficiently large $n$ and some $C>0$ depending on the feature of the
vertex. Condition on a realization of $G_N$ such that $\int_S
W(x,y)\,d\mu(y)<\infty$ for all $x\in S$ such that $x$ is the feature of some
vertex in $G_N$. We will prove that if $x\in S$ is the feature of $v\in
V(G_N)$ then a.s.\
\begin{equation}
\lim_{t\rta\infty} Y_{-t}=\int_S W(x,y)\,d\mu(y),\qquad
\text{where $Y_{-t}:=t^{-1}\op{deg}(v;\wt G_t)$ for all $t>0$},
\label{eq39}
\end{equation}
which is sufficient to imply the existence of an appropriate constant $C$.
The convergence result \eqref{eq39} follows by noting that $(Y_t)_{t<0}$ is a
backwards martingale with expectation $\int_S W(x,y)\,d\mu(y)$, which is
left-continuous with right limits at each $t<0$; see the proof of
Proposition~\ref{prop2} for a very similar argument. Hence the Kolmogorov 0-1
law implies \eqref{eq39}. We can conclude that $|E(G_n)\backslash E(\wt
G_{t_n})|=o(|E(\wt G_{t_n})|)$.

Now we prove $|E(\wt G_{t_n})\backslash E(G_n)|<\ep |E(\wt G_{t_n})|$. Let
$\ol G_{t_n}$ be the induced subgraph of $\wt G_{t_n}$ corresponding to
vertices with feature in $S_N$. Then
\[
|E(\wt G_{t_n})\backslash E(G_n)|
\leq |E(\wt G_{t_n})| - |E(\ol G_{t_n})|.
\]
By applying Proposition~\ref{prop2} to each of the graphs $\ol G_{t_n}$ and
$\wh G_{t_n}$, and with $F$ being the simple connected graph on two vertices,
it follows that
\[
\begin{split}
\limsup_{n\rta\infty} |E(\wt G_{t_n})|^{-1}|E(\wt G_{t_n})\backslash E(G_n)|
&\leq \lim_{n\rta\infty}  |E(\wt G_{t_n})|^{-1}(|E(\wt G_{t_n})| - |E(\ol G_{t_n})|)\\
&= \|W\|_1-\|W\1_{S_N\times S_N}\|_1=\|W-W\1_{S_N\times S_N}\|_1<\ep.
\end{split}
\]
\end{proof}

\begin{proof}[Proof of Theorem~\ref{prop11}]
Assume that (i) holds, i.e., $\delta_\square(\W_1,\W_2)=0$. We will prove
that (ii) and (iii) also hold. It is sufficient to prove that (ii) holds,
since (ii) implies (iii).

We first consider the case when $\mu_i(I_i)<\infty$ for $i=1,2$, where
$I_i:=\{x\in S_i\,:\,D_{W_i}(x)>0\}$. Recall that by Proposition
\ref{pro:deg-conv} we have $\mu_1(I_1)=\mu_2(I_2)$, so by restricting the
graphon $\W_i$ to the space $I_i$ for $i=1,2$ we obtain two graphons with cut
distance zero over spaces of finite and equal measure. By definition of
$I_i$, almost surely no vertices of $(\wt G_t)_{t\geq 0}$ will be isolated
for all times, and it is proved that (ii)
holds in, for example, a paper by \citet*[Theorem~8.10]{janson-survey},
who refers to papers by
\citet*{denseconv1}, \citet*{BCL10}, and \citet*{diaconisjanson08} for the original proofs.

Next we consider the case where $\mu_1(I_1)=\mu_2(I_2)=\infty$. We may assume
$\mu_i(S_i\setminus I_i)=0$, since replacing the graphon $\W_i$ by its
restriction to $S_i\setminus I_i$ amounts to removing vertices which are
isolated for all times. Part (i) of Proposition~\ref{prop27} now implies that
we can find a measure $\mu$ such that $W_1^{\pi_1}=W_2^{\pi_2}$ $\mu$-almost
everywhere. By the assumption $\mu_i(S_i\setminus I_i)=0$, part (iv) of the
proposition implies that $\mu$ is a coupling measure. Sampling a graphon
process from $\W_i$ may be done by associating the vertex set with a Poisson
point process on $(S_1\times S_2)\times\R_+$ with intensity
$\mu\times\lambda$, such that each $((x_1,x_2),t)\in (S_1\times
S_2)\times\R_+$ is associated with a vertex with feature $x_i$ appearing at
time $t$.

Now we will prove that (ii) or (iii) imply (i). We will only show that (ii)
implies (i), since we can prove that (iii) implies (i) by the exact same
argument. We assume (ii) holds, and couple $(\wt G_t^{1})_{t\geq 0}$ and
$(\wt G_t^{2})_{t\geq 0}$ such that $\wt G_t^{1}=\wt G_t^{2}$ for all $t\geq
0$. By Theorem~\ref{prop4}(i) we know that
$\lim_{t\rta\infty}\delta_\square(\W_i,\W^{\wt G_t^i})=0$. Since $\W^{\wt
G_t^1}=\W^{\wt G_t^2}$ for all $t\geq 0$ it follows by the triangle
inequality that $\delta_\square(\W_1,\W_2)=0$, so (i) holds.
\end{proof}

\section{Compactness}
\label{sec:compact}

In this appendix we will establish Theorem~\ref{prop1}.

\begin{lemma}
Let $(\W_n)_{n\in\N}$ and $(\wt\W_n)_{n\in\N}$ be two sequences of graphons,
with $\W_n=(W_n,\scr S_n)$, $\scr S_n=(S_n,\cS_n,\mu_n)$, $\wt{\W}_n=(\wt
W_n,\wt{\scr S}_n)$, and $\wt{\scr S}_n=(\wt S_n,\wt{\cS}_n,\wt\mu_n)$, such
that there are measure-preserving transformations $\phi_n\colon S_n\to\wt
S_n$ for which $\lim_{n\rta\infty}\|W_n-\wt W_n^{\phi_n}\|_1=0$. Furthermore,
assume that either (i) $\phi_n$ is a bimeasurable bijection, or (ii) $S_n=\wt
S_n\times[0,1]$, where $[0,1]$ is equipped with Lebesgue measure, and
$\phi_n\colon S_n\to\wt S_n$ is the projection map. Then $(\W_n)_{n\in\N}$
has uniformly regular tails iff $(\wt\W_n)_{n\in\N}$ has uniformly regular
tails. \label{prop25}
\end{lemma}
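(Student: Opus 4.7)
The plan is to first reduce to the case $W_n = \wt W_n^{\phi_n}$ exactly, then transport the witness sets of Definition~\ref{defn2} through $\phi_n$ in both directions, handling case~(i) and case~(ii) separately for the harder direction.

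For the reduction, observe that if $\|W_n - W_n'\|_1 \leq \eta$, and if $U$ is any measurable set with $\mu(U) \leq M$ and $\|W_n - W_n\1_{U \times U}\|_1 < \ep$, then
\[
\|W_n' - W_n'\1_{U \times U}\|_1 \leq \|W_n - W_n\1_{U\times U}\|_1 + 2\|W_n - W_n'\|_1 < \ep + 2\eta.
\]
Hence uniform tail regularity is preserved under uniformly vanishing $L^1$ perturbations; for the finitely many $n$ where the perturbation is not yet small, the remark after Definition~\ref{defn2} lets us ignore them. So we may, and do, assume $W_n = \wt W_n^{\phi_n}$ throughout.

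The direction ``$(\wt\W_n)$ uniformly regular $\Rightarrow (\W_n)$ uniformly regular'' works in both cases: given a witness $\wt U_n \subseteq \wt S_n$ with $\wt\mu_n(\wt U_n) \leq M$ and $\|\wt W_n - \wt W_n\1_{\wt U_n\times \wt U_n}\|_1 < \ep$, set $U_n := \phi_n^{-1}(\wt U_n)$; since $\phi_n$ is measure-preserving, $\mu_n(U_n) \leq M$ and the pulled-back $L^1$ norm is unchanged. For the reverse direction in case~(i), $\phi_n$ is a bimeasurable bijection, so given a witness $U_n$ for $\W_n$ we push forward via $\wt U_n := \phi_n(U_n)$ and run the same argument with $\phi_n^{-1}$.

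The main difficulty is the reverse direction in case~(ii), where $S_n = \wt S_n \times [0,1]$ and $\phi_n$ is the projection; a witness $U_n \subseteq \wt S_n \times [0,1]$ need not be of product form, so a slicing and averaging argument is required. Setting $U_n^t := \{s \in \wt S_n : (s,t) \in U_n\}$ for $t \in [0,1]$, Fubini gives
\[
\int_{[0,1]^2}\!\!\int_{\wt S_n^2}|\wt W_n(s,s')|\bigl(1 - \1_{U_n^t}(s)\1_{U_n^{t'}}(s')\bigr)\, ds\, ds'\, dt\, dt' = \|W_n - W_n\1_{U_n\times U_n}\|_1 < \ep
\]
and $\int_0^1 \wt\mu_n(U_n^t)\, dt = \mu_n(U_n) \leq M$. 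Chebyshev's inequality makes $T:=\{t : \wt\mu_n(U_n^t)\leq 4M\}$ have Lebesgue measure at least $3/4$, so averaging the displayed integral over $T \times T$ (of measure at least $9/16$) produces $(t,t') \in T \times T$ whose inner integral is at most $(16/9)\ep$. Taking $\wt U_n := U_n^t \cup U_n^{t'}$, we have $\wt\mu_n(\wt U_n) \leq 8M$; since $1 - \1_{\wt U_n\times \wt U_n} \leq 1 - \1_{U_n^t \times U_n^{t'}}$, the desired cut bound on $\wt W_n$ follows. The Fubini-and-averaging step is the only nontrivial part of the argument; everything else is a routine transport of witnesses along the measure-preserving map.
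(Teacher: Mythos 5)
Your proposal is correct, and it departs from the paper's argument in the one place the lemma actually has content, namely the backward direction of case~(ii). The paper constructs $\wt U_n$ deterministically via a ``majority vote'' over the $[0,1]$ fiber: it sets $\wt U_n := \{x \in \wt S_n : \int_0^1 \1_{(x,s)\in U_n}\,ds > 1/2\}$, observes that $\wt\mu_n(\wt U_n) \le 2\mu_n(U_n)$, and then shows $\|\wt W_n - \wt W_n\1_{\wt U_n\times \wt U_n}\|_1 \le 2\|W_n - W_n\1_{U_n\times U_n}\|_1$ by exploiting the fact that whenever $(x,x')\notin \wt U_n \times \wt U_n$, the two-dimensional fiber of $(U_n\times U_n)^c$ over $(x,x')$ has Lebesgue measure at least $1/2$. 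Your argument instead slices $U_n$ along the $[0,1]$ direction, uses Fubini and Chebyshev to find a set $T$ of good slices, and then picks a concrete pair $(t,t')\in T\times T$ by a mean-value/averaging step, setting $\wt U_n := U_n^t \cup U_n^{t'}$. Both constructions are sound and both give bounds with explicit absolute constants ($2$ and $2$ for the paper, $8$ and $16/9$ for yours), which is all uniform tail regularity requires. The paper's construction is a bit tighter and is purely deterministic given $U_n$, while yours is conceptually perhaps more elementary (just Fubini and a counting/averaging step) and the choice of $\wt U_n$ is nonconstructive. Your reduction to the case $W_n = \wt W_n^{\phi_n}$ at the start is a clean way to package the triangle-inequality bookkeeping that the paper instead carries along as explicit $\ep_n$ error terms; the remark after Definition~\ref{defn2} correctly handles the finitely many small $n$.
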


\begin{proof}
Let $(\ep_n)_{n\in\N}$ be a sequence of positive real numbers converging to
zero, such that $\|W_n-\wt W_n^{\phi_n}\|_1<\ep_n$ for all $n\in\N$. First
assume $(\wt{\W}_n)_{n\in\N}$ has uniformly regular tails. Given any $\ep>0$
let $M>0$ be such that for all $n\in\N$ we can find $\wt U_n\in\wt{\cS}_n$
satisfying $\wt\mu_n(\wt U_n)<M$ and $\|\wt W_n-\wt W_n\1_{\wt U_n\times \wt
U_n}\|_1<\ep/2$. Define $U_n:=\phi^{-1}_n(\wt U_n)$. Since $\phi_n$ is
measure-preserving, $\mu_n(U_n)=\mu_n(\wt U_n)<M$. By first using $\|W_n-\wt
W_n^{\phi_n}\|_1<\ep_n$ (which implies that $\|(W_n-\wt
W_n^{\phi_n})\1_{U_n\times U_n}\|_1<\ep_n$) and then using that $\phi_n$ is
measure-preserving we get
\[
\begin{split}
\|W_n-&W_n\1_{U_n\times U_n}\|_1\\
&\leq \|W_n-\wt W_n^{\phi_n}\|_1
+\|\wt W_n^{\phi_n}-\wt W_n^{\phi_n}\1_{U_n\times U_n}\|_1
+\|(\wt W_n^{\phi_n}-W_n)\1_{U_n\times U_n}\|_1\\
&\leq \|\wt W_n^{\phi_n}-\wt W_n^{\phi_n}\1_{U_n\times U_n}\|_1+2\ep_n\\
&=\|\wt W_n-\wt W_n\1_{\wt U_n\times \wt U_n}\|_1+2\ep_n\\
&<\ep/2+2\ep_n.
\end{split}
\]
The right side is less than $\ep$ for all sufficiently large $n\in\N$.
Therefore $(\W_n)_{n\in\N}$ has uniformly regular tails.

Next assume $(\W_n)_{n\in\N}$ has uniformly regular tails. We consider the
two cases (i) and (ii) separately. In case (i) it is immediate from the above
result that $(\wt{\W}_n)_{n\in\N}$ has uniformly regular tails, since $\|\wt
W_n-W_n^{\phi^{-1}_n}\|_{1}<\ep_n$. Now consider case (ii). Given any $\ep>0$
let $M>0$ be such that for all $n\in\N$ we can find $U_n\in\cS_n$ satisfying
$\mu_n(U_n)<M/2$ and $\|W_n- W_n\1_{U_n\times U_n}\|_1<\ep/5$. Define $\wt
U_n$ by
\[
\wt U_n := \left\{x\in \wt S_n\,:\,
\int_0^1 \1_{(x,s)\in U_n}\,ds> \frac 12
\right\},
\]
and define $U'_n:=\phi_n^{-1}(\wt U_n)$. Note that $\wt U_n$ is a measurable
set since $(x,s)\mapsto \1_{(x,s)\in U_n}$ is measurable. Then $\wt\mu_n(\wt
U_n)<M$, since
\[
\begin{split}
\mu_n(U_n)&=\int_{\wt S_n} \int_0^1 \1_{(x,s)\in U_n}\,ds\,d\wt\mu(x)
\geq \int_{\wt U_n} \int_0^1 \1_{(x,s)\in U_n}\,ds\,d\wt\mu(x)
\geq \int_{\wt U_n}\frac 12\,d\wt\mu(x)
= \frac 12 \wt\mu_n(\wt U_n).
\end{split}
\]
Next we will argue that
\begin{equation}
\| \wt W_n- \wt W_n\1_{\wt U_n\times \wt U_n}\|_1
\leq 2\|\wt W_n^{\phi_n}- \wt W_n^{\phi_n}\1_{U_n\times U_n}\|_1.
\label{eq34}
\end{equation}
If $(x,x')\in (\wt S_n\times \wt S_n)\backslash (\wt U_n\times \wt U_n)$ it
holds by the definition of $\wt U_n$ that
\begin{equation*}
\int_0^1 \int_0^1 \1_{((x,s),(x',s'))\in (S_n\times S_n)\backslash (U_n\times U_n)}\,ds'\,ds
= 1-\int_0^1 \1_{(x,s)\in U_n}\,ds\int_0^1 \1_{(x',s)\in U_n}\,ds
\geq \frac 12,
\end{equation*}
which implies \eqref{eq34} by
\begin{equation*}
\begin{split}
\|\wt W_n-&\wt W_n\1_{\wt U_n\times \wt U_n}\|_1
= \int_{\wt S_n\times \wt S_n}\,d\mu(x)\,d\mu(x')\left|\wt W_n(x,x')\right|
\1_{(x,x')\in (\wt S_n\times \wt S_n)\backslash (\wt U_n\times \wt U_n)}
\\
&\leq 2\int_{\wt S_n\times \wt S_n}\,d\mu(x)\,d\mu(x')\bigg(\left|\wt W_n(x,x')\right|
 \1_{(x,x')\in (\wt S_n\times \wt S_n)\backslash (\wt U_n\times \wt U_n)}
 \\
 &\qquad\qquad\cdot\int_0^1 \int_0^1
\1_{((x,s),(x',s'))\in (S_n\times S_n)\backslash (U_n\times U_n)}\,ds'\,ds\bigg)
\\
& \leq 2\int_{\wt S_n\times \wt S_n}\,d\mu(x)\,d\mu(x')\left|\wt W_n(x,x')\right|
 \int_0^1 \int_0^1
\1_{((x,s),(x',s'))\in (S_n\times S_n)\backslash (U_n\times U_n)}\,ds'\,ds
\\
&=2\left\|\wt W_n^{\phi_n}
\1_{(S_n\times S_n)\backslash (U_n\times U_n)}\right\|_1\\
&=2\|\wt W_n^{\phi_n}- \wt W_n^{\phi_n}\1_{U_n\times U_n}\|_1.
\end{split}
\end{equation*}
Using that $\phi_n$ is measure-preserving, the triangle inequality, that
$\|\wt W_n^{\phi_n}-W_n\|_1<\ep_n$, and the estimate \eqref{eq34} we get
\[
\begin{split}
\|\wt W_n-\wt W_n\1_{\wt U_n\times \wt U_n}\|_1
&\leq 2\| \wt W_n^{\phi_n}- \wt W_n^{\phi_n}\1_{U_n\times U_n}\|_1\\
&\leq 2\|W_n-W_n\1_{U_n\times U_n}\|_1+4\ep_n
<2\ep/5+4\ep_n.
\end{split}
\]
The right side is less than $\ep$ for all sufficiently large $n\in\N$, and
thus $(\wt{\W}_n)_{n\in\N}$ has uniformly regular tails.
\end{proof}

\begin{proof}[Proof of Theorem~\ref{prop1}]
First we will prove that every $\delta_\square$-Cauchy sequence has uniformly
regular tails. Let $(\mcl W_n)_{n\in\N}$ with $\mcl W_n=(W_n,\scr S_n)$ be a
$\delta_\square$-Cauchy sequence of graphons, i.e.,
$\lim_{n,m\rta\infty}\delta_\square(\W_n,\W_m)\rta 0$. By Lemma~\ref{prop25}
we may assume without loss of generality that $\scr S_n$ is atomless for all
$n\in\N$. By Lemmas~\ref{prop6} and~\ref{prop9} we can find graphons
$\wt{\W}_n=(\wt W_n,\R_+)$ and measure-preserving maps $\psi_n\colon  S_n\to
\R_+$ such that $W_n=(\wt W_n)^{\psi_n}$. Since
$\delta_\square(\wt{\W}_n,\W_n)=0$, $(\wt{\W}_n)_{n\in\N}$ is a Cauchy
sequence. Given any $\ep>0$ choose $N\in\N$ such that
$\delta_\square(\wt{\W}_N,\wt{\W}_n)< \ep/4$ for all $n\geq N$. For each
$n\leq N$ let $M_n\in\R_+$ be such that $\|\wt W_n-\wt
W_n\1_{[0,M_n]^2}\|_1<\ep/3$, and define $M:=\sup_{n\leq N} M_n<\infty$. To
prove that $(\wt{\W}_n)_{n\in\N}$ has uniformly regular tails it is
sufficient to prove that for each $n\geq N$ we can find a Borel-measurable
set $\wt A_n \subset\R_+$ such that
\begin{equation}
\lambda
(\wt A_n)\leq M,\qquad
\|\wt W_n-\wt W_n\1_{\wt A_n\times \wt A_n} \|_1<\ep.
\label{eq16}
\end{equation}
We can clearly find an appropriate set $\wt A_n$ for $n=N$; indeed, we can
find a set $\widetilde A_N\subset\R_+$ such that the second bound holds with
$\eps/3$ instead of $\eps$. By Proposition~\ref{prop10}(c) we can find
isomorphisms $\phi_n\colon \R_+\to \R_+$ such that $\|\wt
W_N-\wt W^{\phi_n}_n\|_{\square}<\ep/3$ for all $n\geq N$. Define $\wt
A_n=\phi_n(\wt A_N)$, and note that
\[
\|\wt W_n-\wt W_n\1_{\wt A_n\times \wt A_n} \|_\square
=\|\wt W_n^{\phi_n}-\wt W_n^{\phi_n}\1_{\wt A_N\times \wt A_N} \|_\square
\leq
\|\wt W_N-\wt W_N\1_{\wt A_N\times \wt A_N} \|_\square+\frac {2\eps}3
< \eps.
\]
Observing that for non-negative graphons the cut norm is equal to the $L^1$
norm, this gives that \eqref{eq16} is satisfied and $(\wt{\W}_n)_{n\in\N}$
has uniformly regular tails. Defining $A_n:=\psi^{-1}(\wt A_n)$, we have
$\mu(A_n)<M$ and $\|W_n-W_n\1_{A_n\times A_n}\|_1=\|\wt W_n-\wt W_n\1_{\wt
A_n\times \wt A_n}\|_1<\ep$. Hence $(\W_n)_{n\in\N}$ has uniformly regular
tails.

Now we will prove that uniform regularity of tails implies subsequential
convergence for $\delta_\square$. We consider some sequence of graphons
$(\W_n)_{n\in\N}$ with uniformly regular tails, and will prove that the
sequence is subsequentially convergent for $\delta_\square$ towards some
graphon $\W$. By Lemma~\ref{prop25} we may assume without loss of generality
that $\scr{S}_n$ is atomless for all $n\in\N$, and by trivially extending
$\W_n$ to a graphon over a space of infinite total mass if needed, we may
assume that $\mu_n(S_n)=\infty$. Recall the definition of a partition of a
measurable space, which was given as part of the discussion before the
statement of Proposition~\ref{prop13}. We will prove that we can find
increasing sequences $(m_k)_{k\in\N}$ and $(M_k)_{k\in\N}$ with values in
$\N$, such that for each $k,n\in\BB N$ there is a partition $\mcl P_{n,k}$ of
$S_n$ and a graphon $\mcl W_{n,k}=(W_{n,k},\R_+)$ such that the following
hold:
\begin{itemize}
\item[(i)] We have $\mcl P_{n,k}=\{I_{n,k}^i\,:\,i=0,\dots,m_k\}$, where
    $\mu_n(S_n\setminus I_{n,k}^0)=M_k$ and $\mu_n(I_{n,k}^i)=M_k/m_k$ for
    $i\in\{1,\dots,m_k\}$.
\item[(ii)] We have $\delta_\square(\mcl W_n,\mcl W_{n,k})<1/k$ for all $n\in\N$.
\item[(iii)] For each $i_1,i_2\in\{1,\dots,m_k\}$ the value of $W_{n,k}$ on
    $([i_1-1,i_1)\times[i_2-1,i_2))M_k/m_k$ is constant and equal to the
    value of $(W_{n})_{\mcl P_{n,k}}$ on $I_{n,k}^{i_1}\times
    I_{n,k}^{i_2}$. On the complement of $[0,M_k]^2$, we have $W_{n,k}=0$.
\item[(iv)] The partition $\mcl P_{n,k+1}$ refines the partition $\mcl
    P_{n,k}$. We number the elements of the partition $\mcl P_{n,k+1}$ to
    be consistent with the refinement. More precisely, defining $r_k:=
    (M_k/m_{k})/(M_{k+1}/m_{k+1})\in\N$ to be the ratio of the partition
    sizes in the two partitions, we have
    $I_{n,k}^i=\bigcup_{j=(i-1)r_k+1}^{ir_k} I_{n,k+1}^j$ for every $i$ with $0<i\leq m_k$.
\end{itemize}
Partitions $\mcl P_{n,k}$ and graphons $\W_{n,k}$ satisfying (i)--(iv) exist
by the following argument. By the assumption of uniformly regular tails, for
each $k\in\N$ we can find an $M_k\in\N$ such that for appropriate sets
$I_{n,k}^0$ satisfying $\mu_n(S_n\setminus I_{n,k}^0)=M_k$ we have
$\|W_n-W_n\1_{(S_n\setminus I_{n,k}^0)\times (S_n\setminus
I_{n,k}^0)}\|_1<1/(3k)$ for all $n\in\N$. By Lemmas~\ref{prop6} and
\ref{prop9}, for each $n,k\in\N$ the graphon $(W_n|_{(S_n\setminus
I_{n,k}^0)\times (S_n\setminus I_{n,k}^0)},S_n\setminus I_{n,k}^0)$ is a
pullback of a graphon $\wt W_{n,k}=(\wt W_{n,k},[0,M_k])$ by a
measure-preserving transformation $\varphi_{n,k}$. By applying Szemer\'edi
regularity for equitable partitions to $\wt W_{n,k}$ (see, for example, the
paper of \citel{lp1}, Lemma~3.3) we can find appropriate $m_k\in\N$ and
partitions $\wt{\mcl P}_{n,k}$ of $[0,M_k]$ such that $\|(\wt W_{n,k}-(\wt
W_{n,k})_{\wt{\mcl P}_{n,k}})\|_\square<1/(3k)$. Then the pullback of $(\wt
W_{n,k})_{\wt{\mcl P}_{n,k}}$ along $\varphi_{n,k}$ equals $( W_{n})_{\mcl
P_{n,k}}$ for an appropriate partition of $S_n$ satisfying (i), and
\[
\begin{split}
\|W_n-(W_n)_{\mcl P_{n,k}}\|_\square
&\le \|W_n-W_n\1_{(S_n\setminus I_{n,k}^0)\times (S_n\setminus I_{n,k}^0)}\|_\square\\
&\quad\phantom{} + \|(W_n-(W_n)_{\mcl P_{n,k}})\1_{(S_n\setminus I_{n,k}^0)\times (S_n\setminus I_{n,k}^0)}\|_\square\\
&\quad\phantom{} + \|(W_n)_{\mcl P_{n,k}}\1_{(S_n\setminus I_{n,k}^0)\times (S_n\setminus I_{n,k}^0)}-(W_n)_{\mcl P_{n,k}}\|_\square\\
&=
\|W_n-W_n\1_{(S_n\setminus I_{n,k}^0)\times (S_n\setminus I_{n,k}^0)}\|_\square
+ \|\wt W_{n,k}^{\varphi_{n,k}}-(\wt W_{n,k})_{\wt{\mcl P}_{n,k}}^{\varphi_{n,k}}
\|_\square\\
&\quad \phantom{}+ \left\|\left(W_n\1_{(S_n\setminus I_{n,k}^0)\times (S_n\setminus I_{n,k}^0)}
-W_n\right)_{\mcl P_{n,k}}\right\|_\square
\\
&< 1/k.
\end{split}
\]
Define $\mcl W_{n,k}$ as described in (iii), and note that all requirements
(i)--(iv) are satisfied since $\delta_\square(((W_n)_{\mcl P_{n,k}},\scr
S_n),\mcl W_{n,k})=0$.

By compactness, for each $k\in\N$ there exists a step function $U_k\colon
\R_+^2\to[0,1]$ (with support in $[0,M_k]^2$) such that $(W_{n,k})_{n\in\N}$
converges pointwise and in $L^1$ along a subsequence towards $U_k$. We may
assume the subsequence along which $(W_{n,k+1})_{n\in\N}$ converges is
contained in the subsequence along which $(W_{n,k})_{n\in\N}$ converges. Note
that for each $i_1,i_2\in\{1,\dots,m_k\}$ the function $U_k$ is constant on
$([i_1-1,i_1]\times[i_2-1,i_2])M_k/m_k$. Furthermore, observe that if
$k,k'\in\N$ and $k'\geq k$, the value of $U_k$ at
$([i_1-1,i_1]\times[i_2-1,i_2])M_k/m_k$ is equal to the average of $U_{k'}$
over this set. Define the graphon $\cU_k$ by $\mcl U_k:=(U_k,\R_+)$.

Choose $M>1$, and then choose $k'$ such that $M_k\geq M_{k'}\geq M$ for all
$k\geq k'$. Let $(X,Y)$ be a uniformly random point in $[0,M_{k'}]^2$. By the
observations in the preceding paragraph $(U_k(X,Y))_{k\geq k'}$ is a
martingale. Hence the martingale convergence theorem implies that the limit
$\lim_{k\rta\infty} U_k(X,Y)$ exists a.s. Since $M$ was arbitrary it follows
that there is a set $E\subset\R_+^2$ of measure zero outside of which
$(U_k)_{k\geq k'}$ converges pointwise. Define the graphon $\mcl U:=(U,\R_+)$
as follows. For any $(x_1,x_2)\in \R_+^2\backslash E$ define
$U(x_1,x_2):=\lim_{k\rta\infty} U_k(x_1,x_2)$, and for any $(x_1,x_2)\in E$
define $U(x_1,x_2):=0$. Since the functions $U_k$ are uniformly bounded,
martingale convergence also implies that $U_k|_{[0,M_{\ell}]^2}$ converges to
$U|_{[0,M_{\ell}]^2}$ in $L^1$ for each ${\ell}\in\N$.

Next we will show that $\lim_{k\rta\infty}\|U_k-U\|_1=0$. Since
$\lim_{k\rta\infty}\|(U_k-U)\1_{[0,M_\ell]^2}\|_1=0$ for each $\ell\in\N$ it
is sufficient to prove that $
\|U_k\1_{\R_+^2\setminus[0,M_\ell]^2}\|_1<1/(3\ell)$ for all $k,\ell\in\N$
for which $k>\ell$. This follows by Fatou's lemma and the
inequality
\[
\begin{split}
\int_{\R_+^2\setminus[0,M_\ell]^2} |W_{n,k}|
&= \int_{[0,M_k]^2\setminus[0,M_\ell]^2} |W_{n,k}|
\leq \int_{(S_n\setminus I_{n,k}^0)^2 \setminus (S_n\setminus I_{n,\ell}^0)^2} |W_{n}|_{\mcl P_{n,k}}\\
&= \int_{(S_n\setminus I_{n,k}^0)^2 \setminus (S_n\setminus I_{n,\ell}^0)^2} |W_{n}|
<1/(3\ell).
\end{split}
\]

By the result of the preceding paragraph
\[
\limsup_{k\rta\infty}\delta_\square(\mcl U_k,\mcl U)\leq
\limsup_{k\rta\infty}\|U_k-U\|_1=0,
\]
and we conclude the proof by applying the triangle inequality
to obtain
\[
\liminf_{n\rta\infty}\delta_\square(\mcl U,\mcl W_n)\leq
\limsup_{k\rta\infty} \liminf_{n\rta\infty} \Big(\delta_\square(\mcl U,\mcl U_k)+\delta_\square(\mcl U_k,\mcl W_{n,k})+\delta_\square(\mcl W_{n,k},\mcl W_n)\Big)=0.
\]
\end{proof}

\section{Basic Properties of Metric Convergent Sequences of Graphs}
\label{sec8}

In this appendix we will establish Propositions~\ref{prop13} and
\ref{prop:unbdd_avg_deg}. First we prove a lemma saying that for a set of
graphs with uniformly regular tails we may assume the sets $U$ in Definition
\ref{defn2} correspond to sets of vertices.

\begin{lemma}
Let $\mcl G$ be a set of graphs with uniformly regular tails. For every
$\ep>0$ there is an $M>0$ such that for each $G\in\mcl G$ we can find a set
$U\subset\R_+$ corresponding to a set of vertices for $G$ such that
$\|W^{G,s}-W^{G,s}\1_{U\times U}\|_1<\ep$ and $\lambda(U)<M$. \label{prop26}
\end{lemma}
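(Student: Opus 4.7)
The plan is to upgrade an arbitrary measurable witness set to a vertex-aligned one by rounding: include a vertex's interval whenever a majority of it lies in the original witness set. This should cost only constant factors in both $\ep$ and $M$.

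Given $\ep>0$, first apply the hypothesis of uniformly regular tails to the family $\{\W^{G,s}:G\in\mcl G\}$ with tolerance $\ep/2$ to obtain $M'>0$ such that for every $G\in\mcl G$ there is a measurable $U'\subseteq\R_+$ with $\lambda(U')\leq M'$ and $\|W^{G,s}-W^{G,s}\1_{U'\times U'}\|_1<\ep/2$. For each $G$, let $I_v$ denote the interval in $\R_+$ that corresponds to vertex $v\in V(G)$ in the stretched canonical graphon (of common length $(2|E(G)|)^{-1/2}$ in the simple case, or of length $\alpha_v\|W^G\|_1^{-1/2}$ in the weighted case). Define
\[
U:=\bigcup\bigl\{I_v\,:\,\lambda(I_v\cap U')\geq \lambda(I_v)/2\bigr\}.
\]
By construction $U$ is a union of vertex intervals, and since every $I_v$ included in $U$ contributes at least $\lambda(I_v)/2$ to $\lambda(U')$, we get $\lambda(U)\leq 2\lambda(U')\leq 2M'$, so setting $M:=2M'$ will handle the measure bound.

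For the norm bound, observe that $W^{G,s}$ is constant on each product $I_u\times I_v$ and that, by definition, $I_u\subseteq U$ or $I_u\cap U=\emptyset$ for every $u$. Hence the contribution of $I_u\times I_v$ to $\|W^{G,s}-W^{G,s}\1_{U\times U}\|_1$ equals $|W^G_{uv}|\,\lambda(I_u)\lambda(I_v)$ when $u\notin U$ or $v\notin U$, and vanishes otherwise. The corresponding contribution to $\|W^{G,s}-W^{G,s}\1_{U'\times U'}\|_1$ equals $|W^G_{uv}|\bigl[\lambda(I_u)\lambda(I_v)-\lambda(I_u\cap U')\lambda(I_v\cap U')\bigr]$. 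Whenever $u\notin U$ we have $\lambda(I_u\cap U')\leq \lambda(I_u)/2$, which gives $\lambda(I_u\cap U')\lambda(I_v\cap U')\leq \tfrac{1}{2}\lambda(I_u)\lambda(I_v)$ and hence the contribution to the $U$-norm is at most twice the contribution to the $U'$-norm; the same holds by symmetry when $v\notin U$. Summing over all pairs yields $\|W^{G,s}-W^{G,s}\1_{U\times U}\|_1\leq 2\|W^{G,s}-W^{G,s}\1_{U'\times U'}\|_1<\ep$, as desired.

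The only real obstacle is the book-keeping of the factor-of-two accounting in the last paragraph; everything else is essentially bookkeeping. The threshold $1/2$ used in defining $U$ is precisely what makes both the measure bound and the norm bound come out with the same constant, so the choice of threshold is the one place where a small amount of care is needed.
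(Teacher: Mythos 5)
Your proposal is correct and follows essentially the same argument as the paper: pick a measurable witness set with tolerance $\ep/2$ and measure bound $M/2$, round to a union of vertex intervals by the majority rule with threshold $1/2$, and verify both bounds degrade by at most a factor of two. The factor-of-two accounting in the norm bound is exactly the one the paper uses, including the observation that $\lambda(I_u)\lambda(I_v)-\lambda(I_u\cap U')\lambda(I_v\cap U')\geq\tfrac12\lambda(I_u)\lambda(I_v)$ whenever $u\notin U$ or $v\notin U$.
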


\begin{proof}
Since $\mcl G$ has uniformly regular tails we can find an $M>0$ such that
for each $G\in\mcl G$ there is a set $\wt U\subset\R_+$ (not necessarily
corresponding to a set of vertices for $G$) such that
$\|W^{G,s}-W^{G,s}\1_{\wt U\times \wt U}\|_1<\ep/2$ and $\lambda(\wt
U)<M/2$. Recall that each vertex $i\in V(G)$ corresponds to an interval
$I_i\subset\R_+$ for the stretched canonical graphon $W^{G,s}$, such that
$\lambda(I_i)$ is proportional to the weight of the vertex. Given a set $\wt
U\subset\R_+$ as above, define
\[
U:=\bigcup_{i\in\mcl I} I_i,\quad\text{where}\quad
\mcl I:= \{i\in V(G)\,:\, 2\lambda(I_i\cap \wt U)>\lambda(I_i) \}.
\]
The lemma now follows by observing that $\lambda(U)\leq 2\lambda(\wt U)<M$
and
\[
\begin{split}
\|W^{G,s}-W^{G,s}\1_{ U\times U}\|_1
&=\sum_{i,j\in V(G)\,:\,(i,j)\not\in\mcl I\times\mcl I } \beta_{i,j} \lambda(I_i)\lambda(I_j)\\
&\leq 2\sum_{i,j\in V(G)\,:\,(i,j)\not\in\mcl I\times\mcl I } \beta_{ij} \big(\lambda(I_i)\lambda(I_j)-\lambda(I_i\cap \wt U)\lambda(I_j\cap \wt U)\big)\\
&\leq 2\|W^{G,s}-W^{G,s}\1_{ \wt U\times \wt U}\|_1\\
&<\ep.
\end{split}
\]
\end{proof}

\begin{proof}[Proof of Proposition~\ref{prop13}]
Define $M_n:=\inf\{M>0\,:\,\supp(W^{G_n,s})\subseteq[0,M]^2 \}$. If
$(G_n)_{n\in\N}$ is sparse, then $\liminf_{n\rta\infty} M_n=\infty$. By
Lemma~\ref{prop26}, if $(G_n)_{n\in\N}$ has uniformly regular tails there
exists an $M'>0$ such that if we order the vertices of $G_n$ appropriately
when defining the canonical graphon $W^{G_n}$ of $G_n$, then
$\|W^{G_n,s}\1_{[0,M']^2}\|_1>1/2$ for all  $n\in\N$.

The graphons $\W^{G_n,r}$ and $\W^{G_n,s}$ are related by $W^{G_n,r}= \wt
M_n^2 W^{G_n,s}(\wt M_n\cdot\phantom{},\wt M_n\cdot\phantom{})$ for some $\wt M_n\geq M_n$ (with
$\wt M_n=M_n$ if $G_n$ has no isolated vertices; if $G_n$ has isolated
vertices corresponding to the end of the interval $[0,1]$ for the canonical
graphon $W^{G_n}$ we will have $\wt M_n>M_n$). If $\lim_{n\rta\infty} \wt
M_n=\infty$ and $\|W^{G_n,s}\1_{[0,M']^2}\|_1>1/2$ for all $n\in\N$, then
\begin{equation}
\|W^{G_n,r}\1_{[0,a_n]^2}\|_1>1/2
\qquad\text{and}\qquad
\lim_{n\rta\infty} a_n=0,\qquad
\text{where } a_n:= \min\big(M' \wt M_n^{-1},1\big).
\label{eq36}
\end{equation}
The proof of (i) is complete if we can prove that \eqref{eq36} implies that
$(G_n)_{n\in\N}$ is not uniformly upper regular. Assume the opposite, and let
$K\colon (0,\infty)\to (0,\infty)$ and $(\eta_n )_{n\in\N}$ be as in the
definition of uniform upper regularity. Let $\mcl P_n$ be a partition of
$\R_+$ such that one of the parts is $[0,a'_n]$, where $a'_n\geq a_n$ is
chosen as small as possible such that $[0,a'_n]$ corresponds to an integer
number of vertices of $G_n$ for the canonical graphon. Then
$\lim_{n\rta\infty} a'_n=0$ since $\lim_{n\rta\infty} a_n=0$ and
$\lim_{n\rta\infty}V(G_n)=\infty$. By the first part of \eqref{eq36} it
follows that $(W^{G_n,r})_{\mcl P_n}>K(1/2)$ on $[0,a'_n]^2$ for all
sufficiently large $n$; hence for all sufficiently large $n$,
\[
\|(W^{G_n,r})_{\mcl P_n}\1_{|(W^{G_n,r})_{\mcl P_n}|\geq K(1/2)}\|_1
\geq \|(W^{G_n,r})_{\mcl P_n}\1_{[0,a'_n]^2}\|_1
= \|W^{G_n,r}\1_{[0,a'_n]^2}\|_1
>1/2.
\]
We have obtained a contradiction to the assumption of uniform upper
regularity, and thus the proof of (i) is complete.

Defining $\rho_n:=\rho(G_n)$ (recall the definition of $\rho$ in the
beginning of Section~\ref{sec2}), we have
$W^{G_n,s}=W^{G_n}(\rho_n^{1/2}\cdot\phantom{},\rho_n^{1/2}\cdot\phantom{})$ and
$W^{G_n,r}=\rho_n^{-1}W^{G_n}$. If $(G_n)_{n\in \N}$ is dense and has
convergent edge density the following limit exists and is positive:
$\rho:=\lim_{n\rta\infty}\rho_n>0$. It follows by Lemma~\ref{prop16} (resp.\
Lemma~\ref{prop15}) that $(G_n)_{n\in\N}$ is a Cauchy sequence for
$\delta_\square^s$ (resp.\ $\delta_\square^r$) iff $(\W^{G_n})_{n\in\N}$ is a
Cauchy sequence for $\delta_\square$, since for any $n,m\in\N$,
\[
\begin{split}
\Big|\delta_\square^s(G_n,G_m)&-\delta_\square\big((W^{G_n}(\rho^{1/2}\cdot\phantom{},\rho^{1/2}\cdot\phantom{}),\R_+),
(W^{G_m}(\rho^{1/2}\cdot\phantom{},\rho^{1/2}\cdot\phantom{}),\R_+)\big)\Big|
\\
&\le \delta_\square\big(W^{G_n,s},(W^{G_n}(\rho^{1/2}\cdot\phantom{},\rho^{1/2}\cdot\phantom{}),\R_+)\big)
\\
&\quad\phantom{}+ \delta_\square\big((W^{G_m}(\rho^{1/2}\cdot\phantom{},\rho^{1/2}\cdot\phantom{}),\R_+),W^{G_m,s}\big)\\
&
\rta\, 0
\end{split}
\]
as $n,m\rta\infty$, and a similar estimate holds with $\delta_\square^r$
instead of $\delta_\square^s$. This completes the proof of the first
assertion of (ii).

To prove the second assertion of (ii) consider the following sequence of
dense graphs $(G_n)_{n\in\N}$, which is a Cauchy sequence for
$\delta_\square^s$ but not for $\delta_\square^r$. For odd $n$ let $G_n$ be a
complete simple graph on $n$ vertices, and for even $n$ let $G_n$ be the
union of a complete graph on $n/2$ vertices and $n/2$ isolated vertices. This
sequence converges to $\W_1:=(\1_{[0,1]^2},\R_+)$ for $\delta_\square^s$, but
does not converge for $\delta_\square^r$.

Conversely, the following sequence of dense graphs $(G_n)_{n\in\N}$ is a
Cauchy sequence for $\delta_\square^r$ but not for $\delta_\square^s$. For
odd $n$ let $G_n$ be a complete graph on $n$ vertices, and for even $n$ let
$G_n$ be an Erd\H{o}s-R\'{e}nyi graph with edge probability $1/2$. This
sequence converges to $\W_1$ for $\delta_\square^r$, but does not converge
for $\delta_\square^s$.
\end{proof}

\begin{proof}[Proof of Proposition~\ref{prop:unbdd_avg_deg}]
We will assume throughout the proof that the graphs have no isolated
vertices, since the case of $o|E(G_n)|$ vertices clearly follows from this
case. We assume the average degree of $(G_n)_{n\in\N}$ is bounded above by
$d\in\N$, and want to obtain a contradiction. When defining the canonical
stretched graphon $W^{G_n,s}$ of $G_n$, each vertex of $G_n$ corresponds to
an interval of length $1/\sqrt{2|E(G_n)|}$. Since $|E(G_n)|/|V(G_n)|\leq d/2$
by assumption, the vertices of $G_n$ correspond to an interval of length
$|V(G_n)|/\sqrt{2|E(G_n)|} \geq\sqrt{2|E(G_n)|}/d$, which is too stretched
out to be compatible with uniformly regular tails.  Explicitly, given that
$G_n$ has no isolated vertices it follows that for any $M>0$ and any Borel
set $I\subset \R_+$ satisfying $\lambda(I)<M$,
\[
\int_{(\R_+\backslash I)\times \R_+} W^{G_n,s} \geq \frac{\sqrt{2|E(G_n)|}/d-M}{\sqrt{2|E(G_n)|}}.
\]
By the assumption that $\lim_{n\rta\infty}|E(G_n)|=\infty$, the right side of
this equation is greater than $1/(2d)$ for all sufficiently large $n\in\N$.
Since $M>0$ was arbitrary, this is not compatible with $G_n$ having uniformly
regular tails, which together with Theorem~\ref{prop1} gives a contradiction.
\end{proof}

\section{Exchangeability of Graphon Processes}
\label{sec6}

The main goal of this appendix is to prove Theorem~\ref{prop5}.

\begin{lemma}
Let $(\wt G_n)_{n\in\N}$ be a sequence of simple graphs with uniformly regular
tails, such that $|E(\wt G_n)|<\infty$ for all $n\in\N$ and
$\lim_{n\rta\infty}|E(\wt G_n)|=\infty$. Fix $d\in\N$, and for each $n\in\N$
let $\wh G_n$ be an induced subgraph of $\wt G_n$ where all or some of the
vertices of degree at most $d$ are removed. Then $\lim_{n\rta\infty} |E(\wh
G_n)|/|E(\wt G_n)|=1$. \label{prop12}
\end{lemma}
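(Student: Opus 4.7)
The plan is to combine the definition of uniformly regular tails (interpreted via Lemma~\ref{prop26}) with the trivial degree bound that each removed vertex sheds at most $d$ edges. Throughout, let $R_n := V(\wt G_n) \setminus V(\wh G_n)$ be the set of removed vertices; by assumption every $v\in R_n$ satisfies $\deg_{\wt G_n}(v)\leq d$. The number of edges lost is
\[
|E(\wt G_n)|-|E(\wh G_n)| \leq \sum_{v\in R_n}\deg_{\wt G_n}(v),
\]
so it would be enough to show $\sum_{v\in R_n}\deg_{\wt G_n}(v)=o(|E(\wt G_n)|)$. However, $|R_n|$ could be as large as $|V(\wt G_n)|$, which can vastly exceed $|E(\wt G_n)|$, so the naive bound $d|R_n|$ is insufficient. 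The tail regularity assumption is precisely what rescues this.

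Fix $\eps>0$. Since $(\wt G_n)_{n\in\N}$ has uniformly regular tails, Lemma~\ref{prop26} yields an $M=M(\eps)>0$ and, for each $n$, a subset $U_n\subseteq V(\wt G_n)$ such that (after identifying $U_n$ with the corresponding union of vertex intervals in the stretched canonical graphon)
\[
\lambda(U_n)\leq M
\qquad\text{and}\qquad
\bigl\|W^{\wt G_n,s}-W^{\wt G_n,s}\1_{U_n\times U_n}\bigr\|_1<\eps.
\]
Since each vertex of $\wt G_n$ corresponds to an interval of length $1/\sqrt{2|E(\wt G_n)|}$ in $W^{\wt G_n,s}$, the first condition translates to $|U_n|\leq M\sqrt{2|E(\wt G_n)|}$, and since each edge contributes area $1/(2|E(\wt G_n)|)$ (counted twice for symmetry), the second condition says that the number of edges of $\wt G_n$ with at least one endpoint outside $U_n$ is at most $\eps|E(\wt G_n)|$.

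Now split $R_n=(R_n\cap U_n)\cup (R_n\setminus U_n)$ and bound the dropped edges accordingly. Edges incident to $R_n\cap U_n$ are counted by
\[
\sum_{v\in R_n\cap U_n}\deg_{\wt G_n}(v)\leq d\,|U_n|\leq dM\sqrt{2|E(\wt G_n)|},
\]
which is $o(|E(\wt G_n)|)$ because $|E(\wt G_n)|\to\infty$. On the other hand, every edge incident to a vertex of $R_n\setminus U_n$ has at least one endpoint outside $U_n$, hence is counted in the tail-regularity budget above, contributing at most $\eps|E(\wt G_n)|$. Combining,
\[
\frac{|E(\wt G_n)|-|E(\wh G_n)|}{|E(\wt G_n)|}\leq \frac{dM\sqrt{2|E(\wt G_n)|}}{|E(\wt G_n)|}+\eps
\xrightarrow[n\to\infty]{} \eps.
\]
Letting $\eps\downarrow 0$ gives $|E(\wh G_n)|/|E(\wt G_n)|\to 1$, as desired.

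The only delicate point is the translation from the $L^1$ statement in Definition~\ref{defn2} to a bound on edge counts for a vertex-aligned set $U_n$; this is exactly the content of Lemma~\ref{prop26}, so there is no serious obstacle beyond carefully tracking the rescaling factor $\sqrt{2|E(\wt G_n)|}$ in the stretched canonical graphon.
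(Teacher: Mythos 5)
Your proof is correct, and it uses the same underlying observation as the paper: a bounded-measure set $U_n$ covers only $O(\sqrt{|E(\wt G_n)|})$ vertex intervals, each removed vertex has degree at most $d$, and removed vertices falling outside $U_n$ contribute edges counted by the tail budget. The paper argues by contradiction (a positive fraction of lost edges forces $\Omega(\sqrt{|E(\wt G_n)|})$ low-degree vertices, whose intervals cannot be covered by any bounded $U_n$), whereas you argue directly by splitting $R_n$ across a vertex-aligned $U_n$ obtained from Lemma~\ref{prop26}; the bookkeeping is slightly cleaner in your version, but the content is the same.
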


\begin{proof}
We wish to prove that $\ep:=\limsup_{n\rta\infty}\ep_n= 0$, where
$\ep_n:=1-|E(\wh G_n)|/|E(\wt G_n)|$. By taking a subsequence we may assume
$\ep=\lim_{n\rta\infty}\ep_n$. We will carry out the proof by contradiction,
and assume $\ep>0$.  By definition of $\wh G_n$ there are at least $\ep_n
|E(\wt G_n)|$ edges of $\wt G_n$ which have at least one endpoint of degree
at most $d$. Hence there are at least $\ep_n |E(\wt G_n)|/d$ vertices with
degree between $1$ and $d$. In the canonical stretched graphon of $\wt G_n$,
each vertex corresponds to an interval of length $(2|E(\wt G_n)|)^{-1/2}$.
Hence the total length of the intervals corresponding to vertices of degree
between $1$ and $d$ is at least $2^{-1/2} \ep_n |E(\wt G_n)|^{1/2} d^{-1}$,
which tends to infinity as $n\rta\infty$. It follows that for each $M>0$ and
any sets $U_n \subset \R_+$ of measure at most $M$,
\[
\|W^{\wt G_n,s}-W^{\wt G_n,s}\1_{U_n\times U_n}\|_1
\geq \big(2^{-1/2}\ep_n |E(\wt G_n)|^{1/2} d^{-1}-M\big) \cdot (2|E(\wt G_n)|)^{-1/2},
\]
which is at least $\ep (2^{3/2} d)^{-1}$ when $n$ is sufficiently large.
Thus $(\wt G_n)_{n\in \N}$ does not have uniformly regular tails, and we
have obtained the desired contradiction.
\end{proof}

We will now prove Theorem~\ref{prop5}. Note that we use a result of
\citet*[Theorem~9.25]{kallenberg-exch} for part of the argument, a result
which is also used by \citet*{veitchroy}, but that we use it to prove
Theorem~\ref{prop5}, which characterizes exchangeable random graphs that have
uniformly regular tails, while \citet*{veitchroy} use it to characterize
exchangeable random graphs that have finitely many edges for each finite
time, but which do not necessarily have uniformly regular tails (a notion not
considered by \citel{veitchroy}).

\bigskip 

\begin{proof}[Proof of Theorem~\ref{prop5}]
First assume $(\wt G_t)_{t\geq 0}$ is a graphon process generated by
$\W_\alpha$ with isolated vertices, where $\alpha$ is a random variable. We
want to prove that $(\wt G_t)_{t\geq 0}$ has uniformly regular tails, and
that the measure $\xi$ is exchangeable. Regularity of tails is immediate from
Theorems~\ref{prop4}(i) and~\ref{prop1}. Exchangeability is immediate by
observing that the Poisson random measure $\mcl V$ on $\R_+\times S$ defined
in the beginning of Section~\ref{sec:W-random-results} is identical in law to
$\{(\phi(t),x)\,:\, (t,x)\in\mcl V \}$ for any measure-preserving
transformation $\phi\colon\R_+\to \R_+$ (in particular, for the case when
$\phi$ corresponds to a permutation of intervals).

To prove the second part of the theorem assume that $\xi$ is jointly
exchangeable and that $(\wt G_t)_{t\geq 0}$ has uniformly regular tails. By
joint exchangeability of $\xi$ it follows from the representation theorem for
jointly exchangeable random measures on $\R_+^2$ \cite[Theorem~9.24]{kallenberg-exch}
that a.s.
\begin{equation}
\begin{split}
\xi =&\, \sum_{i,j} f(\alpha,x_i,x_j,\zeta_{\{i,j\}}) \delta_{t_i,t_j}
+ \beta\lambda_D+\gamma\lambda^2\\
&+\sum_{j,k} \left(
g(\alpha,x_j,\chi_{j,k})\delta_{t_j,\sigma_{j,k}} + g'(\alpha,x_j,\chi_{j,k})\delta_{\sigma_{j,k},t_j}\right)\\
&+\sum_j \left( h(\alpha,x_j)(\delta_{t_j}\otimes\lambda) + h'(\alpha,x_j)(\lambda\otimes\delta_{t_j}) \right)\\
&+\sum_k \left( l(\alpha,\eta_k)\delta_{\rho_k,\rho'_k} +l'(\alpha,\eta_k)\delta_{\rho'_k,\rho_k}
\right),
\end{split}
\label{eq6}
\end{equation}
for some measurable functions $f\geq 0$ on $\R_+^4$, $g,g'\geq 0$ on
$\R_+^3$, and $h,h',l,l'\geq 0$ on $\R_+^2$, a set of independent uniform
random variables $(\zeta_{\{i,j\}})_{i,j\in \N}$ with values in $[0,1]$,
independent unit rate Poisson processes $(t_j,x_j)_{j\in\N}$ and
$(\sigma_{i,j},\chi_{i,j})_{j\in\N}$ on $\R_+^2$ for $i\in\N$ and
$(\rho_j,\rho'_j,\eta_j)_{j\in\N}$ on $\R_+^3$, an independent set of random
variables $\alpha,\beta,\gamma\geq 0$, and $\lambda$ (resp.\ $\lambda_D$)
denoting Lebesgue measure on $\R_+$ (resp.\ the diagonal $x_1=x_2\geq 0$).

By the definition \eqref{eq5} in Section~\ref{sec:W-random-results} of $\xi$ as a sum of point masses, all the
terms in \eqref{eq6} involving Lebesgue measure must be zero; i.e., except on
an event of probability zero, $\beta=\gamma=0$ and
$h(\alpha,x_j)=h'(\alpha,x_j)= 0$ for all $j\in\N$. Recall that by
\eqref{eq19} each vertex can be uniquely identified with the time $t\geq 0$
when it appeared in the graph, and that each point mass $\delta_{t,t'}$ with
$t,t'\geq 0$ represents an edge between the two vertices associated with $t$
and $t'$. Almost surely no two of the random variables
$\rho_k,\rho'_k,t_i,t_j,\sigma_{j,k}$ for $i,j,k\in\N$ have the same value,
and hence the functions $f,g,g',l,l'$ take values in $\{0,1\}$ almost
everywhere. Furthermore, since the graphs $\wt G_t$ are undirected, we have
$g=g'$ and $l=l'$, and $f$ is symmetric in its second and third input
argument.

First we will argue that the subgraphs $\wh G_t$ of $\wt G_t$ corresponding
to the terms
\[
f(\alpha,x_i,x_j,\zeta_{\{i,j\}})\delta_{t_i,t_j}
\]
have the law of a graphon process with isolated vertices generated by some
(possibly random) graphon $\mcl W$. Condition on the realization of $\alpha$,
and define the function $W_\alpha\colon \R_+^2\to[0,1]$ by
\[
W_\alpha(x,x') := \Pr( f(\alpha,x,x',\zeta_{\{i,j\}})=1\,|\,\alpha)\qquad \text{for all } x,x'\in\R_+.
\]
It follows that, conditioned on $\alpha$ such that $W_\alpha\in L^1$, $(\wh
G_t)_{t\geq 0}$ has the law of a graphon process generated by $\mcl
W_\alpha=(W_\alpha,\R_+)$. To conclude we need to prove that $W_\alpha\in
L^1$ almost surely, which will be done in the next two paragraphs.

First we will argue that $(\wh G_t)_{t\geq 0}$ has uniformly regular tails.
Since no two of the random variables $\rho_k,\rho'_k,t_i,t_j,\sigma_{j,k}$
have the same value for $i,j,k\in\N$, each point mass
$\delta_{\rho_k,\rho'_k}$ or $\delta_{\rho'_k,\rho_k}$ of $\xi$ corresponds
to an isolated edge, i.e., an edge between two vertices each of degree one,
and each point mass $\delta_{\sigma_{j,k},t_j}$ or
$\delta_{t_j,\sigma_{j,k}}$ of $\xi$ corresponds to an edge between the
vertex $t_j$ in $\wh G_t$ and a vertex of degree one; i.e.,
$\sum_{k\in\N}(\delta_{t_j,\sigma_{j,k}}+\delta_{\sigma_{j,k},t_j})$
corresponds to a star centered at the vertex associated with $t_j$. Note that
$\wh G_t$ and $\wt G_t$ satisfy the conditions of Lemma~\ref{prop12} with
$d=1$. Hence $\lim_{t\rta\infty} |E(\wh G_t)|/|E(\wt G_t)|=1$, and since $\wt
G_t$ has uniformly regular tails this implies that $\wh G_t$ must also have
uniformly regular tails.

We assume that $W_\alpha$ is not almost surely integrable, and will derive a
contradiction. We condition on $\alpha$ such that $W_\alpha\not\in L^1$, and
to simplify notation we will write $W$ instead of $W_\alpha$. Let $\wh
G^{+}_t$ (resp.\ $\wh G^{-}_t$) be the induced subgraph of $\wh G_t$
consisting of the vertices for which the feature $x$ satisfies $x\in
I:=\{x'\in\R_+\,:\,\int_{\R_+} W(x,x')\,dx'\geq 1\}$ (resp.\ $x\not\in I$).
Let $\W^{+}=(W^{+},\R_+)$ and $\W^{-}=(W^{-},\R_+)$ denote the corresponding
graphons (we will see shortly that they are integrable), i.e.,
$W^{+}=W\1_{I\times I}$ and $W^{-}=W\1_{I^c\times I^c}$. Since $|E(\wh
G_t)|<\infty$ a.s.\ for each $t\geq 0$ the measure $\xi$ is locally finite
a.s. We deduce from this that $\lambda(I)<\infty$ and $\|W^-\|_1 <\infty$
\cite[Theorem~9.25, (iii) and (iv)]{kallenberg-exch}. By applying
Proposition~\ref{prop2} this implies further that
\[
\lim_{t\rta\infty}t^{-2} |E(\wh G^{+}_t)|=\frac 12 \|W^+\|_1 \le \frac 12 \lambda(I)^2<\infty,
\]
\[
\lim_{t\rta\infty}t^{-2} |E(\wh G^{-}_t)|=\frac 12 \|W^-\|_1<\infty,
\]
and
\[
\lim_{t\rta\infty}t^{-2} |E(\wh G_t)|=\infty.
\]
It follows that if $\wt E(\wh G_t):=E(\wh G_t)\backslash (E(\wh G_t^{+})\cup
E(\wh G_t^{-}))$ is the set of edges having one endpoint in $V(\wh G_t^{+})$
and one endpoint in $V(\wh G_t^{-})$, we have
\begin{equation}
\lim_{t\rta\infty}|\wt E(\wh G_t)|/|E(\wh G_t)|=1.
\label{eq33}
\end{equation}
For the stretched canonical graphon $\W^{\wh G_t,s}$ the edges $\wt E(\wh
G_t)$ correspond to $A:=(J_t\times J_t^c)\cup(J_t^c\times J_t)\subset
\R_+^2$, where $J_t\subset\R_+$ corresponds to $V(\wh G_t^+)$. Since $|V(\wh
G_t^+)|=\Theta(t)$, we have $\lambda(J_t)=|V(\wh G_t^+)| (2|E(\wh
G_t)|)^{-1/2}=o_t(1)$. By \eqref{eq33} and $\|W^{\wh G_t,s}\|_1=1$, we have
$\lim_{t\rta\infty}\|W^{\wh G_t,s}\1_{A^c}\|_1=0$. Since
$\lambda(J_t)=o_t(1)$ and $W^{\wh G_t,s}$ takes values in $[0,1]$, we have
$\lim_{t\rta\infty}\|W^{\wh G_t,s}\1_{A\cap U_t^2}\|_1=0$ for all sets $U_t
\subset \R_+$ of bounded measure. We have obtained a contradiction to the
hypothesis of uniform regularity of tails, since
\[
\lim_{t\rta\infty}\|W^{\wh G_t,s}\1_{U_t^2}\|_1
\leq \lim_{t\rta\infty}\|W^{\wh G_t,s}\1_{U_t^2\cap A}\|_1
+ \lim_{t\rta\infty}\|W^{\wh G_t,s}\1_{A^c}\|_1 = 0.
\]

To complete the proof that $(\wt G_t)_{t\geq 0}$ has the law of a graphon
process with isolated vertices, we need to argue that a.s.
\begin{equation}
l(\alpha,\eta_k) =
g(\alpha,x_j,\chi_{j,k})
=0
\qquad \text{for all } k,j\in\N.
\label{eq7}
\end{equation}
Let $N_t\in\N_0$ denote the number of edges associated with terms of $\xi$ of
the form $\delta_{\rho_k,\rho'_k}+\delta_{\rho'_k,\rho_k}$, and let $\wt N_t$
denote the number of edges associated with terms of $\xi$ of the form
$\delta_{\sigma_{j,k},t_j}+\delta_{t_j,\sigma_{j,k}}$. Since $\wh G_t$ and
$\wt G_t$ satisfy the conditions of Lemma~\ref{prop12} with $d=1$, and since
Lemma~\ref{prop2} implies that a.s.-$\lim_{t\rta\infty} |E(\wt
G_t)|/t^2=\frac 12 \|W\|_1$, it follows that a.s.\
\begin{equation}
\lim_{t\rta\infty}N_t/t^2=0\quad\text{and}\quad
\lim_{t\rta\infty}\wt N_t/t^2=0.
\label{eq8}
\end{equation}
We will prove \eqref{eq7} by contradiction, and will consider each term
separately. First assume $\lambda(\supp (l(\alpha,\phantom{}\cdot\phantom{}))>0$ with positive
probability. Conditioned on a realization of $\alpha$ such that
$p:=\lambda(\supp (l(\alpha,\phantom{}\cdot\phantom{}))>0$, the random variable $N_t$ is a
Poisson random variable with expectation $t^2p$. Hence
$\lim_{t\rta\infty}N_t/t^2=p$, which is a contradiction to \eqref{eq8}. It
follows that $\lambda(\supp (l(\alpha,\phantom{}\cdot\phantom{}))=0$ a.s., and thus
$l(\alpha,\eta_k)=0$ for all $k\in\N$ a.s.

Now assume $\lambda(\supp (g(\alpha,\phantom{}\cdot\phantom{},\phantom{}\cdot\phantom{})))>0$ with positive
probability. Then there exists $\ep>0$ such that with positive probability
there is a set $I\subset\R_+$ (depending on $\alpha$) satisfying
$\lambda(I)=\ep$, and such that for all $x\in I$ it holds that
$\lambda(I_x)>\ep$, where $I_x :=\{x'\in\R_+\,:\, g(\alpha,x,x')=1 \}$.
Consider the Poisson point process $(t_j,x_j)_{j\in\N}$ corresponding to the
graphon process $\wt G_t$ with isolated vertices. The number of points
$(t_j,x_j)\in [0,t]\times I$ evolves as a function of $t$ as a Poisson
process with rate $\ep>0$; hence the number of such points divided by $t$
converges to $\ep$ a.s. For any given pair $(t_j,x_j)\in [0,t]\times I$ the
number of points $(\sigma_{i,j},\chi_{i,j})\in [0,t]\times I_{x_j}$ for the
Poisson point process $(\sigma_{i,j},\chi_{i,j})_{i\in\N}$ has the law of a
Poisson random variable with intensity greater than $t\ep$. Hence,
\[
t^{-2}\lim_{t\rta\infty} \wt N_t
= t^{-2}\lim_{t\rta\infty} \sum_{j,k\,:\,t_j,\sigma_{j,k}\leq t}
g(\alpha,x_j,\chi_{j,k}) > \ep^2.
\]
This contradicts \eqref{eq8}, and thus completes our proof that $(\wt
G_t)_{t\geq 0}$ has the law of a graphon process with isolated vertices.
\end{proof}

\begin{remark}
In our proof above we observed that the assumption of exchangeability alone
is not sufficient to prove that $(\wt G_t)_{t\geq 0}$ has the law of a
graphon process with isolated vertices. More precisely, without this
assumption we might have $W\not\in L^1$ and the measure might also consist of
the terms containing $g,g'$ and $l,l'$. We observed in the proof that the
terms containing $l,l'$ correspond to isolated edges, and that the terms
containing $g,g'$ correspond to ``stars'' centered at a vertex in the graphon
process. It is outside the scope of this paper to do any further analysis of
these more general exchangeable graphs.
\end{remark}

\section{Left Convergence of Graphon Processes}
\label{sec5}

In this appendix we will prove Proposition~\ref{prop3}. The following lemma
will imply part (ii) of the proposition.

\begin{lemma}
Let $\W$ be a bounded, non-negative graphon, and assume that $h(F_k,\mcl
W)<\infty$ for a star $F_k$ with $k$ leaves.  Then $h(F,\mcl W)<\infty$ for
all simple, connected graphs $F$ of maximal degree at most $k$.
\label{lem:h-bd}
\end{lemma}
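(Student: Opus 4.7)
The plan is to first reduce to the case where $F$ is a tree. Let $T \subseteq F$ be any spanning tree; it has the same vertex set as $F$ and maximum degree at most $k$. Since $W$ is non-negative and $W \le C := \|W\|_\infty$, bounding the factors corresponding to non-tree edges by $C$ gives
\[ \int_{S^{|V(F)|}} \prod_{(i,j)\in E(F)} W(x_i, x_j) \prod_v d\mu(x_v) \le C^{|E(F)| - |V(F)| + 1} \int_{S^{|V(T)|}} \prod_{(i,j)\in E(T)} W(x_i, x_j) \prod_v d\mu(x_v), \]
so it suffices to prove that the right-hand integral is finite for every tree $T$ of maximum degree at most $k$.

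For such a tree, I would root $T$ at an arbitrary vertex $r$ and for each non-root vertex $v$ define the ``subtree integral''
\[ H_v(y) := \int W(y, x_v) \prod_{e \in E(T_v)} W(x_e) \prod_{w \in V(T_v)} d\mu(x_w), \]
where $T_v$ is the subtree of $v$ and its descendants, and the factor $W(y, x_v)$ accounts for the edge from $v$ to its parent. Fubini gives the recursion $H_v(y) = \int W(y, x) \prod_c H_c(x)\, d\mu(x)$ over children $c$ of $v$, with base case $H_v = D_W$ when $v$ is a leaf, and the full integral over $T$ equals $\int \prod_{c \text{ child of } r} H_c(x_r) \, d\mu(x_r)$.

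The crux of the argument is the claim that $H_v \in L^p(S, \mu)$ for all $p \in [1, k]$ whenever $v$ is a non-root vertex; I would prove this by induction on the height of $v$. For a leaf, $H_v = D_W$, and $D_W \in L^1$ (since $W$ is integrable) together with $D_W \in L^k$ (the hypothesis $h(F_k, \W) < \infty$) gives $D_W \in L^p$ for all $p \in [1, k]$ by interpolation. For the inductive step, with $v$ having children $c_1, \dots, c_m$ where $m = d(v) - 1 \le k - 1$, the generalized Hölder inequality with $m$ factors at common exponent $m$ (together with $W \le C$) yields
\[ \|H_v\|_\infty \le C \int \prod_i H_{c_i}(x) \, d\mu(x) \le C \prod_i \|H_{c_i}\|_m < \infty, \]
while Fubini and Hölder with $m+1$ factors at common exponent $m+1$ give
\[ \|H_v\|_1 = \int D_W(x) \prod_i H_{c_i}(x) \, d\mu(x) \le \|D_W\|_{m+1} \prod_i \|H_{c_i}\|_{m+1}, \]
and both norms are finite because $m+1 = d(v) \le k$. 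Thus $H_v \in L^1 \cap L^\infty \subseteq L^p$ for every $p \in [1,\infty]$.

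Finally, the full integral $\int \prod_c H_c(x_r) \, d\mu(x_r)$ is bounded by a single application of generalized Hölder: since the root has $d(r) \le k$ children, $\int \prod_c H_c \le \prod_c \|H_c\|_{d(r)} < \infty$ by the claim. The hard part will be tracking exponents: each Hölder step at a non-root vertex $v$ demands $D_W \in L^{d(v)}$ and $H_{c_i} \in L^{d(v)}$, and since $v$ has a parent we have $d(v) - 1 \le k - 1$ children, leaving exactly enough room for the extra $D_W$ factor in the $L^1$ estimate without exceeding exponent $k$. At the root no extra factor appears, so the exponent $d(r) \le k$ suffices, and the maximum degree hypothesis is used in the tightest possible way.
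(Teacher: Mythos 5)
Your proof is correct, and it takes a genuinely different route through the tree case than the paper's. Both arguments begin with the same reduction to a spanning tree by absorbing non-tree edges into a power of $\|W\|_\infty$, and both use the observation that $D_W\in L^1\cap L^k$ implies $D_W\in L^s$ for $1\le s\le k$. After that they diverge. The paper peels a star off the bottom of the tree: it takes a deepest vertex $u_1$, its parent $v$, its grandparent $u$, and $u_1$'s siblings (all leaves), deletes the edge $uv$ to split $T$ into a smaller tree $T_1$ (still of max degree $\le k$) and a star $T_2$ with at most $k-1$ leaves, applies the submultiplicativity bound $h(T,\W)\le\|W\|_\infty\,h(T_1,\W)\,h(T_2,\W)$, and inducts on $|V(T)|$; the only analytic input is the finiteness of $\int D_W^s$ for $s\le k$. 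You instead root the tree, define the subtree integrals $H_v$, and show by induction on height that $H_v\in L^p$ for $p\in[1,k]$ (in fact $L^1\cap L^\infty$ for non-leaf, non-root vertices) via generalized H\"older at each internal vertex, closing at the root with one more H\"older step. Your exponent bookkeeping is correct throughout: the $L^1$ estimate at a non-root $v$ uses $d(v)\le k$ factors including the extra $D_W$, and the root uses $d(r)\le k$ factors with no extra $D_W$. Your version is longer and more delicate, but it yields a stronger conclusion ($L^p$ control on every subtree integral) and makes visible exactly where the max-degree hypothesis is tight; the paper's peeling argument is shorter when only finiteness is needed.
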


\begin{proof}
We first note that if $h(F_k,\mcl W)=\int  D_W(x)^k\, d\mu(x)<\infty$ for a
star with $k$ leaves, then the same holds for all stars with at most $k$
leaves, since we know that $D_W\in L^1(S)$ by our definition of a graphon.
Also, using that $W$ is bounded, we assume without loss of generality that
$F$ is a tree $T$ of maximal degree $\Delta\leq k$.

Designate one of the vertices, $r$, as the root of the tree, and choose a
vertex $u_1$ such that no other vertex is further from the root.  If $u_1$
has distance $1$ from $r$, then $T$ is a star and there is nothing to prove.
Let $u$ be $u_1$'s grandparent, let $v$ be its parent, and let $u_2,\dots,
u_s$ for $ 1\leq s\leq \Delta -1$ be its siblings. Note that by our
assumption on $u_1$, all the siblings $u_1,\dots,u_s$ are leaves.
Furthermore, if their grandparent $u$ is the root and the root has no other
children, then $T$ is again a star, so we can rule that out as well.

If we remove the edge $uv$ from $T$, we obtain two disjoint trees $T_1$ and
$T_2$, and as just argued, the one containing $u$ is a tree with at least $2$
vertices and maximal degree at most $\Delta$, while the second one is a star,
again of maximal degree at most $\Delta$. Because $h(T,\mcl W)\leq
\|W\|_\infty h(T_1,\mcl W) h(T_2,\mcl W)$, the lemma now follows by
induction.
\end{proof}

\begin{proof}[Proof of Proposition~\ref{prop3}]
We will start by proving (i). Fix some simple connected graph $F$ with $k$
vertices. By Proposition~\ref{prop2} applied with $F$ and the simple
connected graph on two vertices, respectively,
\[
\lim_{t\rta\infty}t^{-k}\mathrm{inj}(F,G_t)=\|W\|^{k/2}_1 h(F,\cW),\qquad
2\lim_{t\rta\infty}t^{-2} |E(G_t)|= \|W\|_1,
\]
and hence
\begin{equation}
\lim_{t\rta\infty} |2E(G_t)|^{-k/2} \mathrm{inj}(F,G_t)=h(F,\cW),
\label{eq26}
\end{equation}
proving (i).

(ii) Since $h(F,\W)=\int D_w^k$ if $F$ is a star with $k$ leaves, we can use
Lemma~\ref{lem:h-bd} to conclude that $h(F,\W)<\infty$ for every simple
connected graph $F$ with at least two vertices.  Express
$\mathrm{hom}(F,G_t)$ as $\mathrm{hom}(F,G_t)=\sum_{\Phi}
\mathrm{inj}(F/\Phi,G_t)$, where we sum over all equivalence relations $\Phi$
on $V(F)$. By Proposition~\ref{prop2} applied with $F/\Phi$, we have
\[
\lim_{t\rta\infty}|2E(G_t)|^{-k/2}\mathrm{inj}(F/\Phi,G_t)=0
\]
 unless $\Phi$
is the equivalence relation for which the number of equivalence classes
equals $|V(F)|$. Hence the estimate \eqref{eq26} holds with $\mathrm{hom}$ in
place of $\mathrm{inj}$, which completes the proof of (ii).

Next we will prove (iii). Let $F$ be a simple connected graph with at least
three vertices, and assume $d\in\N$ is such that the degree of the vertices
of $G_n$ is bounded by $d$. We may assume $G_n$ has no isolated vertices,
since $h(F,G_n)$ is invariant under adding or deleting isolated vertices.
Under the assumption of no isolated vertices, we have $|E(G_n)|\geq
|V(G_n)|/2$. By boundedness of degrees, $\op{hom}(F,G_n)\leq
|V(G_n)|d^{|V(F)|-1}$. Combining these estimates, $h(F,G_n) \leq
|V(G_n)|^{1-|V(F)|/2} d^{|V(F)|-1}$, from which the desired result follows.

Now we will prove (iv). We first construct an example of a sequence of graphs
$(G_n)_{n\in\N}$ which converges for the stretched cut metric
$\delta_\square^s$, but which is not left convergent. Let $(\wt
G_n)_{n\in\N}$ be a sequence of simple dense graphs with $|V(\wt
G_n)|\rta\infty$ that is convergent in the $\delta_\square$ metric, and hence
also in the $\delta_\square^s$ metric.

Define $G_n:=\wt G_n$ for even $n$, and for odd $n$ let $G_n$ be the union
of $\wt G_n$ and $|E(\wt G_n)|^{7/8}$ vertices of degree one, which are all
connected to the same uniformly random vertex of $\wt G_n$. Then
$(G_n)_{n\in\N}$ converges for $\delta_\square^s$ with the same limit as
$(\wt G_n)_{n\in\N}$, since $\wt G_n$ is an induced subgraph of $G_n$ and
$|E(\wt G_n)|/|E(G_n)|\rta 1$. On the other hand, $(G_n)_{n\in\N}$ is not
left convergent, since if $F$ is the simple connected graph with three
vertices and two edges, then $\op{hom}(F,G_n)= \Omega(|E(\wt G_n)|^{14/8})$
for odd $n$ and hence $h(F,G_n)\to\infty$ along sequences of odd $n$, while
$h(F,G_n)$ converges to a finite number by the fact that dense graph
sequences which are convergent in the cut metric are left convergent.

Finally we will provide a counterexample in the reverse direction; i.e., we
will construct a sequence of graphs $(G_n)_{n\in\N}$ which is left
convergent, but does not converge for the stretched cut metric. Let $(\wt
G_n)_{n\in\N}$ be left convergent and satisfy $\lim_{n\rta\infty}|E(\wt
G_n)|=\infty$, and let $G_n$ be the union of $\wt G_n$ and $|E(\wt G_n)|$
isolated edges. Then $(G_n)_{n\in\N}$ is left convergent, since
$\op{hom}(F,G_n)=\op{hom}(F,\wt G_n)+2|E(\wt G_n)|$ when $F$ is the simple
connected graph on two vertices, and $\op{hom}(F,G_n)=\op{hom}(F,\wt G_n)$
when $F$ has at least three vertices. On the other hand, $(G_n)_{n\in\N}$ is
not convergent for $\delta_\square^s$, since it does not have uniformly
regular tails.
\end{proof}

\vskip 0.2in


\begin{thebibliography}{37}
\providecommand{\natexlab}[1]{#1}
\providecommand{\url}[1]{\texttt{#1}}
\expandafter\ifx\csname urlstyle\endcsname\relax
  \providecommand{\doi}[1]{doi: #1}\else
  \providecommand{\doi}{doi: \begingroup \urlstyle{rm}\Url}\fi

\bibitem[Aldous(1981)]{aldous}
D.~J.~Aldous.
\newblock Representations for partially exchangeable arrays of random
  variables.
\newblock \emph{J.~Multivariate Anal.}, 11\penalty0 (4):\penalty0 581--598,
  1981.
\newblock \doi{10.1016/0047-259X(81)90099-3}.

\bibitem[Bickel and Chen(2009)]{bickelchen}
P.~J.~Bickel and A.~Chen.
\newblock A nonparametric view of network models and {N}ewman-{G}irvan and
  other modularities.
\newblock \emph{Proc.\ Natl.\ Acad.\ Sci.\ USA}, 106\penalty0 (50):\penalty0
  21068--21073, 2009.
\newblock \doi{10.1073/pnas.0907096106}.

\bibitem[Bickel et~al.(2011)Bickel, Chen, and Levina]{BCL11}
P.~J.~Bickel, A.~Chen, and E.~Levina.
\newblock The method of moments and degree distributions for network models.
\newblock \emph{Ann.~Statist.}, 39\penalty0 (5):\penalty0 2280--2301, 2011.
\newblock \doi{10.1214/11-AOS904}.

\bibitem[Billingsley(1999)]{billingsley-book}
P.~Billingsley.
\newblock \emph{Convergence of Probability Measures}.
\newblock John Wiley \& Sons, Inc., New York, second edition, 1999.
\newblock \doi{10.1002/9780470316962}.

\bibitem[Bollob{\'a}s et~al.(2007)Bollob{\'a}s, Janson, and Riordan]{bjr07}
B.~Bollob{\'a}s, S.~Janson, and O.~Riordan.
\newblock The phase transition in inhomogeneous random graphs.
\newblock \emph{Random Structures Algorithms}, 31\penalty0 (1):\penalty0
  3--122, 2007.
\newblock \doi{10.1002/rsa.20168}.

\bibitem[Bollob{\'a}s and Riordan(2009)]{br-09}
B.~Bollob{\'a}s and O.~Riordan.
\newblock Metrics for sparse graphs.
\newblock In \emph{Surveys in Combinatorics 2009}, volume 365 of \emph{London
  Math.~Soc.~Lecture Note Ser.}, pages 211--287. Cambridge Univ.\ Press,
  Cambridge, 2009.
\newblock \doi{10.1017/CBO9781107325975.009}.

\bibitem[Borgs et~al.(2015)Borgs, Chayes, Cohn, and Ganguly]{w-estimation}
C.~Borgs, J.~T.~Chayes, H.~Cohn, and S.~Ganguly.
\newblock {Consistent nonparametric estimation for heavy-tailed sparse graphs}.
\newblock Preprint, \arXiv{1508.06675}, 2015.

\bibitem[Borgs et~al.(2017)Borgs, Chayes, Cohn, and Holden]{multiscale2}
C.~Borgs, J.~T.~Chayes, H.~Cohn, and N.~Holden.
\newblock {In preparation}, 2017.

\bibitem[Borgs et~al.(2014{\natexlab{a}})Borgs, Chayes, Cohn, and Zhao]{lp1}
C.~Borgs, J.~T.~Chayes, H.~Cohn, and Y.~Zhao.
\newblock {An $L^p$ theory of sparse graph convergence I: limits, sparse random
  graph models, and power law distributions}.
\newblock Preprint, \arXiv{1401.2906}, 2014{\natexlab{a}}.

\bibitem[Borgs et~al.(2014{\natexlab{b}})Borgs, Chayes, Cohn, and Zhao]{lp2}
C.~Borgs, J.~T.~Chayes, H.~Cohn, and Y.~Zhao.
\newblock {An $L^p$ theory of sparse graph convergence II: LD convergence,
  quotients, and right convergence}.
  \newblock To appear in Ann.\ Probab., \arXiv{1408.0744}, 2014{\natexlab{b}}.

\bibitem[Borgs et~al.(2013)Borgs, Chayes, Kahn, and
  Lov{\'a}sz]{left-right-conv}
C.~Borgs, J.~Chayes, J.~Kahn, and L.~Lov{\'a}sz.
\newblock Left and right convergence of graphs with bounded degree.
\newblock \emph{Random Structures Algorithms}, 42\penalty0 (1):\penalty0 1--28,
  2013.
\newblock \doi{10.1002/rsa.20414}.

\bibitem[Borgs et~al.(2010)Borgs, Chayes, and Lov{\'a}sz]{BCL10}
C.~Borgs, J.~Chayes, and L.~Lov{\'a}sz.
\newblock Moments of two-variable functions and the uniqueness of graph limits.
\newblock \emph{Geom.~Funct.~Anal.}, 19\penalty0 (6):\penalty0 1597--1619,
  2010.
\newblock \doi{10.1007/s00039-010-0044-0}.

\bibitem[Borgs et~al.(2006)Borgs, Chayes, Lov{\'a}sz, S{\'o}s, and
  Vesztergombi]{graph-hom}
C.~Borgs, J.~Chayes, L.~Lov{\'a}sz, V.~T.~S{\'o}s, and K.~Vesztergombi.
\newblock Counting graph homomorphisms.
\newblock In \emph{Topics in discrete mathematics}, volume~26 of
  \emph{Algorithms Combin.}, pages 315--371. Springer, Berlin, 2006.
\newblock \doi{10.1007/3-540-33700-8_18}.

\bibitem[Borgs et~al.(2008)Borgs, Chayes, Lov{\'a}sz, S{\'o}s, and
  Vesztergombi]{denseconv1}
C.~Borgs, J.~T.~Chayes, L.~Lov{\'a}sz, V.~T.~S{\'o}s, and K.~Vesztergombi.
\newblock Convergent sequences of dense graphs {I}: {S}ubgraph frequencies,
  metric properties and testing.
\newblock \emph{Adv.~Math.}, 219\penalty0 (6):\penalty0 1801--1851, 2008.
\newblock \doi{10.1016/j.aim.2008.07.008}.

\bibitem[Caron and Fox(2014)]{caron-fox}
F.~Caron and E.~B.~Fox.
\newblock {Sparse graphs using exchangeable random measures}.
\newblock Preprint, \arXiv{1401.1137}, 2014.

\bibitem[Chatterjee(2015)]{C15}
S.~Chatterjee.
\newblock Matrix estimation by universal singular value thresholding.
\newblock \emph{Ann.~Statist.}, 43\penalty0 (1):\penalty0 177--214, 2015.
\newblock \doi{10.1214/14-AOS1272}.

\bibitem[Choi et~al.(2012)Choi, Wolfe, and Airoldi]{CWA}
D.~S.~Choi, P.~J.~Wolfe, and E.~M.~Airoldi.
\newblock Stochastic blockmodels with a growing number of classes.
\newblock \emph{Biometrika}, 99\penalty0 (2):\penalty0 273--284, 2012.
\newblock \doi{10.1093/biomet/asr053}.

\bibitem[{\c{C}}{\i}nlar(2011)]{cinlar-book}
E.~{\c{C}}{\i}nlar.
\newblock \emph{Probability and Stochastics}, volume 261 of \emph{Graduate
  Texts in Mathematics}.
\newblock Springer, New York, 2011.
\newblock \doi{10.1007/978-0-387-87859-1}.

\bibitem[Diaconis and Janson(2008)]{diaconisjanson08}
P.~Diaconis and S.~Janson.
\newblock Graph limits and exchangeable random graphs.
\newblock \emph{Rend.~Mat.~Appl.~(7)}, 28\penalty0 (1):\penalty0 33--61, 2008.

\bibitem[Frieze and Kannan(1999)]{FK99}
A.~Frieze and R.~Kannan.
\newblock Quick approximation to matrices and applications.
\newblock \emph{Combinatorica}, 19\penalty0 (2):\penalty0 175--220, 1999.
\newblock \doi{10.1007/s004930050052}.

\bibitem[Gao et~al.(2015)Gao, Lu, and Zhou]{rateopt}
C.~Gao, Y.~Lu, and H.~H.~Zhou.
\newblock Rate-optimal graphon estimation.
\newblock \emph{Ann.~Statist.}, 43\penalty0 (6):\penalty0 2624--2652, 2015.
\newblock \doi{10.1214/15-AOS1354}.

\bibitem[Halmos(1974)]{halmos}
P.~R.~Halmos.
\newblock \emph{Measure Theory}, volume~18 of \emph{Graduate Texts in
  Mathematics}.
\newblock Springer, New York, 1974.
\newblock \doi{10.1007/978-1-4684-9440-2}.

\bibitem[Herlau et~al.(2016)Herlau, Schmidt, and M{\o}rup]{kallenberg-block}
T.~Herlau, M.~N.~Schmidt, and M.~M{\o}rup.
\newblock Completely random measures for modelling block-structured sparse
  networks.
\newblock In D.~D.~Lee, M.~Sugiyama, U.~V.~Luxburg, I.~Guyon, and R.~Garnett,
  editors, \emph{Advances in Neural Information Processing Systems 29}, pages
  4260--4268. Curran Associates, Inc., 2016.

\bibitem[Hoff et~al.(2002)Hoff, Raftery, and Handcock]{HRH02}
P.~D.~Hoff, A.~E.~Raftery, and M.~S.~Handcock.
\newblock Latent space approaches to social network analysis.
\newblock \emph{J.~Amer.~Statist.~Assoc.}, 97\penalty0 (460):\penalty0
  1090--1098, 2002.
\newblock \doi{10.1198/016214502388618906}.

\bibitem[Hoover(1979)]{hoover}
D.~Hoover.
\newblock Relations on probability spaces and arrays of random variables.
\newblock Preprint, Institute for Advanced Study, Princeton, NJ, 1979.

\bibitem[Janson(2013)]{janson-survey}
S.~Janson.
\newblock \emph{Graphons, Cut Norm and Distance, Couplings and Rearrangements},
  volume~4 of \emph{New York Journal of Mathematics Monographs}.
\newblock State University of New York, University at Albany, Albany, NY, 2013.

\bibitem[Janson(2016)]{janson16}
S.~Janson.
\newblock {Graphons and cut metric on {$\sigma$-finite} measure spaces}.
\newblock Preprint, \arXiv{1608.01833}, 2016.

\bibitem[Kallenberg(2002)]{kallenberg-prob}
O.~Kallenberg.
\newblock \emph{Foundations of Modern Probability}.
\newblock Springer, New York, second edition, 2002.

\bibitem[Kallenberg(2005)]{kallenberg-exch}
O.~Kallenberg.
\newblock \emph{Probabilistic Symmetries and Invariance Principles}.
\newblock Springer, New York, 2005.
\newblock \doi{10.1007/0-387-28861-9}.

\bibitem[Klopp et~al.(2017)Klopp, Tsybakov, and Verzelen]{oracle}
O.~Klopp, A.~B.~Tsybakov, and N.~Verzelen.
\newblock Oracle inequalities for network models and sparse graphon estimation.
\newblock \emph{Ann.~Statist.}, 45\penalty0 (1):\penalty0 316--354, 2017.
\newblock \doi{10.1214/16-AOS1454}.

\bibitem[Lov{\'a}sz and Szegedy(2006)]{ls-graphlimits}
L.~Lov{\'a}sz and B.~Szegedy.
\newblock Limits of dense graph sequences.
\newblock \emph{J.~Combin.~Theory Ser.~B}, 96\penalty0 (6):\penalty0 933--957,
  2006.
\newblock \doi{10.1016/j.jctb.2006.05.002}.

\bibitem[Lov{\'a}sz and Szegedy(2007)]{ls-analyst}
L.~Lov{\'a}sz and B.~Szegedy.
\newblock Szemer\'edi's lemma for the analyst.
\newblock \emph{Geom.~Funct.~Anal.}, 17\penalty0 (1):\penalty0 252--270, 2007.
\newblock \doi{10.1007/s00039-007-0599-6}.

\bibitem[Rohe et~al.(2011)Rohe, Chatterjee, and Yu]{roheetal}
K.~Rohe, S.~Chatterjee, and B.~Yu.
\newblock Spectral clustering and the high-dimensional stochastic blockmodel.
\newblock \emph{Ann.~Statist.}, 39\penalty0 (4):\penalty0 1878--1915, 2011.
\newblock \doi{10.1214/11-AOS887}.

\bibitem[Stroock(2011{\natexlab{a}})]{stroock-analyticview}
D.~W.~Stroock.
\newblock \emph{Probability Theory: An Analytic View}.
\newblock Cambridge University Press, Cambridge, second edition,
  2011{\natexlab{a}}.

\bibitem[Stroock(2011{\natexlab{b}})]{stroock-integration}
D.~W.~Stroock.
\newblock \emph{Essentials of Integration Theory for Analysis}, volume 262 of
  \emph{Graduate Texts in Mathematics}.
\newblock Springer, New York, 2011{\natexlab{b}}.
\newblock \doi{10.1007/978-1-4614-1135-2}.

\bibitem[Veitch and Roy(2015)]{veitchroy}
V.~Veitch and D.~M.~Roy.
\newblock The class of random graphs arising from exchangeable random measures.
\newblock Preprint, \arXiv{1512.03099}, 2015.

\bibitem[Wolfe and Olhede(2013)]{WO}
P.~J.~Wolfe and S.~C.~Olhede.
\newblock Nonparametric graphon estimation.
\newblock Preprint, \arXiv{1309.5936}, 2013.

\end{thebibliography}
\end{document}